\tikzset{
  commutative diagrams/.cd, 
  arrow style=tikz, 
  diagrams={>=stealth}
}
\newenvironment{customthm}[1]
  {\innercustomthm}
  {\endinnercustomthm}
\newenvironment{customconj}[1]
  {\innercustomconj}
  {\endinnercustomconj}
\def\@tocline#1#2#3#4#5#6#7{\relax
  \ifnum #1>\c@tocdepth 
  \else
    \par \addpenalty\@secpenalty\addvspace{#2}%
    \begingroup \hyphenpenalty\@M
    \@ifempty{#4}{%
      \@tempdima\csname r@tocindent\number#1\endcsname\relax
    }{%
      \@tempdima#4\relax
    }%
    \parindent\z@ \leftskip#3\relax \advance\leftskip\@tempdima\relax
    \rightskip\@pnumwidth plus4em \parfillskip-\@pnumwidth
    #5\leavevmode\hskip-\@tempdima
      \ifcase #1
       \or\or \hskip 1em \or \hskip 2em \else \hskip 3em \fi%
      #6\nobreak\relax
    \dotfill\hbox to\@pnumwidth{\@tocpagenum{#7}}\par
    \nobreak
    \endgroup
  \fi}
\newcounter{marginnote}
\DeclareMathAlphabet{\mathpzc}{OT1}{pzc}{m}{it}
\newtheorem{theorem}{Theorem}[subsection]
\newtheorem{corollary}[theorem]{Corollary}
\newtheorem{lemma}[theorem]{Lemma}
\newtheorem{proposition}[theorem]{Proposition}
\newtheorem{conjecture}[theorem]{Conjecture}
\newtheorem{quasi-theorem}[theorem]{Quasi-Theorem}
\theoremstyle{definition}
\newtheorem{definition}[theorem]{Definition}
\newtheorem{warning}[theorem]{$\lightning$ Warning}
\newtheorem{remark}[theorem]{Remark}
\newtheorem{construction}[theorem]{Construction}
\newtheorem{example}[theorem]{Example}
\newtheorem{blank remark}[theorem]{}
\newtheorem{not1}[theorem]{Notation}
\newcommand{\A}{{\mathbb{A}}}           
\newcommand{\CC} {{\mathbb C}}
\newcommand{\PP}{\mathbb{P}}         
\newcommand{\QQ} {{\mathbb Q}}		
\newcommand{\RR} {{\mathbb R}}		
\newcommand{\ZZ} {{\mathbb Z}}		
\def\setminus{\smallsetminus}
\newcommand{\Hom}{\operatorname{Hom}}
\newcommand{\cal}{\mathcal}
\def\cC{{\cal C}}
\def\cD{{\cal D}}
\def\cF{{\cal F}}
\def\cP{{\cal P}}
\def\cX{{\cal X}}
\def\cY{{\cal Y}}
\def\cZ{{\cal Z}}
\def\trop{\mathsf{trop}}
\newcommand{\Spec}{\operatorname{Spec}}
\def\blfootnote{\xdef\@thefnmark{}\@footnotetext}
\title{Logarithmic enumerative geometry for curves and sheaves}
\date{}
\author{Davesh Maulik {\it \&} Dhruv Ranganathan}
\address{Davesh Maulik \\ Department of Mathematics\\
Massachusetts Institute of Technology, Cambridge, MA, USA}
\email{\href{mailto:maulik@mit.edu}{maulik@mit.edu}}
\address{Dhruv Ranganathan \\ Department of Pure Mathematics {\it \&} Mathematical Statistics\\
University of Cambridge, Cambridge, UK}
\email{\href{mailto:dr508@cam.ac.uk}{dr508@cam.ac.uk}}
\begin{document}

\maketitle

\begin{abstract}
We propose a logarithmic enhancement of the Gromov--Witten/Donaldson--Thomas correspondence, with descendants, and study its behavior under simple normal crossings degenerations. The formulation of the logarithmic correspondence requires a matching of tangency conditions with relative insertions. This is achieved via a version of the Nakajima basis for the cohomology of the Hilbert schemes of points on a logarithmic surface. 

Next, we establish a strong form of the degeneration formula in logarithmic DT theory -- the numerical DT invariants of the general fiber of a degeneration are determined by the numerical DT invariants attached to strata of the special fiber. The GW version of this result, which we prove in all target dimensions, strengthens currently known formulas. A key role is played by a certain exotic class of insertions, introduced here, that impose non-local incidence conditions coupled across multiple boundary strata of the target geometry.

Finally, we prove compatibility of the new logarithmic GW/DT correspondence with degenerations. In particular, the logarithmic conjecture for all strata of the special fiber of a degeneration implies the traditional GW/DT conjecture on the general fiber. Compatibility is a strong constraint, and can be used to calculate logarithmic DT invariants. Several examples are included to illustrate the nature and utility of the formula. 
\end{abstract}

\vspace{0.75in}
\setcounter{tocdepth}{1}
\tableofcontents
\newpage

\section*{Introduction}

\subsection{Enumerative geometry of pairs} Let $X$ be a smooth algebraic $3$-fold and $D\subset X$ a simple normal crossings divisor. We are interested in numerical invariants obtained from intersection theory on moduli spaces attached to $(X|D)$. In Gromov--Witten (GW) theory, the {``curve side''}, the moduli space is the space of logarithmic stable maps from nodal curves to $X$~\cite{AC11,Che10,GS13}. Donaldson--Thomas (DT) theory and the closely related Pandharipande--Thomas (PT) theory, are the {``sheaf side''} of enumerative geometry. In these contexts, the moduli spaces parameterize sheaves, or more generally, stable pairs in the derived category that are appropriately framed along $D$ ~\cite{MR20}.

In the absence of a divisor $D$, there is a network of conjectures -- what we loosely refer to as the {\it curve/sheaf correspondence} -- which gives a precise equivalence between the curve and sheaf sides of enumerative geometry: for a fixed $3$-fold $X$, for each curve class $\beta$, after appropriate normalizations, generating functions of GW, DT, and PT invariants are related by an explicit change of variables~\cite{MNOP06a,MNOP06b,PT09}. The conjecture has been proved in a number of interesting cases~\cite{BP08,MNOP06a,MNOP06b,MOOP,Ob21,PP12,PP14}, including striking recent progress by Pardon~\cite{Par23}. 

Our goal here is to provide a conjectural extension of the correspondence to the setting of simple normal crossings pairs, and then study how this conjecture behaves under simple normal crossings degenerations. One of the main results of the paper, independent of the conjectural framework, is a complete picture of the behavior of DT and PT invariants under degeneration, and a strengthening of the existing GW results. We find that the conjecture is compatible with degeneration in a strong sense. 

There are several motivations for our study. Our work generalizes results for pairs $(X|D)$ when $D$ is smooth and for {\it double point} degenerations~\cite{MNOP06b,LiWu15}; already in this simpler case, the compatibility of the correspondence with degeneration is an important constraint. It provides inductive strategies to prove the correspondence, and is used in a crucial fashion in~\cite{BP08,PP14,PP12}. We expect that compatibility with simple normal crossings degenerations strongly constrains the full descendant curve/sheaf conjecture, which remains largely open.
Analogous constraints in Gromov--Witten theory have proved to be very useful computational tools, see for instance~\cite{Bou17,Par17}. 

The logarithmic correspondence also appears to be interesting in its own right, for example via its interactions with work of Bousseau and others related to the refined topological string~\cite{Bou17,Bou19,BFGW21}, with the Gross--Siebert mirror construction~\cite{GS19}, and with the double ramification cycle on the moduli space of curves~\cite{HMPPS,KHSUK,RUK22}. The logarithmic context also has a rich potential link, in its $K$-theoretic incarnation, with ideas in geometric representation theory~\cite{Oko17}. 

\subsection{The logarithmic curve/sheaf correspondence} In order to state the correspondence precisely, we need to organize DT/PT and GW invariants. Let $X$ be a smooth and projective complex threefold and let $D\subset X$ be a simple normal crossings divisor with divisor components $D_1,\ldots,D_r$. We often use $(X|D)$ to denote the pair. Fix a curve class $\beta$ on $X$. It determines a sequence of intersection numbers $\mathbf n = (\beta\cdot D_i)_i$.

For each choice of holomorphic Euler characteristic $\chi$, we have a logarithmic DT space $\mathsf{DT}_{\beta,\chi}(X)$, constructed in~\cite{MR20}, which
parametrizes subschemes of expansions of $X$ along $D$; we review this in Section~\ref{sec: review-of-moduli}.  It is a proper Deligne--Mumford stack, equipped with a virtual fundamental class.


Similarly, for each divisor $D_i$ we can consider the Hilbert scheme of $n_i$ points on expansions of $D_i$ along its induced boundary. These are smooth Deligne--Mumford stacks of dimension $2n_i$. Each is equipped with a natural simple normal crossings boundary divisor, given by the locus where the target expands nontrivially. We let $\mathsf{Hilb}^{\bf n}(D)$ be the {\it product} of all of these Hilbert schemes $\mathsf{Hilb}^{n_i}(D_i)$, taken over all divisors. There is a natural evaluation morphism:
\[
\mathsf{DT}_{\beta,\chi}(X|D)\to \mathsf{Hilb}^{\bf n}(D).
\]
This induces {\it boundary insertions} in DT/PT theory by pulling back cohomology classes. 

For the applications we have in mind, we find the appropriate level of generality is to work with insertions in the logarithmic cohomology of this target -- the direct limit of cohomology rings over all logarithmic blowups of $\mathsf{Hilb}^{\bf n}(D)$. 
In Section~\ref{sec: Hilb-cohomology}, we prove that the logarithmic cohomology $\mathsf{Hilb}^{\bf n}(D)$ has a natural Nakajima-type basis.  Each basis element $\bm{\mu}(\delta)$ 
is given by a {\it cohomology decorated vector of partitions}.  This is a vector $\bm \mu = (\mu_1, \dots, \mu_r)$ where each $\mu_i$ a partition of $n_i$, together with a cohomology class $\delta$ in the logarithmic cohomology of a certain product of strata of $D$. 
See Section~\ref{sec: exotic-insertions} below for further discussion of these insertions. 

In addition, one can use operators built out of the universal sheaf to impose primary and descendant {\it bulk insertions} as in the usual theory~\cite{MNOP06b}. If we fix a boundary class $\bm{\mu}(\delta)$, and bulk descendants $\tau_{a_1}(\zeta_1),\ldots,\tau_{a_k}(\zeta_k)$, then the general theory of~\cite{MR20} gives us a DT or PT invariant. By fixing $\beta$ and the insertions, and then summing over the possible values of $\chi$ we are led to a DT partition function, denoted:
\[
\mathsf{Z}_{\mathsf{DT}}\left(X,D;q | \prod_{i=1}^{k} \tau_{a_{i}}(\zeta_i)|\bm{\mu}(\delta)\right)_{\beta}
= \sum_{\chi} \langle \prod_{i=1}^{k} \tau_{a_{i}}(\zeta_i)| \bm{\mu}(\delta) \rangle^{\mathsf{DT}}_{\beta,\chi} \cdot {q^{\chi}}.
\]
The {\it normalized} DT partition function is obtained by dividing the partition function by the $\beta = 0$ contribution. The PT case is analogous, and the PT and normalized DT series are conjectured to be equal~\cite{MR20}. 

On the Gromov-Witten side, we fix a genus $g$ and use the 
$\bm \mu$ to determine
{\it tangency} data for curves with respect to $D$.  We have a logarithmic GW space $\mathsf{GW}_{\beta,g}(X)_{k, \bm \mu}$, which parametrizes 
 stable maps of curves to expansions of $X$, with appropriate tangency, and with $k$ internal marked points; see Sections~\ref{sec: expanded-maps} and \ref{subsec:comparisons} for background here and comparisons to other approaches.
 
There is an associated evaluation space $\mathsf{Ev}^{\bm\mu}(D)$, which parametrizes ordered points in expansions of $D_i$, as well as an evaluation map
\[
\mathsf{GW}_{\beta,g}(X)_{k,\bm\mu}\to \mathsf{Ev}^{\bm\mu}(D)
\]
which sends a logarithmic stable map to the locations of the points of tangency.
The class $\delta$ defines a logarithmic cohomology class on $\mathsf{Ev}^{\bm\mu}(D)$ which imposes boundary conditions for these points.
Disconnected logarithmic GW partition functions may now be defined, with the usual normalization that the map is non-constant on every connected component of the domain curve~\cite{MNOP06a}. 
Formally, we have 
\[
\mathsf{Z}_{\mathsf{GW}}\left(X,D; u | \prod_{i=1}^{k} \tau_{a_{i}}(\zeta_i)|\bm{\mu}(\delta)\right)_{\beta} = \sum_{g} \langle \prod_{i=1}^{k} \tau_{a_{i}}(\gamma_i) | \bm{\mu}(\delta) \rangle^{\bullet, \mathsf{GW}}_{g,\beta} u^{2g-2}.
\]

We now state the simplest version of our main conjecture -- the case of primary insertions.

\begin{customconj}{A}
Fix boundary conditions $\bm{\mu}(\delta)$ and primary bulk $\tau_{0}(\zeta_1),\ldots,\tau_{0}(\zeta_k)$. Let $d_\beta$ be the anticanonical degree of $\beta$. Then under the identification of variables $-q = e^{iu}$, we have the equality
\[
(-q)^{-d_{\beta}/2} \mathsf{Z}_{\mathsf{PT}}\left(X,D;q | \prod_{i=1}^{k} \tau_{0}(\zeta_i)|\bm{\mu}(\delta)\right)_{\beta}=
(-iu)^{d_{\beta} + \sum \ell(\mu_j) - |\mu_j|} \mathsf{Z}_{\mathsf{GW}}\left(X,D; u|\prod_{i=1}^{k} \tau_{0}(\zeta_i)|\bm{\mu}(\delta)\right)_{\beta}.
\]
\end{customconj}

In Section~\ref{sec:curve/sheaf-conjectures} we present complete descendant conjectures extending the conjecture above and~\cite{PP12}. 

\subsection{The degeneration package} 
The second goal of this paper is to establish techniques for studying traditional curve-counting invariants in terms of logarithmic invariants, after passing to a simple normal crossings degeneration.  

The complete degeneration formalism is given in Section~\ref{sec: splitting-compatibility} with explicit cycle-theoretic and numerical formulas, after appropriate notation has been established. In this introduction we give a more qualitative statement. 

\begin{customthm}{B}
Let $\mathcal Y\to B$ be a simple normal crossings degeneration and assume that the fiber over $0$ in $B$ is the unique singular fiber. Let $Y_\eta$ denote the general fiber and $Y_0$ this singular fiber. 
\begin{enumerate}[(i)]
\item The virtual class of the space $\mathsf{DT}_{\beta,\chi}(Y_0)$ decomposes as a sum
\[
[\mathsf{DT}_{\beta,\chi}(Y_0)]^{\mathsf{vir}} = \sum_{\gamma} [\mathsf{DT}_{\gamma}(Y_0)]^{\mathsf{vir}},
\]
where $\gamma$ is an appropriately enhanced graph, embedded in the dual complex of $Y_0$, with vertices and edges decorated by curve class, Euler characteristic, and multiplicity data. These graphs are called Hilbert $1$-complexes.

\item Each vertex $v$ of the Hilbert $1$-complex $\gamma$ determines a $3$-fold pair $X_v$, which is birational to a projective bundle over a stratum of $Y_0$. The choice of vertex $v$ uniquely determines curve class and Euler characteristic, together denoted $\gamma(v)$ for a DT moduli problem associated to $X_v$. 

\item The logarithmic DT invariants associated to the general fiber $Y_\eta$, with bulk insertions from $\mathcal Y$, are algorithmically determined by the DT invariants associated to the pairs $X_v$ with numerical data $\gamma(v)$, where $\gamma$ ranges over all Hilbert $1$-complexes $\gamma$, and $v$ ranges over all its vertices. 
\end{enumerate}

Parallel degeneration formulas hold for both PT and GW invariants.

\end{customthm}

Explicit formulas are given in Section~\ref{sec: splitting-compatibility}. In Section~\ref{sec: examples} we explain in examples how the compatibility can be used as a new constraint to study DT invariants and the curve/sheaf conjectures. We also wish to stress that in the final point of the theorem statement, the {\it numerical} invariants of the general fiber, obtained after integration, can be obtained by the numerical invariants of the individual components of the special fiber. In this aspect, 
the formula is stronger and more practical than the statements that can be easily deduced from~\cite{R19} in GW theory.

\subsection{Degeneration and the curve/sheaf correspondence} The logarithmic curve/sheaf correspondence is formulated to be an extension of the traditional conjectures~\cite{MNOP06a,MNOP06b}. However, several aspects of the proposed correspondence are new.  For instance, in the logarithmic context, the foundations of GW and DT/PT theory are rather different from each other. There are versions of logarithmic GW theory using ``collapsed'' targets ~\cite{AC11,Che10,GS13} and expanded targets~\cite{R19,Li01}; however, in DT theory we only have access to an expanded theory. In line with this, the logarithmic Nakajima basis, required for matching boundary conditions, takes a more subtle form here than in the classical case. 

As a result we view the following result as evidence that the logarithmic curve/sheaf correspondence is robust. 

\begin{customthm}{C}
The descendant logarithmic curve/sheaf correspondence is compatible with degeneration. 

In more detail: let $\mathcal Y\to B$ be a simple normal crossings degeneration and assume that the fiber over $0$ in $B$ is the unique singular fiber. If the logarithmic curve/sheaf correspondence holds for all strata of the singular fiber $Y_0$, then it holds for $\mathcal Y_\eta$, with insertions in the cohomology of $\mathcal Y$. 
\end{customthm}

It is our expectation that this compatibility places strong constraints on the GW/DT correspondence. In simple terms, a single variety can admit many several different degenerations and the formulas coming out of these degenerations tend to be very different. A manifestation of this can already be seen when using tropical geometry to count rational members in a pencil of cubics in $\mathbb P^2$, see for instance~\cite[Section~4]{vGOR}.

In fact, we expect that when $(X|D)$ is the pair consisting of a toric threefold and the toric boundary, compatibility should be sufficient to establish the full primary correspondence for this geometry. The expectation is based on parallel work in GW theory of Bousseau and Parker~\cite{Bou17,Par17}, and we explore these ideas in a detailed example in Section~\ref{sec: examples}. Examples that illustrate this may be found in Section~\ref{sec: examples}.

\subsection{Exotic invariants and Nakajima basis}

For pairs $(X|D)$ when $D$ is smooth, our work reduces to the constructions of relative GW/DT theory by Li and Li--Wu~\cite{Li01,LiWu15} and the degeneration formula when $\mathcal Y\to B$ is a double point degeneration~\cite{Li02,LiWu15,MNOP06b,MPT10}.  In the simple normal crossings setting, both the correspondence and the degeneration formula become more delicate.  We highlight two new features.

\subsubsection{Exotic insertions}\label{sec: exotic-insertions} 

For double point degenerations, the numerical degeneration formula involves boundary conditions associated to the divisor $D$ along which components are glued.
In order to generalize this, an important new feature is the introduction of certain {\it exotic} insertions. 
On the GW side, we have evaluation maps
\[
\mathsf{GW}_{\beta,g}(X)_{k,\bm\mu}\to \mathsf{Ev}^{\bm\mu}(X)
\]
along which we pull back logarithmic cohomology classes to impose boundary conditions.  
However, this allows us to impose conditions from the cohomology of {\it blowups} of $\mathsf{Ev}^{\bm\mu}(X)$ along strata.  While the space on the right is a product of strata of $D$, these {\it exotic insertions} include classes on modifications that do not respect the product structure, so we have boundary insertions that are ``coupled'' across different boundary components.
Parallel insertions arise from the evaluation map in DT theory and
the logarithmic cohomology of $\mathsf{Hilb}^{\bf n}(D)$.  Again the evaluation space is a product, but its logarithmic cohomology does not decompose along with the product structure. 
These extra classes are necessary to have a K\"unneth decomposition in our setting and obtain numerical splitting formulas.

\subsubsection{Nakajima basis} The logarithmic curve/sheaf correspondence  requires a logarithmic version of the {\it Nakajima basis}. Recall that if $S$ is a smooth projective surface, the rational cohomology of $\mathsf{Hilb}^n(S)$ has a distinguished basis given by unordered partitions of $n$, with parts weighted by $H^\star(S)$, see~\cite{dCM02,GS93,Groj96,Nak97}. Under the conjecture, these partitions correspond to the different tangency conditions in relative GW theory.

In the logarithmic theory, each component $D_i\subset D$ has a boundary of its own. The Hilbert scheme of $D_i$, relative to the simple normal crossings divisor $(D_i\cap D_j)_j$, has a more complicated cohomology. In Section~\ref{sec: Hilb-cohomology} we give a complete calculation of the cohomology of this space in terms of cohomology rings of logarithmic modifications of symmetric products of surfaces. The partition structure that arises from this result is what allows us to match with GW theory.   This matching with GW theory seems to rely on the expanded formulation of GW theory, and it is unclear how to do this in other formalisms of logarithmic GW invariants.

The construction of the Nakajima basis is straightforward, but only after working relative to an appropriate stack of expansions; the key geometric idea is to separate two geometric phenomena -- points moving into each other and points moving into the boundary.

This construction arises in several places in this paper. First, as already noted, it is needed even just to formulate the correspondence. Second, 
the expression of certain modified diagonal classes in the K\"unneth decomposition of a product of such Hilbert schemes, is necessary to make the degeneration formula useable. Finally, the compatibility of the conjectures with the degeneration formula requires that the basis is a compatible with diagonal self-correspondences. 

\subsection{Relationship with other approaches} 

On the Gromov-Witten side, we discuss briefly some of the other approaches to studying GW invariants under simple normal crossings degenerations. 

\subsubsection{Punctures} Abramovich, Chen, Gross, and Siebert have developed a theory of {\it punctured} GW invariants that performs a similar function to our degeneration package~\cite{ACGS15,ACGS17}. The theory of punctured invariants is a priori larger than ordinary logarithmic GW invariants, but the results of~\cite{ACGS17} establish a different degeneration package using these invariants. Our results expands the class of invariants in a different way, by allowing the exotic insertions. It is not clear how to compare these. Indeed the evaluation spaces in the punctured theory are Artin stacks with subtle properties~\cite{ACGS17}.
While the punctured formalism is well-adapted to a different set of applications, there is currently no punctured DT theory, so it is not suited to our current study. 

 Additionally, given a simple normal crossings degeneration, it is not obvious to us that the GW invariants of the general fiber are determined by the punctured log GW invariants of the components of the special fiber. We do note, however, that there has been important recent progress in developing computational tools in punctured GW theory~\cite{Gross23,Wu21}. We hope these methods can be fruitfully combined with our work here. A theory of punctured DT invariants also appears to be an interesting potential direction. 

\subsubsection{Explosions} Our approach is also close in spirit to Parker's degeneration formulas in {\it exploded} GW theory~\cite{Par17a}. Parker's formulas take value in {\it refined de Rham cohomology}, see~\cite{Par17b}, which is analogous to logarithmic cohomology. As Parker notes, refined cohomology does not support a K\"unneth theorem so one cannot immediately extract complete numerical consequences from the theorem~\cite{Par19}. Nevertheless, Parker has developed several methods to do explicit computations in his theory~\cite{Par14p2,Par15,Par17} and his calculations have played an important role in the development of this paper.

\subsection{Outline} The paper is broken into two parts. In Part~\ref{part: 1} we deal exclusively with the enumerative geometry of pairs, and work towards a precise statement of the logarithmic/curve sheaf correspondence. Section~\ref{sec: tropical-toolbox} contains a review of the basics of cone complexes and Artin fans, their subdivisions, and some basic constructions and properties of these that are used throughout. In Section~\ref{sec: review-of-moduli} we review the foundations of logarithmic DT theory from~\cite{MR20}, and explain a slightly new point of view on logarithmic GW theory that is better adapted to the comparison. In Section~\ref{sec: points} we explain the structure of the evaluation spaces in these two theories, i.e. the logarithmic Hilbert scheme of points and the spaces of stable maps from points. We also construct the evaluation maps from the sheaf/curve spaces to the spaces of points. In Section~\ref{sec: Hilb-cohomology} we prove that the cohomology of the evaluation spaces on the two sides match, relying on our setup in the previous section. With this setup complete, we state the logarithmic curve/sheaf conjectures in Section~\ref{sec:curve/sheaf-conjectures}. The conjectures are stated for both DT and PT theory, including descendants in the PT setting. 

In Part~\ref{part: 2} we introduce the logarithmic degeneration package. In Section~\ref{sec: DT-over-base} we construct the DT (and PT) space for a degeneration, making the necessary adaptations from our previous work~\cite{MR20} which deals with the pairs case. The degeneration package contains roughly three steps, and the first two of these are essentially formal and are handled in Sections~\ref{sec: DT-over-base} and~\ref{sec: decomposition-step}. In Section~\ref{sec: splitting-compatibility} we state the formulas in the degeneration package in DT, PT, and GW theory. We include numerical degeneration formulas in each case. The proofs are delayed until after Section~\ref{sec: examples}, which contains several examples that are meant to illustrate the features of the degeneration package, including how it can be used to constraint invariants in practice. The rest of the paper is then dedicated to proving the main formulas. We largely focus on the DT case, but explain/review the GW cases as we go. In Section~\ref{sec: combinatorial-degeneration} we prove a purely combinatorial analogue of the full degeneration package. This translates to the formula on the stack of expansions. In Section~\ref{sec: moving-gluing} we prove a coarse version of the gluing formula. This essentially brings the DT side of the picture in line with the GW picture completed in~\cite{R19}, although the proofs proceed quite differently due to the different nature of the construction of DT  theory. The final formula is deduced in Section~\ref{sec: main-splitting-formula} by using a logarithmic intersection theory argument, and here we present the full GW picture as well. 

\subsection*{Conventions and notation} When logarithmic structures are mentioned, they will always be fine and usually saturated, unless explicitly stated to the contrary. However, consistent with Olsson's point of view~\cite{Ols03} and our own previous work~\cite{MR20}, we typically make arguments in this paper by working relative to an Artin fan, and we rarely manipulate sheaves of monoids directly. In order to have compact notation, when working with simple normal crossings pairs $(X,D)$, we will typically use $X$ for the associated logarithmic scheme, thereby suppressing $D$ from the notation. The logarithmic sheaf and mapping spaces will then be denoted $\mathsf{DT}(X)$ and $\mathsf{GW}(X)$. The main exception to this notation will be in circumstances where we feel the presence of $D$ should be emphasized to avoid confusion. 

\subsection*{Acknowledgements} We have benefited from countless conversations with friends and colleagues over the years on the degeneration formulas, GW/DT, and logarithmic intersection theory, and other ideas related to this project. In particular, we're grateful to L. Battistella, F. Carocci, M. Gross, J. Guzman, P. Kennedy-Hunt, D. Holmes, S. Molcho, N. Nabijou, A. Okounkov, R. Pandharipande, O. Randal-Williams, Q. Shafi, B. Siebert, R. Thomas, A. Urundolil Kumaran, and J. Wise. D.R. presented the results here at online seminars at ETH and Stanford at the height of the pandemic, and these significantly influenced the presentation in this paper. We thank R. Pandharipande and R. Vakil, respectively, for creating these opportunities. Finally, we thank an anonymous referee for feedback on the first version. 

\noindent
D.R. is supported by EPSRC New Investigator Award EP/V051830/1.
\newpage

{\Large \part{The logarithmic curve/sheaf conjectures}\label{part: 1}}

\section{A tropical toolbox for logarithmic geometry}\label{sec: tropical-toolbox}

We introduce a few notions from the tropical geometry of logarithmic schemes. The background section~\cite[Section~1]{MR20} is a well-adapted reference. Detailed introductions can be found in~\cite{ACMUW,CCUW}. 

\subsection{Cone complexes} We use the term {\it cone} to mean an abstract rational polyhedral cone with integral structure. This is a pair $(\sigma,M)$ where $\sigma$ is a topological space and $M$ is a finitely generated group of continuous real valued functions on $\sigma$, with the property that the evaluation map:
\[
\sigma\to\Hom(M,\RR)
\]
is a homeomorphism from $\sigma$ to a strongly convex cone in the vector space $\Hom(M,\RR)$. This map also defines the subset of {\it integral points} of $\sigma$. It is customary to suppress $M$ from the notation. The elements of $M$ are the {\it linear functions} on $\sigma$. The {\it dual monoid} of a cone $\sigma$ is the monoid $S_\sigma$ of non-negative linear functions. 

In practice, all such objects are obtained from cones inside vector spaces with lattices, in the sense of toric geometry, but without remembering the ambient vector space explicitly. 

A {\it morphism} of cones $\sigma'\to\sigma$ is a continuous map that carries $M$ to $M'$ under pullback. 

A {\it face} of $\sigma$ is the vanishing set of an element of $S_\sigma$. A face of a cone is itself a cone, with the map on functions given by restriction. The set of faces of a cone is naturally ordered by inclusion, and forms a partially ordered set. More generally, a morphism $\tau\to \sigma$ is a {\it face morphism} if it is an isomorphism onto a face. 

A cone has a well-defined dimension, given by the length of the longest chain of proper face inclusions, minus one. A $1$-dimensional cone is called a ray and has a well-defined first nonzero lattice point, called its {\it primitive generator}. A cone $\sigma$ of dimension $k$ is said to be {\it smooth} if it has exactly $k$ rays and the primitive generators of the rays span the lattice points of $\sigma$.

We obtain more general objects by gluing cones together:

\begin{definition}[Cone spaces, smoothness, morphisms, and face maps]
A {\it cone space} is a collection $\mathcal C = \{\sigma_\alpha\}$ of cones with a collection of face morphisms $\cF$ that satisfies the following three properties: (i) the collection is closed under composition, (ii) the collection contains all identity maps of the form $\mathsf{Id}_{\sigma_{\alpha}}$, and (iii) every face of a cone in $\cC$ is the image of exactly one arrow in $\cF$. A {\it cone complex} is a cone space where there is at most one morphism between any two cones in the $\cC$.

A cone space is {\it smooth} if all its constituent cones are smooth. 

A {\it morphism} $f\colon \Sigma_1\to\Sigma_2$ of cone spaces is a choice, for each $\sigma_{\alpha_1}$ in $\cC_1$, of a cone $\sigma_{\alpha_2}$ in $\cC_2$, and a map
\[
\sigma_{\alpha_1}\to\sigma_{\alpha_2}
\]
that does not factor through a proper face.  Moreover, for any face map $\sigma_{\alpha_1} \to \sigma_{\beta_1}$ in $\cF_1$, there exists a corresponding face map $\sigma_{\alpha_2}\to \sigma_{\beta_2}$ making the natural square commute. 
\end{definition}

The traditional definition of cone complex -- a topological space presented as the colimit of a partially ordered set of face morphisms -- is equivalent to the one above. A cone space additionally allows, for example, two isomorphic faces of a cone to be glued to each other. We allow cone spaces in this text for consistency with earlier work on degenerations of Hilbert schemes~\cite{LiWu15,MR20}, however all the cone spaces appearing here can be replaced by cone complexes after subdivision, so they can be avoided if one wishes so.

There is also a more technical notion of a {\it cone stack}, which is a fibered category over the category of rational polyhedral cones with face morphisms, satisfying natural properties~\cite{CCUW}. We will not need this, except in passing.

\subsection{Artin fans} There is a sequence of parallel notions in the theory of algebraic stacks. The cone spaces are easier to manipulate, while the stacks are more readily connected to geometry. As we point out below, the category of cone spaces can be faithfully embedded into a category of algebraic stacks, so we can play these off each other. 

The equivalence starts with the notion of an {\it Artin cone}, which is analogous to a cone, and proceeds via gluing. 

Let $\sigma$ be a cone and $S_\sigma$ its dual monoid. The {\it Artin cone} associated to $\sigma$ is
\[
\mathsf A_\sigma := \Spec \CC[S_\sigma]/\Spec \CC[M].
\]
An Artin cone is naturally a {\it logarithmic algebraic stack} in the topology of smooth morphisms.

By the basic toric geometry dictionary, a morphism $\sigma'\to\sigma$ gives rise to maps
\[
\mathsf A_{\sigma'}\to\mathsf A_\sigma.
\]
It is straightforward to check that face maps are open immersions, and strict with the natural logarithmic structures. 

As a consequence of this construction, a cone space gives rise to a natural diagram of logarithmic algebraic stacks. We record the following:

\begin{definition}
An {\it Artin fan} is a logarithmic algebraic stack $\mathsf A$ that has a strict \'etale cover by a disjoint union of Artin cones. 
\end{definition}

Cone complexes and cone spaces give rise to Artin fans -- given a cone space $\Sigma$, we get a natural diagram of Artin cones with strict open immersions. The colimit in the category of logarithmic algebraic stacks is an Artin fan $\mathsf A_\Sigma$. 

The category of cone spaces with its natural class of morphisms embeds fully faithfully into the category of Artin fans above. If we enlarge the domain to include cone stacks, the $2$-categories of cone stacks and of Artin fans, become equivalent. 

\subsection{Logarithmic structures via Artin fans} Given a scheme $X$, an Artin fan $\mathsf A$, and a morphism
\[
X\to \mathsf A
\]
we can pull back the logarithmic structure from $\mathsf A$ to $X$. 

Olsson constructed an Artin fan $\mathsf{LOG}$ such that the moduli space of logarithmic structures on a scheme $X$ is isomorphic to the mapping stack from $X$ to $\mathsf{LOG}$. 

As a consequence, {\it any} logarithmic structure on a scheme $X$ comes from a map to some Artin fan. Building on this, Abramovich and Wise introduced the {\it Artin fan of a logarithmic scheme $X$}, which can be seen as the ``best possible'' factorization of the map
\[
X\to \mathsf{LOG}
\]
through an Artin fan. Precisely, under mild hypotheses, there is an initial factorization $$X\to \mathsf A_X\to \mathsf{LOG}$$ such that $\mathsf A_X\to \mathsf{LOG}$ is \'etale and representable. 

We contrast the notion of {\it an} {Artin fan}, which is a particular type of logarithmic algebraic stack, with {\it the Artin fan of a logarithmic scheme $X$}. While we use Artin fans throughout this text, we will make very limited use of this latter notion. In fact, the only situation we will make use of this is when $(X|D)$ is a simple normal crossings pair, in which case the Artin fan $\mathsf A_X$ is the one associated to the cone over the dual complex of $D$. 

All the logarithmic structures in this text will arise from {\it specific} maps to Artin fans. We will always be in the following situation for moduli spaces: we will have a moduli space $\mathsf M$ an Artin fan $\mathsf A$, and a map $\mathsf M\to\mathsf A$ that defines the logarithmic structure. 

\subsection{Logarithmic modifications and tropical models} Given an Artin fan $\mathsf A$, a {\it logarithmic modification} is a proper and birational morphism
\[
\mathsf A'\to \mathsf A.
\]
More generally, given a logarithmic scheme $X$, a {\it logarithmic modification} is the pullback of a logarithmic modification $\mathsf A'\to \mathsf A$ under a strict map
\[
X\to\mathsf A. 
\]
In fact, Abramovich and Wise show that the Artin fan $\mathsf A$ can always be chosen to be the Artin fan $\mathsf A_X$, see~\cite{AW}.

\begin{warning}
Since the pullback of a birational map is not always birational, a logarithmic modification of a logarithmic scheme $X$ need not be a modification in the traditional sense. Logarithmic modifications are proper and birational when $X\to \mathsf A$ is a flat map, but this is rarely the case for moduli spaces of maps or sheaves.
\end{warning}

Artin fans have smooth covers by toric varieties. In this cover, logarithmic modification corresponds to performing a torus equivariant modification. As a result, there is a natural bijection between {\it complete subdivisions} of a cone space $\Sigma$ and {\it logarithmic modifications} of $\mathsf A_\Sigma$. 

\noindent
{\bf Terminology} (Complete and incomplete subdivisions). We recall that a {\it possibly incomplete} subdivision $\Sigma'\to\Sigma$ is one that factorizes as a face inclusion followed by a complete subdivision. Most subdivisions in this paper will be complete, so defaulting to the standard, and in contrast with~\cite{MR20}, we assume subdivisions are complete and will explicitly use {\it incomplete subdivision} when this is not the case. 

The moduli spaces that are relevant to this paper are sufficiently general that it is impractical to compute their Artin fan. However, it is also unnecessary in our applications. We will always construct these spaces with specific strict maps to Artin fans. 

\noindent
{\bf Terminology} (Tropical models). Let $X\to \mathsf A$ be a strict map of logarithmic schemes. A logarithmic modification $X'\to X$ is called a {\it tropical model} if it is pulled back from a modification $\mathsf A'\to\mathsf A$. 

\subsection{The system of subdivisions} We often consider the collection of subdivisions of a cone space $\Sigma$ together. If we fix a cone space $\Sigma$, the set of subdivisions of $\Sigma$ is a category, in fact, a directed partially ordered set. Arrows are given by commuting triangles. Given subdivisions $\Sigma'$ and $\Sigma''$ in this category, there exists {\it at most one arrow} between $\Sigma'$ and $\Sigma''$. If an arrow exists, we say that the domain {\it dominates} the codomain. If $\Sigma$ is smooth, then the set of smooth cone spaces forms a subsystem. 

Given $\Sigma'$ and $\Sigma''$ subdivisions of $\Sigma$, there does always exist an element of this category, $\Sigma'''$ that maps to both $\Sigma'$ and $\Sigma''$. We call this category the {\it inverse system of subdivisions}. 

The category of cone spaces admits {\it fiber products} and subdivisions are stable under base extension, see~\cite{Mol16}. Suppose we are given a diagram
\[
\begin{tikzcd}
&\Sigma_1\arrow{d}\\
\Sigma_2'\arrow{r}&\Sigma_2,
\end{tikzcd}
\]
where $\Sigma'_2\to\Sigma_2$ is a subdivision. Given {\it any} subdivision $\Sigma_1'\to\Sigma_1$, it makes sense to ask {\it does $\Sigma_1'$ dominate the fiber product $\Sigma_2\times_{\Sigma_1}\Sigma_1'$ in the inverse system of subdivisions?} If it does, then such an arrow is unique, and composing this map with the map from the fiber product to $\Sigma_1'$, we say that there is {\it an induced map} 
$$
\Sigma_1'\to\Sigma_2'.
$$

Given a map of cone complexes
\[
\varphi\colon \Sigma_1\to\Sigma_2,
\]
and subdivisions $\Sigma'_1\to \Sigma_1$ and $\Sigma'_2\to \Sigma_2$, if there is an induced map $\varphi'\colon\Sigma_1'\to\Sigma_2'$, we refer to this as a {\it subdivision of $\varphi$}.

If $X\to Y$ is a morphism of logarithmic schemes, and $X'$ and $Y'$ are logarithmic modifications, we can then ask whether there is an induced map $X'\to Y'$. If there is, we will refer to it as the {\it induced map}.

Let us return to tropical models; suppose $X\to \mathsf A$ is strict. If $\mathsf A$ is not {\it the} Artin fan of $X$, then there may exist, and typically do exist, logarithmic modifications that are not tropical models. However, the system of tropical models (for a given choice of strict map to an Artin fan) is cofinal in the system of modifications. 

\subsection{Star of a cone}\label{sec:star-of-cone}

Let $\Sigma$ be a cone space and $\sigma$ a cone in it. The \textit{overstar} of $\sigma$ is the complement of cones in $\Sigma$ that do not contain $\sigma$ as a face\footnote{Note that this is \textit{not} the union of cones that contain $\sigma$!}; this is not a cone space, but there is a natural associated topological space, obtained as a colimit of conical interiors. 

The overstar has an action of the additive monoid $\sigma$. Indeed, every cone that contains $\sigma$ as a face inherits a monoid action of $\sigma$, which restricts to an action on the overstar. 

\begin{definition}
The \textit{star} of $\sigma$ has support equal to the quotient of the overstar $\Sigma^\sigma_\circ$ by this $\sigma$-action:
\[
|\Sigma^\sigma_\circ|\to |\Sigma(\sigma)|.
\]
If $\tau$ is a cone of $\Sigma$ that contains $\sigma$ as a face, let $\tau^\sigma_\circ$ denote the cell given by the complement of the cones of $\tau$ that do not contain $\sigma$. The image of $\tau^\sigma_\circ$  is a cone in the quotient. The cones of this form give a fan structure on the star, and we denote it $\Sigma(\sigma)$. 
\end{definition}

The star construction satisfies basic compatibilities.  If $\sigma$ and $\tau$ are cones of $\Sigma$ such that $\sigma$ is a face of $\tau$, and $\overline{\tau}$ is the corresponding cone in
$\Sigma(\sigma)$, then $\Sigma(\tau) = (\Sigma(\sigma))(\overline{\tau})$.  Similarly, given a strict map of cone complexes $\Sigma \rightarrow \Sigma'$ that sends a cone $\sigma$ onto a cone $\sigma'$, there is an induced map of stars $\Sigma(\sigma) \rightarrow \Sigma'(\sigma')$.

\subsection{Combinatorial flatness} We frequently use a form of flatness for maps between cone spaces.

\begin{definition}
A map $\Sigma_1\to\Sigma_2$ of cone spaces is {\it combinatorially flat} if every cone of the source maps surjectively onto a cone of the target. It has {\it reduced fibers} if, in addition, for every cone of the source, the image of its lattice generates the lattice of the target cone. 
\end{definition}

When the map of cone spaces arises from an equivariant map of toric varieties, the condition of combinatorial flatness is equivalent to the map having equidimensional fibers. The combinatorial condition of having reduced fibers ensures that these fibers are reduced. We will typically work in contexts where $\Sigma_2$ is a {\it smooth} cone space. In this case, combinatorial flatness is equivalent to flatness of the map on Artin fans. 

Any map of cone spaces can be made combinatorially flat by subdivision of source and target. 

\begin{proposition}\label{prop: strong-ss-reduction}
Let $\Sigma_1\to\Sigma_2$ be a map of cone complexes of finite type. There exists subdivision of this map $\Sigma_1'\to\Sigma_2'$ that is combinatorially flat. By replacing the integral lattice of $\Sigma_2'$ by a finite index sublattice, we can guarantee that the map has reduced fibers. The subdivision can be chosen in such a way as to guarantee that $\Sigma_1'$ and $\Sigma_2'$ are both smooth. 
\end{proposition}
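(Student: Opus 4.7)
The plan is to establish the three required properties -- combinatorial flatness, smoothness of source and target, and reducedness of fibers -- in three separate stages, reducing each to a known combinatorial statement where possible.

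For the first stage, I would start by subdividing the target to accommodate the images of cones of $\Sigma_1$. Since $\Sigma_1$ has finitely many cones and each $\varphi(\sigma)$ is a rational polyhedral cone inside a single cone of $\Sigma_2$, one can produce a subdivision $\Sigma_2' \to \Sigma_2$ in which every image $\varphi(\sigma)$ is a union of cones. This can be done by iteratively refining $\Sigma_2$ by the images $\varphi(\sigma)$, one cone at a time. Having done this, I would then pull back the subdivision to the source: for each cone $\sigma$ of $\Sigma_1$, the subdivision of $\varphi(\sigma)$ induced by $\Sigma_2'$ pulls back through $\varphi|_\sigma$ to a subdivision of $\sigma$. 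Because the maps $\varphi|_\sigma$ agree on faces, these local subdivisions glue to a subdivision $\Sigma_1' \to \Sigma_1$, and by construction every cone of $\Sigma_1'$ maps surjectively onto a cone of $\Sigma_2'$. This gives combinatorial flatness.

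For the second stage, smoothness, I would first apply a toric resolution of singularities to $\Sigma_2'$, producing a smooth subdivision, then pull this further subdivision back to the source as above; combinatorial flatness is preserved throughout. At this point $\Sigma_2'$ is smooth but the source cones need not be. Here I would invoke the combinatorial form of the weak semistable reduction theorem of Abramovich--Karu: given any surjective morphism of rational polyhedral cones $\tau \to \tau'$ with $\tau'$ smooth, one can subdivide $\tau$ into smooth cones each of which still maps surjectively onto $\tau'$. Applying this locally to each cone of $\Sigma_1'$ and gluing -- which is possible because compatibility on faces is automatic from the inductive description of the subdivision -- produces a smooth refinement of $\Sigma_1'$ over the same $\Sigma_2'$, without disturbing the flatness we already achieved. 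This is the main obstacle in the proof: the existence of such a subdivision is the combinatorial core of semistable reduction and is where the genuine work lies; everything else is formal.

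For the third stage, I would address reducedness of fibers by modifying the integral structure on $\Sigma_2'$ rather than subdividing further. For each pair consisting of a cone $\sigma$ of $\Sigma_1'$ and its image cone $\tau = \varphi(\sigma)$ in $\Sigma_2'$, the image of the lattice of $\sigma$ under $\varphi$ is a finite index sublattice of the lattice of $\tau$. Taking a common finite-index sublattice $N' \subset N$ of the global lattice of $\Sigma_2'$ contained in all of these images -- possible since there are finitely many cones -- and replacing $N$ by $N'$ on the target while leaving $\Sigma_1'$ unchanged makes every fiber of $\varphi$ reduced in the combinatorial sense. Since this lattice change preserves the underlying cones and therefore preserves smoothness of $\Sigma_2'$ (once we choose $N'$ compatibly with the ray generators) and does not affect combinatorial flatness, all three conditions hold simultaneously for the final map $\Sigma_1' \to \Sigma_2'$, completing the proof.
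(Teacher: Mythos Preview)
The paper's proof is simply a citation to the toroidal strong semistable reduction theorem of Adiprasito--Liu--Temkin, with a footnote distinguishing it from the weaker Abramovich--Karu result. Your proposal attempts to sketch an actual argument, and the overall architecture (flatten, then resolve, then adjust the lattice) is reasonable, but there is a genuine gap in the second stage.

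You attribute to Abramovich--Karu the statement that, over a smooth cone $\tau'$, any surjective cone $\tau\to\tau'$ can be subdivided into \emph{smooth} cones each surjecting onto $\tau'$. This is not what weak semistable reduction gives: Abramovich--Karu produce a subdivision into \emph{simplicial} cones, and the paper's footnote says exactly this. Upgrading ``simplicial'' to ``smooth'' while preserving combinatorial flatness is precisely the hard content of strong semistable reduction, resolved only in Adiprasito--Liu--Temkin. So your stage~2 is circular: the lemma you invoke is the theorem you are trying to prove. Relatedly, the claim that local smooth subdivisions glue along shared faces ``automatically from the inductive description'' is not justified --- you have not given such an inductive description, and ensuring compatibility on faces is one of the genuine technical points.

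There is also a smaller issue in stage~3: passing to a finite-index sublattice of the target will in general destroy smoothness of the target cones (primitive generators change and need no longer form part of a lattice basis). Your parenthetical ``once we choose $N'$ compatibly with the ray generators'' gestures at this but does not resolve it; in the actual theorem, the lattice modification and the smoothness are arranged together rather than sequentially.
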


\begin{proof}
This is the statement of the toroidal strong semistable reduction theorem~\cite{ALT18}.\footnote{Note however that the weak semistable reduction theorem can be used to guarantee a combinatorially flat modification where $\Sigma'_2$ is smooth~\cite{AK00,Mol16}, and in fact, by an observation of Molcho, can be used to guarantee that source and target are {\it simplicial}, and this is always sufficient for our purposes. }
\end{proof}

\subsubsection{The relevance of combinatorial flatness} Combinatorial flatness arises in intersection problems for logarithmic schemes, and more generally in fiber products; it is the basis of the results on the degeneration formula in~\cite{R19}, and is explored in depth in~\cite{MR21}. 

The essential geometry can be seen for maps between simple normal crossings pairs. Suppose we have maps
\[
\begin{tikzcd}
& X\arrow{d} \\
Y\arrow{r} & B,
\end{tikzcd}
\]between simple normal crossings pairs, with divisors $D_X,D_Y$, and $D_B$. We assume that these are maps of pairs -- the preimage of $D_B$ in $X$ and in $Y$ is contained, respectively, in $D_X$ and $D_Y$. 

One can take the fiber product of these schemes in two different categories -- in schemes, and in fine and saturated logarithmic schemes; let us momentarily denote the latter $(X\times_B Y)^\mathsf{log}$. By abstract nonsense, there is a map
\[
(X\times_B Y)^\mathsf{log}\to X\times_B Y.
\]
In fact, the map is always finite, but is typically not surjective. If $X\to B$ is, in addition, dominant, then this map is the normalization of a component. 

The basic tension in our results is that the methods of intersection theory are directly applicable to $X\times_B Y$ but the geometric spaces of interest will be expressed as logarithmic fiber products, so play the role of $(X\times_B Y)^\mathsf{log}$. As a result, the following proposition is useful. 

Given $X\to B$, there is an induced map of cone complexes $\Sigma_X\to\Sigma_B$. We maintain the setup above.  

\begin{proposition}
Suppose the induced map $\Sigma_X\to\Sigma_B$ is combinatorially flat with reduced fibers. Then the map
\[
(X\times_B Y)^\mathsf{log}\to X\times_B Y.
\]
is an isomorphism.
\end{proposition}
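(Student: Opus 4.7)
The plan is to reduce the claim to a monoid-theoretic statement at the level of Artin cones, and then to invoke the standard fact, going back to Kato, that integral and saturated morphisms of fs log schemes have well-behaved fiber products. Since the log structures on $X$, $Y$, and $B$ are pulled back from Artin fans via strict maps $X \to \mathsf{A}_X$, $Y \to \mathsf{A}_Y$, $B \to \mathsf{A}_B$, with compatible lifts of the morphisms $X \to B$ and $Y \to B$, both the scheme-theoretic fiber product $X \times_B Y$ and its fs counterpart $(X \times_B Y)^{\mathsf{log}}$ are obtained by strict base change from the corresponding fiber products of the Artin fans. It therefore suffices to prove that, under the combinatorial hypotheses, the natural map $\mathsf{A}_{\Sigma_X} \times^{\mathsf{fs}}_{\mathsf{A}_{\Sigma_B}} \mathsf{A}_{\Sigma_Y} \to \mathsf{A}_{\Sigma_X} \times_{\mathsf{A}_{\Sigma_B}} \mathsf{A}_{\Sigma_Y}$ of algebraic stacks is an isomorphism.

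Both types of fiber product are local in the strict étale topology on the Artin fans, so after passing to suitable covers one reduces to the case of individual Artin cones. For cones $\sigma_X, \sigma_Y, \sigma_B$ with sharp dual monoids $S_X, S_Y, S_B$, the scheme fiber product of Artin cones is presented, up to the torus quotient, by the monoid algebra $\mathbb{Z}[P]$ with $P = S_X \oplus_{S_B} S_Y$ the pushout in commutative monoids, while the fs fiber product is presented by $\mathbb{Z}[P^{\mathsf{fs}}]$, where $P^{\mathsf{fs}}$ denotes the associated fine, saturated, sharp quotient. The required claim becomes: under the combinatorial hypotheses, $P$ is itself fine and saturated, so that $P = P^{\mathsf{fs}}$.

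This last point is a standard consequence of Kato's formalism for integral and saturated morphisms of monoids. Combinatorial flatness of $\Sigma_X \to \Sigma_B$ translates, via the duality between cones and their dual monoids, into \emph{integrality} of the monoid map $S_B \to S_X$ in Kato's sense; the reduced fibers condition translates into \emph{saturatedness}. An integral and saturated morphism preserves fineness and saturatedness under pushout -- the monoid-level shadow of the classical statement that such morphisms of fs log schemes are preserved by arbitrary fs base change and agree with the underlying scheme base change. Consequently $P$ is fine and saturated, and the two fiber products coincide.

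The main obstacle is the careful verification of the translation in the previous paragraph, matching ``every source cone maps surjectively onto a target cone'' with integrality of the dual monoid map, and ``image lattice generates the target lattice'' with saturatedness. While both translations are essentially classical, the argument must respect the cone-by-cone nature of the combinatorial hypothesis when passing from a cone space to its constituent Artin cones, and must verify that the integral-and-saturated hypothesis holds uniformly across the strict étale cover so that the global conclusion follows from the local one. Once this dictionary is in place, the rest of the proof is formal.
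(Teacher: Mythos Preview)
Your proposal is correct and follows essentially the same approach as the paper: both invoke Kato's theory of integral and saturated morphisms, with combinatorial flatness corresponding to integrality and the reduced fibers condition corresponding to saturatedness. The paper's proof is a one-line citation of Kato, while you have spelled out the reduction to Artin cones and the monoid pushout, but the mathematical content is identical.
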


\begin{proof}
The result follows from general facts about integral and saturated maps and fiber products in the categories of fine, and fine and saturated logarithmic schemes; see~\cite{Kat89}. 
\end{proof}

\section{Moduli spaces of curves and sheaves}\label{sec: review-of-moduli}

We present an overview of logarithmic DT and PT theory, as developed in~\cite{MR20} and explain how logarithmic GW theory can be constructed in a parallel fashion, using expansions. We include a brief sketch the relationship with the earlier literature~\cite{AC11,Che10,GS13,R19}. 

Let $(X,D)$ be a pair of a smooth projective threefold with a simple normal crossings divisor. The components of $D$ are $D_1,\ldots, D_r$. We assume that the intersection of any non-empty subset of divisors is connected. This assumption is not essential, but simplifies the exposition.

\subsection{Expansions of a pair} Logarithmic DT theory is based on a moduli space of subschemes in {\it expansions} of $X$ along $D$, subject to transversality and stability conditions. 

Let $\mathcal J$ be an ideal sheaf on $X$, with associated closed subscheme $Z$. Recall that the {\it deformation to the normal cone of $\mathcal J$} is the fiber over $0$ of the morphism
\[
\mathsf{Bl}_{\mathcal J+(t)} \left(X\times\mathbb A^1\right)\to\mathbb A^1.
\]
We first define an expansion -- of $X$ along its divisor $D$ -- over a closed point. These are the special fibers of particular types of deformations to the normal cone. Each component $D_i$ of $D$ is a Cartier divisor, which we can think of as giving rise to a pair $(L_i,s_i)$ of a divisor and a section. These $s_i$ are the monomials in the logarithmic structure. A {\it monomial ideal sheaf} on $X$ with respect to $D$ is an ideal sheaf generated by monomials in the $s_i$. A {\it monomial subscheme} of $X$ with respect to $D$ is a subscheme of the form $\mathbb V(\mathcal J)$, where $\mathcal J$ is a monomial ideal sheaf.

\begin{definition}
    A {\it rough expansion} of a pair $(X,D)$ over $\Spec\mathbb C$ is the special fiber of a deformation to the normal cone of a monomial subscheme of $X$ with respect to $D$. A rough expansion is reduced if it is reduced as a scheme. 
\end{definition}

If a rough expansion is a normal crossings variety, it is naturally stratified by intersections of the irreducible components. In fact, a monomial deformation to the normal cone of $(X,D)$ is always a toroidal embedding in the sense of~\cite{KKMSD}, with divisor given by the strict transform of $D\times \mathbb A^1$ union the special fiber. As a result, it is locally isomorphic to a pair consisting of a toric variety and its toric boundary, and therefore is naturally stratified. 

If $\mathcal X$ is a monomial deformation to the normal cone, the {\it rank $1$ locus} is the union of locally closed strata of codimension $0$ and $1$. Its {\it rank $0$} locus is the union of locally closed strata of codimension $0$.

\begin{definition}
    A \textit{expansion} of $(X,D)$ over $\Spec(\mathbb C)$ is obtained from a reduced rough expansion by taking a union of strata contained inside the rank $1$ locus. An expansion has rank $0$ if it is a union of strata in the rank $0$ locus, and rank $1$ otherwise.
    \end{definition}

We record a few examples of expansions. 

\begin{example}
    The simplest rough expansions are obtained from deformation to the normal cones of strata of $D$ in $X$. Note that if $W$ is a stratum, then the total space of $\mathsf{Bl}_{W\times\{0\}} X\times\mathbb A^1$ is itself a simple normal crossings pair, and therefore comes equipped with a natural stratification. We can therefore perform further blowups, centered in the special fiber, to produce new flat degenerations of $X$. These are projective birational modifications of $X\times\mathbb A^1$ with monomial indeterminacy locus. By performing a base change to ensure reducedness and then removing the triple intersection of the components, we are left with a large class of expansions. In fact, all the expansions we will  need arise in this fashion.
\end{example}

An expansion $\cX$ is said to have {\it rank $0$} if it has no strata of codimension $1$. For example, the complement $X\setminus D$ has rank $0$, and if $\cX$ is an expansion of rank $1$, unions of the interiors of all irreducible components is a rank $0$ expansion. 

\subsection{The moduli stack of expansions}\label{sec: stack-of-expansions} The notion of an expansion is essentially elementary and can be made without reference to tropical geometry. However, the corresponding moduli theory is best handled via tropical geometry~\cite{MR20}. We first summarize the purely geometric output, before explaining the tropical input.

The main construction in~\cite{MR20} outputs, for each $(X,D)$, a directed partially ordered set $(\Lambda,\prec)$ and for each choice of $\lambda\in \Lambda$ an Artin fan $\mathsf{Exp}(X)_\lambda$. The set $\Lambda$ is typically infinite. If $\lambda \prec \nu$ there is a logarithmic modification
\[
\mathsf{Exp}(X)_\nu\to\mathsf{Exp}(X)_\lambda.
\] 
We remark further on the precise structure of the set $\Lambda$ in Section~\ref{sec: tropical-moduli-spaces} below, but for now, we note two basic properties: (i) that it is analogous to the system of modifications discussed above and in particular is closed under logarithmic modifications including generalized root constructions, and (ii) the partially ordered set $\Lambda$ does {\it not} typically have a terminal element. 

\noindent
{\bf Terminology and notation.} In the main text, we will fix a choice of $\lambda$ and then suppress it from the notation. We will frequently show that results hold for {\it sufficiently fine} $\mathsf{Exp}(X)_\lambda$, i.e. after replacing the choice with any $\lambda$ such that $\lambda_0\prec \lambda$, for some fixed $\lambda_0$. 


For each such fixed choice, points of $\mathsf{Exp}(X)_\lambda$ correspond roughly to expansions of $X$ along $D$. To do this precisely, we must track another system of combinatorial choices, analogous to $\Lambda$, but this time for the universal family. Precisely, there is another another partially ordered set $L$ and a system of spaces $\{\cX\}_{\ell\in L}$. Each $\cX_\ell$ is birational to $\mathsf{Exp}(X)\times X$. For each choice of $\ell$ and $\lambda$ we can therefore ask if the natural rational projection from $\cX_\ell$ to $\mathsf{Exp}_\lambda$ extends to a morphism. Furthermore, as can ask if this morphism is flat with reduced fibers. Say that such choices of $\ell$ and $\lambda$ are {\it compatible}.

For compatible choices $\ell$ and $\lambda$, we have a ``universal'' diagram:
\[
\begin{tikzcd}
    \mathcal X_\ell\arrow[swap]{d}{\pi}\arrow{r}{b} & X\\
    \mathsf{Exp}(X)_\lambda&.
\end{tikzcd}
\]
The morphism $\pi$ is flat and the fibers of $\pi$ are expansions of $X$, presented as expansions of $X$ by the restriction of $b$ to the fiber.  

In the main text, we use the phrase {\it ``fix a stack of expansions''}, and by this, we will mean a choice of $\cX_\ell$ and $\mathsf{Exp}(X)_\lambda$, compatible in the sense above. The $\ell$ and $\lambda$ will be then be suppressed from the notation.

\subsubsection{Tube components} A key subtlety dealt with in~\cite{MR20} is that of \textit{tube components}, which are certain distinguished components in expansions. To understand their need, we first observe that given distinct points of $\mathsf{Exp}(X)$, the fibers of $\pi$ over these points may be naturally isomorphic over $X$. Alternatively, the isomorphism type, over $X$, of an expansion over $\mathbb C$ does not uniquely determine a moduli map. Additional data is needed to uniquely determine a moduli map. The ambiguity is understood as follows. 

Let $\mathcal X$ be an expansion over $\Spec(\mathbb C)$. An irreducible component $W$ of $\mathcal X$ is called a \textit{$\mathbb P^1$-bundle} if there is another expansion $\mathcal X_c$ and a morphism
\[
\mathcal X\to \mathcal X_c
\]
over $X$, such that (i) the map is birational on all components except $Y$, and (ii) the component $W$ in question is a $\mathbb P^1$-bundle over its image in $\mathcal X_c$. 

Given a point of $\mathsf{Exp}(X)$, the fiber of $\pi$ over this point may have several $\mathbb P^1$-bundle components. However, by the construction of the stack $\mathsf{Exp}(X)$ in our earlier work, for every such fiber, a subset of the $\mathbb P^1$-bundle components are designated as \textit{tube components}. We emphasize that this is part of the data of the point in $\mathsf{Exp}(X)$, and a map to the stack $\mathsf{Exp}(X)$ gives rise to an expansion \textit{together} with the designation of tube components in each fiber. 

The locus of rank $0$ expansions of $X$ is an open and closed substack $\mathsf{Exp}_0(X)\subset\mathsf{Exp}(X)$. In the main body, we will typically examine these for the divisor components $D_i$, i.e. we will use $\mathsf{Exp}(D_i$).

\subsection{Tropical moduli spaces}\label{sec: tropical-moduli-spaces} The main method for understanding the stacks $\mathsf{Exp}(X)$, and the method used for its construction in~\cite{MR20}, is the Artin fan/cone space dictionary.

Given the pair $(X|D)$, one canonically associates a cone complex $\Sigma_{X}$ or simply $\Sigma$ when $(X,D)$ or $D$ is clear from context or irrelevant. The divisor $D$ consists of components $D_1,\ldots, D_r$. Consider the fan $\RR_{\geq 0}^r$ with its natural integral structure $\mathbb N^r\subset \RR_{\geq 0}^r$. The faces of $\Delta_r$ correspond to subsets $I$ of $\{1,\ldots, r\}$. The corresponding intersection $D_I$ of the divisors $D_i$, over $i\in I$ is a stratum of $(X|D)$ that may or may not be empty. 

\begin{definition}
    The cone complex $\Sigma_X$ associated to $(X|D)$ is the subfan of $\RR_{\geq 0}^r$ given by the union of cones corresponding to nonempty intersections $D_I$. 
\end{definition}

The key combinatorial notion introduced in~\cite{MR20} is that of a \textit{$1$-complex}, which is an embedded polyhedral complex in $\Sigma_X$ of dimension at most $1$. A $1$-complex is not required to be pure dimensional. Similarly, a {\it $0$-complex} is an embedded polyhedral complex of dimension $0$, so is a finite set of points in $\Sigma_X$. Experts should note that that $1$-complexes do not have any edge or vertex decorations, nor is any balancing condition imposed. 

In~\cite{MR20}, we build a topological space $|T(\Sigma)|$ which is a moduli space for families of $1$-complexes. We sometimes use the notation $|T|$ when $\Sigma$ is clear. The space comes with a natural universal family $|G|\to|T|$, and the fibers are $1$-complexes. Although it is not a cone space or even a cone stack, the set $|T|$ genuinely functions as a moduli space -- a point of $|T|$ is uniquely specified by a $1$-complex, and the fiber of  $|G|\to|T|$ is the corresponding $1$-complex. 

The spaces $|T|$ and $|G|$ are the underlying sets of cone complexes -- however, there is an entire family of such cone space structures, and there is no (obvious) minimal representative. Instead, we \textit{choose} a cone complex for both $|T|$ and $|G|$ to form $G\to T$, such that this map is combinatorially flat. 

The set of all cone space structures on $|T|$ forms an inverse system, organized by a directed partially ordered set $(\Lambda,\prec)$, and two structures $T_\lambda$ and $T_{\lambda'}$ always have a smooth $T_{\lambda''}$ that dominates both via subdivisions. We will choose a cone complex structure $G_\lambda$ on $|G|$ such that the map to $G_\lambda\to T_\lambda$ is combinatorially flat. We typically drop $\lambda$ from the notation.

This is exactly parallel to the structure we have already explained for $\mathsf{Exp}(X)$ -- the system of cone space structures exactly matches the system of birational models. 

We explain the tube components in the previous section in more concrete terms. Fix a point $p$ in $|T|$. The fiber in $|G|$ over $p$ is a $1$-complex $|G|_p$. The corresponding fiber in $G\to T$ is a $1$-complex $G_p$. The spaces $|G|_p$ and $G_p$ differ only in polyhedral structure -- the latter is obtained from the former by subdividing edges by adding $2$-valent vertices. We call these \textit{tube vertices}. 

Fix cone complex structures compatible with a map to obtain $G\to T$. We can choose the cone structure on $T$ in such a way that the map $G\to T$ is combinatorially flat with reduced fibers, see~\cite[Section~3.5]{MR20}. We will not need $G$ to be smooth, but note that it can be guaranteed. Indeed, applying semistable reduction (resp. weak semistable reduction) we can ensure that the total space $G$ is smooth (resp. simplicial), see~\cite{AK00,ALT18}. 

Both $G$ and $T$ are cone spaces and have associated Artin fans. If $T$ is presented as a colimit
\[
T=\varinjlim_D \sigma,
\]
of rational polyhedral cones, the stack $\mathsf{Exp}(X)$ is defined by
\[
\mathsf{Exp}(X):=\varinjlim_D A_\sigma.
\]
In particular we have the following:

\begin{proposition}
    The points of the stack $\mathsf{Exp}(X)$ are in natural bijection with the cones of the stack $T$. 
\end{proposition}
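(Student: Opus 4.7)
The plan is to exploit the colimit presentation $\mathsf{Exp}(X) = \varinjlim_{\sigma \in T} \mathsf A_\sigma$ and reduce to a single cone. Because face morphisms of cones give rise to strict open immersions of Artin cones (as already recorded in the excerpt), every transition map in the diagram defining $\mathsf{Exp}(X)$ is a strict open immersion. A colimit of logarithmic algebraic stacks along strict open immersions is computed on underlying topological spaces in the evident way, so the points of $\mathsf{Exp}(X)$, up to isomorphism, are the points of the $\mathsf A_\sigma$ glued along the given face inclusions.

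Next, I would analyze a single Artin cone. For a rational polyhedral cone $\sigma$ with dual monoid $S_\sigma$ and associated torus $T_\sigma = \Spec \CC[M]$, standard toric geometry identifies the $T_\sigma$-orbits of $\Spec \CC[S_\sigma]$ with the faces of $\sigma$: the face $\tau \preceq \sigma$ corresponds to the locally closed stratum cut out by setting to zero exactly the monomials in $S_\sigma \setminus \tau^\perp$. Passing to the quotient stack, the isomorphism classes of geometric points of $\mathsf A_\sigma = [\Spec \CC[S_\sigma]/T_\sigma]$ are therefore in canonical bijection with the faces of $\sigma$. Moreover, the open immersion $\mathsf A_\tau \hookrightarrow \mathsf A_\sigma$ induced by a face morphism $\tau \hookrightarrow \sigma$ identifies the points of $\mathsf A_\tau$ (faces of $\tau$) with exactly those points of $\mathsf A_\sigma$ whose associated face of $\sigma$ is contained in $\tau$; this is again the standard toric picture.

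Assembling these two facts gives the result. The underlying set of $\mathsf{Exp}(X)$ is the quotient of the disjoint union $\bigsqcup_{\sigma \in T} \{\text{faces of }\sigma\}$ by the equivalence relation generated by the identifications coming from the face morphisms $\cF$ of the cone space. By the axioms of a cone space — in particular the requirement that every face of every cone in $T$ is the image of exactly one arrow in $\cF$ — each abstract face is glued to a unique cone of $T$, namely the cone it represents. Hence the colimit set is precisely the collection of cones of $T$, and naturality of each identification in the chain yields a natural bijection.

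The main potential obstacle is the last bookkeeping step: verifying that the cone-space axioms prevent over-identification in the colimit. Here one must invoke the difference between cone complexes and cone spaces — a cone may appear as the image of two distinct face morphisms in $\cF$, so two different faces of one big cone may be glued to the same cone of $T$ — and check that the resulting equivalence classes are exactly the cones of $T$. This is a direct unwinding of the definitions, but it is the only nontrivial combinatorial point; once it is in place, the proposition follows formally.
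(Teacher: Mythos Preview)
Your proposal is correct and is precisely the natural argument. The paper itself does not provide a proof of this proposition; it is stated immediately after the colimit definition of $\mathsf{Exp}(X)$ and treated as evident from the construction, so your argument is in fact more detailed than what the paper offers. The only caveat is that your final paragraph about ``over-identification'' in cone spaces is slightly more delicate than you indicate: in a cone space two distinct faces of a single cone $\sigma$ may indeed be identified with the same cone $\tau$ of $T$, and one should check that this does not collapse the corresponding strata of $\mathsf A_\sigma$ beyond what is already recorded by $T$---but this is exactly what the colimit in stacks (as opposed to schemes) handles, since the two open immersions $\mathsf A_\tau \rightrightarrows \mathsf A_\sigma$ remain distinct morphisms even when their images coincide set-theoretically.
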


Similarly, the space $G$ is a possibly incomplete subdivision of $\Sigma_X\times T$. The dictionary relating subdivisions of cone spaces with birational models gives us a non-proper birational model of $X\times \mathsf{Exp}(X)$. We denote it $\cX$.  Since $\mathsf{Exp}(X)$ is smooth, combinatorial flatness of $G\to T$ ensures flatness of the family
\[
\cX\to \mathsf{Exp}(X). 
\]

Let us spell out the correspondence a little more. Given a cone $\tau$ of $T$, we have an associated point $e\in\mathsf{Exp}(X)$. If $p\in |T|$ is in the interior of this cone, the vertices of $G_p$ are in bijection with irreducible components of the expansion 
$\mathcal X_e$ of $X$ lying over this point $e$.
As we have noted, $G_p$ has a subset of $2$-valent vertices obtained by subdividing $|G|_p$ called tube vertices. The corresponding components of $\mathcal X_e$ are known as \textit{tube components}. 

The stack of rank $0$ expansions $\mathsf{Exp}_0(X)$ is naturally in bijection with a space parameterizing $0$-complexes. We denote this $\mathsf{P}(\Sigma)$. It is a union of connected components in $T(\Sigma)$. 

\subsection{Moduli spaces of sheaves} With the construction of $\mathsf{Exp}(X)$ completed, we can recall the definition of the DT and PT moduli spaces. Consider an expansion $\mathcal X_p$ over $\Spec(\mathbb C)$.

\begin{definition}\label{def: dt-stability}
A subscheme $Z\subset \mathcal X_p$ of dimension at most $1$ is \textit{algebraically transverse} if all locally closed strata of $\cX_p$ meet $Z$, and for any codimension $1$ stratum $S$ (i.e. either double intersections or boundary divisors), the multiplication map
\[
\mathcal I_Z\otimes \mathcal O_S\to \mathcal O_{\mathcal X_p}\otimes \mathcal O_S,
\]
is injective. The tensor product is taken on $\mathcal X_p$. 
\end{definition}

Concretely, this means that no embedded points or components are contained in any divisor stratum, and furthermore, that the subscheme on the two sides of a double divisor induce the same subscheme on the divisor. The condition is the same as the one in ``traditional'' relative DT theory~\cite{LiWu15,MNOP06b}. The departure comes from the next definition. For it, note that we fix a stack $\mathsf{Exp}(X)$, and therefore given a point $\Spec(\mathbb C)\to \mathsf{Exp}(X)$, the pullback of the universal expansion comes with a distinguished set of tube vertices. 

\begin{definition}
    Fix a map $p\colon\Spec(\mathbb C)\to \mathsf{Exp}(X)$ and let $\mathcal X_p$ be the associated expansion. Let $Z\subset \mathcal X_p$ be a algebraically transverse subscheme. It is called \textit{DT stable} if for every tube component $Y$, the subscheme $Z\cap Y$ is stable under the $\mathbb G_m$-action in the fibers of the associated $\mathbb P^1$-bundle. 
\end{definition}

Putting these pieces together, we can define the DT moduli space. 

\begin{definition}
    The DT moduli stack is the fibered category over schemes, whose fiber over $S$ is the groupoid of maps $S\to \mathsf{Exp}(X)$ together with families
\[
\begin{tikzcd}
    Z\arrow{dr} \arrow{rr}& & \mathcal X\arrow{dl}\arrow{r} & X\\
    & S, & &
\end{tikzcd}
\]
such that over every geometric point, the subscheme $Z$ is algebraically transverse and DT stable.
\end{definition} 

For stable pairs, we proceed similarly.

\begin{definition}
Fix a map $p:\Spec(\mathbb C)\to \mathsf{Exp}(X)$ and associated expansion $\mathcal X_p$.  A PT stable pair on $\mathcal X_p$ is given by a pair $(\mathcal E, \sigma)$ where $\mathcal E$ is a pure one-dimensional sheaf on $\mathcal X_p$ and 
$$\sigma: \mathcal{O}_{\mathcal{X}_p} \rightarrow \mathcal E$$
is a section satisfying the following conditions:
\begin{enumerate}[(i)]
\item the cokernel of $\sigma$ is supported in dimension zero and away from the rank $1$ strata and tube components. In a neighborhood of these loci, the stable pair is given by the structure sheaf of a pure one-dimensional subscheme, with its canonical section.
\item
The support of $\mathcal E$ satisfies the same algebraic transversality conditions of 
Definition \ref{def: dt-stability}.
\item
On every tube component $Y$, the stable pair is invariant under the $\mathbb G_m$-action in the fibers of the associated $\mathbb P^1$-bundle.
\end{enumerate}

\end{definition}

The PT moduli stack is defined in parallel to the DT moduli stack, and again lives over 
$\mathsf{Exp}(X)$.

The pushforward $\beta$ of the curve class $Z$ (or $[\mathcal E]$) to $X$ and the holomorphic Euler characteristic $\chi$ of $Z$ (or $\mathcal E$) are both locally constant in flat families. Once these invariants are fixed, the resulting substacks $\mathsf{DT}_{\beta,\chi}(X)$ and
 $\mathsf{PT}_{\beta,\chi}(X)$ possess all the required properties to define logarithmic DT and PT theory. 

\begin{theorem}
    The stack $\mathsf{DT}_{\beta,\chi}(X)$ is a proper Deligne--Mumford stack. The forgetful morphism
    \[
    \mathsf{DT}_{\beta,\chi}(X)\to\mathsf{Exp}(X)
    \]
    is equipped with a perfect obstruction theory. Since any finite type open substack of $\mathsf{Exp}(X)$ is pure of dimension $0$, virtual pullback gives rise to a virtual fundamental class on $\mathsf{DT}_{\beta,\chi}(X)$.  The analogous statements hold for 
    $\mathsf{PT}_{\beta,\chi}(X)$.
\end{theorem}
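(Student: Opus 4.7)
The plan is to assemble the stated properties from the groundwork laid in \cite{MR20}, working relative to the chosen stack of expansions $\mathsf{Exp}(X)$. Fix a finite-type open substack of $\mathsf{Exp}(X)$; it carries a universal flat family of expansions $\cX \to \mathsf{Exp}(X)$. The DT moduli stack is, by construction, the substack of the relative Hilbert scheme $\mathrm{Hilb}_{\beta,\chi}(\cX/\mathsf{Exp}(X))$ cut out by algebraic transversality and by $\mathbb G_m$-invariance on tube components. I would first argue that this is Deligne--Mumford: the relative Hilbert scheme is Deligne--Mumford and proper over $\mathsf{Exp}(X)$, algebraic transversality is checkable via Tor vanishing along codimension one strata together with the closed condition that $Z$ meet every stratum, and the $\mathbb G_m$-invariance requirement on each tube component (the set of tubes being rigidified by the moduli map to $\mathsf{Exp}(X)$) is a closed condition.

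Properness is the hard part, and is exactly what the construction of $\mathsf{Exp}(X)$ in \cite{MR20} is engineered to handle. Given a DT object over the generic point of the spectrum of a DVR, one must produce a unique extension after allowed base change. Naively extending a subscheme inside $X \times \Spec R$ typically fails algebraic transversality along $D$, either with embedded components in boundary strata or with non-matching restrictions across a double divisor. The combinatorial moduli of $1$-complexes prescribes precisely which monomial modification of $X \times \Spec R$ to perform in order to accommodate the limit, and the role of tube vertices together with the $\mathbb G_m$-stability condition is to rigidify residual automorphisms of the resulting expansion so that the extension becomes unique. I would deduce the valuative criterion directly from the relevant theorems of \cite{MR20}.

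For the perfect obstruction theory and virtual class, let $\pi\colon \cX \to \mathsf{DT}_{\beta,\chi}(X)$ denote the restricted universal expansion and $\mathcal{I}$ the universal ideal sheaf. I would construct
\[
E^\bullet = \bigl(\R\pi_{*}\R\Hom(\mathcal{I}, \mathcal{I})_0[1]\bigr)^\vee
\]
together with its Atiyah-class map to $\tau_{\geq -1} L_{\mathsf{DT}_{\beta,\chi}(X)/\mathsf{Exp}(X)}$, so that the relative perfect obstruction theory over $\mathsf{Exp}(X)$ is defined by the standard DT construction applied fibrewise to the expansions. Perfect amplitude in $[-1,0]$ follows from the usual trace-splitting arguments applied to the threefold fibers of $\cX/\mathsf{Exp}(X)$. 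Since each finite-type open of $\mathsf{Exp}(X)$ is pure of dimension zero, Manolache's virtual pullback along the forgetful morphism produces a virtual fundamental class of the expected dimension. The PT case is parallel, with the ideal sheaf $\mathcal{I}$ replaced by the two-term complex $I^\bullet = [\mathcal{O}_{\cX} \to \mathcal{E}]$ associated to the universal stable pair, and $\R\Hom(\mathcal{I},\mathcal{I})_0$ replaced by $\R\Hom(I^\bullet, I^\bullet)_0$.
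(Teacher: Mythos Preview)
The paper does not supply a proof of this theorem; it is stated as a summary of the main results of \cite{MR20}, and the surrounding text (and the analogous GW discussion in Section~\ref{sec: expanded-maps}) merely points to specific sections of that paper. Your sketch is a reasonable reconstruction of the argument there and follows the same route: relative Hilbert scheme over $\mathsf{Exp}(X)$, stability as a constructible condition, valuative criterion handled by the combinatorics of $1$-complexes, and the trace-free Ext complex for the obstruction theory.

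Two small points to tighten. First, DT stability---including the tube $\mathbb G_m$-invariance---is established in \cite[Section~4.3]{MR20} as an \emph{open} condition in families, not a closed one; your phrasing ``closed condition'' for both the stratum-meeting requirement and the tube condition is off. The openness is what identifies $\mathsf{DT}_{\beta,\chi}(X)$ as an open substack of the relative Hilbert scheme over $\mathsf{Exp}(X)$, and is the reason finiteness of automorphisms (hence Deligne--Mumford) follows. Second, for the perfect obstruction theory it is worth making explicit the feature that actually drives the argument: the universal expansion has only rank~$1$ singularities (no triple points), so the fibres are at worst double-point degenerations and the arguments of \cite[Section~3]{MPT10} apply verbatim. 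Your appeal to ``the usual trace-splitting arguments'' is correct in spirit but hides this geometric input.
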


\begin{remark}
Since subschemes and stable pairs are the same near all rank $1$ strata, most of our arguments 
are the same for DT and PT moduli spaces.  Going forward, we will explain proofs in the DT setting
and indicate the necessary modifications, if any, for the PT moduli space.

\end{remark}

\subsection{Moduli spaces of expanded stable maps}\label{sec: expanded-maps} 
We turn to logarithmic stable maps and Gromov-Witten theory.

There are two different flavours of moduli space of logarithmic stable maps to a simple normal crossings pair $(X,D)$ that produce the same invariants. In the original construction~\cite{AC11,Che10,GS13}, a logarithmic stable map is simply a logarithmic map $C\to X$ from a logarithmically smooth curve, such that the underlying map on schemes is a stable map. These are shown to form a moduli stack $\mathsf M(X)$ equipped with logarithmic structure.

In earlier work of the second author, a different theory is presented. A moduli space $\mathsf{GW}(X)$ is constructed, whose closed points parameterize maps
\[
C\to \cX\to X,
\]
where $\cX\to X$ is an expansion in the sense we have just discussed. However, as originally presented, the construction of $\mathsf{GW}(X)$ is not logically independent from $\mathsf M(X)$. Rather, the former space is {\it constructed} as a logarithmic modification
\[
\mathsf{GW}(X)\to\mathsf M(X).
\]
In particular, the situation, as presented this space does not naturally live over the stack $\mathsf{Exp}(X)$. The situation is easily remedied, and the space may be constructed by using the methods of~\cite{MR20}, without relying on $\mathsf M(X)$.

We recall that a prestable map $C\to \mathcal X$ to an expansion is \textit{predeformable} if the preimage of every double divisor of $\cX$ is a union of nodes, and the tangency order at the two preimages of the node under normalization are equal, see~\cite{Li01}. 

Recall the notions of $\mathbb P^1$-bundle components and tube components from Section~\ref{sec: stack-of-expansions}.

\begin{definition}
    Fix a map $p:\Spec(\mathbb C)\to \mathsf{Exp}(X)$ and let $\cX_p$ be the associated expansion. A predeformable prestable map $f: C\to\cX_p\to X$ is \textit{GW stable} or simply \textit{stable} if every contracted irreducible component has at least three marked or nodal points and if, for every tube component $Y$, each component of $f^{-1}(Y)$ is stable under the $\mathbb G_m$-action in the fibers of the associated $\mathbb P^1$-bundle.  
\end{definition}

Over the stack $\mathsf{Exp}(X)$ is a birational modification:
\[
\cX\to X\times\mathsf{Exp}(X),
\]
from the universal expansion. Given a divisor $D_j\subset X$, the strict transform gives rise to a divisor expansion $\cD_j$. This induces a Cartier divisor on any expansion $\cX/S$.

\begin{definition}
Given a map $C\to\cX$ with a marked point $p_i$ on $C$, for each divisor $D_j$, the {\it tangency order} or {\it contact order} of $C$ with $D_j$ at $p_i$ is the usual scheme theoretic order of vanishing of the equation of the $\cD_j$ at $p_i$.
\end{definition}

We identify the moduli space of expanded stable maps as the moduli stack over $\mathsf{Exp}(X)$ parameterizing diagrams
\[
\begin{tikzcd}
    \mathcal C\arrow{dr} \arrow{rr}& & \mathcal X\arrow{dl}\arrow{r} & X\\
    & S, & &
\end{tikzcd}
\]
of predeformable prestable maps that are GW stable. 

The resulting stack $\mathsf{GW}'(X|D)$ can be shown to be algebraic, separated, and universally closed; the prime in the notation is there to indicate that this not quite the space we want to work with -- this has to do with saturating the logarithmic structure. It is straightforward to carry out stable maps analogues of the results in~\cite{MR20} and this is sketched in~\cite{CN21}.
We briefly remark on the key steps. We fix a choice of $\mathsf{Exp}(X)$. 
\begin{enumerate}[(i)]
\item (Constructing the mapping stack) First, we can consider the stack sending a scheme $S$ to the family of maps from prestable curves over $S$ to families of expansions arising from maps $S\to \mathsf{Exp}(X)$. This is an algebraic stack by general results on representability of mapping stacks~\cite[\href{https://stacks.math.columbia.edu/tag/0D19}{Tag 0D19}]{stacks-project}.

In this mapping stack, we have not yet imposed the tangency condition. We consider the locus of maps $C\to\cX\to X$ that are {\it predeformable}, i.e. where the preimages of double divisors in $\cX$ are nodes of $C$, the contact orders on the sides of each such node match, and preimages of boundary divisors on $\cX$ are unions of marked points. We now explain how to deduce algebraicity of this stack. 

Fix a predeformable map $C\to\cX\to X$. The scheme $\cX$ is equipped with a logarithmic structure, and one can equip $C$ with a logarithmic structure in the usual way for nodal curves. Predeformability is then precisely the condition that the map $C\to \cX$ can be enhanced to map of logarithmic schemes -- but in the category of fine (rather than fine and saturated) logarithmic schemes. Further discussion can be found in~\cite[Section~3]{AMW12}. Algebraicity of this locus in the mapping stack can now be deduced from Wise's general results~\cite{Wis16a}. 

\item (GW stability) The GW stability condition is an open condition in families. Indeed the proof on the Hilbert scheme side is found in~\cite[Section~4.3]{MR20} and the proofs are easily adaptable to the stable maps case. The GW stability condition guarantees finiteness of automorphism groups, see for instance~\cite[Section~4.5]{MR20}. The consequence of these results is that $\mathsf{GW}'(X)$ is a Deligne--Mumford stack equipped with a map
\[
\mathsf{GW}'(X)\to\mathsf{Exp}(X).
\]

\item (Properness) The properness of $\mathsf{GW}'(X)$ can also be completed in parallel to the Hilbert scheme case. Suppose $S^\circ$ is the spectrum of a discrete valued field and $S$ is the spectrum of a discrete valuation ring. Given a family of predeformable maps $C\to X$ over $S^\circ$ then the image of $C$ is a subscheme $Z$ defined over $S^\circ$. The arguments of~\cite[Section~2.6]{MR20} give rise to a ramified base change $S'\to S$ and an expansion $\cX'\to S'$ such that the usual stable maps limit of the map exists (note that $\cX$ is typically non-compact so this is a nontrivial statement). Since $\cX'$ has logarithmic rank $1$, the original relative stable maps theory~\cite{Li01} gives rise to a further base change and a further expansion $\cX''\to S''$ such that the stable maps limit of $C\to X$ over $S''$ now exists as a predeformable relative stable maps satisfying GW stability. The above argument can be adapted to the case when the map over $S^\circ$ itself has an expanded target, using the formalism of~\cite[Section~7]{MR20}. 

Boundedness follows similarly. If we fix the genus, marked points, curve class, and tangency orders the balancing condition of~\cite{GS13}, adapted also in~\cite{MR20}, shows that only a finite type substack of $\mathsf{Exp}(X)$ can carry a stable map with these discrete data. Now fix an expansion $\cX\to X$. Each irreducible component component of this expansion can be expressed as a projective bundle over a stratum in a blowup of $X$. Since the curve class is fixed, the pushforward to $X$ of the curve class of any map to such a component is bounded. But by the balancing condition, the contact orders of such a curve with the boundary of this component are also bounded. It follows that the curve class attached to each component is bounded. Since the genus is fixed from the outset, the mapping stack with this fixed target is also bounded. We conclude that the space $\mathsf{GW}'(X)$ is of finite type. Altogether, we deduce properness of the stack. 

\item (Virtual structure) The moduli space $\mathsf{GW}'(X)$ comes with a map
\[
\mathsf{GW}'(X)\to \mathsf{Exp}(X).
\]
This map is equipped with a relative perfect obstruction theory, following~\cite{Li01} or~\cite{AMW12}. The $K$-theory class of the obstruction theory is given, over a map $f:C\to\cX\to X$, by the vector bundle with fiber $H^0(C,f^\star T_X^{\mathsf{log}})-H^1(C,f^\star T_X^{\mathsf{log}})$. Note that since the universal expansion has logarithmic rank $1$, these older arguments, originally developed for smooth pairs, apply without change here.
\end{enumerate}

The logarithmic structure on $\mathsf{GW}'(X)$ is typically not saturated. However, we can pass to the saturation to form the moduli space $\mathsf{GW}(X)$. The logarithmic structures are defined so that the resulting map
\[
\mathsf{GW}(X)\to\mathsf{Exp}(X)
\]
is strict. 

\begin{remark}
Note that the use of logarithmic mapping stacks, and the saturation issues, can be simplified by using orbifold structures following Abramovich--Fantechi~\cite{AF11}. Precisely, in Step (i) above, we can introduce root stack structures along the double divisors and boundary divisors, and work with the moduli stack of orbifold curves. The usual mapping stack, in orbifolds, contains the mapping stack that we want as an open substack. Note that this effectively removes the need for logarithmic structures in logarithmic Gromov--Witten theory, and provides an exact normal crossings parallel to~\cite{AF11}. 
\end{remark}

\subsubsection{Bookkeeping the tangency data} 

To make the compatibility with sheaf theory clear, we handle tangency indexing slightly differently from the logarithmic GW theory literature~\cite{AC11,GS13}. Let $n_i$ be the intersection number $D_i\cdot \beta$. Let $\mathbf n = (n_1,\ldots, n_r)$. Let $\mu_i$ be an ordered partition of $n_i$ and let $\bm \mu = (\mu_1,\ldots, \mu_r)$ be the resulting vector of partitions. Finally, fix an auxiliary positive integer $k$. 

We consider maps
\[
f: C\to \cX\to X,
\]
with $\sum_i \ell(\mu_i)+k$ marked points, or more precisely, a set of marked points in bijection with the set of parts of all paritions in $\bm{\mu}$ and $k$ additional marked points. Let us label these:
\[
p^{D_1}_1,\ldots,p^{D_1}_{\ell(\mu_1)},\ldots,p^{D_r}_1,\ldots,p^{D_r}_{\ell(\mu_r)},q_1,\ldots, q_k.
\]
The contact order recorded by $\bm{\mu}$ demands that $p^{D_i}_j$ has positive contact order only with the $D_i$, and zero with all other divisors. Moreover, the contact order of $D_i$ with $p_j^{D_i}$ is equal to the $j^{th}$ part of the partition $\mu_i$. 

The condition that each point has nonzero contact with at most one divisor is sometimes called the {\it disjoint} case. Restriction to the disjoint case does not restrict the class of invariants we can consider. By logarithmic birational invariance~\cite{AW}, a space can always be replaced by a blowup so this condition holds. However in DT and PT theory, the disjointness hypothesis is part of the definition.

\subsubsection{Notation} When fixing the tangency data, we denote the moduli space $\mathsf{GW}_{g,\beta}(X)_{\bm\mu,k}$, and when fixing the number of marked points alone we use $\mathsf{GW}_{g,\beta}(X)_n$. We will suppress any irrelevant symbols, i.e. parameters whose value does not enter into the discussion, when there is no danger of confusion.

\subsection{Comparisons}\label{subsec:comparisons} Before proceeding with a study of these spaces, we mention how they are related to other moduli space of logarithmic stable maps. For the purposes of this paper, we note that the key point is that the degeneration formula of~\cite[Theorem~B]{R19} applies to the spaces of maps $\mathsf{GW}(X)$ constructed in Section~\ref{sec: expanded-maps}. 

We first note that the space $\mathsf{GW}(X)$ above can be shown to be isomorphic to a logarithmic modification of the Abramovich--Chen--Gross--Siebert moduli space $\mathsf M(X)$~\cite{AC11,Che10,GS13} as {\it algebraic stacks}, though not with the same logarithmic structure. One way to deduce this is to first observe that the construction in~\cite{R19} produces a $\mathsf M^\dagger(X)$ that carries a modified universal curve and a family of expansions of $X$ such that the map is GW stable:
\[
\begin{tikzcd}
\cC\arrow{rr} \arrow{dr} & &\cX\arrow{dl} \arrow{r} & X\\
& \mathsf M^\dagger(X).& &
\end{tikzcd}
\]
We can therefore find a morphism
\[
\mathsf M^\dagger(X)\to\mathsf{Exp}(X)
\]
for {\it some} choice of $\mathsf{Exp}(X)$. This produces a morphism to the space $\mathsf{GW}(X)$. By comparing the moduli functors directly, this map is can be seen to be an isomorphism (for an appropriate choice of $\mathsf{Exp}(X)$). 

\subsection{Decorated tropical moduli and stacks of expansions}\label{sec:decorated}

Recall from Section~\ref{sec: tropical-moduli-spaces} $1$-complexes of $\Sigma$ are undecorated embedded polyhedral complexes.  It will be convenient to allow decorations by discrete invariants associated to our moduli problems, especially for the purposes of working with generating functions.  

\begin{definition}
A Chow $1$-complex of $\Sigma$ is a $1$-complex, such that each edge $e$ or ray $r$ is decorated by a positive integer $n_e$ and each vertex $v$ is decorated by an effective curve class $\beta_v \in H_2(Y_v)$, where $Y_v$ is the closure of the stratum associated to $v$.
\end{definition}

Given a $1$-dimensional subscheme $Z$ in an expansion $\mathcal{X}$ of $X$, then we obtain a Chow $1$-complex as follows. We decorate each ray $r$ with the intersection number of $Z$ with the divisor $D_r$ corresponding to $r$; if $e$ is an edge incident to a vertex $v$, there is a divisor $D_e$ in the expansion component $X_v$, and we decorate the edge with the intersection number of the restricted subscheme $Z_v$ with $D_e$. For each vertex $v$, we take the restriction $Z_v$ to the irreducible component associated to $v$ and pushforward its cycle class $[Z_v]$ to $Y_v\subset X$.  We can recover the total curve class $[Z_v]$ from the data of the decorations at $v$ and its adjacent edges and rays, which is why we use the adjective Chow.

Recall that there are natural specializations among the combinatorial types of $1$-complexes, essentially given by contracting edge lengths to $0$, see~\cite[Section~2]{MR20}.  For Chow $1$-complexes, we demand that the decoration is additive: if a subset of vertices specialize to the same vertex, the corresponding decorations should be added.

If we fix $\beta \in H_2(X)$, we can consider tropical cycles whose vertex decorations sum to $\beta$ and obtain a topological space $|T_\beta(\Sigma)|$, equipped with a universal family and a forgetful map $|T_\beta(\Sigma)| \rightarrow |T(\Sigma)|$.  If we fix a cone structure $T(\Sigma)$, it induces a natural cone structure on $|T_\beta(\Sigma)|$ as follows; for each cone $\tau$ of $T(\Sigma)$ and distribution of labels to the vertices of the $1$-complex associated to a point interior of $\tau$, we define a cone of $T_\beta(\Sigma)$ by taking the corresponding
locus in the preimage of $\tau$.  Face maps are defined in the natural way.  The result is a map of cone complexes
$$T_\beta(\Sigma) \rightarrow T(\Sigma)$$
which is surjective and an isomorphism upon restriction to any cone in the domain.

By now passing from the cone complexes to Artin fans, we see there is a natural non-separated \'etale cover:
\[
\mathsf{Exp}_{\beta}(X)\to \mathsf{Exp}(X).
\]
The geometry of the \'etale cover can be understood from the corresponding map of cone complexes.

The morphism from the GW/DT/PT spaces factorize through this cover; for example, we have
\[
\mathsf{DT}_{\beta,\chi}\to \mathsf{Exp}_{\beta}(X)\to \mathsf{Exp}(X).
\]
Because the map is \'etale, using this more refined Artin fan still produces the same virtual structure on $\mathsf{DT}_{\beta,\chi}$.

We can refine further by decorating each vertex with Euler characteristic $\chi_v$ as well,

\begin{definition}
A Hilbert $1$-complex of $\Sigma$ is a Chow $1$-complex such that each vertex $v$ is decorated additionally with an integer $\chi_v$, to indicate the Euler characteristic of the subscheme supported on this component.
\end{definition}

We use the adjective Hilbert to indicate that these decorations recover the Hilbert polynomial of the subscheme on each irreducible component of our expansion.
In families of Hilbert $1$-complexes, we demand that the Euler characteristic specializes in the natural way.

If we fix $\chi \in \ZZ$, we consider Hilbert $1$-complexes with total Euler characteristic $\chi$, and obtain a cone complex $T_{\beta,\chi}(\Sigma)$, again equipped with a map $T_{\beta,\chi}(\Sigma) \rightarrow T(\Sigma)$ that is surjective and an isomorphism on each cone.  
As before, we have factorizations
\[\mathsf{DT}_{\beta,\chi}(X), \mathsf{PT}_{\beta,\chi}(X)\to \mathsf{Exp}_{\beta,\chi}(X)\]
which give the same virtual structure.

We recall, as in~\cite{R19}, that an {\it $n$-marked abstract tropical curve} is an abstract $1$-dimensional polyhedral complex $\mathbf G$, together with a marking of its unbounded rays by $\{1,\ldots,n\}$. 

We consider integral piecewise linear maps
\[
\mathbf G\to \Sigma_X
\]
such that no bounded edge of $\mathbf G$ is contracted by this map. Note that unbounded rays may be contracted. 

Each of the unbounded rays has a {\it direction}, given by the directional derivative, directed towards the unbounded direction of the ray. The directional derivative is an element of the lattice  of the cone in $\Sigma_X$ that contains the ray. We make the assumption that the direction of each non-contracted ray is a non-negative multiple of a ray of $\Sigma_X$. Therefore, all the unbounded edges of $\mathbf G$ parallel to a given ray of $\Sigma_X$ naturally form a partition. As a result, given such a map, we can encode a tuple of partitions $\bm{\mu}$, which we refer to as the {\it contact order} of the map. 

Given a cone $\sigma$ in $\Sigma_X$, we introduce the notation $X_\sigma$ for the closed stratum dual to the cone. 

\begin{definition}
A {\it tropical stable map} $\mathbf G\to \Sigma$ is a integer piecewise linear map from an $n$-marked tropical curve together with two vertex decorations, satisfying the conditions above, and with the following decorations: (i) a {\it genus} denoted $g_v$, valued in $\mathbb Z_{\geq 0}$, and (ii) a {\it curve class} denoted $\beta_v$, valued in $H^+_2(X_v;\ZZ)$, for each vertex of $\mathbf G$. 

The genus of a tropical curve is equal to the sum of its first Betti number $b_1(\Gamma)$ and all vertex genera. The total curve class is the sum of the pushforwards of $\beta_v$ to $X$, taken over all vertices. 
\end{definition}

Given an unbounded ray of $\mathbf G$, its direction has the same data structure of a contact order in GW theory. As in the GW section, the contact orders are given by the data $\bm{\mu}$, and we have $k$ additional marked points with trivial contact order. We fix these tangency data, as well as the curve class $\beta$ and the genus $g$. 

Following~\cite{CCUW,R19}, the moduli stack of tropical maps above can be constructed, and denote $TSM_{\beta,g}(\Sigma)$. It is a cone stack with a map $TSM_{\beta,g}(\Sigma)\to T$. Note that we have disallowed contractions of bounded edges, but taking this into account, the description of the cone stack exactly follows~\cite[Section~2]{R19}.

Since every edge in a tropical stable map (with the definition above, and so without contracted bounded edges) maps with positive slope onto an edge of an expansion, this map $TSM_{\beta,g}(\Sigma)\to T$ is an isomorphism rationally on each cone. By adjusting the integral structure on $T$, and then passing to Artin fans, we now obtain a non-separated strict \'etale cover
\[
\mathsf{Exp}_{\beta,g}(X)_{\bm{\mu},k}\to \mathsf{Exp}(X).
\]

A tropical stable map $\varphi: \mathbf G\to \Sigma_X$ determines a Chow $1$-complex: the $1$-complex is the image of $\mathbf G$, and the curve class decoration at each vertex of $\mathsf{im}(\varphi)$ is equal to the sum of the vertex decorations at all vertices of $\mathbf G$ that map onto it. Note that strictly speaking, the image can have more vertices than $\mathbf G$, due to images of edges intersecting. The definition is nevertheless valid, with the understanding that the empty sum is equal to $0$. 

With all this as stated, we have, as before, a factorization:
\[
\mathsf{GW}_{\beta,g}(X)_{\bm{\mu},k}\to \mathsf{Exp}_{\beta,g}(X)_{\bm{\mu},k}\to \mathsf{Exp}(X).
\]

\section{Moduli spaces of points and evaluation maps}\label{sec: points}

The DT/PT, and GW moduli spaces associated to pairs $(X,D)$ all admit evaluation maps. For the DT/PT moduli spaces, the target of the evaluation is the space of $0$-dimensional subschemes in expansions of components $D_i$ in $D$; in the GW case, the target is the space of maps from points to these expansions, which is just a modification of a product of divisor components. 

A basic conceptual asymmetry is present in the setup. In DT/PT theory, the evaluation maps are used both to {\it encode} tangency conditions and impose incidence conditions within the boundary strata, whereas in GW theory, the tangency conditions are imposed as part of the locally constant numerical data in the moduli space. The matching between the evaluation conditions is therefore nontrivial and plays a role in the conjectures.

\subsubsection{Evaluation on the non-degenerate locus} On the sheaf side, observe that there is an open subset of the Hilbert scheme of curves in $X$ on which we have a map to the Hilbert scheme of points on $D_i$:
\begin{eqnarray*}
-\cap D_i: \mathsf{Hilb}_\circ(X)&\to& \mathsf{Hilb}^d_\circ(D_i), \ \ \ d := \mathsf{deg}([Z]\cdot [D_i])\\
Z&\mapsto& Z\cap D_i.
\end{eqnarray*}
The circle on the domain indicates that we have passed to the open subspace of subschemes that are algebraically transverse to all components of $D$, and on the codomain, to the locus of points whose support does not meet $D_i\cap D_j$.

There is a similar map on the maps side:
\begin{eqnarray*}
\mathsf{GW}^\circ(X)_{{\bm \mu},k}&\to& D_i^{\ell(\mu_i)}, \\
{[F:C\to X]}&\mapsto& [F^{-1}(D_i)\to D_i].
\end{eqnarray*}
Again, the circle indicates that we have restricted to the open subspace of logarithmic maps where the target is unexpanded. Note that we are using the disjointness hypothesis on $\bm\mu$ here.

In general, given an expansion
$\mathcal X_p\to X$, for each divisor $D_i$, the expansion $\mathcal X_p$ determines an expansion 
$\mathcal{D}_{i,p}$ of $D_i$ along its induced boundary.  
Intersecting a subscheme or stable pair with this expansion gives
a $0$-dimensional subscheme in $\mathcal{D}_{i,p}$; algebraic transversality includes the condition of non-emptiness of intersections with strata, so
every component of $\mathcal{D}_{i,p}$ meets the subscheme nontrivially.  Similarly, restricting a stable map gives an ordered set of points in $\mathcal{D}_{i,p}$, indexed by the parts of the partition $\mu_i$.

In order to extend these to the compact moduli of logarithmic sheaves and maps, we first construct tropical evaluation maps and pass to geometric moduli spaces.

\subsection{Rank $0$ expansions} Each divisor component $D_i\subset D\subset X$ is itself a simple normal crossings surface pair, with divisor determined by the intersection of $D$ with the remaining components of $D$. In order to avoid overburdening notation, we use $\partial D_i$ to denote the boundary divisor on $D_i$. Let $\Sigma_i$ be the cone complex associated to $D_i$.

In Section~\ref{sec: tropical-moduli-spaces}, we described a topological space $|P(\Sigma_i)|$ whose points are in natural bijection with $0$-complexes in $\Sigma_i$, and have noted that it is the underlying topological space of a cone space, and in fact, several cone spaces, that are organized in an inverse system. We now choose such a cone space structure on $P(\Sigma_i)$. Note that when we work with $0$-complexes, there is no subtlety involving tube components -- the fibers of the universal family $G\to P(\Sigma_i)$ are $0$-complexes.

We pass to the Artin fan of $P$ and $G$ to obtain a family
\[
\begin{tikzcd}
    \mathcal D_i\arrow{d}{\pi}\arrow{r} & D_i\\
    \mathsf{Exp}_0(D_i),
\end{tikzcd}
\]
of expansions of $D_i$ along its boundary. The subscript $0$ indicates that we are looking at expansions that come from $0$-complexes. The morphism $\pi$ is smooth and typically has disconnected fibers. Every component is a torus bundle over a locally closed stratum in $D_i$, and has dimension equal to the dimension of $D_i$. 

\subsection{Evaluation spaces: moduli of logarithmic $0$-cycles} We will need to study the logarithmic Hilbert scheme of points on the disjoint union of the components $D_i$. One of the complications of the new logarithmic context, is that the logarithmic Hilbert scheme of a disjoint union is not a product of those corresponding to the factors\footnote{It is a subdivision of a product, and as such is rarely a product of subdivisions.}. 

In this section, we explain the picture for a single simple normal crossings surface pair, which is simpler. In order to control subscripts, we denote this surface $E$ with boundary divisor $\partial E$ though we will have $D_i$ and its boundary in mind. Its cone complex is denoted $\Sigma_E$. We fix a cone structure on the $P(\Sigma_E)$ as above, and work with the associated stack of rank $0$ expansions. 

We begin with a logarithmic version of the symmetric product.

\begin{definition}
A \textit{family of stable $0$-cycles on $E$ relative to $\partial E$ of length $n$ over $S$} is the data of a morphism
\[
S\to \mathsf{Exp}_0(E)
\]
and a flat family over $S$ of length $n$ $0$-cycles $A_S\hookrightarrow \mathcal E/S$ such that in each geometric fiber, every stratum of the expansion has a non-empty intersection with the $0$ cycle. We denote the resulting algebraic stack by $\mathsf{Sym}^n(E)$. 
\end{definition}

\begin{warning}
    The notation $\mathsf{Sym}^n(E)$ which should not be confused with the usual symmetric product of the underlying scheme of $E$. 
\end{warning}

\begin{proposition}
The moduli stack of stable $0$-cycles of length $n$ on $E$ relative to $\partial E$ is representable by a Deligne--Mumford stack and is equipped with a natural morphism from the logarithmic symmetric product:
\[
\mathsf{Sym}^n(E)\to \mathsf{Exp}_0(E)
\]
equipping it with a logarithmic structure by pullback. 
\end{proposition}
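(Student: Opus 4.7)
The morphism $\mathsf{Sym}^n(E)\to \mathsf{Exp}_0(E)$ is tautological: the datum of a family of stable $0$-cycles of length $n$ includes a morphism to $\mathsf{Exp}_0(E)$, so the forgetful functor supplies the desired morphism. The logarithmic structure on $\mathsf{Sym}^n(E)$ is then defined by strict pullback along this morphism, since $\mathsf{Exp}_0(E)$ is an Artin fan with its canonical logarithmic structure. What remains is representability and the Deligne--Mumford property.

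To check representability, it suffices to work smooth-locally over the Artin fan $\mathsf{Exp}_0(E)$. Let $\mathsf A_\sigma \to \mathsf{Exp}_0(E)$ be a strict \'etale chart by an Artin cone. The pullback of the universal expansion $\mathcal E\to\mathsf{Exp}_0(E)$ to $\mathsf A_\sigma$ is a smooth and flat family of (non-proper) surfaces whose geometric fibers are the disjoint unions of torus bundles over strata described in Section~\ref{sec: tropical-moduli-spaces}. Standard results on relative Hilbert schemes for flat families then produce a relative Hilbert functor $\mathsf{Hilb}^n(\mathcal E_{\mathsf A_\sigma}/\mathsf A_\sigma)$ representable by an algebraic space. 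The stability requirement -- that every stratum of the expansion meets the $0$-cycle non-trivially -- is the complement of the closed condition cutting out cycles whose support misses some stratum, and so defines an open substack. Descending along the cover gives representability of $\mathsf{Sym}^n(E)$ over $\mathsf{Exp}_0(E)$ by an algebraic stack.

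For the Deligne--Mumford property, the potential non-triviality of automorphism groups comes from the torus automorphisms of the expansions parametrized by $\mathsf A_\sigma$: each geometric fiber $\mathcal E_p$ carries a fibrewise torus action along each of its torus-bundle components over strata. An automorphism of a stable $0$-cycle $A\subset \mathcal E_p$ over $\mathsf{Exp}_0(E)$ is precisely an element of this torus that stabilizes $A$ with its scheme structure. The stability condition forces $A$ to meet every torus-bundle component in at least one geometric point; since the torus acts freely along the fibers of each such bundle, the stabilizer of any such intersection point is finite. This forces the automorphism group of any object to be finite, and combined with the algebraic space representability above yields the Deligne--Mumford property.

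The main obstacle is this automorphism analysis, but here we benefit from being in the rank $0$ setting: there are no tube components and no codimension $1$ strata to worry about, so the stability condition is by itself sufficient. By contrast, the rank $1$ analysis for $\mathsf{DT}_{\beta,\chi}(X)$ in~\cite[Section~4.5]{MR20} requires a more delicate argument to rule out automorphisms along tube components, and no analogue of that complication arises here.
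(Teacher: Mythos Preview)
Your proposal is essentially correct and spells out the strategy that the paper merely gestures at: the paper's own proof is the single sentence ``It is a simpler version of the main result of~\cite{MR20},'' and your sketch is precisely a simplified rerun of that argument (algebraicity via a relative moduli functor over the Artin fan, stability as an open condition, finiteness of automorphisms from the free torus action on each component of the expansion).

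One small slip worth correcting: you invoke the relative \emph{Hilbert} functor to get representability, but $\mathsf{Sym}^n(E)$ parametrizes $0$-\emph{cycles}, not $0$-dimensional subschemes; the paper explicitly distinguishes $\mathsf{Sym}^n(E)$ from $\mathsf{Hilb}^n(E)$ via the Hilbert--Chow map. The fix is cosmetic---use the relative Chow variety or, equivalently, the $S_n$-quotient of the relative ordered configuration space $\mathsf{Ev}^n$ (which you can see is how the paper thinks of it)---and the rest of your argument goes through unchanged. Your closing remark that the rank~$0$ setting sidesteps the tube-component subtleties of~\cite{MR20} is exactly the sense in which the paper calls this case ``simpler.''
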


\begin{proof}
It is a simpler version of the main result of~\cite{MR20}. 
\end{proof}

\subsection{Stable maps from points} 

For evaluation in Gromov-Witten theory, we will require a version of the space above, but where the points in the $0$-cycle are ordered.
Given a scheme $S$, consider the moduli functor parameterizing a morphism 
\[
S\to \mathsf{Exp}_0(E)
\]
and sections $\sigma_i: S \rightarrow  \mathcal E/S$, for $i = 1\dots n$ of the universal expansion, such that $\sum \sigma_i$ defines a family of stable $0$-cycles.  

The moduli functor parametrizing these data is a Deligne--Mumford stack $\mathsf{Ev}^n(E)$, which is smooth over $\mathsf{Exp}(E).$ It is birational to the product $E^n$. 

There exists a natural action of the symmetric group $S_n$ on $\mathsf{Ev}^n(E)$ that permutes the labels.  If we equip $\mathsf{Sym}^n(E)$ with the trivial action of $S_n$, then the natural morphism
$$\mathsf{Ev}^n(E) \rightarrow\mathsf{Sym}^n(E)$$
over $\mathsf{Exp}_0(E)$ is $S_n$-equivariant.  

Suppose we are given a partition $\mu = (\mu^{(1)}, \dots, \mu^{(\ell)})$ of $n$.  Define the closed substack
$$\mathsf{Ev}^\mu(E) \subset \mathsf{Ev}^n(E)$$ to be the locus where, 
for each $1 \leq k \leq \ell$,
we have
$$\sigma_{\mu^{(1)}+\dots\mu^{(k-1)}+1} = \dots =\sigma_{\mu^{(1)}+\dots\mu^{(k)}}.$$
This is the locus where we have $\ell$ sections of the family of expansions, repeated with multiplicity given by the parts of $\mu$.

Let $$\mathsf{Sym}^{\mu}(E) \subset \mathsf{Sym}^{n}(E)$$ denote the image of $\mathsf{Ev}^\mu(E)$ under the quotient map.  
It is the closure of the locus of $0$-cycles of degree $n$ where the points have multiplicities given by the parts of $\mu$. 
Denote this relative interior, where the multiplicities of the $\ell$ sections are strictly as given, by $\mathsf{Sym}^{\mu, \circ}(E)$.

The map
$$\mathsf{Ev}^\mu(E) \rightarrow\mathsf{Sym}^\mu(E)$$
is a Galois quotient by the group $\mathrm{Aut}(\mu)$ of permutations preserving $\mu$.  It is \'{e}tale over
the open subset  $\mathsf{Sym}^{\mu, \circ}(E)$.
 These locally closed substacks form a stratification of $\mathsf{Sym}^{n}(E)$.

\subsection{Hilbert scheme of points on a logarithmic surface} 

On the sheaf side, once we have fixed $\mathsf{Exp}_0(E)$, then one can consider the moduli space of $0$-dimensional subschemes of length $n$ on the smooth locus of the universal expansion. This defines a 
logarithmic Hilbert scheme of points
$$\mathsf{Hilb}^{n}(E) \rightarrow \mathsf{Exp}_0(E)$$
which parametrizes families of expansions, along with a stable relative family of $0$-dimensional subschemes.  Here, stability is equivalent to stability of the associated family of $0$-cycles. See~\cite[Section~5.3]{MR20} for further details.

Using the Hilbert--Chow map, we obtain \textit{support morphisms}
\begin{equation}\label{eqn: support-map}
\begin{tikzcd}
\mathsf{Hilb}^{n}(E)\arrow{dr} & & \mathsf{Ev}^n(E)\arrow{dl}\\
& \mathsf{Sym}^{n}(E)\arrow{d}\\ &\mathsf{Exp}_0(E).
\end{tikzcd}    
\end{equation}

The diagram is compatible with modification in the following sense: if we change our model by a logarithmic modification
$$\mathsf{Exp}_0'(E) \rightarrow \mathsf{Exp}_0(E),$$ 
the corresponding evaluation spaces discussed are all modified by pulling back the above diagram via this modification.  

\subsection{Evaluation spaces for multiple boundary components} In DT theory, it is more natural to consider the Hilbert schemes of points on all boundary surfaces simultaneously. We explain how the picture \eqref{eqn: support-map} works in this setup.

We have fixed a threefold simple normal crossings pair $(X|D)$, and let $D_1, \dots, D_r$ denote the irreducible boundary components of $D$; each of these is a smooth surface $D_i$ equipped with a boundary $\partial D_i$ and cone complex $\Sigma_i$.  We will use the abuse of notation $(D, \partial D)$ to refer to the disjoint union of these pairs. 

We fix choices of polyhedral structures on $P(\Sigma_i)$ for all $i$, and thereby have stacks of expansions $\mathsf{Exp}_0(D_i)$ for each component. 
We also pick a subdivision of the product
\[
\left(\prod P(\Sigma_i)\right)^{\dagger} \rightarrow \prod P(\Sigma_i)
\]
and take the corresponding tropical model
$$\sigma:  \mathsf{Exp}_0(D) \rightarrow \prod \mathsf{Exp}_0(D_i).$$

\begin{remark}
As we have indicated, it will be important in what follows that this modification does not necessarily preserve the product structure. Indeed, when examining curves in threefolds, we will naturally be led to collections of $0$-dimensional subschemes on surfaces. The combinatorics of their degenerations are not independent, precisely because they are linked by the combinatorics of the expansion of $X$ carrying this curve. 
\end{remark}

Fix an $r$-tuple of non-negative integers $\mathbf{n} = (n_1, \dots, n_r)$.  For each $1 \leq i \leq r$, we have the relative Chow variety, evaluation space, and Hilbert scheme of points:
\[
\mathsf{Sym}^{n_i}(D_i), \ \ \mathsf{Ev}^{n_i}(D_i), \ \ \textsf{and}\ \  \mathsf{Hilb}^{n_i}(D_i)
\]
respectively, lying over a product of the stack of expansions for $D_i$. We define the corresponding evaluation spaces for $(D, \partial D)$ by taking their product and pulling back along the log modification $\sigma$.  The result is a sequence of spaces fitting into the diagram:
\[
\begin{tikzcd}
\mathsf{Hilb}^{\mathbf{n}}(D)\arrow{dr} & & \mathsf{Ev}^\mathbf{n}(D)\arrow{dl}\\
& \mathsf{Sym}^{\mathbf{n}}(D)\arrow{d}\\ &\mathsf{Exp}_0(D).
\end{tikzcd}
\]

Fix an $r$-tuple of partitions by $\bm{\mu} = (\mu_1, \dots, \mu_r)$ where, for each $1 \leq i \leq r$, the entry $\mu_i$ is a partition $n_i$.
We denote by 
$$\mathsf{Ev}^{\bm{\mu}}(D)$$ the modification, obtained by pullback along $\sigma$, of the product $$\prod_i\mathsf{Ev}^{\mu_{i}}(D_{i})\subset \prod_i \mathsf{Ev}^{n_i}(D_i)$$
and
$$\mathsf{Sym}^{\bm{\mu}}(D) \subset \mathsf{Sym}^{n}(D)$$
its image.
As before, the Chow variety version is obtained as the quotient by the action of 
$$\mathrm{Aut}(\bm{\mu}):= \prod_{i} \mathrm{Aut}(\mu_i).$$

\subsection{Tropical evaluation maps}\label{tropical-evaluation} We now discuss evaluation maps, and start with the tropical picture. We can use the star construction of Section~\ref{sec:star-of-cone} to define tropical evaluation maps, already considered in~\cite[Section~5.3]{MR20}.

Fix a cone complex $\Sigma$. We restrict our attention to $1$-complexes in $\Sigma$ whose unbounded rays are each parallel to some ray of of $\Sigma$ (these are the only ones that arise in practice).    Let
$$T^\circ(\Sigma)$$
denote the subcomplex of $T(\Sigma)$ parametrizing $1$-complexes with this property.

Fix a ray $\delta$ in $\Sigma$ and let $\Sigma(\delta)$ denote the star fan of the ray $\delta$. We let $P(\Sigma(\delta))$ denote the tropical moduli space
parametrizing $0$-complexes.
We will construct an evaluation map
\[
\mathsf{ev}_\delta: |T^\circ(\Sigma)|\to |P(\Sigma(\delta)).|
\]
Informally, it is described as follows. Given a $1$-complex, outside of a compact neighborhood of the origin of $\Sigma$, it consists of a disconnected union of rays, each parallel to one of the rays of $\Sigma$. In particular, there is a finite set of rays parallel to $\delta$, and by taking the quotient in the direction of $\delta$, we obtain a collection of points on $\Sigma(\delta)$. This defined $\mathsf{ev}_\delta$. We now construct this map formally. 

\begin{construction}
Fix a cone $\sigma$ in $T^\circ(\Sigma)$ and let $\mathbf G_\sigma\to \sigma$ be the associated family of $1$-complexes. 
Over the vertex point $v \in \sigma$ the graph $\mathbf G_v$ consists of a subset of rays of $\Sigma$.  If $\delta$ is not one of these rays, then there are no unbounded rays 
in the direction $\delta$ for $1$-complexes in $\sigma$, and $\mathsf{ev}_\delta$ sends $\sigma$ to the empty complex.

Otherwise, the ray $\delta \subset \mathbf G_v \subset \mathbf G_\sigma$ gives us a diagram of pairs
\[
\begin{tikzcd}
\mathbf (G_{\sigma}, \delta) \arrow{d}\arrow{r} & (\Sigma, \delta)\\
(\sigma, v) & 
\end{tikzcd}
\]
and by passing to stars, we get a family of $0$-complexes in $\Sigma_i(\delta)$ paramterized by $\sigma(v) = \sigma$.  Again, gluing over cones defines the map 
$\mathsf{ev}_\delta$.
\end{construction}

If $\Sigma$ has rays $\delta_1, \dots, \delta_r$, we can take the product of these maps to obtain
\[
|T^{\circ}_{\beta}| \rightarrow \prod |P(\Sigma_i)|.
\]
We can choose cone structures to obtain a map of cone complexes
\[
\mathsf{ev}: T^{\circ}_{\beta} \rightarrow \left(\prod |P(\Sigma_i)|\right)^{\dagger}.
\]
As we will discuss later, we can arrange for this map to be combinatorially flat.

\subsection{Geometric evaluations}\label{sec: geometric-evaluations} Let $(X,D)$ be a threefold pair as above, with divisor components $D_1,\ldots, D_r$. Given an algebraically transverse subscheme
\[
Z\hookrightarrow \cX\to X.
\]
Fix a divisor $D_i$. As we have noted in Section~\ref{sec: expanded-maps}, there is an induced expansion $\cD_i$. The algebraic transversality guarantees that we get a $0$-dimensional subscheme of $\cD_i$ that intersects every component of $\cD_i$. 

If $D_i$ is dual to the ray $\delta_i$ in $\Sigma$, and we write $\Sigma_i$ for the star fan $\Sigma(\delta_i)$, we constructed the evaluation map

\[
\mathsf{ev}: T^{\circ}_{\beta} \rightarrow \left(\prod |P(\Sigma_i)|\right)^{\dagger}
\]
in the previous section. There is an induced map on stacks of expansions:
\[
\mathsf{Exp}^\circ(X)\to \mathsf{Exp}_0(D),
\]
where we have restricted to the substack of the stack of expansions corresponding to $T^\circ(\Sigma)$ on the left. This operation above gives rise to a map, lifting the above one, encoded by a commutative diagram:
\[
\begin{tikzcd}
\mathsf{DT}_{\beta,\chi}(X)\arrow{d}\arrow{r} & \mathsf{Hilb}^{\mathbf{n}}(D)\arrow{d}\\
\mathsf{Exp}(X)\arrow{r}& \mathsf{Exp}_0(D),
\end{tikzcd}
\]
where $\mathbf{n} = (n_1,\dots, n_r)$ and $n_i$ is the intersection number $\beta\cdot D_i$.

Similarly,
on the GW side, if we 
fix an $r$-tuple of partitions by $\bm{\mu} = (\mu_1, \dots, \mu_r)$ 
of $\mathbf{n}$, 
then we can also lift the map on stacks of expansions to a diagram

\[
\begin{tikzcd}
\mathsf{GW}_{\beta,g}(X)_{\bm{\mu}}\arrow{d}\arrow{r} & \mathsf{Ev}^{\bm{\mu}}(D)\arrow{d}\\
\mathsf{Exp}(X)\arrow{r}& \mathsf{Exp}_0(D).
\end{tikzcd}
\]

\subsection{Pure complexes and uniform flattening}\label{sec: pure-complexes}

We return to the tropical evaluation maps from Section \ref{tropical-evaluation}.
 Our arguments will require us to choose cone structures such that these maps
\[
\mathsf{ev}: |T^\circ(\Sigma)|\to \prod_i |P(\Sigma(\delta_i))|
\]
are combinatorially flat. If one fixes all the discrete data, this can be done in a straightforward way, see Proposition~\ref{prop: strong-ss-reduction}. However, in GW/DT theory one typically forms a generating function by fixing the curve class $\beta$ and allowing the genus $g$ or Euler characteristic $\chi$ take all possible values. In order to state our degeneration setup in terms of generating functions, we need a stronger, {\it uniform flattening} statement. 

The following is the main result of this section.
\begin{theorem}\label{prop:flat-evaluation}
There exist cone structures on $T^{\circ}_{\beta}(\Sigma)$ and $(\prod P(\delta_i))^{\dagger}$ so that the evaluation map

\[
T^{\circ}_{\beta} \rightarrow (\prod P(\delta_i))^{\dagger}
\]
is well-defined and combinatorially flat with reduced fibers.

\end{theorem}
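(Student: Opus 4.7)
The plan is to first pick cone structures that make $\mathsf{ev}$ well-defined as a morphism of cone complexes, and then apply the toroidal semistable reduction theorem (Proposition~\ref{prop: strong-ss-reduction}) to refine these structures into combinatorially flat ones with reduced fibers. The crucial preliminary input is that, with $\beta$ fixed, the cone complex $T^\circ_\beta(\Sigma)$ is uniformly of finite type, independent of any auxiliary discrete data such as Euler characteristic or genus.

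For the finiteness, I would argue as follows. For fixed $\beta$, the intersection numbers $n_i = \beta \cdot D_i$ are fixed, so the total multiplicity of unbounded rays of any $1$-complex in $T^\circ_\beta(\Sigma)$ in direction $\delta_i$ equals $n_i$. Together with the fact that vertex decorations are effective classes summing to $\beta$, this bounds all combinatorial types: the underlying graph has boundedly many vertices and edges, the edge multiplicities are constrained, and the decorations admit only finitely many distributions. Consequently, any cone structure on $T^\circ_\beta(\Sigma)$ induced from a cone structure on $T^\circ(\Sigma)$ (in the sense of Section~\ref{sec:decorated}) has only finitely many cones, and this holds independently of Euler characteristic or genus, since those are not part of the Chow $1$-complex data.

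For well-definedness, fix such a finite cone structure on $T^\circ_\beta(\Sigma)$. On each cone $\tau$, the universal $1$-complex $\mathbf G_\tau \to \tau$ has a fixed combinatorial type, and the set of rays parallel to a given $\delta_i$ is constant with footpoints varying affinely in the coordinates of $\tau$. The star construction of Section~\ref{sec:star-of-cone} therefore provides a linear map from $\tau$ into a single cone of $P(\Sigma(\delta_i))$, and so the product over $i$ defines a linear map from $\tau$ into a single cone of $\prod P(\Sigma(\delta_i))$. Choosing $(\prod P(\delta_i))^\dagger$ to be any cone structure on the topological space $\prod |P(\Sigma(\delta_i))|$ that refines the product and has each of these finitely many images as a union of cones makes $\mathsf{ev}$ a morphism of cone complexes.

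With well-definedness in hand, Proposition~\ref{prop: strong-ss-reduction} applied to this morphism yields subdivisions of both source and target, together with a possible refinement of the integral lattice on the target, making the resulting map combinatorially flat with reduced fibers. Subdivisions remain valid cone structures on the underlying topological spaces, so this produces the desired pair. The main obstacle is the uniform finiteness in the first step: once that combinatorial boundedness is secured, the remainder of the argument is a direct application of the toolkit from Section~\ref{sec: tropical-toolbox}, and in particular requires nothing beyond the standard subdivide-source-and-target mechanism of semistable reduction.
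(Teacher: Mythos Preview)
Your argument has a genuine gap in the finiteness step. You claim that with $\beta$ fixed, $T^\circ_\beta(\Sigma)$ has only finitely many cones because ``the underlying graph has boundedly many vertices and edges.'' This is false. A Chow $1$-complex may have vertices decorated by the zero curve class, so one can add arbitrarily many free (floating) vertices, or insert arbitrarily many linear $2$-valent vertices along edges, without changing the total class $\beta$. The paper says this explicitly just before introducing pure complexes: ``if we fix $\beta$, then the space of possible $1$-complexes has infinitely many cones, but only due to adding floating points or subdividing edges.'' Consequently $T^\circ_\beta$ is not of finite type, and Proposition~\ref{prop: strong-ss-reduction} cannot be applied directly, since it is stated for maps of cone complexes of finite type.

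The paper's proof is designed precisely to get around this. It introduces the retraction $\rho\colon |T|\to|T^{\mathsf{pure}}|$ that erases free and linear $2$-valent vertices, observes that the evaluation factors through $\rho$, and uses boundedness to land in a \emph{finite} subcomplex $T^{\mathsf{pure}}_{\mathsf{bd}}$. Semistable reduction is then applied only to the finite-type map $T^{\mathsf{pure}}_{\mathsf{bd}}\to(\prod P(\delta_i))^\dagger$. The remaining (infinite) piece $T_\beta\to T^{\mathsf{pure}}_{\mathsf{bd}}$ is flattened by a separate and more delicate construction (Proposition~\ref{prop: uniform-flattening}), which builds a cone structure on the relative space of $0$-complexes by hand using relative barycentric subdivisions; this is exactly the work that your direct appeal to Proposition~\ref{prop: strong-ss-reduction} skips over.
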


The reason for such a flattening to exist is that, if we fix $\beta$, then the space of possible $1$-complexes has infinitely many cones, but only due to adding floating points or subdividing edges, which 
don't contribute to the evaluation maps.  To make this precise, we make the following definitions.

We say that a $2$-valent vertex of a $1$-complex is \textit{linear $2$-valent} if its two adjacent edges/rays lie in the same cone and have the same direction.  We say that a vertex is \textit{free} if it has no adjacent edges or rays.  In each case, we can ``erase'' the vertex by coarsening the polyhedral structure on the $1$-complex. The linear condition means that the result is another $1$-complex. 

\begin{definition}
A $1$-complex is pure if it has no free or linear $2$-valent vertices.
\end{definition}

We have the subspace $|T^{\mathsf{pure}}| \subset |T|$ parametrizing pure $1$-complexes and a retraction map 
$$\rho:  |T| \rightarrow |T^{\mathsf{pure}}|$$
obtained by taking a $1$-complex and erasing all free and linear $2$-valent vertices.  The inclusion in the other direction is a section of $\rho$.

Note that by construction the evaluation map from $|T^\circ|$ factors through $\rho$:
$$|T^\circ| \rightarrow  |T^{\mathsf{pure}}| \rightarrow |\prod P(\delta_i)|.$$

\begin{lemma}
There exists a finite subcomplex $|T^{\mathsf{pure}}_{\sf bd}| \subset |T^{\mathsf{pure}}|$
such that the composition $|T_\beta| \rightarrow |T|  \rightarrow |T^{\mathsf{pure}}|$ factors through a map
$$ |T_\beta| \rightarrow |T^{\mathsf{pure}}_{\sf bd}|.$$
\end{lemma}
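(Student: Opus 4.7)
My plan is to reduce to a finiteness statement for combinatorial types of pure Chow $1$-complexes with total decoration $\beta$. First I observe that the retraction $\rho$ commutes with the inclusion of decorations, so a Chow $1$-complex $G$ with total curve class $\beta$ maps to a pure Chow $1$-complex $\rho(G)$ of the same total class (the decorations $\beta_v$ add additively when free or linear $2$-valent vertices are erased, and edge multiplicities $n_e$ are preserved since linear $2$-valent vertices sit between two collinear edges carrying the same multiplicity). Since every cone of $|T^{\mathsf{pure}}|$ corresponds to a single combinatorial type of pure $1$-complex together with continuous position/length data, it is enough to show that only finitely many combinatorial types of pure Chow $1$-complexes with total decoration $\beta$ arise; the union of the corresponding cones will be the desired $|T^{\mathsf{pure}}_{\sf bd}|$.

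Next I would enumerate the bounded discrete invariants attached to such a pure Chow $1$-complex. The number of unbounded rays is bounded because for each boundary ray $\delta_i$ of $\Sigma$ the sum of multiplicities of rays of $G$ in direction $\delta_i$ equals $\beta\cdot D_i$; in particular each individual multiplicity $n_r$ is bounded. The vertex decorations $\beta_v$ are effective curve classes on strata $Y_v$ whose pushforwards to $X$ sum to $\beta$, so by finite generation of the effective cone the number of possible decomposition patterns is finite, and in particular the number of vertices with $\beta_v\neq 0$ is bounded. Edge multiplicities $n_e$ on bounded edges are also bounded, since at each vertex $v$ they are computed as intersection numbers of a curve of class $\beta_v$ (bounded by $\beta$) with the divisor in $Y_v$ corresponding to $e$.

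The core combinatorial step is to bound the number of vertices with $\beta_v=0$. By purity, each such vertex is either a leaf (valence $1$), a junction (valence $\geq 3$), or a genuine bend (valence $2$ whose two adjacent edges do not lie in a common cone with a common direction). Leaves and junctions are bounded by standard graph-theoretic inequalities once the number of unbounded rays and the number of decorated vertices are bounded. For bend vertices, the point is that the direction vector of each edge of $G$ lies in the integral structure of the cone of $\Sigma$ containing it, and a change of direction at a $\beta_v=0$ vertex corresponds to crossing a face of $\Sigma$ or to a contribution picked up from an adjacent decorated vertex; since the multiplicities are bounded, the dual cone of allowable direction vectors is finite, and the path between two successive decorated vertices or unbounded rays can use only finitely many distinct direction changes. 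This bounds the total number of bends, hence the number of vertices with $\beta_v=0$.

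Putting these bounds together, the underlying graph of any pure Chow $1$-complex in the image has bounded numbers of vertices, edges, and rays, with all discrete labels drawn from a finite set; consequently only finitely many combinatorial types occur. Taking $|T^{\mathsf{pure}}_{\sf bd}|$ to be the subcomplex of $|T^{\mathsf{pure}}|$ assembled from the cones corresponding to these finitely many types gives the required factorization. The main obstacle is the combinatorial bound on $\beta_v=0$ bend vertices: the other steps follow from the definition of Chow $1$-complex and routine effectivity bounds, but the bend bound is where the definition of purity has to be used carefully, together with the fact that edges lie inside a fixed cone complex of bounded dimension.
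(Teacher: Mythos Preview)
The paper's own proof is simply a citation to the boundedness results of \cite[Section~4.4]{MR20}; no self-contained argument is given here. Your proposal attempts to supply one, and the overall framework---reduce to finitely many combinatorial types of pure $1$-complexes, bound rays via $\beta\cdot D_i$, bound decorated vertices via effectivity, bound edge weights via intersection numbers---is the right shape and matches what the cited boundedness argument does.

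The genuine gap is exactly where you flag it: the bound on $\beta_v=0$ bend vertices. Your sketch says a direction change at such a vertex ``corresponds to crossing a face of $\Sigma$ or to a contribution picked up from an adjacent decorated vertex,'' and that a path can use only finitely many distinct direction changes. This is not an argument. Without the \emph{balancing condition}---that at each vertex the weighted edge directions sum to a vector determined by $\beta_v$ and the ambient cone---nothing prevents a zigzag path inside a single top-dimensional cone of $\Sigma$, producing arbitrarily many non-linear $2$-valent vertices while all edge weights stay equal to $1$ and all vertex decorations vanish. The abstract definition of Chow $1$-complex as stated in the paper does not explicitly impose balancing, so you cannot derive it from the definitions you invoke; it enters because $T_\beta$ parameterizes $1$-complexes arising from actual transverse subschemes in expansions, and it is precisely this geometric input that \cite[Section~4.4]{MR20} uses. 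Once balancing is in hand, a $\beta_v=0$ vertex in the interior of a cone is forced to be linear (hence absent from a pure complex), and the remaining vertices on lower faces are controlled by standard tropical-curve degree bounds---but you need to say this, and your ``finitely many distinct direction changes'' heuristic is neither the correct mechanism nor sufficient on its own.
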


\begin{proof}
This is proven as part of the boundedness results in~\cite[Section~4.4]{MR20}.
\end{proof}

\begin{proposition}\label{prop: uniform-flattening}
Given a cone structure on $|T^{\mathsf{pure}}_{\sf bd}|$, after possibly passing to a subdivision, there exists a cone complex structure
on $|T_\beta|$ such that the map
\[
T_\beta \rightarrow T^{\mathsf{pure}}_{\sf bd}
\]
is combinatorially flat with reduced fibers.
\end{proposition}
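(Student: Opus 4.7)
The plan is to reduce combinatorial flatness to a cone-by-cone analysis, exploiting the finiteness of $T^{\mathsf{pure}}_{\mathsf{bd}}$ from the preceding lemma. First I would describe the fibers of the retraction $\rho\colon |T_\beta|\to |T^{\mathsf{pure}}_{\mathsf{bd}}|$ explicitly: a Chow $1$-complex lying over a decorated pure type $(\Gamma_\tau, \{\beta_v\})$ is obtained from $\Gamma_\tau$ by (i) inserting finite sequences of linear $2$-valent vertices on edges and rays, and (ii) adjoining finitely many free vertices carrying effective curve class decorations $\alpha_1,\dots,\alpha_m$ summing to $\beta - \sum_v \beta_v$. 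Since $\beta$ is fixed and each summand is bounded in degree against an ample class, the set of effective decompositions arising in (ii) is finite.

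For each such combinatorial enhancement, the corresponding stratum of $|T_\beta|$ is a cone whose coordinates are the lengths of the subdivided edge pieces. The map to $\tau$ is the linear map that sums these pieces back to the total edge length on each edge of $\Gamma_\tau$. This map is surjective onto $\tau$, and the integer lattice of the source surjects onto that of the target, so each individual enhancement cone is combinatorially flat over $\tau$ with reduced fibers. I would then assemble these into a global cone structure on $|T_\beta|$: collapse of a piece length merges adjacent subdivision vertices (yielding a face map between enhancement cones), and collapse of an edge of $\Gamma_\tau$ specializes to a face of $\tau$ in the target, so the face structure on the source is compatible with that on $T^{\mathsf{pure}}_{\mathsf{bd}}$.

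Before performing the assembly, I would subdivide $T^{\mathsf{pure}}_{\mathsf{bd}}$ if needed so that the finite data governing the effective decompositions in (ii) is constant on each cone; finiteness of $T^{\mathsf{pure}}_{\mathsf{bd}}$ and of the decomposition set ensures only finitely many new cones are required. The main obstacle is the combinatorial bookkeeping in the gluing step: although each local enhancement cone is well-behaved and the target is of finite type, the collection of subdivision enhancements is infinite, so care is needed to verify that all face relations among them assemble into a bona fide cone complex. Any residual incompatibilities can be resolved by applying Proposition~\ref{prop: strong-ss-reduction} within each (now finite-type) target cone, yielding the required global combinatorial flatness with reduced fibers.
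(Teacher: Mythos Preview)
There are two genuine gaps. First, your description of the fibers omits the continuous moduli carried by free vertices. A free vertex is a point of $\Sigma$; its position in whichever cone of $\Sigma$ it occupies is part of the moduli of the Chow $1$-complex and is erased by $\rho$. Thus the cone of an enhancement is not coordinatized by edge-piece lengths alone: each free vertex contributes $\dim\sigma$ continuous parameters for the cone $\sigma\subset\Sigma$ it lies in, and adjacent enhancement cones must be glued as free vertices cross walls of $\Sigma$, collide with the $1$-complex, or collide with one another. The curve-class decorations you emphasize are discrete and orthogonal to this issue. (A similar remark applies to $2$-valent insertions on unbounded rays, which contribute an unconstrained positive coordinate rather than a piece-length summand.)

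Second, your appeal to Proposition~\ref{prop: strong-ss-reduction} at the end is not valid: that result requires the map to be of finite type, but even over a single cone $\tau$ of $T^{\mathsf{pure}}_{\mathsf{bd}}$ the preimage in $|T_\beta|$ has infinitely many cones, since one may insert arbitrarily many extra vertices each carrying the zero class. Finiteness of the target does not rescue this.

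The paper handles both points by recognizing that the difference between a $1$-complex and its pure part is precisely a $0$-complex---the extra vertices form an unordered finite subset of $|\Sigma|$. Hence $\rho$ is the relative moduli of $0$-complexes in a complete subdivision $\Sigma_T$ of $\Sigma\times T^{\mathsf{pure}}_{\mathsf{bd}}$ extending the universal $1$-complex and chosen flat over the base. Over each cone $\tau$ one forms the relative fiber products $\Sigma_\tau^n$, passes to a \emph{relative barycentric subdivision}, and then quotients by the maps that permute points, insert vertices of $G_\tau$, and identify repeated points. The barycentric step is the crucial device: it turns these identifications into face maps, so the quotient is a genuine cone complex, while flatness over $\tau$ is checked before the quotient, where it is immediate from the fiber-product structure.
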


\begin{proof}

Since the difference between a $1$-complex and its pure part is completely determined by extra vertices,
the map
\[
\rho:  |T| \rightarrow |T^{\mathsf{pure}}|
\]
can be realized as the 
relative tropical moduli space of $0$-complexes on $|\Sigma|\times  |T^{\mathsf{pure}}|$ over $ |T^{\mathsf{pure}}|$.
This description allows us to lift a cone structure from $|T^{\mathsf{pure}}|$ by following the construction of the tropical space of $0$-complexes, adapting it to a relative setting, and checking it preserves flatness.

More precisely, let $G \rightarrow T^{\mathsf{pure}}_{\sf bd}$ be a universal family over our specified bounded subcomplex.  We can extend $G$ to a complete subdivision 
\[
\Sigma_T \rightarrow \Sigma \times T^{\mathsf{pure}}_{\sf bd}
\]
so that $G \subset \Sigma_T$ is a subcomplex.  Furthermore, after possibly subdividing the base, we can assume that
$$\Sigma_T \rightarrow T^{\mathsf{pure}}_{\sf bd}$$
is combinatorially flat.

For each cone $\tau$ of $T^{\mathsf{pure}}_{\sf bd}$, we will construct the cone structure on $\rho^{-1}(\tau)$ as follows.  Let $\Sigma_\tau \subset \Sigma_T$ denote the preimage of $\tau$.
For each $n > 0$ , consider the relative fiber product
\[
\Sigma^n_{\tau}: = \Sigma_\tau \times_{\tau} \Sigma_\tau \dots \times_{\tau} \Sigma_{\tau}.
\]
This is a cone complex, combinatorially flat over $\tau$, since $\Sigma_\tau$ is such.

We will take the disjoint union over $n$ of $\Sigma^n_\tau$ and quotient by the equivalence relation associated to adding vertices of $G_\tau$, forgetting multiplicities, and permuting the ordering.
In order to put a cone complex structure on this quotient, we first pass to a subdivision.

Given a combinatorially flat map of cones $f: \sigma \rightarrow \tau$, we define the relative barycentric subdivison $\tilde{\sigma} \rightarrow \sigma$ to be the subdivision obtained by taking the barycentric subdivision of each polyhedral fiber 
of $f$.  By the flatness assumption, this defines a cone complex on $|\sigma|$ and each cone surjects onto a face of $\tau$.  Given a combinatorially flat map of cone complexes $C \rightarrow D$, this construction glues along faces to produce 
a subdivision $\widetilde{C} \rightarrow C$ which is still flat over $D$.

In our setting, we take the relative barycentric subdivision $\widetilde{\Sigma^n_{\tau}}$, which is still flat over $\tau$.   For each vertex $v$ of $|G|_t$, where $t$ is a point in the interior of $\tau$, we have a section
$\tau \rightarrow \Sigma_\tau$.  By adding $v$ to an $n$-tuple of points at position $i$, this defines inclusions of complexes
\[
f_{v,i}: \widetilde{\Sigma^n_{\tau}} \rightarrow \widetilde{\Sigma^{n+1}_{\tau}}.
\]
Similarly, we have various diagonal inclusions
\[
g_{i,j}: \widetilde{\Sigma^n_{\tau}} \rightarrow \widetilde{\Sigma^{n+1}_{\tau}}
\]
and an action of permutations $p \in S_n$
\[
p: \widetilde{\Sigma^n_{\tau}} \rightarrow \widetilde{\Sigma^{n}_{\tau}}.
\]

If we consider the equivalence relation on $\coprod \widetilde{\Sigma^n_{\tau}}$ induced by these inclusions and permutations,the quotient is set-theoretically given by $\rho^{-1}(\tau)$.
Because of the barycentric subdivision,  there is a natural cone complex structure on this quotient. Furthermore, since every cone of the prequotient surjects onto a face of $\tau$, the same is true afterwards.
The resulting construction is compatible with passing to faces of $\tau$, so the result is the desired cone structure on $\rho^{-1}(T^{\mathsf{pure}}_{\sf bd})$, flat with respect to $\rho$.
\end{proof}

Finally, we conclude with the proof of Theorem~\ref{prop:flat-evaluation}.

\begin{proof}

Choose cone structures so that we have a flat map
$$T^{\mathsf{pure}}_{\sf bd} \rightarrow (\prod P(\delta_i))^{\dagger}.$$
After further subdivision of both of these, we can further assume by the previous proposition that the map
$$T^{\circ}_{\beta}(\Sigma) \rightarrow T^{\mathsf{pure}}_{\sf bd}$$ is flat also, so by composition we are done. The claim about reduced fibers is similar. 

\end{proof}

\section{The cohomology of the Hilbert scheme of points}\label{sec: Hilb-cohomology}

The Hilbert scheme of $n$ points on a smooth and projective surface (without logarithmic structure) is a smooth and projective variety, and there is a simple description of the cohomology groups in terms of the famous Grojnowski--Nakajima basis~\cite{Groj96,Nak97}. The basis is a crucial input into the curve/sheaf correspondence for threefold smooth pairs in~\cite{MNOP06b}. 

The goal of this section is to extend this correspondence to surfaces with a simple normal crossings divisor, with the view towards the fully logarithmic curve/sheaf correspondence. We do so by studying the diagram from Section~\ref{sec: points}:
\[
\begin{tikzcd}
\mathsf{Hilb}^{\mathbf{n}}(D)\arrow{dr} & & \mathsf{Ev}^\mathbf{n}(D)\arrow{dl}\\
& \mathsf{Sym}^{\mathbf{n}}(D)\arrow{d}\\ &\mathsf{Exp}_0(D).
\end{tikzcd}
\]
As in the previous section, $D$ is the boundary divisor in our threefold $X$ with components $D_i$, and the space the space $\mathsf{Exp}_0(D)$ is a modification of the product of $\mathsf{Exp}(D_i)$:
\[
\sigma: \mathsf{Exp}_0(D)\to \prod_i \mathsf{Exp}_0(D_i).
\]
Throughout the discussion in this section, the choice of $\mathsf{Exp}(D)$ is fixed, and the results will hold for any fixed choice.

\subsection{Cohomology basis via partitions}

Choose an $r$-tuple $\mathbf{n}$ of non-negative integers.
If we also fix an $r$-tuple of partitions $\bm{\mu}$ of $\mathbf{n}$, there is a corresponding space of logarithmic stable maps
where the marked points map to the boundary with multiplicities indexed by $\bm{\mu}$. Although the mapping stack itself does not appear in this section, its evaluation space does.

The evaluation space $\mathsf{Ev}$ and the Hilbert scheme $\mathsf{Hilb}$ are the natural targets for boundary maps in GW theory and DT theory respectively. The main result of this section asserts that, after passing to an appropriate subspace of invariants under a group action, the cohomology groups of these two spaces coincide.

\begin{theorem}\label{thm: Nakajima-basis}
There exists a natural isomorphism
\[
\Gamma= \oplus_{\bm{\mu}} \Gamma_{\bm{\mu}}: \bigoplus H^\star(\mathsf{Sym}^{\bm{\mu}}(D)) \rightarrow H^\star( \mathsf{Hilb}^{\mathbf{n}}(D))
\]
that is compatible with log modification of $\mathsf{Exp}(D)$.  
In particular, if we pass to the direct limit, we have a corresponding isomorphism of log cohomology spaces.
\end{theorem}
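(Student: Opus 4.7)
The plan is to construct the operators $\Gamma_{\bm\mu}$ via Nakajima-style incidence correspondences, verify the combined map is an isomorphism by stratifying both sides by partition type and reducing to classical punctual Hilbert scheme computations fiberwise over $\mathsf{Exp}_0(D)$, and finally observe that compatibility with log modification is built into the construction.

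First, I would define each $\Gamma_{\bm\mu}$ as a relative correspondence. Consider the fiber product $\mathsf{Hilb}^{\mathbf n}(D) \times_{\mathsf{Exp}_0(D)} \mathsf{Ev}^{\bm\mu}(D)$ and the closed substack $Z_{\bm\mu}$ consisting of pairs $(Z, (p_{i,j}))$ where, for each divisor $D_i$, the intersection of $Z$ with the universal expansion $\mathcal D_i$ is the $0$-dimensional subscheme supported on the ordered points $p_{i,j}$ with multiplicities equal to the parts of $\mu_i$. The fibers of $Z_{\bm\mu}$ over $\mathsf{Ev}^{\bm\mu}(D)$ are products of punctual Hilbert schemes. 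Pulling back $\delta \in H^\star(\mathsf{Sym}^{\bm\mu}(D))$ along the \'etale Galois cover $\mathsf{Ev}^{\bm\mu}(D) \to \mathsf{Sym}^{\bm\mu}(D)$, and then using $Z_{\bm\mu}$ as a correspondence (normalized by $|\mathrm{Aut}(\bm\mu)|$), defines $\Gamma_{\bm\mu}$. By construction this is functorial under pullback along any log modification of $\mathsf{Exp}(D)$, so the compatibility statement of the theorem will be automatic once we prove the isomorphism for a single fixed model.

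Next, I would verify that $\Gamma$ is an isomorphism by a stratification argument. Stratify $\mathsf{Sym}^{\mathbf n}(D)$ by the locally closed strata $\mathsf{Sym}^{\bm\mu, \circ}(D)$, and correspondingly stratify $\mathsf{Hilb}^{\mathbf n}(D)$ by pulling back along the support morphism. Over each stratum $\mathsf{Sym}^{\bm\mu,\circ}(D)$, the restricted support morphism is fibered, over $\mathsf{Exp}_0(D)$, by products of punctual Hilbert schemes on smooth surfaces, one for each support point, of lengths dictated by the parts of $\bm\mu$. The classical result of Brian\c con and Iarrobino identifies their rational cohomology as concentrated in the top degree with dimension equal to the number of partitions, and Nakajima's construction realizes the resulting basis via the correspondence $Z_{\bm\mu}$ restricted to this open locus. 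Combining with smoothness of $\mathsf{Sym}^{\bm\mu}(D)$ over $\mathsf{Exp}_0(D)$ and the relative decomposition theorem applied to the proper support morphism $\mathsf{Hilb}^{\mathbf n}(D) \to \mathsf{Sym}^{\mathbf n}(D)$, one computes $H^\star(\mathsf{Hilb}^{\mathbf n}(D))$ stratum by stratum via the long exact sequence of the stratification, and sees that $\oplus_{\bm\mu} \Gamma_{\bm\mu}$ is the expected isomorphism.

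The main obstacle is the interplay between the partition stratification and the logarithmic structure, in two ways. First, the modification $\sigma\colon \mathsf{Exp}_0(D) \to \prod_i \mathsf{Exp}_0(D_i)$ does not respect the product structure, so the logarithmic cohomology of $\mathsf{Hilb}^{\mathbf n}(D)$ cannot be written as a tensor product of single-component analogues and a naive reduction to a single surface pair fails. The definition of $\mathsf{Sym}^{\bm\mu}(D)$ as the pullback along $\sigma$ of the product $\prod_i \mathsf{Sym}^{\mu_i}(D_i)$, rather than as a product in its own right, is chosen precisely so that the Nakajima correspondences transport cleanly across $\sigma$; verifying this requires checking that $Z_{\bm\mu}$ pulls back to the expected correspondence over the modified base and that no new cohomological contributions arise from the non-product corners introduced by $\sigma$. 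Second, one must verify that $\Gamma$ is natural under the inverse system of cone structures on $T$ and on $(\prod P(\Sigma_i))^\dagger$, so that one can pass to the direct limit and obtain the log cohomology version; this is formal from the construction of the correspondence but requires uniform choices of the type supplied by Theorem~\ref{prop:flat-evaluation}.
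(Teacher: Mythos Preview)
Your overall strategy is the paper's: define $\Gamma_{\bm\mu}$ via the incidence correspondence (the paper calls it $F^{\bm\mu}(D)$, the closure of the preimage of $\mathsf{Sym}^{\bm\mu,\circ}(D)$), invoke the decomposition theorem for the support morphism $\mathsf{Hilb}^{\mathbf n}(D)\to\mathsf{Sym}^{\mathbf n}(D)$, and check base-change stability of the correspondences to get compatibility with log modification.

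The gap is that you do not isolate the structural property that makes the decomposition theorem output a clean direct sum here: the support morphism is \emph{semismall}, with the $\mathsf{Sym}^{\bm\mu}(D)$ as exactly the relevant strata, each with a unique top-dimensional fiber component. This is the hypothesis under which de Cataldo--Migliorini produce $H^\star(\mathsf{Hilb}^{\mathbf n}(D)) \cong \bigoplus_{\bm\mu} H^\star(\mathsf{Sym}^{\bm\mu}(D))$ (shifted) with no IC complexes or nontrivial local systems. Your route via ``long exact sequence of the stratification'' does not by itself yield a direct sum; you need the connecting maps to vanish, which is precisely what semismallness buys. Relatedly, your Brian\c con--Iarrobino claim is wrong: the punctual Hilbert scheme $\mathsf{Hilb}^m(\mathbb C^2,0)$ does \emph{not} have cohomology concentrated in top degree (already $m=2$ gives $\mathbb P^1$). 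What Brian\c con actually supplies is irreducibility and the dimension $m-1$, which is exactly the input to the semismallness check: over a point of multiplicity profile $\bm\mu$ the fiber has dimension $\sum_i(|\mu_i|-\ell(\mu_i))$, matching half the codimension of the stratum.

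The paper's device for verifying all of this, and for handling your worry about the non-product modification $\sigma$, is a local product lemma: in a formal neighborhood of any point of the relative symmetric product, the relative Hilbert--Chow morphism over $\mathsf{Exp}_0(D)$ is isomorphic to the classical Hilbert--Chow morphism on a fixed smooth surface, crossed with the base. This reduces semismallness, identification of relevant strata, irreducibility of the top fiber component, and base-change stability of $F^{\bm\mu}(D)$ all at once to the absolute case treated by de Cataldo--Migliorini; the passage across $\sigma$ is then just another base change and requires no separate analysis.
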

Here, the summation is over $r$-tuples of partitions 
$\bm{\mu} = (\mu_1, \dots, \mu_r)$.

Thus far, our constructions have depended on a fixed subdivision $\sigma$ of the product of the stacks of expansions of $D_i$, ranging over all $i$. In order to obtain something that is independent of the choice of stack of expansions, we can consider the logarithmic cohomology of these spaces. These are obtained by taking the direct limit with respect to all modifications of the stack of expansions.

It will be convenient to pass directly from $\mathsf{Ev}^{\bm{\mu}}(D)$ to $\mathsf{Hilb}^{\mathbf{n}}(D)$, so we set
\[
\widetilde{\Gamma_{\bm{\mu}}} = \Gamma_{\bm{\mu}}\circ \rho_\star:  
H^\star(\mathsf{Ev}^{\bm{\mu}}(D)) \rightarrow H^\star( \mathsf{Hilb}^{\mathbf{n}}(D))
\]
where $\rho: \mathsf{Ev}(D) \rightarrow \mathsf{Sym}(D)$ is the finite quotient map.

Our proof of this theorem builds on the classical study of the cohomology groups of the Hilbert scheme of points using the decomposition theorem, see work of G\"ottsche--S\"orgel~\cite{GS93} and de Cataldo--Migliorini~\cite{dCM02,dCM04}.

\subsubsection{Semismall morphisms} We recall some basic facts about semismall maps and the decomposition theorem~\cite{BBD82}. See also~\cite{dCM04}. 
\begin{definition}
Given a proper morphism $f: Y \rightarrow Z$ with $Z$ irreducible, we say $f$ is \textit{semismall} with respect to a stratification of $Z$ if,
for every stratum $W$ and every point $z \in W$, we have
$$2\cdot\mathsf{dim}(f^{-1}(z)) \leq \mathsf{dim}(Z) - \mathsf{dim}(W).$$
A stratum is \textit{relevant} if equality holds in the above inequality.  We say $f$ is semismall if there exists a stratification as above.  In that case, the set of closures of relevant strata is independent of the choice of stratification.  

\end{definition}

Suppose that for each relevant stratum $W$ and $z \in W$, there is a unique irreducible component $F_z$ in $f^{-1}(z)$ where this equality is realized.  We will always be in this situation.
Given a relevant stratum $W$, we denote by $F_W$ the closure of this maximal irreducible component of the fibers over $W$.
The fundamental class of $F_W$ defines a correspondence between $\overline{W}$ and $Y$.
As we vary over relevant strata, a result of de Cataldo and Migliorini states that these correspondences define a decomposition of the cohomology of $Y$ as a direct sum of the cohomologies of the relevant strata closures, up to shifts:
\[
H^\star(Y) = \bigoplus H^\star(\overline{W})\left[2\cdot\mathsf{codim}(F_W,Y)\right].
\]

\begin{remark}
A special case of this result occurs for a smooth surface $S$, and the Hilbert--Chow morphism 
$$\mathsf{Hilb}^n(S) \rightarrow \mathsf{Sym}^n(S),$$ 
where the relevant strata
are precisely the strata $\mathsf{Sym}^\mu(S)$ associated to partitions of $n$.  In this case, the above decomposition recovers the Nakajima decomposition via cohomology-weighted partitions~\cite{dCM02}.
\end{remark}

Suppose we have a smooth family of surfaces $\pi: S \rightarrow B$, such that $B$ and the geometric generic fiber of $\pi$ are irreducible.
We again consider the relative Hilbert scheme of points, along with its Hilbert--Chow morphism
$$\pi^{[n]}:  \mathsf{Hilb}^{n}(S/B) \rightarrow \mathsf{Sym}^n(S/B).$$
Fix a base point $0 \in B$ and a zero cycle $z$ of length $n$ in $S_0$, which defines a point $[z] \in \mathsf{Sym}^n(S_0)\subset \mathsf{Sym}^n(S/B)$.
\begin{lemma}
After restriction to a formal (or analytic) neighborhood of $[z]$, the morphism $\pi^{[n]}$ is isomorphic to the restriction of 
$$\mathsf{Hilb}^{n}(S_0)\times B \rightarrow \mathsf{Sym}^n(S_0)\times B$$
to a formal neighborhood of the corresponding point $[z]\times 0$.
\end{lemma}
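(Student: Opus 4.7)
The plan is to reduce the claim to a purely local statement around the support of $z$, and then use smoothness of $\pi\colon S\to B$ to find a local model identifying $S$ with $\mathbb A^2\times B$ in a formal neighborhood of each point in the support.

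First, I would decompose $z = \sum_{i=1}^{k} z_i$, where $z_i$ is supported at a single point $p_i\in S_0$ of length $n_i$, so $\sum n_i = n$. Since the $p_i$ are distinct, an open neighborhood of $[z]$ in $\mathsf{Sym}^n(S/B)$ decomposes as a product: any zero cycle sufficiently close to $z$ splits uniquely into a sum of zero cycles of length $n_i$ supported near each $p_i$. The same is true for $\mathsf{Hilb}^n(S/B)$ near any subscheme with cycle class $z$: a length-$n$ subscheme close enough to $z$ is a disjoint union of length-$n_i$ subschemes near each $p_i$. Thus both Hilbert--Chow morphisms decompose as a product indexed by $i$ of punctual versions near $(p_i, 0)$, and it suffices to handle each factor separately.

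Next, for each $p_i$, the smoothness of $\pi$ gives an isomorphism of formal schemes $\widehat{S}_{p_i}\cong \widehat{\mathbb A^2}\times \widehat B$ over $\widehat B$, where $\widehat B$ denotes the formal neighborhood of $0\in B$. The key step is that the formation of the Hilbert scheme of points, restricted to the formal neighborhood of a closed point in the base of $\mathsf{Sym}$, depends only on the formal neighborhood of the support of the underlying cycle. Concretely, for any Artinian local ring $A$ and any family of length-$n_i$ subschemes of $S\times_B \Spec A$ whose closed fiber is supported at $p_i$, the entire family is scheme-theoretically supported in an infinitesimal neighborhood of $p_i$; the analogous statement holds on the symmetric product side. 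Therefore both $\mathsf{Hilb}^{n_i}$ and $\mathsf{Sym}^{n_i}$, completed at the relevant point, are computed from the formal neighborhood of $p_i$ in $S$, and the local trivialization of $S\to B$ immediately yields
\[
\widehat{\mathsf{Hilb}^{n_i}(S/B)}_{[z_i]} \;\cong\; \widehat{\mathsf{Hilb}^{n_i}(S_0)}_{[z_i]}\times \widehat B,
\]
compatibly with the Hilbert--Chow map on each side.

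Finally, taking the product over $i$ and invoking the local product decomposition of the Hilbert and symmetric products from the first step recovers the statement for $[z]$. The main subtlety is justifying that the completions of $\mathsf{Hilb}^{n_i}(S/B)$ and $\mathsf{Sym}^{n_i}(S/B)$ at their distinguished points depend only on the formal neighborhood of $p_i$ in $S$; this can be argued either by a direct functor-of-points argument using that Artin-local families of zero-dimensional subschemes are supported at their special fiber, or by appealing to the compatibility of Hilbert schemes with flat base change in the source combined with the fact that completion is faithfully flat on Noetherian local rings. Once this local-to-formal passage is in place, the rest is formal bookkeeping.
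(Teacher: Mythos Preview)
Your proposal is correct and follows essentially the same approach as the paper: both arguments use smoothness of $\pi$ to obtain a formal product decomposition $\widehat{S}_{p_i}\cong \widehat{(S_0)}_{p_i}\times\widehat{B}$ around each point of the support of $z$, then pass this product structure to the Hilbert and symmetric product schemes. The paper compresses your explicit point-by-point decomposition into the single remark that the product structure holds for any finite subset of $S_0$, but the content is the same.
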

\begin{proof}
Since $\pi$ is smooth, the formal neighborhood of any point $x \in S_0$ inside $S$ is isomorphic to the product of the formal neighborhood of $x$ inside $S_0$ and $0 \in B$; a similar product structure holds for any finite subset of $S_0$.  By applying this to the support of the cycle $z$ inside $S_0$, the claim follows.
\end{proof}

Using this lemma, we have the following corollary.

\begin{corollary}\label{semismall-families}
The morphism $\pi^{[n]}$ has the following properties:
\begin{enumerate}[(i)]
\item The morphism is semismall.
\item The relevant strata closures are the irreducible subvarieties $\mathsf{Sym}^{\mu}(S/B)$.
\item Associated to each relevant stratum above, we have a correspondence $F^\mu(S/B)$ obtained by taking the closure of its preimage.
Both the relevant strata closures $\mathsf{Sym}^{\mu}(S/B)$ and the associated correspondence $F^\mu(S/B)$ defined above are flat over $B$
\item The varieties $\mathsf{Sym}^{\mu}(S/B)$ and $F^\mu(S/B)$ are preserved under base change with respect to morphisms $B' \rightarrow B$.
\end{enumerate}
\end{corollary}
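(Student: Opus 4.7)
The plan is to use the preceding lemma as the main leverage: since the formal germ of $\pi^{[n]}$ at any point factors as a product with a formal neighborhood in $B$, each claim reduces to the corresponding statement for the absolute Hilbert--Chow morphism of a fiber surface, together with a flatness verification for the strata closures.

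First, for the semismallness claim (i) and the identification of relevant strata (ii), I would stratify $\mathsf{Sym}^n(S/B)$ by the locally closed subvarieties $\mathsf{Sym}^{\mu,\circ}(S/B)$ indexed by partitions $\mu$ of $n$, defined by the multiplicity type of the $0$-cycle. Since $S\to B$ is smooth of relative dimension $2$, the relative symmetric product $\mathsf{Sym}^n(S/B)$ has relative dimension $2n$ and each stratum $\mathsf{Sym}^{\mu}(S/B)$ has relative dimension $2\ell(\mu)$. Over a point $[z]\in\mathsf{Sym}^\mu(S_0)$, the scheme-theoretic fiber of $\pi^{[n]}$ coincides with the fiber of the absolute Hilbert--Chow map $\mathsf{Hilb}^n(S_0)\to\mathsf{Sym}^n(S_0)$ at $[z]$ by the lemma. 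For the absolute case (Brian\c{c}on / Iarrobino / G\"ottsche), the fiber has pure dimension $n-\ell(\mu)$, and contains a unique top-dimensional irreducible component. This is exactly half the codimension $2(n-\ell(\mu))$ of the stratum in $\mathsf{Sym}^n(S/B)$, which gives semismallness with the claimed relevant strata.

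Next, for part of (iii), the flatness of $\mathsf{Sym}^\mu(S/B)\to B$ is immediate: it is a quotient of a fiber product of copies of $S$ over $B$ by a finite group, and smoothness of $S\to B$ together with characteristic zero hypotheses ensures this quotient is flat over $B$. For the correspondence $F^\mu(S/B)$, defined as the closure of the top-dimensional component of the preimage of $\mathsf{Sym}^{\mu,\circ}(S/B)$ in $\mathsf{Hilb}^n(S/B)$, I would again invoke the lemma: formally locally at any point of $\mathsf{Sym}^\mu(S_0)\times\{0\}$, the preimage is a product of the corresponding absolute fiber component with a formal disk in $B$, so $F^\mu(S/B)$ is formally locally a product with $B$. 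This gives equidimensionality of fibers together with the necessary flatness statement (for instance via miracle flatness, since source and target are Cohen--Macaulay in this local model).

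Finally, claim (iv) is essentially formal: the construction of $\mathsf{Hilb}^n(S/B)$, $\mathsf{Sym}^n(S/B)$, and the stratification by partition type all commute with base change $B'\to B$ in the evident way, and the top-dimensional component of the preimage used to define $F^\mu$ is also preserved since the formal-local product structure of the lemma is preserved under base change. The main step I expect to require real care is the identification of the top-dimensional component of the fiber of $\pi^{[n]}$ as the base change of a single irreducible subvariety $F^\mu(S/B)$; this requires knowing that the top component in the absolute case deforms to a single irreducible component in the family, which again follows from the product formal-local structure of the lemma, but is the place where one must be careful not to pick up spurious components.
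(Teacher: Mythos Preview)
Your approach is essentially the same as the paper's: both reduce everything to the absolute Hilbert--Chow morphism via the formal-local product structure of the preceding lemma, and the paper's proof is in fact terser than yours. One small point worth adding is the irreducibility of $\mathsf{Sym}^\mu(S/B)$ itself (part of claim (ii)): the paper notes this uses the standing hypothesis that the geometric generic fiber of $\pi$ is irreducible, which you do not explicitly invoke.
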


\begin{proof}
Most of the statements follow directly from the lemma and the local product structure which, in particular implies that the closure of a stratum, when restricted to a fiber over a point in $B$, is the same as the closure of the restriction.  For the irreducibility of $\mathsf{Sym}^{\mu}(S/B)$, this follows from the irreducibility of the generic fiber of $\pi$.
\end{proof}

\subsubsection{Proof of Theorem~\ref{thm: Nakajima-basis}}

We apply the results of previous section to $\mathsf{Hilb}^{\mathbf{n}}(D)$.
It suffices to establish the following claims:
\begin{enumerate}[(i)]

\item The morphism
\[
\mathsf{Hilb}^{\mathbf{n}}(D) \rightarrow \mathsf{Sym}^{\mathbf{n}}(D)
\] 
is semismall.
\item The relevant strata closures are the irreducible subvarieties $\mathsf{Sym}^{\bm{\mu}}(D)$ 
\item The fiber over the generic point of each relevant stratum has a unique irreducible component of maximal dimension.

\item If $F^{\bm{\mu}}(D)$ is the closure of the preimage of this relevant stratum in $\mathsf{Hilb}^{\mathbf{n}}(D)$, both 
$$\mathsf{Sym}^{\bm{\mu}}(D)\ \   \textnormal{and}\ \ F^{\bm{\mu}}(D),$$ are flat over $\mathsf{Exp}(D)$ and are preserved by logarithmic modification.

\end{enumerate}

The first three items, combined with \cite{dCM02}, implies the claimed decomposition of cohomology.  The morphisms $\Gamma_{\bm{\mu}}$ are defined using the correspondences
$[F^{\bm{\mu}}(D)]$.  By the last item, since the correspondences are preserved under base change, this implies the decomposition is compatible with
log modification.

To show these claims, we first consider the case of a single boundary component $(E, \partial E)$.  All items can be checked after passing to a smooth cover of
$\mathsf{Exp}_0(E)$, in which case they follow from Corollary \ref{semismall-families}.
Since all claims are preserved under products, we then deduce the case of
\[
\prod \mathsf{Hilb}^{\mathbf{n}}(D_i) \rightarrow \prod \mathsf{Sym}^{\mathbf{n}}(D_i)
\]
over
$\prod \mathsf{Exp}_0(D_i)$.
Finally, again by Corollary \ref{semismall-families}, everything is preserved under base change, 
so we can prove the statement for an arbitrary 
logarithmic modification as well.

\qed

\subsection{Diagonal matching} 

We show that the correspondences $\Gamma_{\bm{\mu}}$ from Theorem \ref{thm: Nakajima-basis} are compatible with diagonal self-correspondences. In the traditional situation, without logarithmic structure, this is used to show compatibility with the Poincar\'{e} pairing, and this is in turn needed for compatibility of the curve/sheaf correspondence with degeneration~\cite{MNOP06b}.

We need it for the same reasons, but also require something more: 
compatibility with {\it strict transforms} of the diagonal under blowups of the product. Precisely, we study the diagonal $\Delta$ in a product of the form 
$\mathsf{Hilb}^{\mathbf{n}}(D)  \times \mathsf{Hilb}^{\mathbf{n}}(D)$ and analyze how $\Gamma$ interacts with the strict transform of $\Delta$ in log modifications of this product.

\begin{remark}
In general, there are correction terms in calculating the action of correspondences on strict transforms, see for instance~\cite{MR21}. However in the present setting, for our particular correspondence, no such terms arise. Roughly, one can view this as a consequence of the transversality of two different stratifications -- one coming from the logarithmic structure and the other coming from the stratification on the symmetric product by multiplicity type. We explain this rigorously below.
\end{remark}

As in the last section, let $D_1, \dots D_r$ be a collection of surfaces with boundary.\footnote{In practice, and in this paper, these may arise from a threefold as earlier, or may instead be the components of the double locus of the special fiber of a degeneration of a threefold.} Fix an $r$-tuple of non-negative integers $\mathbf{n} = (n_1, \dots, n_r)$.  Consider
a tropical model
\begin{equation}\label{eqn: H-blowup}
    \left(\mathsf{Hilb}^{\mathbf{n}}(D)  \times \mathsf{Hilb}^{\mathbf{n}}(D) \right)^{\dagger} \rightarrow \mathsf{Hilb}^{\mathbf{n}}(D)  \times \mathsf{Hilb}^{\mathbf{n}}(D)
\end{equation}
and denote
$$\Delta_H^{\dagger} \hookrightarrow \left(\mathsf{Hilb}^{\mathbf{n}}(D)  \times \mathsf{Hilb}^{\mathbf{n}}(D) \right)^{\dagger}$$
the strict transform of the diagonal.

Given a pair of $r$-tuple of partitions of $\mathbf{n}$, $\bm{\mu} = (\mu_1, \dots, \mu_r)$ and $\bm{\nu} = (\nu_1, \dots, \nu_r)$, we can take the 
corresponding modification of the relevant strata
$$
\left({\mathsf{Sym}^{\bm{\mu}}(D)} \times \mathsf{Sym}^{\bm{\nu}}(D) \right)^{\dagger}\rightarrow {\mathsf{Sym}^{\bm{\mu}}}(D) \times \mathsf{Sym}^{\bm{\nu}}(D).
$$
If $\bm{\mu} = \bm{\nu}$, we let $\Delta_{\bm{\mu}}^{\dagger}$ denote the strict transform of the diagonal of the right-hand side.

There is a corresponding log modification of the cycles
$$
\left(F^{\bm{\mu}}(D) \times F^{\bm{\nu}}(D)\right)^{\dagger} \rightarrow\left(\mathsf{Sym}^{\bm{\mu}}(D) \times \mathsf{Sym}^{\bm{\nu}}(D)\right)^{\dagger}.
$$
These define transpose operators on rational cohomology
$$\Gamma^{\mathsf{tr}}_{\bm{\mu},\bm{\nu}}:  H^\star\left(\left(\mathsf{Hilb}^{\mathbf{n}}(D)  \times \mathsf{Hilb}^{\mathbf{n}}(D) \right)^{\dagger}\right)
\rightarrow H^\star(\left(\mathsf{Sym}^{\bm{\mu}}(D) \times \mathsf{Sym}^{\bm{\nu}}(D)\right)^{\dagger}).$$

Given a partition $\mu = (\mu^{(1)}, \dots, \mu^{(\ell)})$ of $n$, we set
$$m_\mu =  \prod \mu^{(i)};$$
given an $r$-tuple of partitions $\bm{\mu}$, we set $m_{\bm{\mu}} = \prod_{i=1}^{r} m_{\mu_i}$.  Similarly, we set
$$(-1)^{\bm{\mu}}: = \prod_{i=1}^{r} (-1)^{n_i - \ell(\mu_i)}.$$

\begin{proposition}

If $\bm{\mu}= \bm{\nu}$, then 
\[
\Gamma^{\mathsf{tr}}_{\bm{\mu},\bm{\nu}}\left([\Delta_H^{\dagger}]\right) = (-1)^{\bm{\mu}}m_{\bm{\mu}} \cdot\left[\Delta_{\bm{\mu}}^{\dagger}\right];
\]
If $\bm{\mu} \neq \bm{\nu}$, then $\Gamma^{\mathsf{tr}}_{\bm{\mu},\bm{\nu}}([\Delta_H^{\dagger}]) = 0.$

\end{proposition}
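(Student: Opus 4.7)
The plan is to bootstrap the statement from the classical Nakajima orthogonality on the Hilbert scheme of a smooth projective surface, via the étale-local product structure of $\mathsf{Hilb}^{\mathbf{n}}(D)$ over $\mathsf{Exp}_0(D)$, and then to verify that passing to strict transforms under \eqref{eqn: H-blowup} introduces no correction. The argument splits into the vanishing case $\bm{\mu} \neq \bm{\nu}$, which is essentially a dimension count, and the diagonal case $\bm{\mu} = \bm{\nu}$, where the coefficient $(-1)^{\bm{\mu}} m_{\bm{\mu}}$ has to be extracted from a classical calculation.

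For $\bm{\mu} \neq \bm{\nu}$, I will first observe that the image of $(F^{\bm{\mu}}(D) \times F^{\bm{\nu}}(D)) \cap \Delta_H$ in $\mathsf{Sym}^{\mathbf{n}}(D) \times \mathsf{Sym}^{\mathbf{n}}(D)$ lies on the small diagonal, and hence in the diagonal image of $\mathsf{Sym}^{\bm{\mu}}(D) \cap \mathsf{Sym}^{\bm{\nu}}(D)$. Since the multiplicity type of a $0$-cycle is uniquely determined, this intersection has strictly positive codimension inside either factor stratum. Combined with the semismallness of the Hilbert--Chow map (Corollary~\ref{semismall-families}), a dimension count forces the pushforward class on $(\mathsf{Sym}^{\bm{\mu}}(D) \times \mathsf{Sym}^{\bm{\nu}}(D))^{\dagger}$ to vanish.

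For $\bm{\mu} = \bm{\nu}$, I will exploit the local product description of the Hilbert--Chow map given in the lemma preceding Corollary~\ref{semismall-families}. Étale-locally around a cycle of multiplicity type $\bm{\mu}$, the morphism $\mathsf{Hilb}^{\mathbf{n}}(D) \to \mathsf{Sym}^{\mathbf{n}}(D)$ factors as a product of classical Hilbert--Chow maps for smooth quasi-projective surfaces, one per support point, times the base $\mathsf{Exp}_0(D)$. The correspondence computation on the diagonal thus reduces to a product of classical computations on $\mathsf{Hilb}^{n_i}(D_i)$. In that classical setting, the orthogonality of the Nakajima basis, as reinterpreted through correspondences by Grojnowski, G\"ottsche--S\"orgel, and de Cataldo--Migliorini~\cite{dCM02,GS93,Nak97}, produces the factor $(-1)^{n_i - \ell(\mu_i)} m_{\mu_i}$: the sign from the parity of the relative codimension of $F^{\mu_i}$, and the integer $m_{\mu_i}$ from the generic degree of $F^{\mu_i} \to \mathsf{Sym}^{\mu_i}$. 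Multiplying over the components $D_i$ yields $(-1)^{\bm{\mu}} m_{\bm{\mu}}$.

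The hard part will be showing compatibility with the strict transforms $\Delta_H^{\dagger}$ and $\Delta_{\bm{\mu}}^{\dagger}$ in \eqref{eqn: H-blowup}. In general, intersections with strict transforms can produce excess contributions from the exceptional divisors, and one needs machinery of the sort developed in~\cite{MR21} to analyze them. The key observation I will use is that \eqref{eqn: H-blowup} is pulled back from a subdivision of $\mathsf{Exp}_0(D) \times \mathsf{Exp}_0(D)$, while by Corollary~\ref{semismall-families}(iv) the cycles $F^{\bm{\mu}}$ and $\mathsf{Sym}^{\bm{\mu}}$ are preserved under base change along $\mathsf{Exp}_0(D)$. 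Consequently the three cycles entering the correspondence, namely $(F^{\bm{\mu}} \times F^{\bm{\mu}})^{\dagger}$, $\Delta_H^{\dagger}$, and $\Delta_{\bm{\mu}}^{\dagger}$, are all obtained by base change along the \emph{same} subdivision. The transversality of the multiplicity stratification on symmetric products with the logarithmic boundary, which is built into the combinatorial flatness of the setup, ensures that no exceptional correction appears, so the previous step applies fiberwise over the subdivision to conclude.
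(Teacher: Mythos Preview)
Your proposal is correct and follows the same overall strategy as the paper: a dimension count for the vanishing when $\bm{\mu}\neq\bm{\nu}$, reduction to the classical de Cataldo--Migliorini computation for the coefficient when $\bm{\mu}=\bm{\nu}$, and base-change invariance of the cycles $F^{\bm{\mu}}$ and $\mathsf{Sym}^{\bm{\mu}}$ over $\mathsf{Exp}_0(D)$ to control the strict transforms.

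The paper packages the strict-transform step more concretely than you do. Rather than arguing abstractly that ``no exceptional correction appears'' via transversality, it chooses a further tropical model $\mathsf{Hilb}^{\mathbf{n}}(D)'\to\mathsf{Hilb}^{\mathbf{n}}(D)$ surjecting onto $\Delta_H^\dagger$, and then exhibits a Cartesian square identifying the relevant fibre product with $F^{\bm{\mu}'}\cap F^{\bm{\nu}'}$ inside this single model; the Cartesian property holds precisely because both vertical maps are base-changed from the same inclusion $F^{\bm{\mu}}\times F^{\bm{\nu}}\hookrightarrow\mathsf{Hilb}\times\mathsf{Hilb}$, which is your observation in disguise. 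This reduces the whole computation to an intersection on one space and sidesteps any discussion of excess terms. For the coefficient, the paper takes the most economical route: rather than working \'etale-locally around a general cycle of type $\bm{\mu}$, it simply restricts to the open locus where all surfaces are unexpanded, where the statement is literally the classical one from \cite[Proposition~5.1.4]{dCM02}. Your \'etale-local reduction would also work but is more effort than needed.
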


\begin{proof}

The diagonal in the self-product $\mathsf{Hilb}^{\mathbf{n}}(D)\times \mathsf{Hilb}^{\mathbf{n}}(D)$ is identified with a copy of $\mathsf{Hilb}^{\mathbf{n}}(D)$. We have fixed a modification of this product. Choose a tropical model 
$$
\mathsf{Hilb}^{\mathbf{n}}(D)' \rightarrow \mathsf{Hilb}^{\mathbf{n}}(D)
$$ 
that maps onto the diagonal $\Delta_H^{\dagger}$. 


Let $\mathsf{Sym}^{\bm{\mu}}(D)'$ and $F^{\bm{\mu}'}$ denote the corresponding modifications of symmetric products relative to boundary; by construction, $\mathsf{Sym}^{\bm{\mu}}(D)'$ 
surjects onto the diagonal cycle $\Delta_{\bm{\mu}}$. 
For each choice of $\bm{\mu}$ and $\bm{\nu}$, we have a diagram
\[
\begin{tikzcd}
  F^{\bm{\mu}'} \cap F^{\bm{\nu}'}\arrow{d}\arrow{r} & \left(F^{\bm{\mu}} \times F^{\bm{\nu}}\right)^{\dagger}\arrow{d}\arrow{r} & \left(\mathsf{Sym}^{\bm{\mu}}(D) \times \mathsf{Sym}^{\bm{\nu}}(D)\right)^{\dagger}\\
 \mathsf{Hilb}^{\mathbf{n}}(D)'\arrow{r} &  \left(\mathsf{Hilb}^{\mathbf{n}}(D)  \times \mathsf{Hilb}^{\mathbf{n}}(D) \right)^{\dagger}&.
\end{tikzcd}
\]

We claim the commutative square is Cartesian.  This follows because the both vertical arrows are obtained by pullback of the product of the inclusions
$$F^{\bm{\mu}}\times F^{\bm{\nu}} \rightarrow \mathsf{Hilb}^{\mathbf{n}}(D) \times \mathsf{Hilb}^{\mathbf{n}}(D).$$
Also the composition of the two horizontal arrows of the top row factors through the intersection
$\mathsf{Sym}^{\bm{\mu}}(D)' \cap \mathsf{Sym}^{\bm{\nu}}(D)' $.

This intersection is a union of strata associated to $r$-tuples $\bm{\lambda}$ of partitions, obtained by combining parts of $\bm{\mu}$ and also by combining parts of $\bm{\nu}$.  Since length can only decrease when combining parts, we have
$\ell(\bm{\lambda}) \leq \ell(\bm{\mu}), \ell(\bm{\nu})$
and at least one of these inequalities must be strict if ${\bm{\mu}} \neq {\bm{\nu}}$.

The cycle class $\Gamma^{\mathsf{tr}}_{\bm{\mu},\bm{\nu}}([\Delta_H]^{\dagger})$ has (complex) homological degree
$$
\mathsf{deg}\left(\Gamma^{\mathsf{tr}}_{\bm{\mu},\bm{\nu}}([\Delta_H]^{\dagger})\right) = \ell(\bm{\mu}) + \ell(\bm{\nu}) \geq 2 \ell(\bm{\lambda}).
$$
If ${\bm{\mu}} \neq {\bm{\nu}}$ then this class vanishes since it is supported on a variety of smaller dimension.  
If $\bm{\mu} = \bm{\nu}$, then again by dimension-counting,  $\Gamma^{\mathsf{tr}}_{\bm{\mu},\bm{\nu}}([\Delta_H]^{\dagger})$ is a multiple of $[\Delta_{\bm{\mu}}]^{\dagger}$.

Finally to determine the multiple, one can restrict to the locus where the surfaces are all unexpanded.  In this case, the calculation is done in 
\cite[Proposition~5.1.4]{dCM02}.

\end{proof}

The following two corollaries are immediate consequences of this proposition.

\begin{corollary}
The inverse to $\Gamma = \oplus_{\bm{\mu}} \Gamma_{\bm{\mu}}$
is given by the operator
\[ \bigoplus_{\bm{\mu}} \frac{(-1)^{\bm{\mu}}}{m_{\bm{\mu}}}\Gamma_{\bm{\mu}}^{\mathsf{tr}}.
\]
\end{corollary}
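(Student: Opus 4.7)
The plan is to show that $\Gamma_{\bm{\mu}}^{\mathsf{tr}} \circ \Gamma_{\bm{\nu}}$ acts on $H^\star(\mathsf{Sym}^{\bm{\nu}}(D))$ as $\delta_{\bm{\mu},\bm{\nu}} \cdot (-1)^{\bm{\mu}} m_{\bm{\mu}} \cdot \mathsf{id}$, from which the corollary is immediate by decomposing $\Gamma = \bigoplus_{\bm{\nu}} \Gamma_{\bm{\nu}}$ across the direct sum. The identity in cohomology is of course represented by the class of the diagonal viewed as a correspondence, so the entire argument rests on translating the diagonal computation in the proposition into a statement about composition of correspondences.

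First, I would recall the standard formalism: if a correspondence $\Phi_1$ from $A$ to $B$ is defined by a class $\phi_1 \in H^\star(A \times B)$ and $\Phi_2$ from $B$ to $C$ by $\phi_2 \in H^\star(B \times C)$, then the composition $\Phi_2 \circ \Phi_1$ from $A$ to $C$ is represented by $p_{13\star}(p_{12}^\star \phi_1 \cdot p_{23}^\star \phi_2)$ on $A \times C$. Applying this to our setup with correspondences $[F^{\bm{\nu}}(D)]$ and $[F^{\bm{\mu}}(D)]^{\mathsf{tr}}$ meeting over $\mathsf{Hilb}^{\mathbf{n}}(D)$, the composition $\Gamma_{\bm{\mu}}^{\mathsf{tr}} \circ \Gamma_{\bm{\nu}}$ is realized as the correspondence on $\mathsf{Sym}^{\bm{\nu}}(D) \times \mathsf{Sym}^{\bm{\mu}}(D)$ obtained by cutting $[F^{\bm{\mu}}(D) \times F^{\bm{\nu}}(D)]^{\dagger}$ against the diagonal of $\mathsf{Hilb}^{\mathbf{n}}(D)$ in the middle. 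Matching our definitions, this correspondence class is precisely $\Gamma^{\mathsf{tr}}_{\bm{\mu},\bm{\nu}}([\Delta_H^{\dagger}])$.

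Next, I would invoke the preceding proposition: the right-hand side equals $(-1)^{\bm{\mu}} m_{\bm{\mu}} [\Delta_{\bm{\mu}}^{\dagger}]$ when $\bm{\mu} = \bm{\nu}$ and vanishes otherwise. Since $[\Delta_{\bm{\mu}}^{\dagger}]$ is the class of the diagonal in the self-product of $\mathsf{Sym}^{\bm{\mu}}(D)$ (strictly transformed inside the log modification), it acts as the identity correspondence on $H^\star(\mathsf{Sym}^{\bm{\mu}}(D))$. Therefore $\Gamma_{\bm{\mu}}^{\mathsf{tr}} \circ \Gamma_{\bm{\nu}} = \delta_{\bm{\mu},\bm{\nu}} \cdot (-1)^{\bm{\mu}} m_{\bm{\mu}} \cdot \mathsf{id}$. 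Composing with $\bigoplus_{\bm{\mu}} \frac{(-1)^{\bm{\mu}}}{m_{\bm{\mu}}} \Gamma_{\bm{\mu}}^{\mathsf{tr}}$ gives the identity on the direct sum, proving that this operator is the inverse of $\Gamma$ (a one-sided inverse suffices since both sides are finite-dimensional vector spaces of the same dimension by Theorem~\ref{thm: Nakajima-basis}).

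The only subtle point, and the step I would expect to justify most carefully, is the identification in step one: that the composition-of-correspondences formula continues to hold cleanly at the level of the chosen log modifications, so that $\Gamma^{\mathsf{tr}}_{\bm{\mu},\bm{\nu}}([\Delta_H^{\dagger}])$ genuinely represents $\Gamma_{\bm{\mu}}^{\mathsf{tr}} \circ \Gamma_{\bm{\nu}}$ rather than this composition modified by some correction class. The conclusion of the preceding proposition's proof -- that the relevant fiber square is Cartesian, because the vertical arrows are obtained from the product inclusion $F^{\bm{\mu}} \times F^{\bm{\nu}} \hookrightarrow \mathsf{Hilb}^{\mathbf{n}}(D) \times \mathsf{Hilb}^{\mathbf{n}}(D)$ by the pullback diagram -- is exactly what is needed to guarantee there are no such correction terms, mirroring the remark before the proposition that the natural transversality of the multiplicity stratification with the logarithmic structure prevents excess intersection.
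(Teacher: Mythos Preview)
Your proposal is correct and matches the paper's approach: the paper simply declares the corollary an ``immediate consequence'' of the preceding proposition, and you have spelled out exactly the standard correspondence-calculus argument that makes this immediate. One small remark: your final paragraph worries more than necessary about corrections from the logarithmic modifications, but for this corollary you may take the modification $\dagger$ to be the identity, so that $\Delta_H^{\dagger}=\Delta_H$ and $\Delta_{\bm\mu}^{\dagger}=\Delta_{\bm\mu}$, and then the identification of $\Gamma^{\mathsf{tr}}_{\bm\mu,\bm\nu}([\Delta_H])$ with the class representing $\Gamma_{\bm\mu}^{\mathsf{tr}}\circ\Gamma_{\bm\nu}$ is the classical composition-of-correspondences formula with no logarithmic subtlety at all.
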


In what follows, let 
\[
\Delta_{\mathsf{Ev}^{\bm{\mu}}}^{\dagger} \hookrightarrow
\left(\mathsf{Ev}^{\bm{\mu}}(D)  \times \mathsf{Ev}^{\bm{\mu}}(D) \right)^{\dagger}
\]
denote the strict transform of the diagonal for the evaluation spaces $\mathsf{Ev}^{\bm{\mu}}(D)$. This diagonal will later play the key role in our degeneration formula in both curve and sheaf theories. The two will be compared by the following:

\begin{corollary}\label{cor: diagonal-formula}
The cycle class of $\Delta_{H}^{\dagger}$ is given by the formula
\begin{align*}
[\Delta_H^{\dagger}] &= \sum_{\bm{\mu}} 
\frac{(-1)^{\bm{\mu}}}{m_{\bm{\mu}}}
\Gamma_{\bm{\mu},\bm{\mu}}
[\Delta_{\bm{\mu}}^{\dagger}] \\
&= 
\sum_{\bm{\mu}} \frac{(-1)^{\bm{\mu}}}{|\mathrm{Aut}(\bm{\mu})|m_{\bm{\mu}}}
\widetilde{\Gamma_{\bm{\mu},\bm{\mu}}}
[\Delta_{\mathsf{Ev}^{\bm{\mu}}}^{\dagger}].
\end{align*}
\end{corollary}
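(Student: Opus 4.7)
The plan is to expand $[\Delta_H^{\dagger}]$ using the basis decomposition provided by Theorem \ref{thm: Nakajima-basis}, read off the coefficients using the preceding proposition, and then transport the resulting identity from the $\mathsf{Sym}$ version to the $\mathsf{Ev}$ version via the quotient map $\rho\times\rho$.

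First, I would apply Theorem \ref{thm: Nakajima-basis} factor-wise together with K\"unneth to the modified self-product in \eqref{eqn: H-blowup}. This yields an isomorphism
\[
\bigoplus_{\bm{\mu},\bm{\nu}}\Gamma_{\bm{\mu},\bm{\nu}}: \bigoplus_{\bm{\mu},\bm{\nu}} H^\star\!\left(\left(\mathsf{Sym}^{\bm{\mu}}(D)\times\mathsf{Sym}^{\bm{\nu}}(D)\right)^{\dagger}\right)\xrightarrow{\sim} H^\star\!\left(\left(\mathsf{Hilb}^{\mathbf{n}}(D)\times\mathsf{Hilb}^{\mathbf{n}}(D)\right)^{\dagger}\right),
\]
whose inverse, by the first corollary applied to each tensor factor, is $\bigoplus\frac{(-1)^{\bm{\mu}+\bm{\nu}}}{m_{\bm{\mu}}m_{\bm{\nu}}}\Gamma^{\mathsf{tr}}_{\bm{\mu},\bm{\nu}}$.

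Next, I would apply this inverse to $[\Delta_H^{\dagger}]$ and then $\Gamma$ to recover the class, producing the expansion
\[
[\Delta_H^{\dagger}] \;=\; \sum_{\bm{\mu},\bm{\nu}}\Gamma_{\bm{\mu},\bm{\nu}}\!\left(\tfrac{(-1)^{\bm{\mu}+\bm{\nu}}}{m_{\bm{\mu}}m_{\bm{\nu}}}\,\Gamma^{\mathsf{tr}}_{\bm{\mu},\bm{\nu}}[\Delta_H^{\dagger}]\right).
\]
The preceding proposition now provides the crucial input: the inner expression vanishes for $\bm{\mu}\neq\bm{\nu}$ and equals $(-1)^{\bm{\mu}}m_{\bm{\mu}}[\Delta_{\bm{\mu}}^{\dagger}]$ when $\bm{\mu}=\bm{\nu}$. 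Using that $(-1)^{2\bm{\mu}}=1$, the prefactor on the surviving diagonal terms collapses to $\frac{(-1)^{\bm{\mu}}}{m_{\bm{\mu}}}$, giving the first claimed identity.

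For the second identity, I would unpack the definition $\widetilde{\Gamma_{\bm{\mu},\bm{\mu}}}=\Gamma_{\bm{\mu},\bm{\mu}}\circ(\rho\times\rho)_\star$, where $\rho\times\rho$ is the Galois quotient by $\mathrm{Aut}(\bm{\mu})\times\mathrm{Aut}(\bm{\mu})$. The step most deserving of care -- and the one place where a small obstacle arises -- is computing the pushforward of the diagonal: over a generic point of $\Delta_{\bm{\mu}}^{\dagger}$ the preimage in $\mathsf{Ev}^{\bm{\mu}}(D)\times\mathsf{Ev}^{\bm{\mu}}(D)$ consists of $|\mathrm{Aut}(\bm{\mu})|^2$ points, but only the $|\mathrm{Aut}(\bm{\mu})|$ of them with both coordinates equal lie on $\Delta_{\mathsf{Ev}^{\bm{\mu}}}^{\dagger}$, so
\[
(\rho\times\rho)_\star[\Delta_{\mathsf{Ev}^{\bm{\mu}}}^{\dagger}] \;=\; |\mathrm{Aut}(\bm{\mu})|\cdot[\Delta_{\bm{\mu}}^{\dagger}].
\]
Substituting $\Gamma_{\bm{\mu},\bm{\mu}}[\Delta_{\bm{\mu}}^{\dagger}]=\frac{1}{|\mathrm{Aut}(\bm{\mu})|}\widetilde{\Gamma_{\bm{\mu},\bm{\mu}}}[\Delta_{\mathsf{Ev}^{\bm{\mu}}}^{\dagger}]$ into the first identity yields the second. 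Beyond this small degree computation, everything else is bookkeeping; the compatibility of $\Gamma_{\bm{\mu},\bm{\nu}}$ and the strict transforms of the diagonals with the logarithmic modification $\dagger$ is built into their definitions and needs no further check.
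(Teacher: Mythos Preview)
Your argument is correct and is exactly the natural unpacking of what the paper leaves implicit; the paper gives no proof beyond declaring both corollaries ``immediate consequences of this proposition'', and your expansion via $\Gamma_{\mathrm{prod}}\circ\Gamma_{\mathrm{prod}}^{-1}$ together with the degree count for $(\rho\times\rho)_\star$ on the diagonal is the intended route.

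One small expository point: when you justify the product inverse as ``the first corollary applied to each tensor factor'', this is slightly loose because the modification $\dagger$ need not respect the product structure, so K\"unneth does not literally reduce you to the single-factor statement. The fix is immediate: the proof of Theorem~\ref{thm: Nakajima-basis} (specifically items (i)--(iv) in its proof, together with Corollary~\ref{semismall-families}) applies directly to the semismall map $(\mathsf{Hilb}\times\mathsf{Hilb})^{\dagger}\to(\mathsf{Sym}\times\mathsf{Sym})^{\dagger}$ over the modified expansion stack, giving the product decomposition and the transpose-inverse formula with the multiplicative coefficients $\frac{(-1)^{\bm{\mu}+\bm{\nu}}}{m_{\bm{\mu}}m_{\bm{\nu}}}$. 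Alternatively, one can invoke the compatibility with log modification built into the theorem. Either way this is a one-line adjustment, not a gap.
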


\section{The curve/sheaf conjectures}\label{sec:curve/sheaf-conjectures}

Equipped with the logarithmic Nakajima basis of Theorem~\ref{thm: Nakajima-basis}, we now state the logarithmic curve/sheaf conjectures. Schematically, the conjectures fill in the top arrow below:
\[
\begin{tikzcd}
    \textsf{GW -- maps from curves}\arrow[dash,dashed]{rr}\arrow[swap,dash]{d}{\color{ForestGreen}\mathsf{evaluation}}& &\textsf{DT -- scheme theoretic curves}\arrow[dash]{d}{\color{ForestGreen}\mathsf{evaluation}}\\
        \textsf{{Ev} -- maps from points}\arrow[swap,dash]{rr}{\color{ForestGreen}\textsf{log Nakajima}}& &\textsf{Hilb -- scheme theoretic points}
\end{tikzcd}
\]
The cohomology classes on the bottom row impose tangency conditions and boundary incidence conditions on the moduli spaces in the top row, and by Theorem~\ref{thm: Nakajima-basis} they can be matched. 

\subsection{Evaluation maps} Let us begin by recalling the evaluation maps. 

Let $X$ be our threefold, with simple normal crossings boundary $D$ and divisor components $D_1, \dots, D_r$.  
Fix a curve class $\beta \in H_2(X, \ZZ)$ and let ${\bf n}$ denote the collection of intersection multiplicities $n_i = D_i \cdot \beta$.
We fix contact orders by an $r$-tuple of partitions $\bm{\mu} = (\mu_1, \dots, \mu_r)$, where $\mu_i$ is a partition of $n_i$.

We first fix cone structures on the tropical moduli spaces so that we have evaluation maps
\[
T_{\beta}(\Sigma) \rightarrow (\prod P(\delta_i))^{\dagger}.
\]

After passing to geometric moduli spaces, we have, for each Euler characteristic $\chi$,
evaluation maps
\[
\mathsf{ev}\colon \mathsf{DT}_{\beta,\chi}(X)\to \mathsf{Hilb}^{\mathbf n}(D).
\]
and a parallel map for PT moduli spaces. 
On the Gromov-Witten side, for each genus $g$,
 we have maps:
\[
\mathsf{ev}_{\bm \mu} \colon \mathsf{GW}_{\beta,g}(X)_{\bm{\mu},k}\to \mathsf{Ev}^{\bm \mu}(D).
\]
In addition, we have, for each $i = 1,\ldots, k$ internal evaluation maps:
\[
\mathsf{ev}_i\colon \mathsf{GW}_{\beta,g}(X)_{\bm{\mu},k}\to X.
\]

\subsection{Generating functions}

For internal insertions, we specify 
cohomology classes
$$\zeta_1, \dots, \zeta_k \in H^\star(X)$$
as well as non-negative integers $a_1, \dots, a_k$ for descendants. 
For relative conditions, we use the map:
\[
\widetilde{\Gamma}= \oplus \widetilde{\Gamma_{\bm{\mu}}}\colon \bigoplus H_{\mathsf{log}}^\star(\mathsf{Ev}^{\bm{\mu}}(D)) \rightarrow H_{\mathsf{log}}^\star( \mathsf{Hilb}^{\mathbf{n}}(D)),
\]
obtained from Theorem~\ref{thm: Nakajima-basis} by taking a colimit over all logarithmic modifications. 

If we fix a log cohomology class
$$\delta \in H_{\mathsf{log}}^\star(\mathsf{Ev}^{\bm{\mu}}(D)),
$$
we have an associated cohomology class 
$$[\bm{\mu}(\delta)] := \frac{1}{m_{\bm{\mu}}}\widetilde{\Gamma_{\bm{\mu}}}(\delta)$$
on the right.

\subsubsection{The GW partition function} We consider the moduli space $\mathsf{GW}(X)^{\bullet}_{g, \beta, \bm{\mu}, k}$
associated to stable maps to expansions of $(X,D)$, where we allow disconnected domains for which each connected component is non-contracted. As we have noted, there are evaluation maps:
$$
\mathsf{ev}_i\colon \mathsf{GW}(X)^{\bullet}_{g, \beta, \bm{\mu}, k} \rightarrow X \ \ \textnormal{and} \ \  \mathsf{ev}_{\bm{\mu}}\colon \mathsf{GW}(X)^{\bullet}_{g, \beta, \bm{\mu}, k} \rightarrow \mathsf{Ev}^{\bm{\mu}}(D).
$$

We assume we have chosen suitable 
refinements so that the log cohomology class $\delta$ arises from a cohomology class on the tropical model $\mathsf{Ev}^{\bm{\mu}}(D)$. This gives descendent log invariants
$$\langle \prod_{i=1}^{k} \tau_{a_{i}}(\zeta_i) | \bm{\mu}(\delta) \rangle^{\bullet, \mathsf{GW}}_{g,\beta} = 
\mathrm{deg}\left(
\prod_{i=1}^{k} \psi_{i}^{a_{i}}\mathsf{ev}_{i}^{\star}(\zeta_i)\cdot \mathsf{ev}_{\bm{\mu}}^{\star}(\delta)\cap [\mathsf{GW}(X)]^{\mathsf{vir}}
\right),$$
where $\mathrm{deg}$ denotes the degree of the pushforward to a point.  
By summing over the genus, we have the partition function
\[
\mathsf{Z}_{\mathsf{GW}}\left(X,D; u | \prod_{i=1}^{k} \tau_{a_{i}}(\zeta_i)|\bm{\mu}(\delta)\right)_{\beta} = \sum_{g} \langle \prod_{i=1}^{k} \tau_{a_{i}}(\gamma_i) | \bm{\mu}(\delta) \rangle^{\bullet, \mathsf{GW}}_{g,\beta} u^{2g-2},
\]
which is a Laurent series in $u$.

\subsubsection{The DT/PT partition functions}
On the sheaf theory side, we proceed as follows.  
For each $\zeta \in H^\star(X)$, we define descendent operators for each $a \geq 0$
\[
\tau_a(\zeta)\colon H_\star(\mathsf{DT}_{\beta,\chi}(X), \mathbb{Q}) \rightarrow H_\star(\mathsf{DT}_{\beta,\chi}(X), \mathbb{Q})
\]
via the formula
\[
\tau_a(\zeta)(\bullet) \coloneqq \pi_{\mathsf{DT},\star}(\mathrm{ch}_{a+2}(\mathcal{I})\cdot \pi_X^\star(\zeta)\cap \pi_{\mathsf{DT}}^{\star}(\bullet)).
\]
Similarly, we have descendent operators $\tau_a(\gamma)$ for $\mathsf{PT}_{\beta,\chi}(X)$.

 Using these and the relative conditions described earlier, we have descendent invariants
\[
\langle \tau_{a_{1}}(\zeta_1)\dots \tau_{a_{k}}(\zeta_k)| \bm{\mu}(\delta) \rangle^{\mathsf{DT}}_{\beta,\chi}
= \mathrm{deg}\left( \prod_{i=1}^{k}\tau_{a_{i}}(\zeta_i)\mathsf{ev}^\star[\bm{\mu}(\delta)]\cap[\mathsf{DT}]^{\mathsf{vir}}  \right)
\]
and the corresponding invariants for stable pairs. If we sum over the possible values of the Euler characteristic $\chi$, we have the partition function
\[
\mathsf{Z}_{\mathsf{DT}}\left(X,D;q | \prod_{i=1}^{k} \tau_{a_{i}}(\zeta_i)|\bm{\mu}(\delta)\right)_{\beta}
= \sum_{\chi} \langle \prod_{i=1}^{k} \tau_{a_{i}}(\zeta_i)| \bm{\mu}(\delta) \rangle_{\beta,\chi} \cdot {q^{\chi}},
\]
and similarly for 
$\mathsf{Z}_{\mathsf{PT}}\left(X,D;q | \prod_{i=1}^{k} \tau_{a_{i}}(\zeta_i)|\bm{\mu}(\delta)\right)_{\beta}.$
Each of these is a Laurent series since for each fixed curve class $\beta$, the logarithmic DT and PT spaces are empty for $\chi\ll0$.

\subsubsection{Boundedness of relative conditions}

As stated above, our invariants depend on a log cohomology class 
$\delta$ in $H_{\mathsf{log}}^\star(\mathsf{Ev}^{\bm{\mu}}(D))
$, which is an infinite dimensional space.
However, it turns out for fixed $\beta$, this dependence factors through a finite-dimensional space.  We do not use this anywhere, but we feel it is an important philosophical point. We sketch how this works on the Gromov-Witten side; the same argument applies in the other cases.

Using Theorem~\ref{prop:flat-evaluation}, assume we have chosen tropical models
so that the map 
\[
T^{\circ}_{\beta} \rightarrow (\prod P(\delta_i))^{\dagger}
\]
is combinatorially flat with reduced fibers.
We pass to geometric moduli spaces
\[
\mathsf{ev}: \mathsf{GW}_{\beta,g}(X)_{\bm{\mu}}
\rightarrow \mathsf{Ev}^{\bm{\mu}}(D).
\]
We claim that log Gromov-Witten invariants with relative insertions in the finite-dimensional space $H^\star(\mathsf{Ev}^{\bm{\mu}}(D))$ determine all log GW invariants.

Indeed, given any log cohomology class $\delta$, choose a log modification $\pi: \mathsf{Ev}^{\bm{\mu}}(D)' \rightarrow \mathsf{Ev}^{\bm{\mu}}(D)$ for which $\delta$
arises from a cohomology class on the modification. We can extend $\pi$  to a Cartesian diagram
\[
\begin{tikzcd}
\mathsf{GW}(X)_{\bm{\mu},k}'\arrow{d}\arrow{r} & \mathsf{Ev}^{\bm{\mu}}(D)'\arrow{d}\\
\mathsf{GW}(X)_{\bm{\mu},k}\arrow{r}& \mathsf{Ev}^{\bm{\mu}}(D).
\end{tikzcd}
\]
As explained in the proof of Theorem \ref{thm: main-splitting-theorem} in Section \ref{gluing-in-dt} in a more general case, combinatorial flatness of $\mathsf{ev}$ implies that
\[
\pi^\star\mathsf{ev}_\star\left(\prod_{i=1}^{k} \psi_{i}^{a_{i}}\mathsf{ev}_{i}^{\star}(\zeta_i)\cap [\mathsf{GW}(X)]^{\mathsf{vir}}\right) =
\mathsf{ev}'_\star\left(\prod_{i=1}^{k} \psi_{i}^{a_{i}}\mathsf{ev}_{i}^{\star}(\zeta_i)\cap [\mathsf{GW}(X)']^{\mathsf{vir}}\right).
\]
Therefore by a push-pull argument,
we have
\[
\langle \prod_{i=1}^{k} \tau_{a_{i}}(\zeta_i) | \bm{\mu}(\delta) \rangle^{\bullet, \mathsf{GW}}_{g,\beta}
=
\langle \prod_{i=1}^{k} \tau_{a_{i}}(\zeta_i) | \bm{\mu}(\pi_\star\delta) \rangle^{\bullet, \mathsf{GW}}_{g,\beta}
\]
so only the projection of $\delta$ onto 
$H^\star(\mathsf{Ev}^{\bm{\mu}}(D))$ contributes.  A similar statement applies for DT and PT invariants.

Since this flattening applies uniformly over all values of $g$, we can also think of the generating functions 
$\mathsf{Z}_{\mathsf{GW}}(u)$, $\mathsf{Z}_{\mathsf{DT}}(q),$ and
$\mathsf{Z}_{\mathsf{PT}}(q)$
as Laurent series depending on only finitely many relative conditions.

\subsection{Conjectures}
The first conjecture concerns punctual evaluation of the DT series, the relationship the DT and PT series, and the rationality of the PT generating function. They were first stated in~\cite{MR20}.

\begin{conjecture}\label{conj: rationality}
\begin{enumerate}
\item[(i)]
The DT series for zero-dimensional subschemes is given by
\[
\mathsf{Z}_{\mathsf{DT}}\left(X,D;q)\right)_{\beta=0} = M(-q)^{\int_{X} c_{3}(T^\mathrm{log}_X\otimes K^{\mathrm{log}}_{X})},
\]
where
\[
M(q) = \prod_{n\geq1} \frac{1}{(1-q^n)^n}
\]
is the McMahon function.
\item[(ii)]
For any curve class $\beta$, and cohomology classes $\zeta_1, \dots, \zeta_k$, and integers $a_1, \dots, a_k$, the PT series
$$\mathsf{Z}_{\mathsf{PT}}\left(X,D;q | \prod_{i=1}^{k} \tau_{a_{i}}(\zeta_i)|\bm{\mu}(\delta)\right)_{\beta}$$
is the Laurent expansion of a rational function in $q$.

\item[(iii)]
If the insertions $\zeta_1, \dots, \zeta_k$ each have degree $\geq 2$, the normalized DT series 
\[
\mathsf{Z}'_{\mathsf{DT}}\left(X,D;q | \prod_{i=1}^{k} \tau_{a_{i}}(\zeta_i)|\mu\right)_{\beta}
:=
\frac{\mathsf{Z}_{\mathsf{DT}}\left(X,D;q | \prod_{i=1}^{k} \tau_{a_{i}}(\zeta_i)|\mu\right)_{\beta}}{\mathsf{Z}_{\mathsf{DT}}\left(X,D;q)\right)_{\beta=0}}
\]
is the Laurent expansion of a rational function in $q$ and
$$\mathsf{Z}'_{\mathsf{DT}} = \mathsf{Z}_{\mathsf{PT}}.$$

\end{enumerate}
\end{conjecture}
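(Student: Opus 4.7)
The three parts of Conjecture 8.3.1 are of very different flavors, so my plan treats them separately, with the degeneration package from Part II of the paper as the connective tissue in each case. The unifying thread is to reduce each statement to its known counterpart either in the non-logarithmic setting or in the smooth-divisor relative setting \cite{LiWu15,MNOP06b,Br11}, by degenerating $(X|D)$ to a configuration where the claim is already established, and then to verify that all the combinatorial/Nakajima-type corrections introduced in Theorem~\ref{thm: Nakajima-basis} and Corollary~\ref{cor: diagonal-formula} assemble correctly.

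For part (i), the moduli space $\mathsf{DT}_{0,\chi}(X)$ parametrizes algebraically transverse zero-dimensional subschemes on expansions of $X$ along $D$. I would first stratify by the distribution of support among the open strata of $X$: points in the interior $X \smallsetminus D$ behave as in the classical Hilbert scheme of points, points on a single divisor $D_i$ behave as in the smooth-divisor relative theory, and points on deeper strata behave as iterated relative versions. The key reduction is to argue that the virtual class splits multiplicatively along this stratification, so the partition function factorizes as a product over strata. On each closed stratum $D_I$ of codimension $|I|$, a direct Behrend-function computation (combined with the toric model for the normal crossings neighborhood) should yield a local McMahon factor $M(-q)^{e_I}$, with $e_I$ a local Euler-type contribution. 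It then remains to verify the combinatorial identity
\[
\sum_{I} e_I \cdot \chi_{\mathrm{top}}(D_I^\circ) \;=\; \int_X c_3\!\left(T^{\log}_X \otimes K^{\log}_X\right),
\]
which is a standard inclusion-exclusion once one expands $c_3(T^{\log}_X \otimes K^{\log}_X)$ using the residue sequence for $T^{\log}_X$ along the components of $D$.

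For parts (ii) and (iii), the traditional argument due to Bridgeland and Toda~\cite{Br11,Toda} proceeds through wall-crossing in the derived category: one interpolates between ideal sheaves (DT) and stable pairs (PT) by a sequence of Bridgeland stability conditions, with the wall-crossing contributions supported on zero-dimensional subsheaves, whose generating function is precisely the degree-zero DT series. My plan is to adapt this to the logarithmic setting by working on expansions of $(X|D)$ one at a time: for each fixed expansion $\mathcal X_e$ corresponding to a cone of $T_{\beta,\chi}(\Sigma)$, the classical Bridgeland--Toda machinery applies, producing a DT/PT identity on $\mathcal X_e$. The expansion-wise identities then need to be assembled into a global identity on $\mathsf{DT}_{\beta,\chi}(X)$, and this is where the degeneration/gluing formula for the degree zero contribution plays the role of the constant term, allowing the normalization to be peeled off uniformly in the expansion data.

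The main obstacle, in my view, is not the non-logarithmic inputs but the absence of a mature logarithmic Hall algebra on the category of complexes on the universal expansion $\mathcal X \to \mathsf{Exp}(X)$. Concretely, the algebraic transversality and DT-stability conditions are not obviously preserved by extensions in the derived category, so one must either (a) restrict to a subcategory of transversely supported complexes and verify it is closed under the extensions relevant to wall-crossing, or (b) enlarge DT/PT to a broader moduli problem on expansions, perform wall-crossing there, and show algebraic transversality cuts out a clean substack. A parallel difficulty is that the tube-component stability intertwines with the $\mathbb G_m$-actions in the expansion, requiring the stability conditions to respect these scaling symmetries. If both can be arranged, rationality in (ii) then follows from the explicit rational form of the wall-crossing kernel (a product over Pandharipande--Thomas-type motives), and (iii) is the literal statement of wall-crossing once one has accounted for the degree-zero McMahon factor from part (i). As a sanity check, I would test the strategy first when $D$ is smooth, where it should reproduce the results of~\cite{LiWu15}, before attempting the full simple normal crossings case.
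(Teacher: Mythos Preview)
The statement you are attempting to prove is a \emph{conjecture} in the paper, not a theorem: the paper does not give a proof of any of its three parts. The paper's contribution regarding Conjecture~\ref{conj: rationality} is limited to two things. First, in Section~\ref{sec: examples} the authors verify part (i) in the special case where $X$ is a toric threefold and $D$ is the full toric boundary, where the integrand vanishes and the series equals $1$; this is done by a short self-replication argument via deformation to the normal cone of a toric divisor, yielding $F(q)=F(q)^2$. Second, the generating-function corollary in Section~\ref{sec: splitting-compatibility} shows that the conjecture is \emph{compatible with degeneration}: if parts (i)--(iii) hold for all strata of the special fiber of a simple normal crossings degeneration, then they hold for the general fiber. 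Neither of these constitutes a proof of the conjecture in general.

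Your proposal is therefore not a reconstruction of a proof in the paper but a research plan for an open problem. As such it is reasonable in outline, and you are right to flag the absence of a logarithmic Hall algebra or wall-crossing formalism as the central obstruction for (ii) and (iii). Two cautions. For part (i), the virtual class on $\mathsf{DT}_{0,\chi}(X)$ does not obviously split multiplicatively along the stratification you describe: the expansions couple the behaviour at different strata, and the Behrend-function approach you sketch would require a careful analysis of the tube components and the stability condition, which is more subtle here than in the smooth-divisor case. For parts (ii) and (iii), working ``expansion by expansion'' is problematic because the DT and PT moduli spaces are built over the entire stack $\mathsf{Exp}(X)$, and the virtual class is defined relative to that stack; restricting to a fixed expansion loses properness and the global obstruction theory, so one cannot simply invoke Bridgeland--Toda fiberwise and then reassemble.
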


We state the simplest form of the curve/sheaf conjectures -- the primary correspondence. Fix the degree
$d_\beta = c_1(T_X)\cdot \beta \in \mathbb{Z}$.

\begin{conjecture}\label{conj: primary}
Under the identification of variables $-q = e^{iu}$, we have the equality
\[
(-q)^{-d_{\beta}/2} \mathsf{Z}_{\mathsf{PT}}\left(X,D;q | \prod_{i=1}^{k} \tau_{0}(\zeta_i)|\bm{\mu}(\delta)\right)_{\beta}=
(-iu)^{d_{\beta} + \sum \ell(\mu_j) - |\mu_j|} \mathsf{Z}_{\mathsf{GW}}\left(X,D; u|\prod_{i=1}^{k} \tau_{0}(\zeta_i)|\bm{\mu}(\delta)\right)_{\beta}.
\]
\end{conjecture}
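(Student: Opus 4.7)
The plan is to reduce the primary correspondence to base cases via Theorem C (compatibility with degeneration), and to exploit the logarithmic Nakajima matching of Theorem 4.1 together with Corollary 4.3 to transport boundary insertions across the GW/PT divide. First, I would argue that any SNC pair $(X|D)$ admits an SNC degeneration $\mathcal{Y}\to B$ whose central fiber has strata that are either toric threefolds with their toric boundary, or smooth pairs of the type treated by Maulik--Nekrasov--Okounkov--Pandharipande and Pandharipande--Pixton. By Theorem C, the correspondence for the general fiber $Y_\eta$ is implied by the correspondence for every stratum of $Y_0$, with insertions (including the exotic ones introduced in Section 1.5.1) pulled back from $\mathcal{Y}$. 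Iterating this reduction brings us to a finite list of local models.

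For the toric base case, I would follow the strategy of MOOP (combined with the perspective of Bousseau and Parker): apply equivariant localization with respect to the dense torus acting on $(X|D)$ to both $\mathsf{Z}_{\mathsf{PT}}$ and $\mathsf{Z}_{\mathsf{GW}}$. On each side the partition functions reduce to sums over combinatorial decorations of the moment polytope, and the local vertex and edge contributions are those computed by Maulik--Oblomkov--Okounkov--Pandharipande. The logarithmic Nakajima basis of Section 4 is precisely what matches the tangency data $\bm\mu$ on the GW side to the Hilbert-scheme insertions $\bm\mu(\delta)$ on the PT side in these local contributions. The expected bulk prefactor $(-iu)^{d_\beta+\sum(\ell(\mu_j)-|\mu_j|)}$ arises from the virtual dimension difference between the two moduli problems, exactly as in the smooth-boundary case.

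To use Theorem C effectively, one needs the class $\delta$ and its counterpart $\bm\mu(\delta)$ to behave compatibly under pullback along strata inclusions and the splitting formula. This is where the diagonal matching Corollary 4.3 enters: it ensures that when the degeneration splits a curve along a double divisor, the inserted strict-transform diagonal $[\Delta_H^\dagger]$ on the sheaf side matches $[\Delta_{\mathsf{Ev}^{\bm\mu}}^\dagger]$ on the curve side, weighted by the combinatorial factors $(-1)^{\bm\mu}/(m_{\bm\mu}|\mathrm{Aut}(\bm\mu)|)$. The change of variables $-q=e^{iu}$ interacts correctly with these weights because each gluing edge contributes the classical MNOP factor $(-1)^{|\mu|-\ell(\mu)}$ on the sheaf side and the corresponding $u$-expansion on the GW side, as in the smooth-pair degeneration formula of MNOP.

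The main obstacle will be the exotic insertions. Unlike the classical setting, the logarithmic evaluation space does not split as a K\"unneth product over divisor components, so when Theorem B is used to decompose $[\mathsf{DT}_{\beta,\chi}(Y_0)]^{\mathsf{vir}}$ (respectively the GW and PT classes) into vertex contributions, the resulting insertions couple multiple strata. Verifying the correspondence in the presence of such coupled classes requires one to check that the isomorphism $\Gamma$ of Theorem 4.1, and hence $\widetilde{\Gamma_{\bm\mu}}$, is compatible not only with products of boundary evaluations but also with strata modifications of these products. In the primary case this reduces to a cycle-theoretic compatibility on logarithmic modifications of $\mathsf{Ev}^{\bm\mu}(D)$ and $\mathsf{Hilb}^{\mathbf n}(D)$ that is essentially formal from the construction of the semismall correspondences $F^{\bm\mu}(D)$, but executing this verification in families over $\mathsf{Exp}(X)$, with the virtual classes as bookkeeping devices, is the delicate point on which the whole strategy rests.
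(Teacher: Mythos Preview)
The statement you are addressing is a \emph{conjecture} in the paper, not a theorem: there is no proof in the paper to compare your proposal against. The paper's contribution is precisely Theorem~C, which shows the conjecture is \emph{compatible} with degeneration, not that it holds. Your proposal is therefore an attempt to prove an open problem, and it has genuine gaps.

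The central gap is in your first step. You assert that any simple normal crossings pair $(X|D)$ admits an SNC degeneration whose special fiber has only strata that are toric threefolds with toric boundary, or smooth pairs already covered by MNOP/Pandharipande--Pixton. No such result exists, and there is no reason to expect it in general: strata of an SNC degeneration are projectivized normal bundles over strata of $Y_0$, which need not be toric or have smooth boundary. Iterating degenerations does not obviously terminate at such a class. Without this reduction, Theorem~C gives you nothing.

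Even granting the reduction, your toric base case is itself open. The paper explicitly treats the full-toric-boundary case as conjectural (Section~\ref{sec: examples}), remarking only that degeneration compatibility is \emph{expected} to constrain or determine these invariants, following ideas of Bousseau and Parker in GW theory. The MOOP localization arguments you invoke handle the \emph{absolute} or \emph{smooth-pair} toric theory; the logarithmic DT/PT vertex for a toric threefold relative to its full boundary has not been computed, and matching it to the GW side is precisely part of what the conjecture asserts. Your sketch assumes the answer.
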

\subsection{Descendent conjectures}

We can also formulate the descendent correspondence, in parallel with the absolute and relative situations explained in \cite{PP14,PP12}. The key idea for this correspondence is a certain universal matrix $\widetilde{\mathsf{K}}$ indexed by partitions $\alpha$ and $\widehat{\alpha}$
\[
\widetilde{\mathsf{K}}_{\alpha, \widehat{\alpha}} \in \QQ[i, c_1, c_2, c_3]((u)).
\]
We will substitute $c_i = c_i(T^{\mathrm{log}}_X)$ the Chern classes of the logarithmic tangent bundle of $(X,D)$ so each matrix entry is a Laurent series with coefficients in $H^\star(X)$.

Before stating the correspondence, we require some further notation.
As in the definition of evaluation spaces, let
$(X|D)^{\ell}$
denote the moduli space of ordered points in expansions of $X$, associated to a choice of tropical moduli space of ordered points in $\Sigma_X$.

After choosing subdivisions compatibly, if we have $k$ internal marked points, the evaluation maps $\mathsf{ev}_i\colon\mathsf{GW}(X) \rightarrow X$ for these points can be refined to give an evaluation map
$$\mathsf{ev}: \mathsf{GW}(X) \rightarrow (X|D)^{k}.$$
Let $\Delta^{\mathrm{st}} \subset (X|D)^{k}$ denote the strict transform of the small diagonal $X \subset X^{k}$.  Given $\zeta \in H^\star(X)$, pullback to $\Delta^{\mathrm{st}}$ followed by pushforward defines a cohomology class
$\zeta \cdot \Delta \in H^\star((X|D)^{k}).$

Given a partition $\alpha$ of length $k$ we define the descendent insertion
\[
\tau_{\alpha}(\zeta) = \prod \psi_i^{\alpha_i - 1} \cdot \mathsf{ev}^\star(\zeta\cdot \Delta).
\]
Suppose we are given a partition $\alpha$ of length $k$, and cohomology classes $\zeta_1, \dots, \zeta_k$.  We will sum over all set partitions $P$ of the index set $\{1, \dots, k\}$.  Given such a partition $P$ with an element $S \in P$ corresponding to a subset of $\{1, \dots, k\}$, we set $\alpha_S$ to be the corresponding sub-partition of $\alpha$ and $\zeta_S := \prod_{i \in S} \zeta_i$.

We now define the corrected descendent insertion 
\begin{align*}
\overline{\tau_{\alpha_{1}-1}(\zeta_{1})\cdots \tau_{\alpha_{k}-1}(\zeta_{k})} &= 
\sum_{P} (-1)^{\sigma(P)} \prod_{S \in P} \sum_{\widehat{\alpha}} \tau_{\widehat{\alpha}}(\widetilde{\mathsf{K}}_{\alpha_S, \widehat{\alpha}}\cdot \zeta_{S}).
\end{align*}

Here $(-1)^{\sigma(P)}$ is a sign convention to handle odd cohomology classes $\zeta_i$, which is explained in~\cite[Section~0.4]{PP12}).

We now state the full descendent correspondence for PT theory using the corrected descendents.

\begin{conjecture}\label{conj: descendent}
Under the identification of variables $-q = e^{iu}$, we have the equality
\begin{align*}
 (-q)^{-d_{\beta}/2} &\mathsf{Z}_{\mathsf{PT}}\left(X,D;q | \tau_{\alpha_{1}-1}(\zeta_{1})\cdots \tau_{\alpha_{k}-1}(\zeta_{k})|\bm{\mu}(\delta)\right)_{\beta}=\\
&(-iu)^{d_{\beta} + \sum \ell(\mu_j) - |\mu_j|} \mathsf{Z}_{\mathsf{GW}}\left(X,D; u| \overline{\tau_{\alpha_{1}-1}(\zeta_{1})\cdots \tau_{\alpha_{k}-1}(\zeta_{k})}|\bm{\mu}(\delta)\right)_{\beta}.
\end{align*}

\end{conjecture}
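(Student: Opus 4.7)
The plan is to combine the paper's own compatibility of the correspondence with degeneration (Theorem~C) with the strategy pioneered by Pandharipande and Pixton for the classical descendent conjecture, adapted to the logarithmic setting. The goal is to reduce the general simple normal crossings case to a small collection of base cases attackable by torus-equivariant localization, and to characterize the universal matrix $\widetilde{\mathsf{K}}$ intrinsically so that the descendent correction is stable under the reductions used.

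\textbf{Reduction via degeneration.} Given $(X|D)$, the first step is to choose a simple normal crossings degeneration $\mathcal Y \to B$ whose special fiber has all strata among a distinguished class of ``elementary'' threefold pairs -- toric threefolds with toric boundary, $\mathbb{P}^1$-bundles over surfaces with simple boundary, and their kin. By Theorem~C, the descendent correspondence on the general fiber then follows from the correspondence on each stratum $X_v$ with the numerical data $\gamma(v)$ prescribed by the Hilbert $1$-complex $\gamma$, provided one controls how the descendent insertions distribute under the degeneration. Here the bulk descendents $\tau_{a_i}(\zeta_i)$ distribute over strata via pullback of $\zeta_i$ along the restriction to each $X_v$, and the boundary insertions $\bm{\mu}(\delta)$ distribute via the logarithmic Nakajima identification of Theorem~\ref{thm: Nakajima-basis} combined with the diagonal expansion of Corollary~\ref{cor: diagonal-formula}.

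\textbf{Base case on toric pairs.} For $X$ a smooth projective toric threefold with $D$ its toric boundary, both log PT and log GW moduli spaces carry a residual action of the dense torus, and virtual equivariant localization reduces each side to a sum over fixed loci indexed by vertices and edges of the toric fan, augmented by the combinatorics of expansions and tube components introduced in Section~\ref{sec: review-of-moduli}. The universal matrix $\widetilde{\mathsf{K}}$ is defined as the correction that realizes the correspondence at the level of the capped descendent vertex. The rigidification argument of Pandharipande and Pixton~\cite{PP12,PP14}, which uses degenerations within the toric world and the principle that $\widetilde{\mathsf{K}}$ must be independent of $(X|D)$, should show that a single matrix $\widetilde{\mathsf{K}}$ works for every toric pair, and hence via the degeneration step, for every simple normal crossings pair.

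\textbf{Main obstacle.} The principal difficulty is controlling the exotic logarithmic insertions. These live on modifications of $\mathsf{Ev}^{\bm{\mu}}(D)$ that do not respect the product decomposition across boundary divisors, so they do not factor over strata in the degeneration in a naive way. Expressing them in terms of data that distributes over the vertices of a Hilbert $1$-complex requires the interplay between the logarithmic Nakajima basis of Theorem~\ref{thm: Nakajima-basis} and the strict-transform diagonal formula of Corollary~\ref{cor: diagonal-formula}; verifying that the descendent transformation $\widetilde{\mathsf{K}}$ commutes with the resulting cycle-theoretic decomposition is the most delicate step, and it is here that the sign $(-1)^{\bm{\mu}}$ and the combinatorial factor $m_{\bm{\mu}}$ in the diagonal formula must be tracked carefully. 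A secondary, deeper obstacle is that even when $D=\emptyset$ the statement specialises to the classical Pandharipande--Pixton descendent conjecture, which remains largely open in full generality; hence any complete proof of Conjecture~\ref{conj: descendent} would subsume the current state of the art on the non-logarithmic side and would need to develop the descendent analogue of work such as Pardon's~\cite{Par23} uniformly in the logarithmic framework.
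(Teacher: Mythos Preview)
The statement you are attempting to prove is labelled a \emph{Conjecture} in the paper, and the paper does not offer any proof of it. The paper's contribution regarding this statement is Theorem~C (compatibility with degeneration): it shows that \emph{if} Conjecture~\ref{conj: descendent} holds for all strata of the special fiber of a simple normal crossings degeneration, \emph{then} it holds for the general fiber with insertions restricted from the total space. That is a conditional reduction, not a proof of the conjecture itself.

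Your proposal is therefore not comparable to a proof in the paper, because there is none. What you have written is a plausible research strategy, and you correctly identify the essential obstruction at the end: when $D=\varnothing$ the statement already specialises to the full Pandharipande--Pixton descendent correspondence, which is open in general. Any argument along the lines you sketch would need, as a base case, a complete proof of the descendent correspondence for toric threefolds relative to their full toric boundary with arbitrary exotic relative insertions; this is not currently available, and the paper makes no claim to establish it. Your use of Theorem~C as a reduction step is exactly in the spirit of the paper, but the ``base case on toric pairs'' paragraph glosses over the fact that the capped-descendent-vertex argument of \cite{PP12,PP14} has not been carried out in the logarithmic setting with simple normal crossings boundary, and doing so is itself a substantial open problem rather than a routine adaptation.
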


\newpage

{\Large \part{The logarithmic degeneration package}\label{part: 2}}

In the first part, we have studied the DT, PT, and GW theory of a pair $(X|D)$.  We explain how to use this theory to study traditional enumerative invariants using sufficiently nice degenerations, and in particular, establish the compatibility of the logarithmic curve/sheaf conjectures with degeneration.

\subsection*{The degeneration package} We begin with a simple normal crossings degeneration over a pointed curve $(B,0)$\footnote{Our precise assumptions on $\mathcal Y\to B$ are described in the next section.} 
\[
\mathcal Y\to B,
\]
that is smooth away from the point $0\in B$; the fiber over $0$ is a normal crossings variety denoted $Y_0$. We consider the GW and DT spaces to the fibers of the family $\mathcal Y/B$. On the GW side, this fits into the formalism of~\cite{GS13} and is studied in detail in~\cite{ACGS15}. For the DT side, the necessary foundations are provided in the next section. We are led to moduli spaces:
\[
\begin{tikzcd}
    \coprod_\chi \mathsf{DT}_{\beta,\chi}(\mathcal Y/B) \arrow{dr} & & \coprod_g \mathsf{GW}_{\beta,g}(\mathcal Y/B) \arrow{dl}\\
    & B.&
\end{tikzcd}
\]

The following picture has emerged for logarithmic GW invariants in a degeneration, from work of Abramovich, Chen, Gross, Siebert, and the second author. Specifically, we have three aspects of the package. 

\begin{enumerate}[(i)]
    \item {\bf Deformation invariance.} The virtual cycles of any two fibers of the map
    \[
    \mathsf{GW}_{\beta,g}(\mathcal Y/B)\to B,
    \]
    are homologically equivalent in the total space, see~\cite{GS13}.
    \item {\bf Decomposition.} The virtual class of $\mathsf{GW}_{\beta,g}(Y_0)$ is a sum of virtual classes indexed by distributions of the discrete data. The discrete data are indexed by rigid tropical stable maps. The tropical stable maps account for the topological type of the curve as it breaks over the components of $Y_0$, including the curve class distributions and the contact orders with the strata, see~\cite{ACGS15}.
    \item {\bf Splitting formula.} Given a rigid tropical map $\gamma$, the virtual class of the associated moduli space $\mathsf{GW}_{\gamma}(Y_0)$ can be expressed in terms of moduli spaces of the form $\mathsf{GW}(X_v)$ attached to the vertices of $\gamma$, together with cohomology classes pulled back along the evaluation maps, see~\cite{ACGS17,R19}. 
\end{enumerate}

We now establish the same package of three results for DT and PT theory, and also strengthen the final formula on both curve and sheaf sides.

\subsection*{Vertex-by-vertex numerical splitting} The strengthening referred to above concerns the splitting aspect of the package. One key feature of our splitting formula, as stated in Section~\ref{sec: splitting-compatibility} is that it gives both a cycle-level and numerical statement, split across vertices. On the GW side, this is stronger than the statement established in~\cite{R19}, so in Section \ref{sec: main-splitting-formula} we explain how to modify the argument here to obtain the analogous strong splitting property for GW invariants in the expanded formalism of~\cite{R19}. To emphasize the point, we state explicitly the following:

\noindent
{\bf Key Consequence.} {\it The numerical logarithmic GW/DT/PT invariants of the strata of $Y_0$ determine the logarithmic GW/DT/PT invariants of a general fiber $\mathcal Y_t$, with non-vanishing cohomology\footnote{In other words, insertions that are restricted from the total space of the degeneration.} insertions.}

The GW/DT/PT invariants of a stratum $W$ in $Y_0$ are, by definition, the invariants of the projectivization $\mathbb P_{W}(N_{W/\mathcal Y})$ of its normal bundle. 

To be explicit, we will show deformation invariance, expressing DT invariants of $\mathcal Y_t$ in terms of those of $Y_0$.  Second we prove a decomposition result, expressing the DT invariants of $Y_0$ as a sum of invariants indexed by rigid Hilbert $1$-complexes $\gamma$.  Finally, we prove a splitting formula, which expresses the contribution of $\gamma$ in terms of DT invariants associated to the vertices of $\gamma$. The first two steps, treated in the next two sections, are essentially formal. 

The splitting aspect is the heart of this part of the paper. We will proceed in several steps: a purely combinatorial version of the splitting is established in Section~\ref{sec: combinatorial-degeneration}. After this, a gluing formula parallel to the existing GW formulas is established in Section~\ref{sec: moving-gluing}, and finally the full formula is completed in all theories in Section~\ref{sec: main-splitting-formula}. The full statement of the formulas and the degeneration compatiblity of the conjectures is stated before the proofs in Section~\ref{sec: splitting-compatibility} and several basic examples are recorded in Section~\ref{sec: examples}.

\section{DT space over a base and deformation invariance}\label{sec: DT-over-base}

\subsection{Moduli of vertical $1$-complexes} Let $(B,0)$ be a smooth pointed curve and let
\[
\mathcal Y\to B
\]
be a {\it simple normal crossings degeneration} defined as follows. The space $\mathcal Y$ 
 will be a simple normal crossings pair as in the first part of the paper, but it is no longer required to be proper. It is equipped with a morphism
 \[
\cY\to B
 \]
such that the preimage of $0$ is contained in the boundary divisor of $\cY$. The map is requied to be flat with reduced fibers. Note that $\mathcal Y$ may contain divisors that dominate $B$, i.e. ``horizontal'' divisors: this allows the possibility that the generic fiber is equipped with a nontrivial divisorial boundary. We denote the special fiber by $Y_0$.

Since the family is logarithmic, there is an associated map of cone complexes:
\[
\pi: \Sigma_{\mathcal Y}\to \mathbb R_{\geq 0}
\]
We denote the fiber of $\pi$ over the point $1$ by $\Delta_{Y_0}$. It is a possibly non-compact polyhedral complex.

The irreducible components of $Y_0$ will be denoted as $X_v$ so we have
\[
Y_0 = \bigcup_{v} X_v,
\]
where $v$ runs over the vertices of $\Delta_{Y_0}$. Typically, the degeneration $\mathcal Y$ will be a smooth projective variety fibered over $B$ in varieties of dimension at most $3$, but this assumption is not necessary until the virtual class is involved. 

We construct the associated moduli stack of expansions, using the results of~\cite[Section~3]{MR20}. First, let us treat $\Sigma_{\mathcal Y}$ as a cone complex in absolute terms, and ignore the map to $\mathbb R_{\geq 0}$. The constructions in~\cite{MR20} give rise to a set of embedded $1$-complexes in $\Sigma_{\mathcal Y}$ and this set can be given the structure of a cone complex. Let us denote this $T'(\Sigma_{\mathcal Y})$. 

We say a $1$-complex in $\Sigma_{\mathcal Y}$ is \textit{vertical} if it is contracted by the map $\pi$ above. Let $|T(\Sigma_{\mathcal Y}/\RR_{\geq 0})|$ denote the subset of  $|T'(\Sigma_{\mathcal Y})|$ parametrizing vertical $1$-complexes. 

\begin{lemma}
    The subset $|T(\Sigma_{\mathcal Y}/\RR_{\geq 0})|$ is the set of points in a union of cones in $T'(\Sigma_{\mathcal Y})$, for any cone complex structure on the latter.
\end{lemma}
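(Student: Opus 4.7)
The plan is to show the stronger statement that for any cone complex structure on $T'(\Sigma_{\mathcal Y})$ and any cone $\tau$ of it, the intersection $\tau \cap |T(\Sigma_{\mathcal Y}/\mathbb R_{\geq 0})|$ is either empty or a face of $\tau$. Since faces of cones are themselves cones in $T'(\Sigma_{\mathcal Y})$, this suffices.

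First I would unpack the combinatorial data on a cone $\tau$ of $T'(\Sigma_{\mathcal Y})$. By the construction recalled in Section~\ref{sec: tropical-moduli-spaces}, points of $\tau$ parametrize embedded $1$-complexes of a fixed combinatorial type: for each vertex $v$ of the underlying graph there is a specified minimal cone $\sigma_v \subset \Sigma_{\mathcal Y}$ whose relative interior contains the image of $v$, for each bounded edge $e$ a minimal cone $\sigma_e$ containing the image, and for each unbounded ray $r$ a direction vector $d_r$ lying in some ray of $\Sigma_{\mathcal Y}$. The vertex placement assembles into linear maps $\phi_v\colon \tau \to \sigma_v$.

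Next I would translate verticality into linear conditions on $\tau$. The $1$-complex $\mathbf G_p$ at a point $p \in \tau$ is vertical exactly when $\pi$ vanishes on every vertex, edge and ray of $\mathbf G_p$. Linearity of $\pi$ on each cone of $\Sigma_{\mathcal Y}$ reduces this to: vanishing at every vertex, plus $\pi(d_r) = 0$ for every ray direction $d_r$ (vanishing on bounded edges follows from vanishing at both endpoints, since $\pi|_{\sigma_e}$ is linear and nonnegative). The direction conditions $\pi(d_r) = 0$ depend only on the combinatorial type, so they either hold identically on $\tau$ or fail everywhere; if they fail, $\tau$ contributes nothing. If they hold, then the vertex conditions are the vanishing loci of the linear functions $\pi \circ \phi_v \colon \tau \to \mathbb R_{\geq 0}$, each of which cuts out a face $F_v$ of $\tau$. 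The vertical locus in $\tau$ is then $\bigcap_v F_v$, a face of $\tau$, completing the argument.

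The argument is essentially bookkeeping once the combinatorial type associated to a cone of $T'(\Sigma_{\mathcal Y})$ is pinned down, and the only subtlety is making sure edge and ray verticality really do reduce to vertex and direction conditions. This reduction is forced by the fact that $\pi$ is a morphism of cone complexes, hence linear and nonnegative on each cone of $\Sigma_{\mathcal Y}$, so no serious obstacle arises.
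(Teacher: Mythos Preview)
Your argument has a genuine gap: you have misread the definition of ``vertical''. The paper says a $1$-complex is vertical if it is \emph{contracted} by $\pi$, meaning $\pi$ is constant on it --- the $1$-complex lies in a single fiber $\pi^{-1}(t)$ for some $t \in \RR_{\geq 0}$, not necessarily $t = 0$. This reading is confirmed by the sentence immediately following the lemma, which builds a map $T(\Sigma_{\mathcal Y}/\RR_{\geq 0}) \to \RR_{\geq 0}$ sending a vertical $1$-complex to the (unique) point to which it is contracted; if vertical meant ``lies over $0$'' this map would be constant and there would be nothing to construct.

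With the correct definition, your vertex conditions become ``$\pi \circ \phi_v(p)$ takes the same value for all vertices $v$'' rather than ``$\pi \circ \phi_v(p) = 0$ for all $v$''. The differences $\pi \circ \phi_v - \pi \circ \phi_{v'}$ are linear on $\tau$ but have no reason to be nonnegative there, so their common vanishing locus is a priori only a linear slice of $\tau$, not a face. Your stronger claim --- that the vertical locus meets each cone in a face --- therefore does not follow from your argument as written.

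The paper's proof avoids this issue by working with the complement: it argues that the \emph{non}-vertical locus is closed under generization --- if a point $p$ in the relative interior of a cone $\sigma$ parametrizes a non-vertical $1$-complex, then so does every point in the interior of any cone $\tau$ having $\sigma$ as a face. The non-vertical locus is then a union of open stars of cones, and its complement is a subcomplex.
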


\begin{proof}
    Fix a cone complex structure on $T'(\Sigma_{\mathcal Y})$ and choose a point $p$ in the relative interior of a cone $\sigma$ that parameterizes a cone complex that fails to be vertical. If $\tau$ is a cone that contains $\sigma$ as a face, then for any point $q$ in the relative interior of $\tau$, the $1$-complex corresponding to $q$ is also not vertical. It follows that we can remove the locus of non-vertical $1$-complexes by passing to a union of cones, and so the result follows. 
\end{proof}

Let us fix a cone structure $T(\Sigma_{\mathcal Y}/\RR_{\geq 0})$ as well as a universal family $\mathbf G(\Sigma_{\mathcal Y}/\RR_{\geq 0})$ of $1$-complexes over it. The latter will be compressed to just $\mathbf G$ when there is no chance of confusion.

As a consequence of the lemma, there is a canonical map
\[
T(\Sigma_{\mathcal Y}/\RR_{\geq 0})\to \RR_{\geq 0}
\]
sending a vertical $1$-complex to the point to which it is contracted.

\subsection{Decorated tropical spaces}

As in Section \ref{sec:decorated}, we can consider vertical Chow or Hilbert $1$-complexes, or tropical maps depending on the context. In DT/PT theory, for fixed $\beta$ and $\chi$, we have a diagram of cone complexes
\[
T_{\beta,\chi}(\Sigma_{\mathcal Y}/\RR_{\geq 0})\rightarrow T_{\beta}(\Sigma_{\mathcal Y}/\RR_{\geq 0})\rightarrow T(\Sigma_{\mathcal Y}/\RR_{\geq 0}).
\]
We always choose the cone structures on these spaces compatibly, so that each of the forgetful maps is an isomorphism upon restriction to each cone. Note that without further restrictions these cone complexes and the maps between them are typically not of finite type.

\subsection{Geometric moduli spaces} From the construction in the previous section, we can convert $T(\Sigma_{\mathcal Y}/\RR_{\geq 0})\to \RR_{\geq 0}$ into a map of Artin fans
\[
\mathsf A(T(\Sigma_{\mathcal Y}/\RR_{\geq 0})]\to[\mathbb A^1/\mathbb G_m].
\]
By also base changing along the canonical map $B\to[\mathbb A^1/\mathbb G_m]$ associated to the Cartier divisor $0$ in $B$, we obtain {\it the stack of expansions of a degeneration}:
\[
\mathsf{Exp}(\mathcal Y/B)\to B,
\]
which carries a universal family of expansions of $\mathcal Y$, determined by the combinatorics of $\mathbf G(\Sigma_{\mathcal Y}/\RR_{\geq 0})$.  As before, we have decorated variants $\mathsf{Exp}_{\beta}(\mathcal Y/B)$ and $\mathsf{Exp}_{\beta,\chi}(\mathcal Y/B)$ as well, and \'etale maps between them.

We can use this to build the DT space, over $B$, in this setting. Given a scheme $S\to B$, we can consider families of expansions given by commutative triangles:
\[
\begin{tikzcd}
    S\arrow{rr}\arrow{dr} & & \mathsf{Exp}(\mathcal Y/B)\arrow{dl}\\
    & B.&
\end{tikzcd}
\]
By pulling back the universal family, we obtain a family $\mathcal Y_S\to B$. The fiber over a point $s\in S$, that lies over $b\in B$ is an expansion over the fiber of $\mathcal Y$ over $b$. As before, at each point, there is well-defined notion of tube component -- a distinguished subset of the $\mathbb P^1$-bundle components. Note that components of the special fiber of $\mathcal Y$ are \textit{never} tubes. 

We can similarly study subschemes
\[
\begin{tikzcd}
    Z_S\arrow{rr}\arrow{dr} & & \mathcal Y_S\arrow{dl}\\
    & S,&
\end{tikzcd}
\]
in expansions over a fixed $S\to B$. At each geometric point of $S$, we can demand the DT stability condition, as before. This gives rise to a moduli functor
\[
\mathsf{DT}(\mathcal Y/B)\to\mathsf{Exp}(\mathcal Y/B)\to B.
\]
If we study PT stable pairs in expansions over a fixed $S$, we obtain the moduli functor $\mathsf{PT}(\mathcal Y/B)$.

The structure results from our earlier paper~\cite{MR20} carry over to this context. We summarize the outcome. 

\begin{theorem}
    The space $\mathsf{DT}(\mathcal Y/B)$ is universally closed and separated. For each fixed choice of curve class $\beta$ and holomorphic Euler characteristic $\chi$, the subspace parameterizing subschemes with these data is bounded. The map
    \[
    \mathsf{DT}(\mathcal Y/B)\to\mathsf{Exp}(\mathcal Y/B)
    \]
    is equipped with a relative perfect obstruction theory.
    Parallel results hold for $\mathsf{PT}(\mathcal Y/B).$
\end{theorem}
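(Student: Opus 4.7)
The plan is to transport each of the structural results from~\cite{MR20} for the pair case to the present relative setting, with the stack of expansions $\mathsf{Exp}(\mathcal Y/B)$ playing the role of $\mathsf{Exp}(X)$. First I would establish algebraicity: $\mathsf{DT}(\mathcal Y/B)$ is an open substack of the relative Hilbert functor of the universal expansion $\mathcal Y_{\mathsf{Exp}} \to \mathsf{Exp}(\mathcal Y/B)$, cut out by algebraic transversality and DT stability. The openness of these conditions is local along codimension-one strata and tube components; since the definitions of these strata and of tube components are structurally unchanged from the pair setting, the arguments of~\cite[Section~4]{MR20} apply essentially verbatim, as does the finiteness of automorphism groups.

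Separatedness reduces via the valuative criterion to a uniqueness statement for limits of algebraically transverse subschemes in expansions, which is local around the double divisors, so the argument of~\cite[Section~2.6]{MR20} transfers directly. The more substantial step is universal closedness. Given a family over a trait $\Spec(R)$ mapping to $B$, there are two cases. If the generic point maps into $B \setminus \{0\}$, one is reduced to the pair situation of Part~\ref{part: 1}. If instead the generic point specializes to $0 \in B$, one mimics the tropical reduction of~\cite[Section~2.6]{MR20}: the scheme-theoretic intersection of the generic subscheme with strata of an expansion determines piecewise linear data on $\Sigma_{\mathcal Y}$, and after a ramified base change one uses this data to construct a $1$-complex, together with an expansion in which the flat limit is algebraically transverse and DT stable. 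The crucial check is that the combinatorial datum produced is \emph{vertical}, i.e. fibered over a single point of $\mathbb R_{\geq 0}$, so that the resulting expansion lies in $\mathsf{Exp}(\mathcal Y/B)$; this is forced because the generic subscheme is supported inside a single fiber of $\mathcal Y \to B$. The case of a nontrivial generic expansion is handled by a further appeal to~\cite[Section~7]{MR20}.

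Boundedness for fixed $(\beta, \chi)$ follows from two inputs. First, the analogue of the balancing condition of~\cite[Section~4.4]{MR20} applied to vertical Hilbert $1$-complexes bounds the finite-type locus of $\mathsf{Exp}_{\beta,\chi}(\mathcal Y/B)$ capable of supporting a DT-stable subscheme with these invariants; here one uses that components of $Y_0$ are never declared to be tubes, so each carries a nontrivial share of the curve-class and Euler-characteristic decorations. Second, on each fixed expansion the classical boundedness of the Hilbert scheme with fixed Hilbert polynomial takes over, since every component of an expansion is birational to a projective bundle over a stratum of $\mathcal Y$. The relative perfect obstruction theory is produced exactly as in the pair case from the traceless Atiyah class of the universal ideal sheaf on $\mathcal Y_{\mathsf{Exp}}$ relative to $\mathsf{Exp}(\mathcal Y/B)$; the computation is local near the subscheme and is insensitive to whether the base is a point or $B$. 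The PT statement follows in parallel, substituting the Huybrechts--Thomas obstruction theory for stable pairs and the valuative arguments of~\cite[Section~6]{MR20}. The main obstacle is the verticality check in the valuative criterion: one must verify that the combinatorial limit produced by the reduction algorithm of~\cite{MR20} never forces horizontal expansion, because horizontal expansion would generate an expansion living outside $\mathsf{Exp}(\mathcal Y/B)$ and break the whole formalism.
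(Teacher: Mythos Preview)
Your proposal is correct but takes a longer and more hands-on route than the paper. You re-run each structural argument from~\cite{MR20} in the relative setting, worrying in particular about the verticality of the combinatorial limit in the valuative criterion. The paper instead observes that $\mathsf{DT}(\mathcal Y/B)$ is a \emph{closed} subfunctor of $\mathsf{DT}(\mathcal Y)$, the DT space of the total space $\mathcal Y$ regarded as a simple normal crossings pair in its own right: one simply imposes the closed condition that the subscheme at each geometric point be contained in a fiber of the projection to $B$. Since the structural results of~\cite{MR20} are not dimension-specific, universal closedness, separatedness, and boundedness for fixed $(\beta,\chi)$ are then inherited for free from the absolute space, and your verticality check becomes vacuous. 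For the obstruction theory both approaches coincide, though the paper singles out the key geometric point you leave implicit: the universal expansion has at worst double points (no triple intersections), so the arguments of~\cite{MPT10} and~\cite[Section~5]{MR20} apply without modification.

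Your approach has the virtue of being self-contained and not requiring the reader to think about the $4$-fold $\mathsf{DT}(\mathcal Y)$; the paper's buys brevity and sidesteps the entire tropical-reduction-with-verticality discussion that you flag as the main obstacle.
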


\begin{proof}
    The functor $\mathsf{DT}(\mathcal Y/B)$ can be viewed as a closed subfunctor of the space $\mathsf{DT}(\mathcal Y)$, where the latter parameterizes proper, at most $1$-dimensional, closed subschemes of the total space. Precisely, we demand that at every geometric point, the subscheme $Z_S\hookrightarrow \mathcal Y_S$ is contained in a fiber of the projection to $\A^1$. This establishes the universal closedness, separatedness, and boundedness after fixing numerical data. For the perfect obstruction theory, we note that the universal expansion is always a double point degeneration, i.e. there are no triple points. The arguments of~\cite[Section~3]{MPT10} therefore carry over without change, as in our first paper~\cite[Section~5]{MR20}.  
\end{proof}

There are decorated variants $\mathsf{Exp}_{\beta}(\mathcal Y/B)$ and $\mathsf{Exp}_{\beta,\chi}(\mathcal Y/B)$ as well, and the forgetful map from $\mathsf{DT}_{\beta,\chi}(\cY/B)$ factors over these.

\subsection{Deformation invariance} We state the deformation invariance property of logarithmic DT theory. 

\begin{theorem}
    Let $\mathcal Y\to B$ be a simple normal crossings degeneration of a threefold. The space $\mathsf{DT}_{\beta,
    \chi}(\mathcal Y/B)$ has a relative obstruction theory over $B$. As a result, there is an associated virtual class in Borel--Moore homology:
    \[
    [\mathsf{DT}_{\beta,
    \chi}(\mathcal Y/B)]^{\mathsf{vir}}\in \mathsf{H}_\star(\mathsf{DT}_{\beta,
    \chi}(\mathcal Y/B)).
    \]
    Furthermore, if $j_b:\{b\}\hookrightarrow B$ denotes the inclusion, and $Y_b$ denotes the corresponding fiber of $\mathcal Y$, then
    \[
    j_b^![\mathsf{DT}_{\beta,
    \chi}(\mathcal Y/B)]^{\mathsf{vir}} = [\mathsf{DT}_{\beta,
    \chi}(Y_b)]^{\mathsf{vir}}.
    \]
Parallel results hold for $[\mathsf{PT}(\mathcal Y/B)]^{\mathsf{vir}}.$
\end{theorem}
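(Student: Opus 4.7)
The plan follows the standard template for deformation invariance of sheaf-theoretic virtual classes, adapted to the logarithmic expanded setting, with the substance being the identification of fibers of the stack of expansions.

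First I would observe that $\mathsf{Exp}_{\beta,\chi}(\cY/B)\to B$ is logarithmically smooth and that any finite type open substack has relative dimension zero over $B$; combinatorially this reflects the fact that in $T_{\beta,\chi}(\Sigma_\cY/\RR_{\geq 0})$ the extra dimension over a cone of $\RR_{\geq 0}$ is absorbed by tube components, exactly as in the pairs case treated in Part~\ref{part: 1}. Consequently, the virtual pullback machinery of Manolache, applied to the relative perfect obstruction theory from the previous theorem, produces a virtual fundamental class
\[
[\mathsf{DT}_{\beta,\chi}(\cY/B)]^{\mathsf{vir}} \;=\; \phi^!\,[\mathsf{Exp}_{\beta,\chi}(\cY/B)]
\]
in Borel--Moore homology, where $\phi$ is the forgetful map. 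Composing $\phi$ with the smooth map to $B$ gives the relative obstruction theory over $B$ asserted in the statement.

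Second, for $b\in B$ I would consider the diagram
\[
\begin{tikzcd}
\mathsf{DT}_{\beta,\chi}(Y_b)\arrow{d}\arrow{r} & \mathsf{DT}_{\beta,\chi}(\cY/B)\arrow{d} \\
\mathsf{Exp}_{\beta,\chi}(Y_b)\arrow{d}\arrow{r} & \mathsf{Exp}_{\beta,\chi}(\cY/B)\arrow{d} \\
\{b\}\arrow{r}{j_b} & B
\end{tikzcd}
\]
and invoke functoriality of virtual pullback along the regular embedding $j_b$ into the smooth curve $B$. Provided both squares are Cartesian and the relative obstruction theory on the right restricts to the one on the left, compatibility of virtual pullback with composition yields
\[
j_b^!\,[\mathsf{DT}_{\beta,\chi}(\cY/B)]^{\mathsf{vir}} \;=\; [\mathsf{DT}_{\beta,\chi}(Y_b)]^{\mathsf{vir}}.
\]
The required compatibility of obstruction theories is local and follows from the fact that on both sides the POT is defined by the cotangent complex of the universal subscheme inside the universal expansion, and these universal families are compatible by construction.

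The main obstacle, which absorbs most of the work, is verifying that the bottom square is Cartesian, equivalently that $\mathsf{Exp}_{\beta,\chi}(\cY/B)\times_B\{b\} \cong \mathsf{Exp}_{\beta,\chi}(Y_b)$. For $b\neq 0$ the fiber $Y_b$ carries only the horizontal boundary structure inherited from $\cY$, and the identification is tautological from the definition of $T(\Sigma_\cY/\RR_{\geq 0})$, since vertical $1$-complexes over a smooth point of $B$ are trivial modulo the horizontal contribution. For $b=0$ one must identify the base change of $T(\Sigma_\cY/\RR_{\geq 0})$ to $1\in\RR_{\geq 0}$ with $T(\Sigma_{Y_0})$; this comes down to the observation that verticality of a $1$-complex in $\Sigma_\cY$ based at height $1$ is exactly the condition that it be supported in $\pi^{-1}(1)=\Delta_{Y_0}\cong\Sigma_{Y_0}$. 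Finally, one must rule out the usual discrepancy between the scheme-theoretic and logarithmic fibers, which holds because the cone-complex map $T(\Sigma_\cY/\RR_{\geq 0})\to\RR_{\geq 0}$ is combinatorially flat with reduced fiber over $1$ by our choice of tropical model, invoking Proposition~\ref{prop: strong-ss-reduction} and the standard fact that fine and saturated log fiber products agree with scheme-theoretic ones under combinatorial flatness with reduced fibers. The PT version is parallel: the Pandharipande--Thomas obstruction theory admits the same relative formulation, and the expansion geometry is identical, so the argument goes through verbatim with stable pairs in place of ideal sheaves.
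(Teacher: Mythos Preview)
Your approach is essentially the same as the paper's: use the relative perfect obstruction theory for $\mathsf{DT}_{\beta,\chi}(\cY/B)\to\mathsf{Exp}(\cY/B)$ from the preceding theorem, construct the virtual class via virtual pullback (the paper cites both Manolache and Kapranov--Vasserot for the Borel--Moore version), and deduce the Gysin compatibility from functoriality of virtual pullback along $j_b$.

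The one place you over-engineer is the verification that the bottom square is Cartesian. In the paper's setup, $\mathsf{Exp}(Y_0)$ and $\mathsf{DT}_{\beta,\chi}(Y_0)$ are \emph{defined} as the fibers over $0$ of the family versions (see the opening of Section~\ref{sec: decomposition-step}), so the squares are Cartesian by construction; there is no independently defined $\Sigma_{Y_0}$ to match, since $Y_0$ is singular and its tropical avatar is the polyhedral complex $\Delta_{Y_0}=\pi^{-1}(1)$ rather than a cone complex. The only content is that the restricted obstruction theory agrees with the one built in~\cite{MR20} over smooth fibers and with the one just constructed over $0$, which the paper asserts directly. Your combinatorial-flatness remark about $T(\Sigma_\cY/\RR_{\geq 0})\to\RR_{\geq 0}$ is correct but is already baked into the choice of cone structure made in constructing $\mathsf{Exp}(\cY/B)$, so no separate appeal to semistable reduction is needed.
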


The same construction also gives virtual classes in Chow groups, but the Borel--Moore statement is more useful to us here.

\begin{proof}
    We have already noted that there is a relative obstruction theory for the morphism
    \[
    \mathsf{DT}_{\beta,
    \chi}(\mathcal Y/B)\to \mathsf{Exp}(\mathcal Y/B).
    \]
    If we pullback this obstruction theory along the inclusion $j_b$, we recover the obstruction theory on the smooth fibers of $\mathcal Y$ from~\cite{MR20}, and the obstruction theory of the previous section for the fiber over $0$. We can define Borel--Moore virtual classes by applying virtual pullback defined by Kapranov--Vasserot~\cite[Section~3.2]{KV19}. Alternatively, we can use the Chow theoretic virtual pullback~\cite{Mano12} and then use the cycle class map. The statement of the theorem now follows from formal properties of the pullback. 
\end{proof}


\section{Decomposition into virtual strata}\label{sec: decomposition-step}

\subsection{Virtual stratification} We have constructed a moduli space equipped with structure maps:
\[
\mathsf{DT}_{\beta,\chi}(\mathcal Y/B)\to \mathsf{Exp}(\mathcal Y/B)\to B.
\]
The second map has relative dimension $0$, but the fiber over $0$ will typically be reducible. Let us now pass to the fibers over $0$:
\[
\mathsf{DT}_{\beta,\chi}(Y_0)\to \mathsf{Exp}(Y_0)\to \Spec(\mathbb C). 
\]
By the discussion in the previous section, the first arrow is virtually smooth, i.e. equipped with a relative perfect obstruction theory. However, the $0$-dimensional stack $\mathsf{Exp}(Y_0)$ is \textit{reducible}. Locally in the smooth topology, it is isomorphic to collection of toric varieties glued along their boundary divisors. In fact, since we are free to choose the polyhedral structure on $\mathsf T(\Sigma_{\mathcal Y}/\mathbb R_{\geq 0})$ so all its cones are smooth, locally in the smooth topology, this stack has the structure a normal crossings variety. 

Since $\mathsf{Exp}(Y_0)$ is stratified, by pulling back along $\mathsf{DT}_{\beta,\chi}(\mathcal Y/B)\to \mathsf{Exp}(\mathcal Y/B)$, we obtain a ``virtual'' stratification on the moduli space of subschemes\footnote{The virtual stratification is a stratification in the ``weak'' sense: a decomposition into locally closed subschemes, such that the closure of a stratum is \textit{contained} in a union of strata. We use the term ``virtual'' rather than ``weak'' to remind the reader that it is the pullback stratification along the map with the perfect obstruction theory. Indeed, this makes each virtual stratum virtually smooth.}. 

The discussion above has a combinatorial parallel. The fiber over the point $1$ in
\[
\mathsf T(\Sigma_{\mathcal Y})\to \mathbb R_{\geq 0}
\]
is a polyhedral (and not necessarily conical) complex. We denote it $\mathsf T(\Delta_{Y_0})$.

\begin{proposition}\label{Exp-decomposition}
    The strata of $\mathsf{Exp}(Y_0)$ are in bijection with the vertices of $\mathsf T(\Delta_{Y_0})$.
\end{proposition}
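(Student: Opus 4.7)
The plan is to apply the cone-complex / Artin-fan dictionary of Section~\ref{sec: tropical-toolbox} to the defining presentation
\[
\mathsf{Exp}(\mathcal Y/B) \;=\; \mathsf A_{T(\Sigma_{\mathcal Y}/\mathbb R_{\geq 0})}\times_{[\mathbb A^1/\mathbb G_m]}B,
\]
where the map from the Artin fan to $[\mathbb A^1/\mathbb G_m]$ is controlled combinatorially by the projection $\pi\colon T(\Sigma_{\mathcal Y}/\mathbb R_{\geq 0})\to\mathbb R_{\geq 0}$. Under this dictionary the locally closed strata of the Artin fan are in bijection with the cones of $T(\Sigma_{\mathcal Y}/\mathbb R_{\geq 0})$, so the task is to pick out which cones contribute after base change to $\{0\}\subset B$ and to translate the answer through the slice $\Delta_{Y_0}=\pi^{-1}(1)$.

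For the first step, a cone $\tau$ contributes a stratum of $\mathsf{Exp}(Y_0)$ precisely when the induced map $\tau\to[\mathbb A^1/\mathbb G_m]$ lands in the closed point $B\mathbb G_m$, which happens exactly when $\pi|_\tau$ is surjective onto $\mathbb R_{\geq 0}$; otherwise $\pi|_\tau$ is the zero map and the associated Artin cone factors through the open point. Intersecting any such surjective $\tau$ with $\pi^{-1}(1)$ then produces a polyhedral cell of $T(\Delta_{Y_0})$ of dimension $\dim\tau-1$, and this establishes a bijection between the surjective cones of $T(\Sigma_{\mathcal Y}/\mathbb R_{\geq 0})$ and the cells of $T(\Delta_{Y_0})$. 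Because $\mathsf{Exp}(Y_0)$ is zero-dimensional and, locally in the smooth topology, normal crossings, the strata in the sense of the proposition, i.e.\ the irreducible components, are the closures of the generic (open) strata, and these are indexed by the \emph{minimal} surjective cones. These minimal cones are precisely the rays of $T(\Sigma_{\mathcal Y}/\mathbb R_{\geq 0})$ surjecting onto $\mathbb R_{\geq 0}$, and their slices by $\pi^{-1}(1)$ are exactly the vertices of $T(\Delta_{Y_0})$.

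The only substantive point to verify is that distinct surjective rays yield distinct irreducible components, so that the bijection is well-defined rather than a mere surjection. This follows after choosing a smooth cone structure on $T(\Sigma_{\mathcal Y}/\mathbb R_{\geq 0})$, which is allowed by Proposition~\ref{prop: strong-ss-reduction} and the surrounding discussion: in that case the associated Artin fan is smooth-locally a union of quotients $[\mathbb A^n/\mathbb G_m^n]$ meeting transversely along their boundary strata, and each surjective ray contributes a distinct open local component of $\mathsf{Exp}(Y_0)$. This smoothing reduction is the only step where care is needed; the rest of the proof is a routine application of the dictionary.
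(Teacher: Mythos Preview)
Your proof is correct and follows essentially the same approach as the paper: both identify the irreducible components of $\mathsf{Exp}(Y_0)$ with the rays of $T(\Sigma_{\mathcal Y}/\mathbb R_{\geq 0})$ that surject onto $\mathbb R_{\geq 0}$, and then observe that these rays are exactly the vertices of the slice $T(\Delta_{Y_0})=\pi^{-1}(1)$. The paper states this in two sentences, whereas you unpack the Artin-fan dictionary more explicitly; your final injectivity check (distinct rays give distinct components) is already implicit in the cone-space/Artin-fan equivalence and need not be argued separately.
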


\begin{proof}
    The components of the fiber over $0$ in $\mathsf{Exp}(\mathcal Y/B)$ are in bijection with those rays of $\mathsf T(\Sigma_{\mathcal Y})$ that map surjectively onto $\mathbb R_{\geq 0}$. These are precisely in bijection with the vertices of $\mathsf T(\Delta_{Y_0})$.
\end{proof}

\begin{definition}
A \textit{rigid $1$-complex} are the $1$-complexes corresponding to the vertices of $\mathsf T(\Delta_{Y_0})$.  Similarly, \textit{rigid} Chow $1$-complexes are those corresponding to vertices of  $\mathsf{T}_{\beta}(\Delta_{Y_0})$; rigid Hilbert $1$-complexes correspond to vertices of $\mathsf{T}_{\beta,\chi}(\Delta_{Y_0})$.
\end{definition}

By the choice of our cone structures on decorated spaces, a Chow or Hilbert $1$-complex is rigid if and only if its underlying $1$-complex is rigid.

It follows that the virtual strata of $\mathsf{DT}_{\beta,\chi}(Y_0)$ are in bijection with a subset of these rigid $1$-complexes. Note that this can be a strict subset, as we have no way of guaranteeing non-emptiness of any particular fiber of the map $\mathsf{DT}_{\beta,\chi}(Y_0)\to \mathsf{Exp}(Y_0)$. Nevertheless, our arguments will typically be insensitive to this, so allowing the possibility of a finite set of distinct, empty strata, it is harmless to think about the virtual strata as being in bijection with these vertices.

\subsection{Decomposition} By the deformation invariance above, in order to understand the virtual fundamental class of the general fiber of the DT space of $\mathcal Y/B$, it suffices to understand the fiber over $0$. There is a natural morphism:
\[
\mathsf{DT}_{\beta,\chi}(Y_0)\to \mathsf{Exp}_{\beta,\chi}(Y_0).
\]
As noted previously, the stack $\mathsf{Exp}_{\beta,\chi}(Y_0)$ has several irreducible components corresponding to rigid Hilbert $1$-complexes.

Let us now fix one such rigid Hilbert $1$-complex $\gamma$. There is an irreducible component
\[
\mathsf{Exp}_{\gamma}(Y_0)
\]
associated to $\gamma$. Denote the fiber product of the inclusion of this component with $\mathsf{DT}_{\beta,\chi}(Y_0)$ by $\mathsf DT_\gamma(Y_0)$. As before, the map
\[
\mathsf{DT}_{\gamma}(Y_0)\to \mathsf{Exp}_{\gamma}(Y_0)
\]
is equipped with a perfect obstruction theory. By pulling back the fundamental class, we obtain a virtual class $[\mathsf{DT}_{\gamma}(Y_0)]^{\mathsf{vir}}$. 

We now state the decomposition theorem for the virtual class.

\begin{theorem}
There is an equality of homology classes on $\mathsf{DT}_{\beta,\chi}(Y_0)$ given by
\[
\mathsf{DT}_{\beta,\chi}(Y_0) = \sum_\gamma [\mathsf{DT}_{\gamma}(Y_0)]^{\mathsf{vir}},
\]
where $\gamma$ ranges over the set of rigid Hilbert $1$-complexes.
\end{theorem}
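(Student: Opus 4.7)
The plan is to transport a fundamental-class decomposition on the stack of expansions through the virtual pullback. The setup is arranged to make this run cleanly: the forgetful morphism $p\colon \mathsf{DT}_{\beta,\chi}(Y_0)\to \mathsf{Exp}_{\beta,\chi}(Y_0)$ carries a relative perfect obstruction theory, and by construction $[\mathsf{DT}_{\beta,\chi}(Y_0)]^{\mathsf{vir}} = p^{!}[\mathsf{Exp}_{\beta,\chi}(Y_0)]$, with the analogous identity for each component $\mathsf{Exp}_\gamma(Y_0)$ and its pullback $\mathsf{DT}_\gamma(Y_0)$.

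First I would establish the decomposition of fundamental cycles
\[
[\mathsf{Exp}_{\beta,\chi}(Y_0)] = \sum_\gamma [\mathsf{Exp}_\gamma(Y_0)],
\]
summed over rigid Hilbert $1$-complexes. By Proposition~\ref{Exp-decomposition} applied to the decorated cone complex $T_{\beta,\chi}(\Sigma_{\mathcal Y}/\mathbb{R}_{\geq 0})$ via the \'etale cover of Section~\ref{sec:decorated}, the irreducible components of this zero-dimensional stack are in natural bijection with rigid Hilbert $1$-complexes. Locally in the smooth topology the stack is modeled on a union of toric strata in a simple normal crossings configuration specialized to the origin of the base, so its fundamental cycle splits as the above sum of components, up to multiplicities.

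Second I would guarantee that these multiplicities are all equal to one. The multiplicity attached to the vertex $v$ is the integer $d_v$ such that the primitive generator of the corresponding surjecting ray maps to $d_v \in \mathbb{Z}\subset \mathbb{R}_{\geq 0}$. The inverse system of cone structures on $T(\Sigma_{\mathcal{Y}}/\mathbb{R}_{\geq 0})$ is cofinal for refinements with reduced fibers over $\mathbb{R}_{\geq 0}$ (this is exactly the combinatorial reduced-fiber condition of Proposition~\ref{prop: strong-ss-reduction}). Since the moduli construction and the resulting virtual class are insensitive to the choice of sufficiently fine cone structure, I would pass to one for which every $d_v = 1$.

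Finally I would invoke the standard functoriality of virtual pullback: it is additive on cycles, compatible with base change along closed immersions, and the pullback of the obstruction theory to each component $\mathsf{Exp}_\gamma(Y_0)$ is precisely the obstruction theory whose virtual pullback computes $[\mathsf{DT}_\gamma(Y_0)]^{\mathsf{vir}}$. Applying $p^{!}$ term-by-term then yields
\[
[\mathsf{DT}_{\beta,\chi}(Y_0)]^{\mathsf{vir}} = p^{!}\!\left(\sum_\gamma [\mathsf{Exp}_\gamma(Y_0)]\right) = \sum_\gamma [\mathsf{DT}_\gamma(Y_0)]^{\mathsf{vir}},
\]
as claimed. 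The one place that requires genuine care is the middle step controlling multiplicities; everything else is a formal consequence of the obstruction-theoretic package assembled in Section~\ref{sec: DT-over-base}, and the argument for $\mathsf{PT}$ is identical, only the moduli interpretation of the source changes.
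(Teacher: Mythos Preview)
Your proposal is correct and follows essentially the same route as the paper: decompose the fundamental class of $\mathsf{Exp}_{\beta,\chi}(Y_0)$ into its irreducible components indexed by rigid Hilbert $1$-complexes, then apply virtual pullback term-by-term using the formal properties of Manolache/Kapranov--Vasserot. Your discussion of the multiplicity-one step is more explicit than the paper's, which simply absorbs it into the freedom to choose the cone structure (and notes by contrast in Remark~\ref{rem: decomposition-gw} that the GW decomposition does acquire a nontrivial base-order factor).
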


\begin{proof}
    The result follows from the irreducible component decomposition of $\mathsf{Exp}_{\gamma}(Y_0)$ and the formal properties of the virtual pullback. Again, we can either use~\cite{KV19}, or Manolache's virtual pullback~\cite{Mano12}. In the latter case, we simply note that the class descends from cycles to Chow groups. The statement in Borel--Moore homology follows by applying the cycle class map. 
\end{proof}

\begin{remark}\label{rem: decomposition-gw}
Analogous results in the GW case can be found in~\cite[Theorem~3.11]{ACGS15}, though we note that much of loc. cit. concerns itself with exposing the virtual stratification of the space of logarithmic stable maps in terms of tropical curves. Our formalism essentially builds this into the definition, rendering it something of a tautology.  A \textit{rigid} tropical stable map
to $\Delta_{Y_0}$ is a tropical stable map whose image $1$-complex is rigid; as in the DT/PT setting,
virtual irreducible components of
$\mathsf{GW}(Y_0)$ are indexed by rigid tropical stable maps. 

In GW theory, there is an additional multiplicative factor. It is referred to in~\cite{ACGS15} as the {\it base order} and also appears in~\cite{AF11,CheDegForm}. If $\gamma$ is the rigid tropical stable map with induced Chow $1$-complex $\overline \gamma$, then define $\ell(\gamma)$, or simple $\ell$ if $\gamma$ is clear, to be the product, taken over all edges $e$ in the Chow $1$-complex $\overline \gamma$, of the least common multiplies of the dilation factors $m_{e'}$ of edges $e'$ in $\gamma$ that map to $e$.

The decomposition formula in GW theory states that the virtual class associated to the general fiber is equal to the sum of virtual classes of spaces associated to the rigid tropical maps, weighted by $\ell(\gamma)$.
\end{remark}

\subsection{Uniform rigid $1$-complexes}

If we fix $\beta$ but allow $\chi$ to vary, then a priori there are infinitely many rigid $1$-complexes appearing in the decomposition formula.  This would make the degeneration formula ill-suited for studying generating functions.  Fortunately, for the appropriate choice of cone structure, we have the following uniformity in which rigid complexes arise.

\begin{proposition}

Fix a curve class $\beta$.  There is a finite set $\overline{\gamma_1}, \dots, \overline{\gamma_r}$ of rigid $1$-complexes such that every rigid Chow (or Hilbert) $1$-complex with curve class $\beta$ has underlying $1$-complex in this set.
\end{proposition}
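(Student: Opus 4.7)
The plan is to combine the boundedness of pure parts, established in Section~\ref{sec: pure-complexes}, with the rigidity hypothesis on the decorated $1$-complex. By the choice of cone structures in Section~\ref{sec:decorated}, the forgetful morphisms $T_\beta \to T$ and $T_{\beta,\chi} \to T$ are isomorphisms upon restriction to each cone, so they send vertices to vertices. The underlying $1$-complex of a rigid Chow or Hilbert $1$-complex is therefore itself rigid, and the Hilbert case reduces immediately to the Chow case since the underlying $1$-complex depends only on the Chow data. It suffices to bound the set of rigid $1$-complexes in $\Delta_{Y_0}$ that can support a Chow decoration of total class $\beta$.

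First, I would invoke the boundedness lemma from the proof of Proposition~\ref{prop: uniform-flattening}, applied in the relative setting: since $\beta$ is fixed, the pure part of every Chow $1$-complex of class $\beta$ lies in a finite subcomplex $|T^{\mathsf{pure}}_{\sf bd}|$. The retraction $\rho$ is defined combinatorially by erasing free vertices and linear $2$-valent vertices, and carries vertices of $|T|$ to vertices of $|T^{\mathsf{pure}}|$; consequently the pure part of any rigid $1$-complex arising in our setting is itself rigid and a vertex of $|T^{\mathsf{pure}}_{\sf bd}|$. There are therefore only finitely many possibilities for the pure part.

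It remains to show that only finitely many rigid $1$-complexes have a given pure part $\overline{\gamma}^{\mathsf{pure}}$ and admit a Chow decoration of class $\beta$. Any such complex is obtained from $\overline{\gamma}^{\mathsf{pure}}$ by inserting free vertices and linear $2$-valent vertices. Rigidity forces each inserted vertex to lie at a $0$-dimensional face of $\Delta_{Y_0}$, since an insertion in the relative interior of a positive-dimensional cell would provide a continuous modulus of the complex. Each edge of $\overline{\gamma}^{\mathsf{pure}}$ traverses only finitely many such $0$-dimensional faces, bounding the number of linear $2$-valent insertions. For free vertices, one may assume each carries a nonzero effective class, since a free vertex with trivial decoration can be erased without altering the Chow $1$-complex, and the constraint that vertex classes sum to $\beta$ then bounds their number. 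The main obstacle will be this last step, ensuring that the combinatorial rigidity of the undecorated $1$-complex truly controls the placement of inserted vertices; this rests on the isomorphism of cones under the forgetful map noted above together with the combinatorial description of $\rho$ as a vertex-erasure.
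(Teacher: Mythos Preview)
Your approach shares the paper's broad strategy: reduce to the pure part via the retraction $\rho$, use boundedness to restrict to finitely many pure parts, then bound the fiber of $\rho$ over each. But the final step has two genuine gaps.

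First, the claim that rigidity forces each inserted vertex to lie at a $0$-dimensional face of $\Delta_{Y_0}$ is not justified by the ``continuous modulus'' argument. Rigidity means being a vertex of the \emph{chosen} cone structure on $T(\Delta_{Y_0})$, and this structure has been subdivided (for instance via the relative barycentric subdivisions in the proof of Proposition~\ref{prop: uniform-flattening}) to achieve combinatorial flatness. With such a subdivision there can be vertices of $T(\Delta_{Y_0})$ corresponding to $1$-complexes with inserted points in the relative interior of a cell of $\Delta_{Y_0}$. The continuous family you describe is present, but it may cross cone boundaries and hit new vertices along the way. What is actually true is that inserted vertices must lie at vertices of the completed subdivision of $\Delta_{Y_0}$ determined by the flattening construction---a finite set, but strictly larger than the vertex set of $\Delta_{Y_0}$.

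Second, your argument bounding the number of free vertices via nonzero class is flawed: erasing a free vertex with $\beta_v = 0$ changes the underlying $1$-complex, which is precisely the object you are trying to enumerate, so you cannot discard it. The step is in any case unnecessary: once you know inserted vertices lie in a finite vertex set and that a $0$-complex consists of \emph{distinct} points, the number of free vertices is automatically bounded by the size of that set, with no appeal to $\beta$.

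The paper resolves both issues at once by using the description of $\rho$ as the relative moduli of $0$-complexes over $T^{\mathsf{pure}}$. Over a vertex of $T^{\mathsf{pure}}_{\sf bd}(\Delta)$, the fiber is the moduli of $0$-complexes on a fixed complete subdivision of $\Delta$, and the vertices of this moduli space are exactly the $0$-complexes supported on the finitely many subdivision vertices. This gives finiteness of the fiber vertices directly, without separating into free and linear $2$-valent insertions or invoking $\beta$.
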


In other words, all rigid Hilbert $1$-complexes arise from choosing a $1$-complex from this finite list and distributing curve class and Euler characteristic decorations among the vertices/edges.  In particular, there are only finitely many vertices in $\mathsf{T}_{\beta}(\Delta_{Y_0})$.

\begin{proof}

This follows from a similar argument to Theorem~\ref{prop:flat-evaluation} using pure $1$-complexes and combinatorial flatness.
Recall from Section~\ref{sec: pure-complexes} the notions of pure $1$-complexes, spaces $\mathsf{T}^\mathsf{pure}$ of pure $1$-complexes, and the realization of the forgetful map
$\varphi\colon\mathsf{T} \rightarrow \mathsf{T}^\mathsf{pure}$ as the relative space of $0$-complexes, of some complete subdivision extending the universal $1$-complex.  In our setting, by taking fibers over $1$, we have polyhedral (as opposed to conical) versions of these constructions.

For fixed $\beta$, only a bounded subcomplex of pure complexes arise, so after choosing complex structures appropriately, we have a composition of combinatorially flat maps
\[
\mathsf{T}_{\beta}(\Delta) \rightarrow \varphi^{-1}(\mathsf{T}^\mathsf{pure}_{\sf bd}(\Delta)) \xrightarrow{\varphi} \mathsf{T}^\mathsf{pure}_{\sf bd}(\Delta).
\]

If we fix a vertex in the last space here, the vertices in its preimage with respect to $\varphi$ correspond to $0$-complexes obtained by distributing points to the vertices of a complete subdivision of $\Delta$.
In particular there are only finitely many such vertices.  These give us the finitely many $1$-complexes in the statement of the proposition.

\end{proof}

\begin{remark}
In fact, one can show that, in the statement of the proposition, we can ignore rigid $1$-complexes associated to $\beta$ that are not pure. By the splitting formula, if there are any free or linear $2$-valent vertices, they must support a non-minimal Hilbert polynomial and the corresponding invariant will vanish.  In fact, one can go further and show that the only rigid $1$-complexes that matter are those which are \textit{topologically} rigid:  any nearby $1$-complex has different combinatorial type.  The details for both statements will appear elsewhere.
\end{remark}


\section{Splitting formulas and compatibility of the conjectures}\label{sec: splitting-compatibility}

\subsection{Preliminaries} 

Let $\overline{\gamma}$ be a fixed rigid Chow $1$-complex, indexed by a vertex of the polyhedral complex $T_{\beta}(\Delta_{\cY})$.  In turn, each vertex $v$ of $\overline{\gamma}$ corresponds to an irreducible component $X_v$ which is a torus-bundle over a stratum of $Y_0$. The goal of the splitting formula is to calculate the contribution of $\overline{\gamma}$ 
in the decomposition formula for GW/DT/PT invariants in terms of the corresponding logarithmic invariants associated to its vertices.  More precisely, we will split, among vertices, the contribution to DT/PT invariants of each rigid Hilbert $1$-complex $\gamma$ lying over $\overline{\gamma}$ (and similarly with GW invariants and rigid tropical stable maps).

We may modify $\cY$ by a subdivision and root construction in such a way that the vertices of $\overline{\gamma}$ are a subset of the vertices of the height $1$ slice $\Delta_{\mathcal Y}$ of $\Sigma_{\mathcal Y}$, and similarly that the edges of $\overline{\gamma}$ are a subset of the edges of this slice. In particular, we can assume that $\overline{\gamma}$ has no tube vertices.

Given a vertex $v$ of $\overline{\gamma}$, let $\beta(v) \in H_2(X_v)$ denote the curve class associated to $\overline{\gamma}(v)$.
Let $e_1(v),\ldots, e_m(v)$ be the edges incident to $v$, which are decorated with integers $n_1, \dots, n_m$ from the labelling of $\gamma$.  

Using Theorem~\ref{prop:flat-evaluation}, we have a uniform combinatorial flattening of the evaluation maps
\[
T_{\beta(v)}(\Sigma_v) \rightarrow \left(\prod P(e_i(v)\right)^{\dagger}.
\]
We stress again that this map is not of finite type, and infinitely many cones of the domain might map to a single cone in the codomain and therefore the specific flattening construction of Theorem~\ref{prop:flat-evaluation} must be applied. 

\subsection{Cycle-theoretic splitting: sheaf theory}

Fix a rigid Hilbert $1$-complex $\gamma$ lying over $\overline{\gamma}$; concretely, this means
we have assigned $\chi$-labels to each of the vertices.
We use $\gamma(v)$ as the symbol used to indicate the induced discrete data for each vertex $v$ coming from $\gamma$.

Associated to the vertex $v$, we have the product of Hilbert schemes
\[
\mathsf{Hilb}^{n(v)}(D_v):=\prod_{i=1}^m \mathsf{Hilb}^{n_i}(D_{e_i(v)}). 
\]
Each factor on the right is the logarithmic Hilbert scheme of points on the appropriate divisor, with the number of points determined by the intersection number. 
From the flattened tropical evaluation map recalled above, we can take the corresponding tropical models, so there is a combinatorially flat evaluation map
\[
\mathsf{ev}_v\colon  \mathsf{DT}_{\gamma(v)}(X_v) \to (\mathsf{Hilb}^{n(v)}(D_v))^{\dagger},
\]
where $(-)^\dagger$ denotes a log modification of the product over $e_i(v)$.
Consolidating these over vertices, we have 
\[
\mathsf{ev}:\prod_v \mathsf{DT}_{\gamma(v)}(X_v) \to \prod_v (\mathsf{Hilb}^{n(v)}(D_v))^{\dagger}
\]
Note that this morphism is an external product, i.e. it is obtained by a product of maps, one attached to each vertex $v$. This will be important in splitting contributions across vertices.

To formulate our splitting formula, we require the following construction, further details about which appear in Section~\ref{sec: comb-splitting}. 

\begin{construction}
There is a cutting map $$\kappa: \mathsf{DT}_\gamma(Y_0)\to \prod_{v\in \gamma} \mathsf{DT}_{\gamma(v)}(X_v).$$
This map is constructed first at the combinatorial level. Given a $1$-complex $\gamma'$ that retracts onto $\gamma$, a $1$-complex on the factor corresponding to a vertex $v$ of $\gamma$ is determined by a neighborhood of $v$. This gives rise to a cutting map of cone complexes, and consequently a map of stacks
\[
\mathsf{Exp}_\gamma(Y_0)\to \prod_v \mathsf{Exp}_{\gamma(v)}(X_v).
\]
In practice, this amounts to taking an expansion of $Y_0$ and performing a partial normalization prescribed by $\gamma$. Intersecting with the subscheme gives rise to a commutative diagram involving DT spaces and stacks of expansions
\[
\begin{tikzcd}
\mathsf{DT}_\gamma(Y_0)\ar{r}{\kappa}\ar{d} & \prod_{v} \mathsf{DT}_{\gamma(v)}(X_v)\ar{d}\\
\mathsf{Exp}_\gamma(Y_0)\ar{r}& \prod_v \mathsf{Exp}_{\gamma(v)}(X_v).
\end{tikzcd}
\]
\end{construction}

For every edge $e$ of $\gamma$, the Hilbert scheme for the corresponding stratum $D_e$ occurs twice in the product $\prod_v \mathsf{Hilb}(D_v)$, once for each adjacent vertex.  As a result, we can consider the diagonal $$\Delta_H \subset \prod_v \mathsf{Hilb}_{\beta}(D_v).$$  
We obtain a cycle
\[
\Delta_H^\dagger\subset\prod_v \mathsf{Hilb}^\dagger_{\beta(v)}(D_v)
\]
by taking strict transform.

We now precisely state the strongest form of the logarithmic degeneration formula.

\begin{theorem}\label{thm: main-splitting-theorem}
 There is an equality of homology classes
\[
\kappa_\star\left[\mathsf{DT}_\gamma(Y_0) \right]^{\mathsf{vir}} = \left(\mathsf{ev}\right)^\star\Delta^\dagger\cap \left[\prod_v \mathsf{DT}_{\gamma(v)}(X_v) \right]^{\mathsf{vir}}.
\]
The analogous formula holds for $\kappa_\star\left[\mathsf{PT}_\gamma(Y_0) \right]^{\mathsf{vir}}$.
\end{theorem}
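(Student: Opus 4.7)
The plan is to identify $\mathsf{DT}_\gamma(Y_0)$ with a fiber product of $\prod_v \mathsf{DT}_{\gamma(v)}(X_v)$ cut out by the diagonal cycle $\Delta_H^\dagger$, and then extract the cycle-level identity via virtual pullback. The principal technical hurdle is that moduli spaces in logarithmic geometry naturally live as logarithmic fiber products, which in general differ from scheme-theoretic fiber products; we reconcile these by invoking the combinatorial flatness of the evaluation maps from Theorem~\ref{prop:flat-evaluation}, together with the criterion that the two notions of fiber product agree when the tropical map is combinatorially flat with reduced fibers. Roughly, I would proceed in three stages: a purely tropical splitting on the stacks of expansions, a coarse gluing on the DT moduli, and a virtual refinement.

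First I would carry out the splitting combinatorially. A rigid Hilbert $1$-complex $\gamma$ indexes a vertex of $T_{\beta,\chi}(\Delta_{Y_0})$, hence a component $\mathsf{Exp}_\gamma(Y_0)$ of the decomposition in Proposition~\ref{Exp-decomposition}. The cutting map records, around each vertex $v$ of $\gamma$, the local neighborhood inside any deformation of $\gamma$; this produces a morphism $\mathsf{Exp}_\gamma(Y_0)\to\prod_v\mathsf{Exp}_{\gamma(v)}(X_v)$ compatible with the tropical evaluation maps to the moduli of $0$-complexes on the shared boundary strata. The combinatorial claim, to be proven in Section~\ref{sec: combinatorial-degeneration}, is that after flattening these evaluations, $\mathsf{Exp}_\gamma(Y_0)$ is the iterated logarithmic fiber product of the $\mathsf{Exp}_{\gamma(v)}(X_v)$ over the diagonal in $\prod_v \mathsf{Exp}_0(D_v)$. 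The input is that a deformation of $\gamma$ is determined by compatible local deformations around each vertex that agree on shared edge lengths, a direct computation in cone complexes.

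Next I would upgrade this to a coarse gluing at the level of DT moduli in Section~\ref{sec: moving-gluing}, producing a morphism
\[
\mathsf{DT}_\gamma(Y_0)\longrightarrow \prod_v \mathsf{DT}_{\gamma(v)}(X_v)\times_{\prod_v \mathsf{Hilb}^{n(v)}(D_v)} \Delta_H^\dagger
\]
and showing it is an isomorphism. Given a family of subschemes in an expansion of $Y_0$ whose tropicalization is $\gamma$, its partial normalization along the double loci prescribed by the edges of $\gamma$ produces algebraically transverse subschemes on the component pairs $X_v$, which by transversality induce matching zero-cycles across every shared divisor $D_e$; this gives the morphism. Inverse construction is a pointwise gluing statement: transverse families on the pieces with matching boundary zero-cycles (and matching DT-stability on tube components) uniquely reconstruct a family on the glued expansion, exactly as in~\cite{LiWu15,MNOP06b} for smooth pairs.

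Finally, the virtual refinement in Section~\ref{sec: main-splitting-formula} is carried out by comparing the relative obstruction theories of the two sides. Since the obstruction theories on both sides come from the logarithmic tangent bundle of the ambient expansion, and the singular loci along which one glues contribute matching terms to the deformation complexes, the relative obstruction theory for $\mathsf{DT}_\gamma(Y_0)\to\mathsf{Exp}_\gamma(Y_0)$ is the pullback of $\boxtimes_v$ of the vertex theories along the cutting map; the discrepancy is absorbed in the conormal to the diagonal. Combining this compatibility with virtual pullback in the sense of Manolache or Kapranov--Vasserot, and using combinatorial flatness so that the logarithmic fiber product coincides with the ordinary one (so that intersecting with $\Delta_H^\dagger$ is literally Gysin pullback), yields the claimed equality. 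The main obstacle throughout is bookkeeping the effect of the tropical modifications (the $\dagger$ decorations) under non-flat logarithmic blowups; this is exactly what combinatorial flatness is designed to handle, and it is the reason the uniform flattening of Theorem~\ref{prop:flat-evaluation} is essential rather than a merely convenient choice. The PT case follows by the same argument, since PT stable pairs and DT subschemes agree in a neighborhood of every rank-$1$ stratum and tube component, which is where all the gluing occurs.
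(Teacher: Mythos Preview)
Your three-stage outline (tropical splitting, coarse gluing, virtual refinement) matches the paper's architecture, and your identification of combinatorial flatness as the key mechanism is correct. However, your stage 2 skips a genuine technical step, and this leads to a gap.

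You claim a direct scheme-theoretic isomorphism
\[
\mathsf{DT}_\gamma(Y_0)\;\cong\;\prod_v \mathsf{DT}_{\gamma(v)}(X_v)\times_{\prod_v (\mathsf{Hilb}^{n(v)}(D_v))^\dagger}\Delta_H^\dagger
\]
justified by pointwise gluing. The paper does \emph{not} prove this. Instead, it first passes to a modification $\bigtimes_v\mathsf{DT}(X_v)$ whose cone structure makes the tropical locus $\mathsf{ev}^{-1}(\Delta)$ a genuine subcomplex, and restricts to the open $\bigtimes_v^\circ$ over which the expansions of the gluing divisors from the two sides literally agree. Only over this open does a \emph{family} Hilbert scheme $\mathsf H^{\{2\}}(\cD_e)\to\bigtimes_v^\circ$ exist, and it is the relative diagonal $\mathsf H^\Delta(\cD_e)$ in this family---not the absolute $\Delta_H^\dagger$---for which the scheme-theoretic fiber product identification and the obstruction-theory triangle hold (Proposition~\ref{prop: expanded-formula}). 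Your pointwise inverse construction presupposes that the two induced expansions of $D_e$ agree, which is exactly what the open $\bigtimes_v^\circ$ encodes and is not automatic in the product model.

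Once that weak formula is in place, the paper does further intersection-theoretic work that your outline omits: it shows that the preimage of the absolute strict diagonal lands inside $\bigtimes_v^\circ$ (Lemma~\ref{lem: diagonal-preimage}), collapses the family diagonal to the absolute one, and then pushes the identity from the modification $\bigtimes_v$ down to the honest product $\prod_v$ via a Gysin-pullback argument along the subdivision $\bigtimes_v\mathsf{Hilb}(D_v)\to\prod_v\mathsf{Hilb}^\dagger(D_v)$, using commutativity of virtual pullback with Gysin pullback (Lemma~\ref{lem: good-square} and the Key Claim following it). This last step is what actually delivers the vertex-by-vertex product on the right-hand side of the theorem, and it is absent from your proposal.

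Finally, a factual slip: the DT obstruction theory is governed by $R\pi_\star R\mathcal{H}om(\mathcal I,\mathcal I)_0$, not by the logarithmic tangent bundle of the expansion (that is the GW obstruction theory). The compatibility triangle in the paper is the one induced by the short exact sequence $0\to\mathcal I_0\to\bigoplus_v\mathcal I_v\to\bigoplus_e\mathcal I_e\to 0$ coming from algebraic transversality; the third term identifies with the normal bundle of the \emph{relative} diagonal in $\mathsf H^{\{2\}}(\cD_e)$, which is why the family Hilbert scheme is the natural intermediate object.
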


\subsection{Cycle-theoretic splitting: curve theory} 


A parallel cycle theoretic splitting holds in GW theory, although some of the combinatorial aspects are a little different. 

\subsubsection{Combinatorial organization} We continue to fix a Chow $1$-complex $\overline\gamma$, but now parameterize it using a tropical stable map $\gamma$. Precisely, $\gamma$ is a pair $(\mathbf G,f)$, where $\mathbf G$ is a decorated tropical curve in the sense of Section~\ref{sec:decorated} and $f$ is a map to the polyhedral complex $\Delta_{Y_0}$. A map
\[
f:\mathbf G\to \Delta_{Y_0}
\]
is called {\it rigid} if, similarly to the Chow and Hilbert cases, if it corresponds to a vertex of the moduli space of tropical stable maps to $\Delta_{Y_0}$ with the chosen cone complex structure. As we have noted in Section~\ref{sec:decorated}, the tropical stable map $\gamma$ determines a Chow $1$-complex $\overline \gamma$. 

Let us spell out the ways in which these two could differ:
\begin{enumerate}[(i)]
\item For each vertex of $\overline \gamma$, there could be several corresponding vertices of $\mathbf G$ of the tropical map that map to the given vertex. If this happens, the degree labels on $\gamma$ are required to sum to $\overline \gamma$, so this can only happen in finitely many ways. 
\item Given vertices $u$ and $u'$ connected by an edge $e$ then, for each lift $v$ and $v'$ of these vertices to $\mathbf G$, there could be several edges between $v$ and $v'$. Recall that $e$ is decorated by a multiplicity $n_e$, and each edge of $\mathbf G$ is comes with a dilation factor associated to the map $f$. The factor is necessarily nonzero, and the sum of multiplicities of all edges $e'$ over $e$ must sum to $n_e$. 
\item Similarly, for each leg $e$ of $\overline \gamma$ there can be finitely many legs of $\mathbf G$ that map to $e$. Again, the multiplicities are necessarily positive, and must sum to $n_e$. 
\end{enumerate}

\begin{remark}[Automorphisms]\label{rem: automorphism}
Another complication in GW theory is that the rigid objects $\gamma$ may have automorphisms. This stems from point (ii) above. Precisely, suppose we have $u$ and $u'$ in $\overline\gamma$ with an edge $e$ between them, and also lifts $v$ and $v'$ in $\mathbf G$ with multiple edges between them lying over $e$. If the intersection multiplicities along these edges are the same, then there there are automorphisms of $\mathbf G$ that commute with the map to $\Delta_{Y_0}$. 
\end{remark}

For each vertex $u$ in $\overline \gamma$, we have a well-defined target $X_u$ which is a component of $Y_0$. We also have a curve class $\beta_u$ associated to this vertex. Moreover, for each edge or leg $e$ incident to $u$, we have an intersection multiplicity $n_e$ with this divisor. By choose a lift $\gamma$ of $\overline \gamma$, we enhance these data. Precisely, for each vertex, we have chosen a number of connected components to break this curve class into, indexed by vertices $v$ in $\mathbf G$ over $u$. For each vertex $v$, i.e. for each such connected component, we have chosen a genus, an intersection multiplicity with the boundary of $X_u = X_v$, and a partition of each intersection multiplicity at an edge. 

\subsubsection{The formula} With the organization clarified, we can state the formula. For each vertex $v$ of $\gamma$, we set
$$\mathsf{Ev}_{{\bm \mu}(v)} := \prod_{i=1}^{m} \mathsf{Ev}_{\mu_{e_{i}(v)}}(D_{e_i(v)})$$
to be the product of the GW evaluation spaces from Section~\ref{sec: geometric-evaluations}.

Using the flattened tropical evaluation
\[
T_{\beta(v)}(\Sigma_v) \rightarrow \left(\prod P(e_i(v)\right)^{\dagger},
\]
which is the same one we used for the DT side, we have tropical models of the evaluation space for for each vertex $v$:
$$\mathsf{Ev}_{{\bm \mu}(v)}^{\dagger} := \left(\prod_{i=1}^{m} \mathsf{Ev}_{\mu_{e_{i}(v)}}(D_{e_i(v)})\right)^{\dagger}.$$
These are the target of evaluation maps from the GW spaces, and consolidating over all vertices, we have:
\[
\mathsf{ev}:\prod_v \mathsf{GW}_{\gamma(v)}(X_v)\to \prod_v \mathsf{Ev}_{{\bm \mu}(v)}^{\dagger}.
\]

Finally, observe that associated to $\gamma$ we have a component of the space of maps to $Y_0$, denoted $\mathsf{GW}_\gamma(Y_0)$. Recall the automorphisms noted in Remark~\ref{rem: automorphism}. We can partially rigidify this stack by taking a cover, e.g. by labelling all the edges in $\mathbf G$. Let $\mathsf{GW}'_\gamma(Y_0)$ denote the corresponding stack. See~\cite[Section~2.6]{ACGS15} for a detailed treatment. Finally, as in Remark~\ref{rem: decomposition-gw} let $\ell(\gamma)$ denote the product, taken over all edges $e$ in $\overline \gamma$, of the least common multiplies of the dilation factors $m_{e'}$ of edges $e'$ in $\gamma$ that map to $e$.\footnote{Note that this factor was mistakenly dropped in~\cite{R19}. It appears in the cycle theoretic splitting formula for each rigid tropical curve but cancels out in the final numerical splitting formula due to the base change order in the decomposition part~\cite{ACGS15}. }

As in the construction above for the DT moduli spaces, there is a cutting map
\[
\kappa: \mathsf{GW}'_\gamma(Y_0)\to\prod_{v\in \gamma} \mathsf{GW}_{\gamma(v)}(X_v).
\]
If we let 
 $$\Delta_{\mathsf{Ev}_{\bm \mu}}^{\dagger} \subset \prod_{v} \mathsf{Ev}_{{\bm \mu}(v)}^{\dagger}$$
 denote the strict transform of the diagonal, we can state the GW splitting formula:
 
\begin{theorem}\label{thm: main-splitting-theorem-GW}
 There is an equality of homology classes
\[
\kappa_\star\left[\mathsf{GW'}_\gamma(Y_0) \right] =\prod_e m_e\cdot \frac{1}{\ell(\gamma)} \left(\mathsf{ev}\right)^\star\Delta^\dagger\cap \left[\prod_v \mathsf{GW}_{\gamma(v)}(X_v) \right]^{\mathsf{vir}},
\]
where the indicated product is the product of all dilation factors of all bounded edges $e$ in the rigid tropical map $\gamma$. 
\end{theorem}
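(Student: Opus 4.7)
The plan is to adapt the three-step strategy used for the DT/PT splitting formula (Theorem~\ref{thm: main-splitting-theorem}) to the Gromov--Witten setting. At the heart of all three theorems is the compatibility of virtual pullbacks with the combinatorial splitting of the universal expansion along $\gamma$; the GW argument additionally has to track dilation factors on edges and automorphisms of the rigid tropical map $\gamma$, which is what produces the prefactors $\prod_e m_e$ and $1/\ell(\gamma)$.

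First, I would establish a combinatorial splitting on the stack of expansions, as in Section~\ref{sec: combinatorial-degeneration}. The cone space parametrizing $1$-complexes that retract onto the image $\overline{\gamma}$ of $\gamma$ is naturally identified, up to log modification, with the fiber product $\prod_v T_{\overline{\gamma}(v)}(\Sigma_{X_v})$ taken over the spaces of $0$-complexes $P(\Sigma_{D_e})$ associated to the edges of $\overline{\gamma}$. Passing to Artin fans, this yields a (log Cartesian) diagram of stacks of expansions. The cutting map $\kappa$ on GW moduli lifts this combinatorial cutting map: on geometric points it partially normalizes the expansion along the double divisors indexed by the edges of $\gamma$ and restricts the stable map to each piece, remembering the induced markings.

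Second, I would invoke a Li--Wu type gluing formula, in the expanded formalism of~\cite{R19}, to relate the virtual class of $\mathsf{GW}'_\gamma(Y_0)$ to the naive evaluation fiber product of the vertex contributions over $\prod_e D_e^{\ell(\mu_e)}$. For each bounded edge $e$, the predeformability condition at the associated node forces matching tangency orders $m_e$ on the two branches and contributes a factor of $m_e$ coming from the local toric calculation at the node, equivalently from the order of vanishing of the smoothing parameter in the logarithmic tangent complex. Simultaneously, the symmetric automorphisms of $\gamma$ coming from permuting parallel edges over a single edge of $\overline{\gamma}$ (see Remark~\ref{rem: automorphism}) are responsible for the base-order discrepancy $\ell(\gamma)$ when passing between the rigidified moduli $\mathsf{GW}'_\gamma$ and the quotient $\mathsf{GW}_\gamma$. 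I expect this is the main obstacle: careful bookkeeping of these two types of multiplicities, especially in the presence of loops or parallel edges in $\gamma$, requires a local analysis at each node parallel to the one in~\cite{R19}, and the factor $\prod_e m_e \cdot 1/\ell(\gamma)$ should emerge from this accounting together with the weight-compatibility of the Nakajima basis established in Corollary~\ref{cor: diagonal-formula}.

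Third, I would use a logarithmic intersection theory argument, parallel to the one to be carried out in Section~\ref{sec: main-splitting-formula} for DT, to replace the naive scheme-theoretic diagonal with the strict transform $\Delta^\dagger_{\mathsf{Ev}_{\bm\mu}}$ inside $\prod_v \mathsf{Ev}^\dagger_{\bm{\mu}(v)}$. The essential input is the uniform combinatorial flattening of the evaluation map furnished by Theorem~\ref{prop:flat-evaluation}: combinatorial flatness of $\mathsf{ev}$ with reduced fibers ensures that intersection with the strict transform of the diagonal pulls back correctly through $\mathsf{ev}$, so the result of the second step matches $(\mathsf{ev})^\star\Delta^\dagger_{\mathsf{Ev}_{\bm\mu}}\cap [\prod_v \mathsf{GW}_{\gamma(v)}(X_v)]^{\mathsf{vir}}$ on the nose. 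Assembling the three steps yields the claimed identity.
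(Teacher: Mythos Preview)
Your three-step outline matches the paper's approach closely: start from the combinatorial splitting of Section~\ref{sec: combinatorial-degeneration}, quote the expanded gluing formula of \cite{R19} to get a ``moving'' diagonal class on the fibered product $\bigtimes_v \mathsf{GW}(X_v)$, and then run the logarithmic intersection argument (via combinatorial flatness of the evaluation map) to collapse this to the strict diagonal $\Delta^\dagger$ on the genuine product $\prod_v \mathsf{Ev}^\dagger_{\bm\mu(v)}$. That is exactly what the paper does in Section~\ref{sec: main-splitting-formula}.

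Two small corrections to your Step~2. First, the Nakajima basis and Corollary~\ref{cor: diagonal-formula} play no role in the cycle-level Theorem~\ref{thm: main-splitting-theorem-GW}; they enter only afterwards, when one passes to numerical invariants (Corollaries~\ref{cor: DT-deg-form} and~\ref{cor: GW-deg-form}) by K\"unneth-decomposing the strict diagonal. At the cycle level the class $\Delta^\dagger$ is kept intact. Second, your description of $\ell(\gamma)$ is not quite right: it is not the order of the automorphism group coming from permuting parallel edges, but rather the product over edges $e$ of $\overline\gamma$ of the \emph{least common multiple} of the dilation factors $m_{e'}$ over $e$ (see Remark~\ref{rem: decomposition-gw}). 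This is a lattice-index/saturation factor, the ``base order'' of \cite{ACGS15}, and it appears already in the quoted \cite{R19} formula (with the footnoted correction), not from passing between $\mathsf{GW}'_\gamma$ and $\mathsf{GW}_\gamma$. The factor $\prod_e m_e$ is separate and arises from the node-by-node obstruction theory comparison in \cite{R19}. With those two points fixed, your plan is the paper's proof.
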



\subsection{Numerical degeneration formulas}\label{sec: numerical-formulas}

We can specialize the cycle-theoretic result to obtain numerical degeneration formulas.

We fix some additional notation.
For each rigid Chow $1$-complex $\gamma$, we will sum over collections of partitions $\bm{\mu}:= (\mu_e)$ of the edge labels $n_e$.  For each vertex $v$, we denote by $\bm{\mu}_v$ the subset of $\bm{\mu}$ given by the edges
adjacent to $v$.
The strict transform of the diagonal
$\Delta_{\mathsf{Ev}^{\bm{\mu}}}^{\dagger}$ lives in a product, so we can take its K\"unneth decomposition:

\[
[\Delta_{\mathsf{Ev}_{\bm{\mu}}}^{\dagger}] = \sum_{j} \otimes_{v} \delta^{(j)}_{v} \in \bigotimes H^\star(\mathsf{Ev}_{{\bm{\mu}}(v)}^{\dagger})
\]

\noindent
Given an index set $S$, for each $i \in S$, we fix an insertion $\zeta_i \in H^\star(\mathcal Y)$ and integer $a_i$.  We also denote by $\zeta_i$ the restriction of this insertion to either a fiber $Y_t$ or an irreducible component $X_v$ of an expansion of $Y_0$.  

\begin{corollary}\label{cor: DT-deg-form}
The following numerical degeneration formulas hold for DT/PT invariants in the setting above.
\[
\langle \prod_{i\in S} \tau_{a_i}(\zeta_i)\rangle^{DT, Y_t}_{\beta, \chi} 
=
\sum_{\gamma} \sum_{\bm{\mu}} \sum_{S = \coprod S_v} \sum_{(j)} \frac{(-1)^{\bm{\mu}}}{\mathrm{Aut}(\bm{\mu})}\cdot m_{\bm{\mu}}
\prod_{v}  \langle \prod_{i \in S_v} \tau_{a_i}(\zeta_i)| \bm{\mu}_{v}(\delta^{(j)}_{v}) \rangle^{DT, X_v}_{\gamma'(v)}.
\]
and
\[
\langle \prod_{i\in S} \tau_{a_i}(\zeta_i)\rangle^{PT, Y_t}_{\beta, \chi} 
=
\sum_{\gamma} \sum_{\bm{\mu}} \sum_{S = \coprod S_v} \sum_{(j)} \frac{(-1)^{\bm{\mu}}}{\mathrm{Aut}(\bm{\mu})}\cdot m_{\bm{\mu}}
\prod_{v}  \langle \prod_{i \in S_v} \tau_{a_i}(\zeta_i)| \bm{\mu}_{v}(\delta^{(j)}_{v}) \rangle^{PT, X_v}_{\gamma'(v)}.
\]

\end{corollary}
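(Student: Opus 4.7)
The plan is to assemble the corollary from the cycle-theoretic splitting formula (Theorem \ref{thm: main-splitting-theorem}), the diagonal matching of Corollary \ref{cor: diagonal-formula}, a K\"unneth decomposition, and the deformation invariance and decomposition results from Sections~\ref{sec: DT-over-base} and~\ref{sec: decomposition-step}. Since the corollary is explicitly presented as a consequence of the preceding cycle-level statements, the work is primarily bookkeeping: translating cycle identities into numerical ones and verifying that all combinatorial factors (the sign $(-1)^{\bm\mu}$, the multiplicity $m_{\bm\mu}$, and the $1/|\mathrm{Aut}(\bm\mu)|$) line up.

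First I would integrate the bulk insertions $\prod_i \tau_{a_i}(\zeta_i)$ against the general-fiber virtual class $[\mathsf{DT}_{\beta,\chi}(Y_t)]^{\mathsf{vir}}$ and transport the integral to the special fiber via the deformation invariance theorem of Section~\ref{sec: DT-over-base}: since $\zeta_i$ comes from $H^\star(\mathcal Y)$, the relevant insertions extend over the degeneration, and the restriction to either $Y_t$ or to an expansion component $X_v$ gives the $\zeta_i$ appearing in the formula. Next, I would apply the decomposition theorem of Section~\ref{sec: decomposition-step} to write $[\mathsf{DT}_{\beta,\chi}(Y_0)]^{\mathsf{vir}} = \sum_{\gamma}[\mathsf{DT}_{\gamma}(Y_0)]^{\mathsf{vir}}$ indexed by rigid Hilbert $1$-complexes $\gamma$ over $\overline{\gamma}$, producing the outer sum over $\gamma$.

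Now fix one $\gamma$. Apply Theorem \ref{thm: main-splitting-theorem} and project via the cutting map $\kappa$ to obtain
\[
\kappa_\star\!\left([\mathsf{DT}_\gamma(Y_0)]^{\mathsf{vir}}\cap \textstyle\prod_i\tau_{a_i}(\zeta_i)\right) = \mathsf{ev}^\star[\Delta_H^{\dagger}] \cap \left[\prod_v \mathsf{DT}_{\gamma(v)}(X_v)\right]^{\mathsf{vir}} \cap \prod_i \tau_{a_i}(\zeta_i),
\]
where we used that the bulk descendant operators $\tau_{a_i}(\zeta_i)$ restrict under $\kappa$ to the corresponding operators on each component (they are defined via the universal sheaf on each expansion). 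Substitute $[\Delta_H^{\dagger}]$ using Corollary \ref{cor: diagonal-formula}, then take the K\"unneth decomposition $[\Delta_{\mathsf{Ev}^{\bm\mu}}^{\dagger}] = \sum_{(j)} \bigotimes_v \delta^{(j)}_v$ on the product evaluation space. Because the evaluation map $\mathsf{ev}$ is an external product of the vertex evaluation maps $\mathsf{ev}_v$, and because $[\prod_v \mathsf{DT}_{\gamma(v)}(X_v)]^{\mathsf{vir}}$ is the exterior product of vertex virtual classes, the integral factors as a product over vertices. Distributing the bulk insertions across vertices via a set partition $S = \coprod_v S_v$ (this is the only freedom in integrating a product of pulled-back classes on a product space), and recognizing each factor as the descendent logarithmic DT invariant $\langle \prod_{i\in S_v} \tau_{a_i}(\zeta_i) \mid \bm\mu_v(\delta^{(j)}_v)\rangle^{DT,X_v}_{\gamma(v)}$ (with the prefactor $\tfrac{1}{m_{\bm\mu_v}}$ absorbed into the definition of the bracket $\bm\mu_v(\delta^{(j)}_v)$), yields the stated formula.

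The main obstacle I expect is the careful accounting of the combinatorial prefactors. The factor $(-1)^{\bm\mu}$ and $m_{\bm\mu}$ originate from Corollary \ref{cor: diagonal-formula}, while $|\mathrm{Aut}(\bm\mu)|^{-1}$ accounts for the Galois quotient $\mathsf{Ev}^{\bm\mu}(D) \to \mathsf{Sym}^{\bm\mu}(D)$; one has to check that the definition of $\bm\mu(\delta)$ as $m_{\bm\mu}^{-1}\widetilde{\Gamma_{\bm\mu}}(\delta)$ precisely converts the coefficient $\tfrac{(-1)^{\bm\mu}}{|\mathrm{Aut}(\bm\mu)| m_{\bm\mu}}$ in the diagonal formula into the coefficient $\tfrac{(-1)^{\bm\mu}}{|\mathrm{Aut}(\bm\mu)|}m_{\bm\mu}$ of the corollary, once the $m_{\bm\mu_v}$ prefactors on each vertex side are moved out of the brackets. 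A secondary (minor) point is that the set-partition sum $\sum_{S = \coprod S_v}$ arises precisely from the product rule for pulling back a class under $\prod_v \mathsf{ev}_v$ combined with the multilinearity of descendants in the bulk insertions; nothing is lost because bulk insertions $\tau_{a_i}(\zeta_i)$ are supported on a single expansion component once restricted. The PT case is word-for-word identical, substituting $\mathsf{PT}$ for $\mathsf{DT}$ throughout, since Theorem \ref{thm: main-splitting-theorem} and all inputs hold equally there.
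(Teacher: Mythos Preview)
Your proposal is correct and follows essentially the same approach as the paper: deformation invariance plus decomposition, then Theorem~\ref{thm: main-splitting-theorem} and Corollary~\ref{cor: diagonal-formula}, followed by K\"unneth and the vertex factorization. The only place the paper is more explicit than you are is the behavior of the descendant operators under $\kappa_\star$: rather than saying they ``restrict'' or are ``supported on a single component,'' the paper uses the partial normalization $p\colon \coprod_v \mathcal X_v \to \mathcal Y_0$ and algebraic transversality to show $p^\star \mathcal I_\gamma = \bigoplus_v \mathcal I_v$, hence $p^\star \mathrm{ch}_{a+2}(\mathcal I_\gamma) = \sum_v \mathrm{ch}_{a+2}(\mathcal I_v)$, which yields the intertwining relation $\kappa_\star(\tau_a(\zeta_i)\circ{-}) = \sum_v \tau_a(\zeta_i)\circ \kappa_\star({-})$ and, upon iteration over $i$, the sum over set partitions $S = \coprod_v S_v$.
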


In this corollary, the first sum is over rigid Hilbert $1$-complexes $\gamma$ with discrete invariants $\beta$ and $\chi$; the second sum is over all collections of partitions of the edge labels.  The third sum is over all possible distribution of insertions among the vertices; the last sum is over the terms of the K\"unneth decomposition for the diagonal.

\begin{proof}

Since the cohomology classes $\zeta_i$ are restricted from $\mathcal Y$, using the deformation invariance and decomposition parts of our degeneration package, 
we have the equality
\[
\langle \prod_{i\in S} \tau_{a_i}(\zeta_i)\rangle^{DT, Y_t}_{\beta, \chi} 
=
\sum_{\gamma} 
\mathrm{deg}\left( \prod_{i=1}^{k}\tau_{a_{i}}(\zeta_i)[\mathsf{DT}_{\gamma}]^{\mathsf{vir}}  \right)
\]
where $\gamma'$ ranges over rigid Hilbert $1$-complexes with total discrete invariants $\beta$ and $\chi$.

To split the invariant associated to $\gamma$, we pushforward along the cutting map:
\[
 \kappa: \mathsf{DT}_{\gamma}(Y_0)\to \prod_{v\in \gamma} \mathsf{DT}_{\gamma(v)}(X_v).
\]

Theorem~\ref{thm: main-splitting-theorem} tells us how the virtual class pushes forward, and Corollary~\ref{cor: diagonal-formula} gives us a decomposition of the diagonal in terms of the boundary insertions on each factor of the product.  It remains to analyze the descendent operators.

By construction, we have the partial normalization of the universal target over $\mathsf{DT}_{\gamma'}(Y_0)$
\[
p: \coprod \mathcal{X}_v \rightarrow \mathcal{Y_0}
\]
where $\mathcal{X}_v$ denotes the pullback of the universal target from $\mathsf{DT}_{\gamma'(v)}(X_v)$.
.
By algebraic transversality, we have an equality of ideal sheaves
$$p^\star(\mathcal{I}_{\gamma}) = \oplus \mathcal{I}_v$$
as well as an equality of Chern characters
$$p^\star\mathrm{ch}_{a+2}(\mathcal{I}_{\gamma}) = \sum \mathrm{ch}_{a+2}(\mathcal{I}_v).$$

The push-pull formula gives 
\[
\kappa_\star(\tau_{a}(\zeta_i)\circ ) = \sum_{v} \tau_{a}(\zeta_i)\circ \kappa_\star
\]
As we iterate over our insertions, we obtain the statement of the corollary.
\end{proof}

We state the numerical GW degeneration formula for the disconnected invariants.  

\begin{corollary}\label{cor: GW-deg-form}
The following numerical degeneration formula holds for GW invariants in the setting above. 
\[
\langle \prod_{i\in S} \tau_{a_i}(\zeta_i)\rangle^{\bullet, GW, Y_t}_{\beta, g} 
=
\sum_{\overline{\gamma}}  \sum_{g(v), \bm{\mu}} \sum_{S = \coprod S_v}  \sum_{(j)} \frac{(-1)^{\bm{\mu}}}{\mathrm{Aut}(\bm{\mu})}\cdot m_{\bm{\mu}}
\prod_{v}  \langle \prod_{i \in S_v} \tau_{a_i}(\zeta_i)| \bm{\mu}_{v}(\delta^{(j)}_{v}) \rangle^{\bullet, GW, X_v}_{\gamma(v)}.
\]
\end{corollary}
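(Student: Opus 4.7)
The argument closely parallels the DT case in Corollary~\ref{cor: DT-deg-form}. I would first invoke deformation invariance of logarithmic GW theory over $\mathcal{Y} \to B$, which reduces $\langle \prod_i \tau_{a_i}(\zeta_i)\rangle^{\bullet, GW, Y_t}_{\beta, g}$ to a pairing against $[\mathsf{GW}_{\beta, g}(Y_0)]^{\mathsf{vir}}$; this uses that each $\zeta_i$ extends from $\mathcal{Y}$. I would then apply the GW decomposition formula (Remark~\ref{rem: decomposition-gw}) to express this virtual class as a sum over rigid tropical stable maps $\gamma$, each weighted by the base order $\ell(\gamma)$.

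For each such $\gamma$, I would apply the cycle-theoretic splitting formula (Theorem~\ref{thm: main-splitting-theorem-GW}) via the cutting map $\kappa: \mathsf{GW}'_\gamma(Y_0) \to \prod_v \mathsf{GW}_{\gamma(v)}(X_v)$. This introduces a factor $\prod_e m_e/\ell(\gamma)$; the edge-labeling cover $\mathsf{GW}'_\gamma \to \mathsf{GW}_\gamma$ has degree $|\mathrm{Aut}(\bm{\mu})|$, where $\bm{\mu}$ records the dilation factors of the edges of $\mathbf{G}$ lying over each edge of $\overline{\gamma}$. Since $\prod_e m_e = m_{\bm{\mu}}$, the combined numerical contribution, after cancelling $\ell(\gamma)$ with the decomposition weight, is $m_{\bm{\mu}}/|\mathrm{Aut}(\bm{\mu})|$. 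I would then decompose the diagonal $[\Delta_{\mathsf{Ev}^{\bm{\mu}}}^{\dagger}]$ inside $\prod_v \mathsf{Ev}_{\bm{\mu}(v)}^{\dagger}$ by K\"unneth as $\sum_j \otimes_v \delta^{(j)}_v$, splitting the integral across vertices.

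The descendant insertions $\tau_{a_i}(\zeta_i)$ distribute via push-pull along $\kappa$: the psi classes at the internal markings pull back compatibly since marked points are preserved by normalization, and since each $\zeta_i$ is restricted from $\mathcal{Y}$, its pullback to $\coprod_v X_v$ decomposes additively, yielding the sum $S = \coprod S_v$. Reorganizing by first fixing the underlying Chow $1$-complex $\overline{\gamma}$ and then summing over $(g(v), \bm{\mu})$ produces the stated formula; the sign $(-1)^{\bm{\mu}}$ is built into the GW convention for the relative insertion $\bm{\mu}(\delta)$, fixed so as to be compatible with the Nakajima-basis normalization on the DT side and with the primary correspondence in Conjecture~\ref{conj: primary}. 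The main technical obstacle lies in carefully matching the combinatorial and automorphism conventions --- base order, dilation factors, and edge-labeling degree --- between the decomposition formula of~\cite{ACGS15}, the cycle-theoretic splitting of~\cite{R19} with its needed refinement in Theorem~\ref{thm: main-splitting-theorem-GW}, and the generating-function presentation used here.
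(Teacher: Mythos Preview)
Your proposal is correct and follows essentially the same approach as the paper: deformation invariance, decomposition over rigid tropical maps with weight $\ell(\gamma)$, the cycle-theoretic splitting of Theorem~\ref{thm: main-splitting-theorem-GW}, K\"unneth decomposition of the evaluation diagonal, cancellation of $\ell(\gamma)$ against the base-order weight, and automorphism bookkeeping via Remark~\ref{rem: automorphism}. Your treatment is in fact more detailed than the paper's brief sketch, which simply notes that the argument runs analogously to the DT case subject to the two modifications you identify (the $\ell(\gamma)$ cancellation and the automorphism handling).
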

In this statement, the first summation is over rigid Chow $1$-complexes.  The second summation is over genus decorations for vertices and partition decorations for edges so that the total genus agrees with $g$.  For connected invariants, one can refine this equation to a summation over rigid tropical maps.  The two formulas are related by taking account of how labelling affects automorphism factors.

\begin{proof}
The formula follows analogously, by applying the K\"unneth decomposed formula for the strict diagonal. We note two features that are different from the DT side. First, the automorphism contribution, which is slightly different from the DT/PT side, is accounted for by Remark~\ref{rem: automorphism}. Second, the least common multiple multiplicity $\ell(\gamma)$ from Remark~\ref{rem: decomposition-gw} does not appear in the final numerical formula. Indeed, a factor of $\frac{1}{\ell(\gamma)}$ appears in Theorem~\ref{thm: main-splitting-theorem-GW}. However, for each rigid tropical map $\gamma$, the ``base order'' in the decomposition formula~\cite{ACGS15} produces a multiple of $\ell(\gamma)$ as in Remark~\ref{rem: decomposition-gw}. These cancel in the final formula, leading to the claimed result. 
\end{proof}

\begin{remark}
As noted in the introduction, a feature of these corollaries is that the decomposition is ``vertex by vertex'', which allows to use the K\"unneth decomposition to give a  significantly improved splitting as compared to the statement in~\cite{R19}. The cost, as compared to the situation of smooth pairs, is the need for the exotic evaluation classes discussed in the introduction. Precisely, the exotic insertions arise from the blowup step discussed in Section~\ref{sec: Hilb-cohomology}.
\end{remark}

\subsection{Generating function corollaries}

As a consequence of the numerical degeneration formula, we obtain degeneration formulas
for the GW/DT/PT partition functions.  
For these formulas to make sense, it is important that we are summing over a fixed 
set of rigid Chow $1$-complexes that only depends on $\beta$ and not the other discrete invariants (genus or Euler characteristic).

For DT and PT partition functions, the degeneration formulas are as follows. In order to display the equations compactly, we compress $\mathsf{Z}_{\mathsf{DT}}\left(Y_t;q | \prod_{i\in S} \tau_{a_{i}}(\zeta_i)\right)_\beta$ to $\mathsf{Z}_{\mathsf{DT}}(q)$, and similarly for PT and GW series, with the variable for the GW series being $u$. 
\[
\mathsf{Z}_{\mathsf{DT}}(q)
= 
\sum_{\overline{\gamma}} \sum_{\bm{\mu}} \sum_{S = \coprod S_v} \sum_{(j)} \frac{(-1)^{\bm{\mu}}m_{\bm{\mu}}}{\mathrm{Aut}(\bm{\mu})}\frac{1}{q^{|\bm{\mu}|}}
\prod_{v}  
\mathsf{Z}_{\mathsf{DT}}\left(X_v,D_v;q |
\prod_{i \in S_v} \tau_{a_i}(\zeta_i)| \bm{\mu}_{v}(\delta^{(j)}_{v})
\right)_{\beta(v)}
\]
and
\[
\mathsf{Z}_{\mathsf{PT}}(q)
= 
\sum_{\overline{\gamma}} \sum_{\bm{\mu}} \sum_{S = \coprod S_v} \sum_{(j)} \frac{(-1)^{\bm{\mu}}m_{\bm{\mu}}}{\mathrm{Aut}(\bm{\mu})}\frac{1}{q^{|\bm{\mu}|}}
\prod_{v}  
\mathsf{Z}_{\mathsf{PT}}\left(X_v,D_v;q |
\prod_{i \in S_v} \tau_{a_i}(\zeta_i)| \bm{\mu}_{v}(\delta^{(j)}_{v})
\right)_{\beta(v)}.
\]
For the GW partition function, we have
\[
\mathsf{Z}_{\mathsf{GW}}(u)
= 
\sum_{\overline{\gamma}} \sum_{\bm{\mu}} \sum_{S = \coprod S_v} \sum_{(j)} \frac{(-1)^{\bm{\mu}}m_{\bm{\mu}}}{\mathrm{Aut}(\bm{\mu})}\cdot u^{2\ell(\bm{\mu})}
\prod_{v}  
\mathsf{Z}_{\mathsf{GW}}\left(X_v,D_v;u |
\prod_{i \in S_v} \tau_{a_i}(\zeta_i)| \bm{\mu}_{v}(\delta^{(j)}_{v})
\right)_{\beta(v)}.
\]

In each of these expressions, the first summation is over the finite set of rigid Chow $1$-complexes.  

As a result, we have the compatibility of all of the logarithmic curve/sheaf conjectures from Section \ref{sec:curve/sheaf-conjectures}.
As above, we fix a simple normal crossings degeneration $\mathcal Y$ and, for each stratum $Z_w$ of $Y_0$, let $X_w$ denote the projectivization of its normal bundle in $\mathcal Y$, viewed, up to logarithmic modification, as a smooth variety with simple normal crossings boundary. We fix insertions $\zeta_i \in H^\star(\mathcal Y)$.

\begin{corollary}
If each of Conjectures \ref{conj: rationality}, \ref{conj: primary}, and \ref{conj: descendent}
hold for all components $X_w$ in $Y_0$ with insertions $\zeta_i$, then they also hold for $Y_t$ with insertions $\zeta_i$.
\end{corollary}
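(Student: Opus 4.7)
The plan is to compare the generating-function degeneration formulas of Section~\ref{sec: numerical-formulas} for the PT/DT and GW sides term by term, invoking the assumed logarithmic correspondence on each vertex contribution. I will describe the argument for the primary correspondence (Conjecture~\ref{conj: primary}); the descendent case and the rationality statements follow by the same mechanism with only notational changes.

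First, apply the generating-function degeneration formula to $\mathsf{Z}_{\mathsf{PT}}(q)$ and to $\mathsf{Z}_{\mathsf{GW}}(u)$ for the generic fiber $Y_t$. Both expressions are finite sums indexed by the \emph{same} discrete data: rigid Chow $1$-complexes $\overline{\gamma}$ with curve class $\beta$ (a finite set depending only on $\beta$), partitions $\bm{\mu}$ on edges, distributions of insertions $S = \coprod_v S_v$, and a K\"unneth index $(j)$ for the strict-transform diagonal. By Theorem~\ref{thm: Nakajima-basis} and Corollary~\ref{cor: diagonal-formula}, the K\"unneth classes $\delta^{(j)}_v$ that appear in the sheaf-side decomposition of $[\Delta_H^\dagger]$ correspond bijectively, under the log Nakajima map $\widetilde{\Gamma}$, to those appearing on the curve side for $[\Delta_{\mathsf{Ev}_{\bm{\mu}}}^\dagger]$; this ensures the summands on the two sides are indexed compatibly.

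Next, fix $(\overline{\gamma}, \bm{\mu}, \{S_v\}, j)$ and substitute the assumed vertex-wise correspondence. For each vertex $v$, the hypothesis gives
\[
(-q)^{-d_{\beta(v)}/2}\, \mathsf{Z}_{\mathsf{PT}}(X_v,D_v; q \mid \cdots \mid \bm{\mu}_v(\delta^{(j)}_v))_{\beta(v)} = (-iu)^{d_{\beta(v)} + \sum_{e\ni v}(\ell(\mu_e) - n_e)}\, \mathsf{Z}_{\mathsf{GW}}(X_v,D_v; u \mid \cdots \mid \bm{\mu}_v(\delta^{(j)}_v))_{\beta(v)}.
\]
Taking the product over all vertices, the exponent on the right becomes $\sum_v d_{\beta(v)} + 2\sum_e(\ell(\mu_e) - n_e)$ since each edge is incident to two vertices. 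One checks that $\sum_v d_{\beta(v)} = d_\beta$, using that the log tangent bundles of $X_v$ pull back correctly from $\mathcal{Y}$ along each stratum and that $\beta(v)$ pushes forward to $\beta$. The remaining discrepancy between this combined vertex exponent and the target exponent $d_\beta + \ell(\bm{\mu}) - |\bm{\mu}|$ of the primary correspondence for $Y_t$ is absorbed by the prefactors $q^{-|\bm{\mu}|}$ on the PT side and $u^{2\ell(\bm{\mu})}$ on the GW side of their respective degeneration formulas, under the substitution $-q = e^{iu}$.

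For Conjecture~\ref{conj: rationality}, part (i) follows because the degree-zero DT series of $Y_t$ equals the product of the degree-zero series on strata, both being determined by $c_3$ of the log tangent bundle; parts (ii) and (iii) reduce to the observation that finite sums of Laurent expansions of rational functions in $q$ are rational, and that the normalization by the degree-zero factor is multiplicative across vertices. For Conjecture~\ref{conj: descendent}, the corrected descendents $\overline{\tau_{\alpha_i-1}(\zeta_i)}$ are polynomial expressions in the descendents and the universal matrix $\widetilde{\mathsf{K}}$, which is itself a polynomial in $c_i(T^{\mathrm{log}}_X)$; since these Chern classes restrict correctly to each $X_v$ and the sum over set partitions $P$ in the definition of $\overline{\tau}$ is compatible with the distribution $S = \coprod S_v$ of insertions in the degeneration formula, the argument proceeds exactly as in the primary case.

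The main obstacle is the careful bookkeeping of exponents under the variable change $-q = e^{iu}$: verifying that $\sum_v d_{\beta(v)} = d_\beta$ requires tracking how the anticanonical degree behaves under restriction to the strata $X_v$ (in particular the contribution of the normal bundle of each stratum), and that the edge-contribution prefactors $q^{-|\bm{\mu}|}$ and $u^{2\ell(\bm{\mu})}$ from the degeneration formulas combine correctly with the two incident vertex factors on each edge. A secondary technical point is confirming that the exotic K\"unneth classes $\delta^{(j)}_v$ -- which live in logarithmic cohomology of modifications that do not respect the product structure of boundary strata -- pair with the correspondence $\widetilde{\Gamma}$ in a way that makes the sheaf-side and curve-side summands literally equal term by term; this is exactly what Theorem~\ref{thm: Nakajima-basis} and Corollary~\ref{cor: diagonal-formula} were designed to guarantee, and it is the place where the logarithmic Nakajima basis enters essentially.
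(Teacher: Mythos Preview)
Your proposal is correct and follows the same line the paper implicitly takes: the paper states this corollary without proof, treating it as an immediate consequence of the generating-function degeneration formulas just above, and your term-by-term comparison via the vertex-wise correspondence is exactly the intended mechanism. The bookkeeping concerns you flag (the identity $\sum_v d_{\beta(v)} = d_\beta$ and the absorption of the $q^{-|\bm\mu|}$ versus $u^{2\ell(\bm\mu)}$ prefactors under $-q=e^{iu}$) are the right ones to isolate, and the role of Theorem~\ref{thm: Nakajima-basis} and Corollary~\ref{cor: diagonal-formula} in matching the K\"unneth terms is precisely as you describe.
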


\subsection{A roadmap to the proof}\label{sec: roadmap} The proof of the degeneration formula requires a number of steps. The first task is to establish a parallel statement in a purely combinatorial context, effectively describing the decomposition of $\mathsf{Exp}_{\beta,\chi}(Y_0)$, and this is done in Section~\ref{sec: combinatorial-degeneration}. Next, we prove a weak version of the formula, involving $0$-dimensional subschemes on the universal divisor expansion over the moduli space. This is done in Section~\ref{sec: moving-gluing} and brings the DT side of the story to essentially the same level of development as the existing logarithmic GW gluing result~\cite[Theorem~B]{R19}. Finally, we combine these weaker statements, on both GW and DT sides, with logarithmic intersection theory arguments to upgrade them to the claimed ``vertex-by-vertex'' formula above. This latter step is done on both GW and DT sides, and we emphasize that it is new even on the GW side. 

\section{First examples}\label{sec: examples}

In this section, we explore the splitting formula in some basic examples, and examine its consequences for the logarithmic curve/sheaf correspondence. 

\subsection{Degree zero invariants} The DT invariants counting zero-dimensional ideal sheaves is already very interesting. For a pair $(X|D)$, the series for zero-dimensional subschemes conjecture to be:
\[
\mathsf{Z}_{\mathsf{DT}}\left(X,D;q)\right)_{\beta=0} = M(-q)^{\int_{X} c_{3}(T^\mathsf{log}_X\otimes K^{\mathsf{log}}_{X})},
\]
where
\[
M(q) = \prod_{n\geq1} \frac{1}{(1-q^n)^n}
\]
is the MacMahon function. In particular, if $X$ is a toric threefold and $D$ is its toric boundary the conjecture collapses to the statement
\[
\mathsf{Z}_{\mathsf{DT}}\left(X,D;q)\right)_{0} = 1.
\]
While the general conjecture is presently open, this special case is a pleasant consequence of the degeneration formalism. Consider the degeneration of $X$ to the normal cone of a torus invariant divisor:
\[
\mathcal Y\to \A^1.
\]
The special fiber $Y_0$ consists of two components $X_1$ and $X_2$. Both of them are toric, and in fact, the precise toric structure will not matter in what follows. 

Since we are dealing with $0$-dimensional subschemes, there is no step in the formula for flattening of the evaluation map. There are also no tube subschemes, so the formula simplifies as follows. Fix a value $\chi$ for the holomorphic Euler characteristic. We have an induced degeneration of the moduli space
\[
\mathsf{DT}_{0,n}(\mathcal Y)\to \A^1.
\]
The special fiber decomposes into a union:
\[
\mathsf{DT}_{0,n}(\mathcal Y_0) = \coprod_{n_1+n_2 = n} \mathsf{DT}_{0,n_1}(X_1)\times \mathsf{DT}_{0,n_2}(X_2).
\]
The decomposition is compatible with virtual classes. We can take degree, and then pass to generating functions to get:
\[
\mathsf{Z}_{\mathsf{DT}}\left(X,D;q)\right)_{0} = \mathsf{Z}_{\mathsf{DT}}\left(X_1,D_1;q)\right)_{0}\cdot \mathsf{Z}_{\mathsf{DT}}\left(X_2,D_2;q)\right)_{0}
\]
Now, we recognize that for the appropriate choice of stack of expansion, there is an identification of spaces
\[
\mathsf{DT}_{0,n}(X) \cong \mathsf{DT}_{0,n}(X_1) \cong \mathsf{DT}_{0,n}(X_2), 
\]
compatible with virtual structures. Therefore, writing $F(q)$ for the generating series $\mathsf{Z}_{\mathsf{DT}}(X,D;q)$, we observe that
\[
F(q) = F(q)^2.
\]
The terms are concentrated in non-negative degrees and the constant term is equal to $1$, so the series is invertible and we conclude that $F(q) = 1$. 

\subsection{Conics in $\mathbb P^2\times\PP^1$}\label{sec: conics-example} In this example, we explain what the degeneration formula looks like in a simple example -- counting conics through $5$ points in $\PP^2$. In the following example, we consider a more sophisticated version of this geometry. 

\subsubsection{The invariants} We now consider the threefold $X$ to be $\mathbb P^2\times\PP^1$. We give it the toric logarithmic structure and think of the point on the second factor as being the {\it height}. Let $h$ denote the curve class of the form $\ell \times \{\sf pt\}$, where $\ell$ is a line in $\PP^2$. Set $\beta = 2h$.  Note in particular that this means the intersection numbers with the divisors $\PP^2\times\{0\}$ and $\PP^2\times\{\infty\}$ are $0$. 

Let us describe a few of the subschemes that can appear in this space. The minimum possible value of the Euler characteristic is $\chi = 1$, and this is the case when we have a pure curve contained in a $\PP^2$ slice. On the other hand, the union of two lines at different heights appears in the moduli space where $\beta = 2h$ but $\chi = 2$. Higher Euler characteristic subschemes can also appear by taking a conic together with an embedded point. 

The virtual dimension of the moduli space is equal to $6$ for all values on $\chi$. We can view these roughly as $5$ parameters to choose a conic in $\PP^2$ and another parameter for the height direction. We impose conditions to extract an invariant. The simplest invariants are obtained by forcing point incidences in the $\mathbb P^2$-direction and then imposing a height. We can do this precisely by taking $4$ primary insertions of the 
\[
\{\mathsf{pt}\}\times\PP^1\subset\PP^2\times\PP^1.
\]
and one point insertion.

\begin{remark}[Convention on conditions on $S\times\PP^1$]\label{rem: point-conditions}
 In the discussion, as well as the examples of the form $S\times\PP^1$ that follow, for brevity, the phrase ``$k$ point conditions'' will mean imposing $(k-1)$ conditions of the form $\mathsf{pt}\times\PP^1$ and one genuine point condition. 
\end{remark}

\subsubsection{The target degeneration} We set the degeneration up as follows. Choose five $\mathbb C[\![t]\!]$-points of $\PP^2$, given in homogeneous coordinates by:
\[
p_i(t) = (1\colon z^i_1(t)\colon z^i_2(t)), \ \ \textsf{for}\ \  z^i_j(t)\in \mathbb C[\![t]\!].
\]
We choose the elements $z^i_j(t)$ so that their $t$-valuations are generic, e.g. they give $10$ distinct positive integers. We denote the valuation vectors of these points by $p_i^{\trop}$. We now choose any toric degeneration $\mathcal P^2\to \A^1$ of $\PP^2$ over $\A^1_t$, such that the closures of these points lie in strata of codimension $0$ in the special fiber. 

\subsubsection{The degeneration of the moduli space} We now consider the moduli space $\mathsf{PT}_{2h,\chi}(\cP^2\times\PP^1)$ over $\A^1_t$. The virtual irreducible components of this moduli space in the special fiber are in bijection with rigid Hilbert $1$-complexes. Let us describe some of these. 

The simplest type of rigid $1$-complex is actually a virtually birational copy of the original moduli space $\mathsf{PT}_{2h,\chi}(\PP^2\times\PP^1)$. The rigid $1$-complex is depicted in the figure below
\begin{figure}[h!]

\tikzset{every picture/.style={line width=0.75pt}} 

\tikzset{every picture/.style={line width=0.75pt}} 

\begin{tikzpicture}[x=0.75pt,y=0.75pt,yscale=-1,xscale=1]

\draw [line width=0.75]    (180,362) ;
\draw [shift={(180,362)}, rotate = 0] [color={rgb, 255:red, 0; green, 0; blue, 0 }  ][fill={rgb, 255:red, 0; green, 0; blue, 0 }  ][line width=0.75]      (0, 0) circle [x radius= 2.34, y radius= 2.34]   ;
\draw [line width=0.75]    (232,432) ;
\draw [shift={(232,432)}, rotate = 0] [color={rgb, 255:red, 0; green, 0; blue, 0 }  ][fill={rgb, 255:red, 0; green, 0; blue, 0 }  ][line width=0.75]      (0, 0) circle [x radius= 2.34, y radius= 2.34]   ;
\draw [line width=0.75]    (64,392) ;
\draw [shift={(64,392)}, rotate = 0] [color={rgb, 255:red, 0; green, 0; blue, 0 }  ][fill={rgb, 255:red, 0; green, 0; blue, 0 }  ][line width=0.75]      (0, 0) circle [x radius= 2.34, y radius= 2.34]   ;
\draw [line width=0.75]    (239,535) ;
\draw [shift={(239,535)}, rotate = 0] [color={rgb, 255:red, 0; green, 0; blue, 0 }  ][fill={rgb, 255:red, 0; green, 0; blue, 0 }  ][line width=0.75]      (0, 0) circle [x radius= 2.34, y radius= 2.34]   ;
\draw [line width=0.75]    (118.5,515) ;
\draw [shift={(118.5,515)}, rotate = 0] [color={rgb, 255:red, 0; green, 0; blue, 0 }  ][fill={rgb, 255:red, 0; green, 0; blue, 0 }  ][line width=0.75]      (0, 0) circle [x radius= 2.34, y radius= 2.34]   ;
\draw    (137,356) -- (137,467) ;
\draw    (255,467) -- (137,467) ;
\draw    (48,535) -- (137,467) ;

\draw (38,398) node [anchor=north west][inner sep=0.75pt]    {$p^{\trop}_{2}$};
\draw (185,366) node [anchor=north west][inner sep=0.75pt]    {$p^{\trop}_{1}$};
\draw (93.5,518.5) node [anchor=north west][inner sep=0.75pt]    {$p^{\trop}_{3}$};
\draw (244,538) node [anchor=north west][inner sep=0.75pt]    {$p^{\trop}_{4}$};
\draw (248,418) node [anchor=north west][inner sep=0.75pt]    {$p^{\trop}_{5}$};
\draw (116,358) node [anchor=north west][inner sep=0.75pt]    {$2$};
\draw (223,473) node [anchor=north west][inner sep=0.75pt]    {$2$};
\draw (52,544) node [anchor=north west][inner sep=0.75pt]    {$2$};

\end{tikzpicture}
\caption{A rigid Chow $1$-complex corresponding to a component in the moduli space $\mathsf{PT}_{2h,\chi}(\cP^2_0\times\PP^1)$. The component is virtually birational to $\mathsf{PT}_{2h,\chi}(\PP^2\times\PP^1)$. The vertices corresponding to $p_i^{\trop}$ are components of the expansion that contain the special fibers of $p_i(t)$.}
\end{figure}
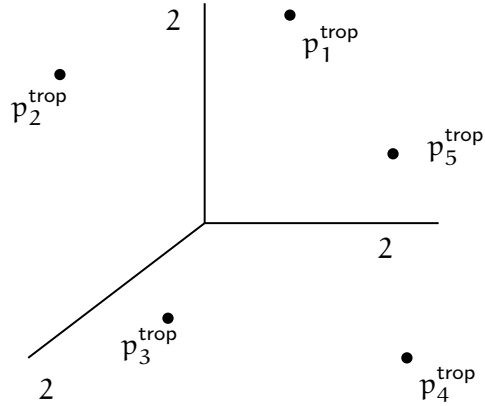
This rigid $1$-complex does not interact with the new components that contain the points $p_i(0)$. We will see shortly that the contributions from such vertices vanish. 

In order to force some interaction with the new points, consider the same $1$-complex as the one above, but translated to the point $p_1^\trop$. If we had not subdivided to include these points, this $1$-complex would not be rigid -- we see that small translations of the $1$-complex would have the same combinatorial type. Indeed, the combinatorial type includes this incidence information, and as a result, the combinatorial type changes under deformation. 

More interestingly, we consider the much more degenerate rigid $1$-complex pictured in Figure~\ref{fig: full-conic}. In this case, the $5$ of the edges/legs of $1$-complex pass through points $p_i^\trop$. 
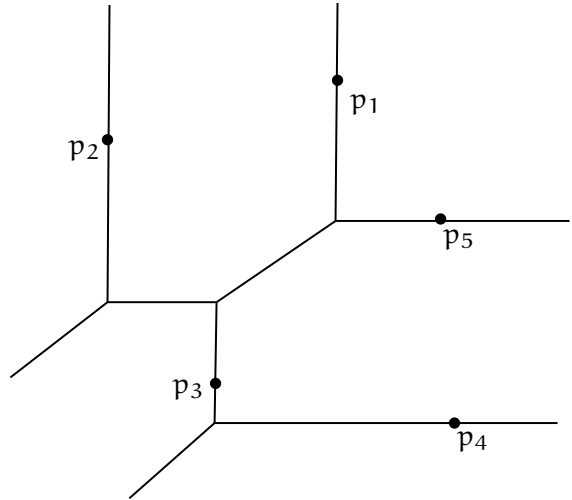
\begin{figure}[h!]

\tikzset{every picture/.style={line width=0.75pt}} 

\begin{tikzpicture}[x=0.75pt,y=0.75pt,yscale=-1,xscale=1]

\draw [line width=0.75]    (495,69) ;
\draw [shift={(495,69)}, rotate = 0] [color={rgb, 255:red, 0; green, 0; blue, 0 }  ][fill={rgb, 255:red, 0; green, 0; blue, 0 }  ][line width=0.75]      (0, 0) circle [x radius= 2.34, y radius= 2.34]   ;
\draw [line width=0.75]    (547,139) ;
\draw [shift={(547,139)}, rotate = 0] [color={rgb, 255:red, 0; green, 0; blue, 0 }  ][fill={rgb, 255:red, 0; green, 0; blue, 0 }  ][line width=0.75]      (0, 0) circle [x radius= 2.34, y radius= 2.34]   ;
\draw [line width=0.75]    (379,99) ;
\draw [shift={(379,99)}, rotate = 0] [color={rgb, 255:red, 0; green, 0; blue, 0 }  ][fill={rgb, 255:red, 0; green, 0; blue, 0 }  ][line width=0.75]      (0, 0) circle [x radius= 2.34, y radius= 2.34]   ;
\draw [line width=0.75]    (554,242) ;
\draw [shift={(554,242)}, rotate = 0] [color={rgb, 255:red, 0; green, 0; blue, 0 }  ][fill={rgb, 255:red, 0; green, 0; blue, 0 }  ][line width=0.75]      (0, 0) circle [x radius= 2.34, y radius= 2.34]   ;
\draw [line width=0.75]    (433.5,222) ;
\draw [shift={(433.5,222)}, rotate = 0] [color={rgb, 255:red, 0; green, 0; blue, 0 }  ][fill={rgb, 255:red, 0; green, 0; blue, 0 }  ][line width=0.75]      (0, 0) circle [x radius= 2.34, y radius= 2.34]   ;
\draw    (495,30) -- (494,140) ;
\draw    (612,140) -- (494,140) ;
\draw    (434,181) -- (494,140) ;
\draw    (434,181) -- (433,242) ;
\draw    (434,181) -- (379,181) ;
\draw    (330,219) -- (379,181) ;
\draw    (380,31) -- (379,181) ;
\draw    (433,242) -- (390,280) ;
\draw    (433,242) -- (606,242) ;

\draw (358,98) node [anchor=north west][inner sep=0.75pt]    {$p_{2}$};
\draw (500,75) node [anchor=north west][inner sep=0.75pt]    {$p_{1}$};
\draw (410.5,218.5) node [anchor=north west][inner sep=0.75pt]    {$p_{3}$};
\draw (554,245) node [anchor=north west][inner sep=0.75pt]    {$p_{4}$};
\draw (547,143) node [anchor=north west][inner sep=0.75pt]    {$p_{5}$};

\end{tikzpicture}
\caption{A rigid Chow $1$-complex corresponding for conics in the degenerate target $\cP^2_0\times\PP^1$.}\label{fig: full-conic}
\end{figure}

Again note that it is only because we have subdivided to include $p_i^\trop$ that these are rigid. For future reference, we refer to this particular combinatorial type where $\gamma$ meets all these points as the {\it tropically distinguished} one. Note that in this geometry, for general valuations as above, there will be a {\it unique} rigid $1$-complex that passes through all the points $p_i^\trop$, also satisfies the balancing condition (all non-balanced ones can be ignored). This follows from standard considerations in tropical geometry, see for instance~\cite{GM07c,Mi03}. 

\subsubsection{Vanishing arguments} Now let $\gamma$ be any rigid Hilbert $1$-complex. We know from the main splitting formula that the virtual class decomposes as:
\[
\left[\mathsf{PT}_{2h,\chi}(\cP_0^2\times\PP^1)\right]^{\sf vir} = \sum_\gamma [\mathsf{PT}_\gamma]^{\sf vir}.
\]
We now impose the point conditions. One can easily see that if a point constraint $p_i(0)$ is contained a component $X_v$ of the special fiber that is not also a vertex of the rigid tropical curve $\gamma$, then the subschemes in the space $\mathsf{PT}_\gamma$ are all disjoint from this point. Therefore, admitting the tropical argument above, after imposing the point conditions, the only relevant summand in the equation is above occurs when $\gamma$ is the tropically distinguished one. 

\subsubsection{The splitting} We now compute the invariant by taking apart the special fiber. In this case, no blowups are necessary to flatten the evaluation map. 

Running the degeneration formula requires splitting the diagonal. After doing this, we find that the DT invariant for conics is the fourth power of the PT invariant for lines in $\PP^2\times\PP^1$ through two points, so we find
\[
\mathsf{Z}_{\mathsf{PT}}(X,D;q)_{2h} = (\mathsf{Z}_{\mathsf{PT}}(X,D;q)_h)^4.
\]
The invariant $\mathsf{Z}_{\mathsf{PT}}(X,D;q)_h$ can in turn be calculated from known invariants. The underlying cycle of a stable pair with curve class $h$ is necessarily a smooth $\PP^1$. The moduli space is therefore isomorphic to the PT moduli space of the local geometry $\mathcal O_{\PP^1}\oplus\mathcal O_{\PP^1}(1)$ over $\PP^1$, relative to three fibers. The formulas (via the normalized DT series) can be found in~\cite{OP10}. A similar argument applies for the usual DT series, to give the same relation. 

\begin{remark}[A better way of imposing point conditions]
In the setting above, when we impose point condition, we have taken a ``by the book'' approach. A more direct approach would be to work with a cut down moduli space $\mathsf{PT}^p{\beta,\chi}(\mathcal Y)$ which imposes the incidence conditions to the points $p_i(t)$ on the whole family. One can then just use an adapted degeneration formula in this setting that directly relates the cut down moduli stacks. If set up in this way, {\it all} $1$-complexes are required to pass through the points $p_i^\trop$. The arguments of the paper carry over this cut down setting with cosmetic changes. 
\end{remark}

\subsection{Flattening and exotic invariants} In the discussion above, a very simple form of the gluing formula holds. Let us explain where one should ``look'' for correction terms. Along the way, we will see the need for exotic insertions. 

Recall the setup of the degeneration package. We have a degeneration $\cY\to B$ with special fiber $Y_0$. There is an associated degeneration of the DT moduli space
\[
\mathsf{DT}_{\beta,\chi}(\cY)\to B,
\]
and the special fiber $\mathsf{DT}_{\beta,\chi}(Y_0)$ consists of a number of {\it virtual irreducible components} indexed by rigid Hilbert $1$-complexes $\gamma$. We denote them $\mathsf{DT}_\gamma(Y_0)$. We then have a map
\[
\prod_v \mathsf{DT}_{\gamma(v)}(X_v)\to \prod_e \mathsf{Hilb}^{\gamma(e)}(D_e)^2 = \prod_v \mathsf{Hilb}^{\gamma(e)}(D_v).
\]
The equality on the right is a regrouping according to flags incident to vertices. The cycle-level degeneration formula then states that if we choose tropical models so that the induced map on cone spaces is combinatorially flat, then the pullback of the strict transform of the diagonal locus on the right is equal to the virtual class $[\mathsf{DT}_{\beta,\chi}(Y_0)]^{\sf vir}$, up to explicit combinatorial multiplicities. To summarize, we note that:
\begin{enumerate}[(i)]
\item The tropical model of the product of Hilbert schemes {\it over edges} on the right may not be a product of tropical models, however
\item the tropical model can be chosen such that the product of Hilbert schemes {\it over vertices} on the right is still a product. 
\end{enumerate}

We now give two combinatorial examples of evaluation maps that exhibit (i) the non-flatness of evaluations, and (ii) the need to flatten ``jointly'', i.e. that the edge-by-edge product structure cannot be maintained. The latter phenomenon is responsible for the exotic invariants. 

\subsubsection{Single divisor phenomena} We exposit and example due to Kennedy-Hunt, in his study of PT spaces on toric surfaces relative to the full toric boundary~\cite{KH21}. We can again work in dimension $2$ to see this phenomenon, and in fact, we can assume that there are no embedded points. 

Let $S$ be the toric surface whose rays are given by the positive real spans:
\[
\rho_1 = \RR_{\geq 0}(1,0), \ \ \rho_2 = \RR_{\geq 0}(-1,-1), \ \ \rho_3 = \RR_{\geq 0}(1,-1).
\]
We can determine a curve class $\beta$ by imposing intersection numbers with the three associated divisors
\[
\beta\cdot D_1 = 4, \ \ \beta\cdot D_2 = \beta\cdot D_3 = 2.
\]
We let $\chi$ be the minimum possible value of the holomorphic Euler characteristic, and then study $\mathsf{PT}_{\beta,\chi}(S)$. There is an evaluation map
\[
-\cap D_1\colon \mathsf{PT}_{\beta,\chi}(S)\to \mathsf{Hilb}^4(D_1).
\]
Kennedy-Hunt observes that this map is not combinatorially flat. Indeed, the cone complex associated to the source contains a cone parameterizing tropical curves of type shown in Figure~\ref{non-flat-evaluation} below:
\begin{figure}

\tikzset{every picture/.style={line width=0.75pt}} 

\tikzset{every picture/.style={line width=0.75pt}} 

\begin{tikzpicture}[x=0.75pt,y=0.75pt,yscale=-1,xscale=1]

\draw    (108,502) -- (182,429.5) ;
\draw    (109,359) -- (182,429.5) ;
\draw    (263,430) -- (182,429.5) ;
\draw [line width=0.75]    (182,429.5) ;
\draw [shift={(182,429.5)}, rotate = 0] [color={rgb, 255:red, 0; green, 0; blue, 0 }  ][fill={rgb, 255:red, 0; green, 0; blue, 0 }  ][line width=0.75]      (0, 0) circle [x radius= 2.34, y radius= 2.34]   ;
\draw [line width=0.75]    (150.5,460) ;
\draw [shift={(150.5,460)}, rotate = 0] [color={rgb, 255:red, 0; green, 0; blue, 0 }  ][fill={rgb, 255:red, 0; green, 0; blue, 0 }  ][line width=0.75]      (0, 0) circle [x radius= 2.34, y radius= 2.34]   ;
\draw [line width=0.75]    (151,399.75) ;
\draw [shift={(151,399.75)}, rotate = 0] [color={rgb, 255:red, 0; green, 0; blue, 0 }  ][fill={rgb, 255:red, 0; green, 0; blue, 0 }  ][line width=0.75]      (0, 0) circle [x radius= 2.34, y radius= 2.34]   ;
\draw    (151,399.75) -- (150.5,460) ;
\draw    (151,399.75) -- (263,400) ;
\draw    (265,459) -- (150.5,460) ;
\draw    (291,430) -- (356,430) ;
\draw [shift={(358,430)}, rotate = 180] [color={rgb, 255:red, 0; green, 0; blue, 0 }  ][line width=0.75]    (10.93,-3.29) .. controls (6.95,-1.4) and (3.31,-0.3) .. (0,0) .. controls (3.31,0.3) and (6.95,1.4) .. (10.93,3.29)   ;
\draw    (388,430.5) ;
\draw [shift={(388,430.5)}, rotate = 0] [color={rgb, 255:red, 0; green, 0; blue, 0 }  ][fill={rgb, 255:red, 0; green, 0; blue, 0 }  ][line width=0.75]      (0, 0) circle [x radius= 3.35, y radius= 3.35]   ;
\draw    (388,400) ;
\draw [shift={(388,400)}, rotate = 0] [color={rgb, 255:red, 0; green, 0; blue, 0 }  ][fill={rgb, 255:red, 0; green, 0; blue, 0 }  ][line width=0.75]      (0, 0) circle [x radius= 3.35, y radius= 3.35]   ;
\draw    (388,459) ;
\draw [shift={(388,459)}, rotate = 0] [color={rgb, 255:red, 0; green, 0; blue, 0 }  ][fill={rgb, 255:red, 0; green, 0; blue, 0 }  ][line width=0.75]      (0, 0) circle [x radius= 3.35, y radius= 3.35]   ;
\draw    (388,505.5) -- (388,355.5) ;

\draw (298,439) node [anchor=north west][inner sep=0.75pt]    {$\cap \ D_{1}$};

\end{tikzpicture}
\caption{A combinatorial type exhibiting the non-flatness of the evaluation map from the PT moduli space.}\label{non-flat-evaluation}
\end{figure}
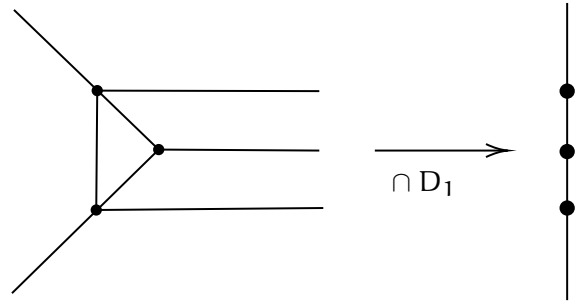
This cone maps, in codomain, to the induced tropical subdivision given by placing a vertical line far to the right of the picture and intersecting with $3$ rays parallel to the $x$-axis. This induces an expansion on $\mathbb R$. The target cone is $2$-dimensional. However, if for example, we fix the middle of the $3$ rays, the position of one of the rays determines the position of the other, because the base vertices of these rays form a cycle. 

\subsubsection{Coupling phenomena} While the above non-flatness illustrates the need to blow up, it does not show why exotic insertions are necessary. For that we consider the following example. We let $X = \PP^2\times\PP^1$ with its toric logarithmic structure. Building on the conics case we looked at before, we consider cubics in the $\PP^2$-direction, i.e. $\beta = h\otimes {\sf pt}$. 

Let $H_0$, $H_1$, and $H_2$ be the pullbacks of the toric lines in $\PP^2$ and $D_0$ and $D_\infty$ the pullbacks of the toric points on $\PP^1$. We impose maximal tangency conditions, i.e. the partition $(3)$ along the $H_0$ and $H_1$, simple tangency, i.e. the partition $(2+1)$ along $H_2$ and of course have tangency $0$ along the $D_j$. 

The typical member of this moduli space is a plane cubic, at fixed height. The Euler characteristic of such a subscheme is $0$. Higher Euler characteristic subschemes can be obtained, for example by adding embedded points. The virtual dimension of the moduli space is $9$. The tangency imposes $5$ conditions. We then impose two bulk incidences of the form ${\sf pt}\otimes\PP^1$ and one point insertion to fix the ``height''. 

With these conditions imposed, we can choose a degeneration of the $\PP^2$ as we did for conics, and specialize the points into different components. Doing this, the following rigid Chow $1$-complex appears, shown in Figure~\ref{fig: triangle-example}
\begin{figure}

\tikzset{every picture/.style={line width=0.75pt}} 

\begin{tikzpicture}[x=0.75pt,y=0.75pt,yscale=-1,xscale=1]

\draw [line width=0.75]    (106,520.5) ;
\draw [shift={(106,520.5)}, rotate = 0] [color={rgb, 255:red, 0; green, 0; blue, 0 }  ][fill={rgb, 255:red, 0; green, 0; blue, 0 }  ][line width=0.75]      (0, 0) circle [x radius= 2.34, y radius= 2.34]   ;
\draw    (348,469) -- (298,488.5) ;
\draw    (79,554) -- (79.5,449) ;
\draw    (79,554) -- (161,554) ;
\draw    (79.5,449) -- (161,554) ;
\draw    (79,553) -- (106,520.5) ;
\draw    (79.5,448) -- (106,520.5) ;
\draw    (161,554) -- (106,520.5) ;
\draw [line width=0.75]    (79,553) ;
\draw [shift={(79,553)}, rotate = 0] [color={rgb, 255:red, 0; green, 0; blue, 0 }  ][fill={rgb, 255:red, 0; green, 0; blue, 0 }  ][line width=0.75]      (0, 0) circle [x radius= 2.34, y radius= 2.34]   ;
\draw [line width=0.75]    (161,554) ;
\draw [shift={(161,554)}, rotate = 0] [color={rgb, 255:red, 0; green, 0; blue, 0 }  ][fill={rgb, 255:red, 0; green, 0; blue, 0 }  ][line width=0.75]      (0, 0) circle [x radius= 2.34, y radius= 2.34]   ;
\draw [line width=0.75]    (79.5,449) ;
\draw [shift={(79.5,449)}, rotate = 0] [color={rgb, 255:red, 0; green, 0; blue, 0 }  ][fill={rgb, 255:red, 0; green, 0; blue, 0 }  ][line width=0.75]      (0, 0) circle [x radius= 2.34, y radius= 2.34]   ;
\draw    (348,469) -- (326,516) ;
\draw    (326,516) -- (298,488.5) ;
\draw    (298,488.5) -- (216,488.5) ;
\draw    (326,516) -- (326,585) ;
\draw    (348,469) -- (394,420.5) ;
\draw [line width=0.75]    (298,488.5) ;
\draw [shift={(298,488.5)}, rotate = 0] [color={rgb, 255:red, 0; green, 0; blue, 0 }  ][fill={rgb, 255:red, 0; green, 0; blue, 0 }  ][line width=0.75]      (0, 0) circle [x radius= 2.34, y radius= 2.34]   ;
\draw [line width=0.75]    (326,516) ;
\draw [shift={(326,516)}, rotate = 0] [color={rgb, 255:red, 0; green, 0; blue, 0 }  ][fill={rgb, 255:red, 0; green, 0; blue, 0 }  ][line width=0.75]      (0, 0) circle [x radius= 2.34, y radius= 2.34]   ;
\draw [line width=0.75]    (348,469) ;
\draw [shift={(348,469)}, rotate = 0] [color={rgb, 255:red, 0; green, 0; blue, 0 }  ][fill={rgb, 255:red, 0; green, 0; blue, 0 }  ][line width=0.75]      (0, 0) circle [x radius= 2.34, y radius= 2.34]   ;

\draw (390,434) node [anchor=north west][inner sep=0.75pt]    {$( 2+1)$};
\draw (36,603) node [anchor=north west][inner sep=0.75pt]   [align=left] {Newton subdivision};
\draw (238,601) node [anchor=north west][inner sep=0.75pt]   [align=left] {Dual polyhedral decomposition};
\draw (338,565) node [anchor=north west][inner sep=0.75pt]    {$( 3)$};
\draw (212,455) node [anchor=north west][inner sep=0.75pt]    {$( 3)$};
\draw (126,478) node [anchor=north west][inner sep=0.75pt]    {$H_{2}$};
\draw (36,492) node [anchor=north west][inner sep=0.75pt]    {$H_{1}$};
\draw (101,558) node [anchor=north west][inner sep=0.75pt]    {$H_{0}$};

\end{tikzpicture}
\caption{The picture on the left is the dual Newton subdivision and the picture on the right is the dual Chow $1$-complex. The picture has been decorated also with the choice of partition along the legs.}\label{fig: triangle-example}
\end{figure}
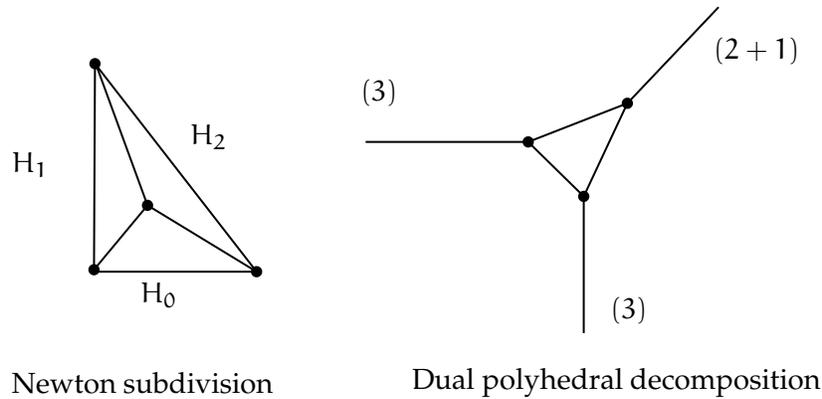
In the splitting formula, we start with evaluation maps
\[
\prod_v\mathsf{DT}_{\beta,\chi}(X_v)\to \prod_e {\sf Hilb}(D_e)^2,
\]
with the products taken over the three vertices in the domain, and three edges in the codomain. The individual tropical evaluation maps
\[
\mathsf{DT}_{\beta,\chi}(X_v)\to {\sf Hilb}(D_e)
\]
for $v$ incident to $e$ are easily seen to be flat. Indeed in this case the cone complex of the codomain has dimension $1$ so the flatness is automatic. However, the consolidated map
\[
\mathsf{DT}_{\beta,\chi}(X_v)\to {\sf Hilb}(D_{e_1})\times {\sf Hilb}(D_{e_2})
\]
is not flat. This forces us, at least in our proof, to consider blowups of this product that are not products of blowups. This is the source of the exotic evaluations. 

\begin{remark}[Relevance of cycles in the $1$-complex]
If the rigid $1$-complex is a tree and the logarithmic rank of $X$ is $2$, even if the consolidated evaluation maps are not flat, one has a fairly simple degeneration formula, where blowups of the evaluation space can essentially be avoided. In GW theory this is recorded in~\cite[Section~6]{R19} and a similar formula holds in DT theory. More generally, if the rigid $1$-complex is a tree in any logarithmic rank, then one can preserve the product structure on the evaluation space, although blowups of the factors must be allows. Both these facts are based on the fact that for a tree, one can choose an orientation on the $1$-complex in such a way that all unbounded edges are directed towards their incident vertex, and the orientation can be extended such that at every vertex there is exactly one outgoing edge. By flowing along such an orientation, gluing can be performed ``one edge at a time''. Once cycles are present in the $1$-complex this is no longer possible.
\end{remark}





\subsection{Trivalent vertices and constraints via degeneration} We introduce a basic class of DT/PT invariants of toric threefolds relative to the full toric boundary, which call the {\it trivalent vertices}. Based on known results in the Gromov--Witten theory of exploded manifolds, it seems reasonable to expect that the trivalent vertices determine {\it all} primary DT/PT invariants in this geometry, see for instance~\cite{Par17}. In particular, it may be possible to reduce the primary GW/DT/PT equivalence for toric threefold pairs to matching the trivalent vertices. We pursue this elsewhere. 


\subsubsection{Setup} In what follows, we specify a projective toric surface $S$ be giving the rays of its fan $\Sigma_S$, and specify a curve class $\beta$ by giving its intersection multiplicity with each invariant divisor, i.e. an integer for each ray. 

We study DT/PT invariants on $X = S\times\PP^1$, with the boundary coming from the toric divisors. We will mainly be interested in classes of the form $\beta\otimes {\sf pt}$.

To fix numerical data for DT/PT invariants we also fix a cohomology weighted partition for each divisor $D_\rho$ in the fan, as well as ``bulk'' primary insertions from $S\times\PP^1$. We are interested in point insertions in the sense of Remark~\ref{rem: point-conditions}, namely finitely many conditions of the form $\mathsf{pt}\times\PP^1$ and one point condition in the threefold $S\times\PP^1$.  

\begin{remark}[Connected vs. disconneced]
If the insertions above are imposed in logarithmic Gromov--Witten theory, the connected and disconnected theories coincide by elementary arguments. Therefore our conjectures, which concern the disconnected theory, relate the invariants considered in~\cite{Bou17}, which a priori the connected theory, with the logarithmic DT invariants defined in~\cite{MR20}. 
\end{remark}

\subsubsection{The trivalent vertices} An interesting class of DT (or PT) invariants of this kind arise when $S$ is a weighted projective plane. Given positive integers $a$ and $b$, we can take $S_{a.b}$ to be the toric surface whose fan has rays:
\[
\rho_1 = \RR_{\geq 0}(a,0), \ \ \rho_2 = \RR_{\geq 0}(b,0), \ \ \rho_3 = \RR_{\geq 0}(-a,-b).
\] 
Note that different $a$ and $b$ can determine the same surface. The intersection multiplicities with the corresponding divisors $D_1,D_2,D_3$ are now taken to be $a$, $b$, and $\mathsf{gcd}(a,b)$. This determines $\beta$.

We define a DT invariant $N_{(a,b)}(q)$ in class $\beta$ with maximal tangency order along each $D_i$, two point insertions (with the notational convention above), and summing over all possible Euler characteristics. 

\subsubsection{A proposed relation among trivalent vertices} We have explained why the trivalent vertices are interesting. We now illustrate how the degeneration formula can constrain and calculate the invariants. The basic principle is that because different degenerations of incidence conditions must give the same answers, one can extract relations by comparing them. 

We propose and examine the following equality:
\[
N_{(2,2)}(q) \stackrel{?}{=} N_{(4,1)}(q).
\]
The $q = 1$ specializations can be checked to be equal, e.g. this is one consequence of Mikhalkin's correspondence theorem~\cite{Mi03}. 

A feature of the proposal is that the moduli spaces of stable pairs in the respective curve classes are very different. There is no obvious {\it geometric} reason why these invariants should be the same. 

\subsubsection{Constraining the proposal via degeneration} We proceed by computing another -- non-trivalent invariant -- in two different ways using the splitting formula, so the equality forces the relation above. This method of using consistency of the degeneration formula appears prominently in~\cite{Bou17,Par17}. 

Consider the toric surface $S_{(2,3)}$. In the notation above, the intersection multiplicities are
\[
\beta\cdot D_1 = 2, \ \ \beta_2\cdot D_2 = 3, \ \ \beta\cdot D_3 = 1.
\]
We impose $3$ point conditions, and in tangency conditions as follows: take maximal tangency, i.e. the partition (2) with $D_1$, simple tangency, i.e. the partition (2+1) with $D_2$, and necessarily tangency $1$ with $D_3$. 

As in the example from Section~\ref{sec: conics-example}, for different configurations of the three points, we get different rigid Chow $1$-complexes. The two possibilities are shown below:
\begin{figure}

\tikzset{every picture/.style={line width=0.75pt}} 

\begin{tikzpicture}[x=0.75pt,y=0.75pt,yscale=-1,xscale=1]

\draw    (159.5,678) -- (160,740) ;
\draw    (219.5,740) -- (160,740) ;
\draw    (160,740) -- (130.5,771) ;
\draw    (130.5,678) -- (130.5,771) ;
\draw    (130.5,771) -- (107.5,812) ;
\draw    (372.5,677) -- (373,739) ;
\draw    (432.5,739) -- (373,739) ;
\draw    (373,739) -- (332.5,761) ;
\draw    (332.5,678) -- (332.5,761) ;
\draw    (332.5,761) -- (309.5,802) ;
\draw [line width=0.75]    (159.75,709) ;
\draw [shift={(159.75,709)}, rotate = 0] [color={rgb, 255:red, 0; green, 0; blue, 0 }  ][fill={rgb, 255:red, 0; green, 0; blue, 0 }  ][line width=0.75]      (0, 0) circle [x radius= 2.34, y radius= 2.34]   ;
\draw [line width=0.75]    (205,740) ;
\draw [shift={(205,740)}, rotate = 0] [color={rgb, 255:red, 0; green, 0; blue, 0 }  ][fill={rgb, 255:red, 0; green, 0; blue, 0 }  ][line width=0.75]      (0, 0) circle [x radius= 2.34, y radius= 2.34]   ;
\draw [line width=0.75]    (416,739) ;
\draw [shift={(416,739)}, rotate = 0] [color={rgb, 255:red, 0; green, 0; blue, 0 }  ][fill={rgb, 255:red, 0; green, 0; blue, 0 }  ][line width=0.75]      (0, 0) circle [x radius= 2.34, y radius= 2.34]   ;
\draw [line width=0.75]    (372.75,708) ;
\draw [shift={(372.75,708)}, rotate = 0] [color={rgb, 255:red, 0; green, 0; blue, 0 }  ][fill={rgb, 255:red, 0; green, 0; blue, 0 }  ][line width=0.75]      (0, 0) circle [x radius= 2.34, y radius= 2.34]   ;
\draw [line width=0.75]    (332.5,719.5) ;
\draw [shift={(332.5,719.5)}, rotate = 0] [color={rgb, 255:red, 0; green, 0; blue, 0 }  ][fill={rgb, 255:red, 0; green, 0; blue, 0 }  ][line width=0.75]      (0, 0) circle [x radius= 2.34, y radius= 2.34]   ;
\draw [line width=0.75]    (130,719) ;
\draw [shift={(130,719)}, rotate = 0] [color={rgb, 255:red, 0; green, 0; blue, 0 }  ][fill={rgb, 255:red, 0; green, 0; blue, 0 }  ][line width=0.75]      (0, 0) circle [x radius= 2.34, y radius= 2.34]   ;

\draw (416,746) node [anchor=north west][inner sep=0.75pt]    {$( 2,0)$};
\draw (198,746) node [anchor=north west][inner sep=0.75pt]    {$( 2,0)$};
\draw (166,677) node [anchor=north west][inner sep=0.75pt]    {$( 0,2)$};
\draw (378.5,676) node [anchor=north west][inner sep=0.75pt]    {$( 0,1)$};
\draw (78.5,678) node [anchor=north west][inner sep=0.75pt]    {$( 0,1)$};
\draw (106.5,817) node [anchor=north west][inner sep=0.75pt]    {$( -2,-3)$};
\draw (311.5,812) node [anchor=north west][inner sep=0.75pt]    {$( -2,-3)$};
\draw (283,676) node [anchor=north west][inner sep=0.75pt]    {$( 0,2)$};

\end{tikzpicture}
\label{Contributing rigid Chow $1$-complexes to the invariant above on the toric surface $S_{(2,3)}$. }
\end{figure}
The invariants are defined independently of any degeneration, so they are obviously equal. In fact, this already provides a constraint between $N_{(2,2)}(q)$ and $N_{(4,1)}(q)$, though at this stage it is not clear how concrete this is. We argue that it is, in fact, extremely concrete. 

For each of the two rigid $1$-complexes in the figure above, we can try to apply the splitting formula of Corollary~\ref{cor: DT-deg-form} by cutting along the unique bounded edge. In particular, one has to sum over partitions of the multiplicity at the bounded edge, coming from the decomposition of the strict diagonal -- let us call this the {\it gluing partition}. 

In the first case on the left, this weight is $2$ so we need to consider the case where this partition is $(2)$ and where it is $(1+1)$. Note that in the case where the gluing partition encodes maximal tangency, the gluing formula holds without corrections. It turns out that when the gluing partition is $(1+1)$ the contribution to the degeneration formula vanishes. This is essentially due to the presence of the $\mathbb P^1$-direction. A detailed argument will appear in a future paper. For now we explore the consequences, so the reader can take the rest of the discussion as conditional if they wish. 


With this at hand, the gluing formula can be applied without any correction terms, because only maximal tangency along the divisor contributes. Since the two degenerations are calculating the same thing. Using the $GL_2(\ZZ)$ invariance of the data (i.e. applying toric automorphisms) we have:
\[
N_{(2,2)}(q)\cdot N_{(2,1)}(q) = N_{(2,1)}(q)\cdot N_{(4,1)}(q).
\]
The terms $N_{(2,1)}(q)$ are nonzero; these can be seen by explicit calculation in the minimal Euler characteristic case. We have the claimed equality. 

As noted, the quantity $N_{(4,1)}$ can be calculated -- it is a PT invariant of a local curve, relative to three fibers. The explicit formula for the normalized DT series can again be found in~\cite{OP10}. The GW/PT equivalence holds for $N_{4,1}(q)$ because no cycle in the moduli space of curves/maps interacts with the codimension $2$ strata -- it follows from toric smooth pair arguments. This verifies the conjecture for $N_{2,2}(q)$, which is a genuinely new logarithmic invariant. 

\begin{remark}
One can ask whether the quantity $N_{a,b}(q)$ depends only on $ab$. We expect that this is true, and it gives a highly nontrivial relation among PT invariants, and for example, would completely determine the trivalent vertices. In GW theory this was done by Parker (and a closely related problem was considered by Bousseau). A different approach to the trivalent vertices in Gromov--Witten theory, which can also handle the more generally higher valency vertices with descendants, has been carried out by Kennedy-Hunt, Shafi, and Urundolil Kumaran using double ramification cycle techniques~\cite{KHSUK}. While their ability to handle descendants is appealing, it is less clear how to transfer this to DT/PT theory. On the other hand, if the correspondence can be proved independently, these techniques can in principle be used to compute descendant PT invariants using moduli of curves techniques. 
\end{remark}


%

\section{Combinatorial splitting formula}\label{sec: combinatorial-degeneration}

The goal of the rest of the paper is devoted to proving the splitting formulas, following the roadmap of Section~\ref{sec: roadmap}.

In this section, we give a purely combinatorial version of the formula. Under the equivalence between Artin fans and cone stacks, it describes the components of the special fiber of the stack $\mathsf{Exp}(\mathcal Y/B)$ in terms of stacks of the form $\mathsf{Exp}(X_v)$. The argument for combinatorial splitting is insensitive to the labelling of vertices and edges with discrete data of curve classes or Euler characteristics.  We will explain the argument for $1$-complexes, noting that the decorated cases are essentially identical. 

\subsection{Setup and notation}\label{sec:setupandnotation} We have a map of cone complexes
\[
\pi: \Sigma_{\mathcal Y}\to \RR_{\geq 0}.
\]
Let $r_1,\ldots, r_k$ be the rays of $\Sigma_{\mathcal Y}$ that map surjectively onto $\RR_{\geq 0}$.  These correspond to the irreducible components of the normal crossings variety $Y_0$.

We have fixed a cone structure on the space of vertical $1$-complexes and a universal family $\mathbf G$ over it. Given a point $p$ in $T(\Sigma_{Y})$ we will write $\mathbf G_p$ for the associated $1$-complex. 

In our discussion of the degeneration package, we have fixed a ray $\rho$ of $T(\Sigma_{\mathcal Y})$ that maps surjectively onto the base $\RR_{\geq 0}$.  Furthermore, we have already subdivided $\Sigma_{mathcal Y}$ so that the associated $1$-complex has no tube vertices and that every vertex occurring in this $1$-complex is associated to one of the rays $r_i$.

We record a few observations:

\begin{enumerate}[(i)]
\item If $p$ is a point on $\rho$ lying over $t$ in $\RR_{\geq 0}$, the $1$-complex at $p$ can be identified with a union of vertices and edges in the $1$-skeleton of the fiber $\pi^{-1}(t)$ of $\Sigma_{\mathcal Y}$ over $t$. 
\item For any $t\in \RR_{>0}$, the partially ordered sets of faces of $\pi^{-1}(t)\subset \Sigma_{\mathcal Y}$ is independent of $t$. 
\item For any point $p$ in $\rho$ lying over $t$ in $\RR_{>0}$, the subset of faces $\pi^{-1}(t)$ whose union is $\mathbf G_p$ is independent of $t$, under the identification above. 
\end{enumerate}

Over different points of $\RR_{>0}$ the fibers of $\Sigma_{\mathcal Y}$ differ by uniformly dilating the positions of the vertices.  Similarly, over a ray of $T(\Sigma_{\mathcal Y})$, the corresponding $1$-complexes differ by dilation.

\begin{figure}[h!]

\tikzset{every picture/.style={line width=0.75pt}} 

\begin{tikzpicture}[x=0.75pt,y=0.75pt,yscale=-0.6,xscale=0.6]

\draw    (340,189) -- (428,189) ;
\draw    (428,189) -- (428,270) ;
\draw    (428,189) -- (497,134) ;
\draw    (650,82) -- (562,82) ;
\draw    (562,82) -- (562,1) ;
\draw    (562,82) -- (493,137) ;
\draw    (193.57,157) -- (237,157) ;
\draw    (237,157) -- (237,199.46) ;
\draw    (237,157) -- (268.51,130.29) ;
\draw    (346,101.46) -- (302.57,101.46) ;
\draw    (302.57,101.46) -- (302.57,59) ;
\draw    (302.57,101.46) -- (268.51,130.29) ;
\draw    (40,341) -- (639,338.01) ;
\draw [shift={(641,338)}, rotate = 179.71] [color={rgb, 255:red, 0; green, 0; blue, 0 }  ][line width=0.75]    (10.93,-3.29) .. controls (6.95,-1.4) and (3.31,-0.3) .. (0,0) .. controls (3.31,0.3) and (6.95,1.4) .. (10.93,3.29)   ;
\draw    (-2,122.54) -- (41.43,122.54) ;
\draw    (40.57,122.46) -- (40.57,164.91) ;
\draw    (84,122.46) -- (40.57,122.46) ;
\draw    (40.57,122.46) -- (40.57,80) ;
\draw  [loosely dotted]  (41.43,122.54) -- (657,73) ;
\draw  [loosely dotted]  (40.57,122.46) -- (625,223) ;
\draw  [line width=3] [line join = round][line cap = round] (41,122) .. controls (41,122) and (41,122) .. (41,122) ;
\draw  [line width=2.25] [line join = round][line cap = round] (303,101) .. controls (303,101) and (303,101) .. (303,101) ;
\draw  [line width=2.25] [line join = round][line cap = round] (237,157) .. controls (237,157) and (237,157) .. (237,157) ;
\draw  [line width=2.25] [line join = round][line cap = round] (428,189) .. controls (428,189) and (428,189) .. (428,189) ;
\draw  [line width=2.25] [line join = round][line cap = round] (562,82) .. controls (562,82) and (562,82) .. (562,82) ;
\draw  [line width=2.25] [line join = round][line cap = round] (40,341) .. controls (40,341) and (40,341) .. (40,341) ;
\draw  [line width=2.25] [line join = round][line cap = round] (238,340) .. controls (238,340) and (238,340) .. (238,340) ;
\draw  [line width=2.25] [line join = round][line cap = round] (429,339) .. controls (429,339) and (429,339) .. (429,339) ;

\draw (595,231) node [anchor=north west][inner sep=0.75pt]    {$r_{1}$};
\draw (604,48) node [anchor=north west][inner sep=0.75pt]    {$r_{2}$};

\end{tikzpicture}
\caption{A schematic of the slices of of $\Sigma_{\mathcal Y}$ over different points of the base $\RR_{\geq 0}$.}
\end{figure}
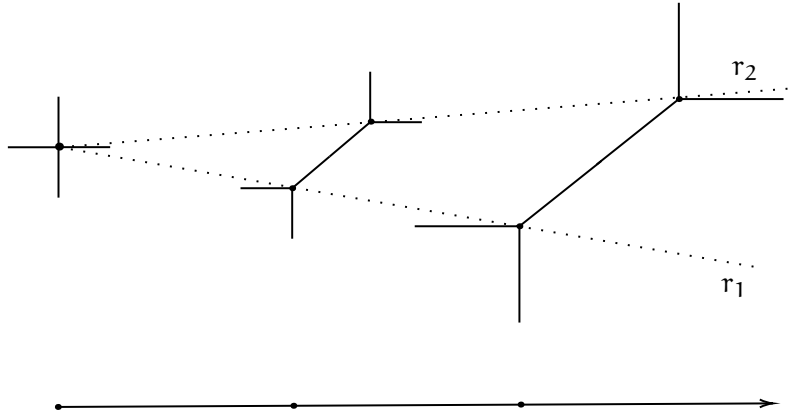

\subsection{Star of a cone} 
Let $\Sigma$ be a cone complex and $\sigma$ a cone in it.  We recall from Section \ref{sec:star-of-cone}
that the \textit{star} of $\sigma$, denoted $\Sigma(\sigma)$, is a cone complex obtained by taking the relative interiors of all cones adjacent to $\sigma$ and quotienting by $\sigma$.

The motivation for considering the star fan construction is the following.  If $V$ is a simple normal crossings pair with cone complex $\Sigma$ and $W\subset V$ is a closed stratum associated to a cone $\sigma$, then $W$ carries a generically trivial logarithmic structure associated to the divisor on $W$ obtained by intersection with the divisorial boundary of $V$.   The star fan $\Sigma(\sigma)$ is precisely the tropicalization of $W$ with respect to this log structure.

We are interested in two instances of this star construction.  First, given $\Sigma_{\mathcal Y}$ and its $r_1,\ldots, r_k$ lying over $\RR_{\geq 0}$, the stars
$\Sigma_{\mathcal Y}(r_i)$ are the cone complexes of the irreducible components of $Y_0$ with the induced generically trivial logarithmic structure. 

The second involves the DT moduli space itself.  The standard degeneration formula involves (i) choosing a stratum and (ii) describing it terms of smaller moduli spaces. We describe the logarithmic structure on a component of the special fiber of the DT moduli space degeneration. However, we would like to work with the generically trivial logarithmic structure, and so again use the star fan of a ray. 

More precisely, given our fixed ray $\rho$ of 
$T(\Sigma_{\mathcal Y})$ that maps surjectively onto the base $\RR_{\geq 0}$, we consider the star fan $T(\Sigma_{\mathcal Y})(\rho)$.  This is a cone space which will keep track of the tropical data associated to a virtual irreducible component of the DT moduli space.

\subsection{Cutting map} 

Our goal is to describe the star fan $T(\Sigma_{\mathcal Y})(\rho)$ in terms of the moduli space of $1$-complexes on the various star fans $\Sigma_{\mathcal Y}(r_i)$.
From now on, we use $\Sigma_1,\ldots \Sigma_k$ to denote these fans, i.e. we have set 
\[ \Sigma_i = \Sigma_{\mathcal Y}(r_i).\] 
We will construct cutting maps
\[
\kappa_i: |T(\Sigma_{\mathcal Y})(\rho)| \to |T^\circ(\Sigma_i)| \subset |T(\Sigma_i)|
\]
from the star of $\rho$ to a certain subspace $T^\circ$ of the moduli space of $1$-complexes in the star fans $\Sigma_i$, which we will specify in the course of the construction. 

In order to do this, we first discuss some preliminaries.
Fix a cone $\tau$ of the space $T(\Sigma_{\mathcal Y})$. We restrict the universal family to $\tau$ to obtain a family of $1$-complexes, compatible with the map to $\RR_{\geq 0}$:
\[
\begin{tikzcd}
\mathbf G_\tau\arrow{r}\arrow{d} & \Sigma_{\mathcal Y}\arrow{d}\\
\tau\arrow{r} & \RR_{\geq 0}. 
\end{tikzcd}
\]
Fix a point $p$ in the interior of $\tau$. The fiber of $p$ in $|\mathbf G_\tau|$ can be identified with a $1$-complex. 

\begin{lemma}\label{lem:combtype}
The combinatorial type of the polyhedral complex over $p$ is independent of the choice of $p$, provided $p$ lies in the interior of $\tau$. 
\end{lemma}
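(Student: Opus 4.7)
The plan is to reduce the statement to a direct property of the combinatorially flat cone structure chosen on $T(\Sigma_{\mathcal Y}/\mathbb R_{\geq 0})$ together with its universal family $\mathbf G$. Indeed, by the setup in Section~\ref{sec:setupandnotation} and following \cite[Section~3]{MR20}, $\mathbf G$ may be realized as a subcomplex of $\Sigma_{\mathcal Y}\times T(\Sigma_{\mathcal Y}/\mathbb R_{\geq 0})$ in such a way that projection to the second factor is combinatorially flat. So first I would make this explicit, noting that its restriction $\mathbf G_\tau$ is a subcomplex of $\Sigma_{\mathcal Y}\times \tau$ in which every cone maps surjectively onto $\tau$.

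Next, I would examine a single cone $\sigma$ of $\mathbf G_\tau$. Since $\sigma\to\tau$ is a surjective morphism of cones, for any point $p$ in the interior of $\tau$ the fiber $\sigma_p := \sigma\cap(\Sigma_{\mathcal Y}\times\{p\})$ is a polyhedron of dimension $\dim \sigma - \dim \tau$ sitting inside the slice $\pi^{-1}(\pi(p))$. As $p$ varies over the interior of $\tau$, the observations (i)--(iii) of Section~\ref{sec:setupandnotation} tell us that the fibers $\sigma_p$ are translates/rescalings of one another inside a fixed family of slices of $\Sigma_{\mathcal Y}$, so each $\sigma_p$ lies in the same cone of $\Sigma_{\mathcal Y}$, has the same integral directional data, and has the same face poset.

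Finally, the $1$-complex $\mathbf G_p$ is the union $\bigcup_\sigma \sigma_p$ as $\sigma$ ranges over cones of $\mathbf G_\tau$, and the incidences among the $\sigma_p$ are controlled by the face relations among the cones $\sigma$ of $\mathbf G_\tau$ themselves -- data which is manifestly independent of $p$. Gluing these observations together shows that $\mathbf G_p$, viewed as an embedded polyhedral complex in $\Sigma_{\mathcal Y}$ (with its vertex-edge incidence structure, ambient cone of each cell, and integer affine structure on each cell), has combinatorial type independent of $p$ in the interior of $\tau$.

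The only real obstacle is a bookkeeping one: carefully aligning what is meant by ``combinatorial type'' of a $1$-complex in this context -- abstract graph, ambient cone of each cell in $\Sigma_{\mathcal Y}$, and integer slopes of edges -- with the combinatorial data encoded by cones of $\mathbf G_\tau$ over $\tau$. Once that dictionary is in place, the lemma is essentially tautological, and the argument reduces to the combinatorial flatness of $\mathbf G\to T$.
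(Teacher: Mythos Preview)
Your proposal is correct and takes essentially the same approach as the paper: both reduce to combinatorial flatness of $\mathbf G_\tau \to \tau$ and observe that the cone/face structure of $\mathbf G_\tau$ itself encodes the graph structure of the fiber, so the combinatorial type is determined independently of $p$. The paper's version is more compressed---it simply notes that cones of $\mathbf G_\tau$ map to $\tau$ either isomorphically (yielding vertices) or with a one-dimensional drop (yielding edges/legs), and that the face poset of $\mathbf G_\tau$ is the graph---whereas your invocation of observations (i)--(iii) from Section~\ref{sec:setupandnotation} is not actually needed here (those concern the specific ray $\rho$, not a general cone $\tau$), but this does not affect correctness.
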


\begin{proof}
The combinatorial structure can be described independently of $p$ as follows. Since $\mathbf G_\tau$ is combinatorially flat over $\tau$, its cones either map isomorphically onto the base or drop dimension by $1$. The vertices in the preimage are in bijection with the former, while the edges and legs are in bijection with the latter. The partially ordered set structure of the cones of $\mathbf G_\tau$ coincides with the graph structure of the $1$-complex over $p$, so we conclude.
\end{proof}

We denote by $G_\tau$ the combinatorial $1$-complex defined by the above lemma.

Fix a ray $r_i$ of $\Sigma_{\mathcal Y}$ and the corresponding star $\Sigma_i$. Given a point in $T(\Sigma_{\mathcal Y})$ that is contained in the overstar of $\rho$, we will take the corresponding $1$-complex in $\Sigma_{\mathcal Y}$ and construct a $1$-complex in $\Sigma_i$. Each vertex of the combinatorial $1$-complex $G_\rho$ is assigned a cone in $\Sigma_{\mathcal Y}$. By Section \ref{sec:setupandnotation}, this cone is in fact a ray of $\Sigma_{\mathcal Y}$ that maps surjectively onto $\mathbb R_{\geq 0}$. Note that there is at most one vertex $u$ of $G_\rho$ that is assigned to $r_i$ since the $1$-complex is embedded.  If no such vertex exists, the cutting map sends everything to the empty $1$-complex, so from now on we assume $u$ exists.

Given a cone $\tau$ of $T(\Sigma_{\mathcal Y})$ containing $\rho$ as  face,with universal family $\mathbf G_\tau \rightarrow \tau$ as above.  The inclusion $\rho \subset \tau$ determines an inclusion of complexes $\mathbf G_\rho \subset \mathbf G_\tau$; moreover the choice of vertex $u$ determines a ray $u_\rho$ in $\mathbf G_\rho$ and thus a ray in $\mathbf G_\tau$ as well.
We then have a diagram of pairs of cone complexes and rays
\[
\begin{tikzcd}
\mathbf (G_{\tau}, u_\rho) \arrow{d}\arrow{r} & (\Sigma_{\mathcal Y}, r_i)\\
(\tau, \rho). & 
\end{tikzcd}
\]

By taking the star fan of each pair in this diagram, we obtain a family of complexes in $\Sigma_i$, and so obtain a map
\[
\tau(\rho)\to |T(\Sigma_i)|.
\]
The construction is easily seen to be compatible with specialization of combinatorial $1$-complexes, and therefore glues across cones. We therefore obtain the cutting morphism
\[
\kappa_i:|T(\Sigma_{\mathcal Y})(\rho)|\to |T(\Sigma_i)|. 
\]

Given a point $p \in \tau \subset T(\Sigma_{\mathcal Y})(\rho)$, it follows from this construction that the only unbounded edges of $\kappa_i(p)$ are parallel to the rays of $\Sigma_i$, since $\mathbf G_\tau$ contains $\mathbf G_\rho$ as a subcomplex.  
Let $T^{\circ}(\Sigma_i)$ denote the subcomplex of $T(\Sigma_i)$ consisting of $1$-complexes with this property; it is an open and closed subcomplex, and it contains the image of $\kappa_i$.

By combining these, we have a \textbf{total cutting map}
\[
\kappa: |T(\Sigma_{\mathcal Y})(\rho)|\to \prod_i |T^\circ(\Sigma_i)|.
\]
Here we are only writing these as maps of underlying spaces, but of course, with appropriate subdivision, they can be lifted to maps of cone complexes. Indeed, the arguments above apply without change to any fixed cone structure, due to the functoriality of the star constructions.

\subsection{Description via metric geometry}\label{sec: alt-description}

We give an equivalent description of the cutting map which can be useful when working with explicit examples.

We begin with two constructions.
First, we construct a retraction relating the combinatorial $1$-complexes $G_\rho$ and $G_\tau$:
\[
\psi: G_\tau\to G_\rho,
\] 
defined as follows.
Given a vertex $v$ of $G_\tau$, as in the proof of Lemma \ref{lem:combtype}, it defines a cone $\sigma_v$ in $\mathbf G_\tau$ which maps isomorphically onto $\tau$.  There is a unique ray $u_\rho$ in $\mathbf G_\rho$ that lies over $\rho \subset \tau$, which corresponds to a unique vertex $u \in G_\rho$ which is $\psi(v)$. The map on edges and rays can be constructed similarly (and is determined by the map on vertices).

Second, we define {\it slice neighborhoods} associated to the map $\pi:  \Sigma_{\mathcal Y} \rightarrow \RR_{>0}$.
Given any point $t$ in $\RR_{>0}$, the fiber $\pi^{-1}(t)$ is a polyhedral complex $\mathscr P_t$; its vertices are in natural bijection with those rays $r_1,\ldots, r_k$ of $\Sigma_{\mathcal Y}$ that are not contracted by $\pi$. Fix a point $t$ and let $v_i$ be a vertex of $\mathscr P_t$. Let $\mathscr P_t^\circ(v_i)$ be the connected component containing $v_i$, of the complement of faces in $\mathscr P_t$ that \textit{do not} contain $v_i$. This set is contained in the overstar of $r_i$ in $\Sigma_{\mathcal Y}$, and therefore we may examine its image in $\Sigma_i$. The projection from the overstar to the star maps this set $\mathscr P_t^\circ(v_i)$ isomorphically onto its image in $\Sigma_i$. We refer to this as the slice neighborhood of $0$ in $\Sigma_i$ at the point $t$. It follows easily from the definitions that upon increasing $t$, the slice neighborhoods get larger, and eventually exhaust $\Sigma_i$.

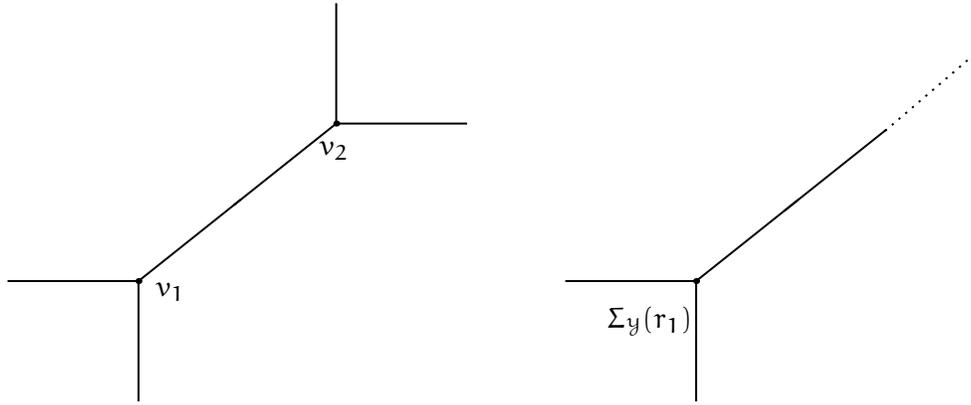
\begin{figure}[h!]
\tikzset{every picture/.style={line width=0.75pt}} 

\tikzset{every picture/.style={line width=0.75pt}} 

\tikzset{every picture/.style={line width=0.75pt}} 

\begin{tikzpicture}[x=0.75pt,y=0.75pt,yscale=-0.75,xscale=0.75]

\draw    (369,192) -- (457,192) ;
\draw    (457,192) -- (457,273) ;
\draw    (457,192) -- (526,137) ;
\draw  [line width=2.25] [line join = round][line cap = round] (457,192) .. controls (457,192) and (457,192) .. (457,192) ;
\draw    (-6,192) -- (82,192) ;
\draw    (82,192) -- (82,273) ;
\draw    (82,192) -- (151,137) ;
\draw    (303,86) -- (215,86) ;
\draw    (215,86) -- (215,5) ;
\draw    (215,86) -- (146,141) ;
\draw  [line width=2.25] [line join = round][line cap = round] (82,192) .. controls (82,192) and (82,192) .. (82,192) ;
\draw  [line width=2.25] [line join = round][line cap = round] (215,86) .. controls (215,86) and (215,86) .. (215,86) ;
\draw    (585,90) -- (516,145) ;
\draw  [dash pattern={on 0.84pt off 2.51pt}]  (644,39.5) -- (584,91) ;

\draw (92,191) node [anchor=north west][inner sep=0.75pt]    {$v_{1}$};
\draw (202,96) node [anchor=north west][inner sep=0.75pt]    {$v_{2}$};
\draw (395,207) node [anchor=north west][inner sep=0.75pt]    {$\Sigma_{\mathcal Y} ( r_{1})$};

\end{tikzpicture}

\caption{A typical slice neighborhood of the star of $r_1$ is depicted on the right. The neighborhood extends only as far as the dotted line in the $(1,1)$ direction, while the star $\Sigma(r_1)$ itself is a fan, and is unbounded in that direction.}
\end{figure}

Given a point $\overline{p} \in  |T(\Sigma_{\mathcal Y})(\rho)|$, we can describe $\kappa_i(\overline{p}) \in T(\Sigma_i)$ as follows.  Let $u$ denote the vertex in $G_\rho$ associated to the ray $r_i$.  Choose a lift $p \in |T(\Sigma_{\mathcal Y})|$ lying over $t \in \RR_{>0}$.  Consider the $1$-complex $\mathbf G_p$; the vertices $v$ of $\mathbf G_p$ such that $\psi(v) = u$ define a subcomplex.  It follows from the definition that these vertices $\psi^{-1}(u)$ lie inside the slice
$\mathscr P_t^\circ(v_i) \subset \mathscr P_t$, and therefore give a finite set $S$ of points inside $\Sigma_i$.  Any edges in $G_\tau$ connecting vertices in $\psi^{-1}(u)$ define corresponding edges connecting points of $S$; similarly, any edges or rays with one boundary vertex inside $\psi^{-1}(u)$ define unbounded rays for the corresponding point of $S$.  The result is a $1$-complex in $\Sigma_i$, which is precisely $\kappa_i(\overline{p})$.

\subsection{Evaluations} Our next task is to characterize the image of the total cutting map. In parallel with geometric degeneration formulas, we will identify it with the subset of the product space where a natural matching condition holds, given by a diagonal condition for an evaluation map. 

If $F$ is any cone complex, recall that $P(F)$ is the subcomplex of $T(F)$ parameterizing $0$-complexes, i.e. $1$-complexes without edges or rays. 

If we fix a ray $\delta$ in the fan of $\Sigma_i$, let $\Sigma_i(\delta)$ denote the star fan of the ray $\delta$. Recall from Section~\ref{sec: points} the evaluation map
\[
\mathsf{ev}_\delta: |T^\circ(\Sigma_i)|\to |P(\Sigma_i(\delta)).|
\]
Informally, it is described as follows. Given a $1$-complex, outside of a compact neighborhood of the origin of $\Sigma_i$, it consists of a disconnected union of rays, each parallel to one of the rays of $\Sigma_i$. In particular, there is a finite set of rays parallel to $\delta$, and by taking the quotient in the direction of $\delta$, we obtain a collection of points on $\Sigma_i(\delta)$. 




We return to our fixed ray $\rho$ in $T(\Sigma_{\mathcal Y})$ with $G_\rho$ be the combinatorial $1$-complex over this ray. Let $e$ be an edge in $G_\rho$ between vertices $v_1$ and $v_2$. The flags $(v_i,e)$ determine rays $\delta_1$ and $\delta_2$ in star fans $\Sigma_1$ and $\Sigma_2$ respectively. Finally, by our assumption the edge $e$ picks out an edge in each fiber of $\Sigma_{\mathcal Y}$ over $\RR_{>0}$. Let $\delta_e$ be the $2$-dimensional cone in $\Sigma_{\mathcal Y}$ corresponding to $e$. 

The following lemma is immediate from the definitions. 

\begin{lemma}\label{lem: gluing divisors}
The following stars are canonically identified:
\[
\Sigma_{\mathcal Y}(\delta_e) = \Sigma_1(\delta_1) = \Sigma_2(\delta_2).
\]
\end{lemma}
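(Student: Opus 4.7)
The plan is to deduce the lemma from the transitivity of the star construction, already recorded in Section~\ref{sec:star-of-cone}: for a face inclusion $\sigma \subset \tau \subset \Sigma$, if $\overline{\tau}$ is the image of $\tau$ in the star fan $\Sigma(\sigma)$, then $\Sigma(\tau) = \Sigma(\sigma)(\overline{\tau})$. Since $\Sigma_i = \Sigma_{\mathcal Y}(r_i)$ by definition, each of the desired identifications will reduce to checking that $r_i$ is a face of $\delta_e$, and that the image of $\delta_e$ in $\Sigma_{\mathcal Y}(r_i)$ is precisely the ray $\delta_i$.

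First I would unpack the combinatorial setup. By the proof of Lemma~\ref{lem:combtype} applied to the ray $\rho$, each vertex $v$ of the combinatorial $1$-complex $G_\rho$ corresponds to a cone of $\mathbf G_\rho$ mapping isomorphically onto $\rho$; tracing this through the map $\mathbf G_\rho \to \Sigma_{\mathcal Y}$ and the standing assumption that $v_i$ is assigned to $r_i$, one sees that the ray $r_i$ is precisely the image of this vertex cone in $\Sigma_{\mathcal Y}$. Similarly, the edge $e$ between $v_1$ and $v_2$ corresponds to a $2$-dimensional cone of $\mathbf G_\rho$, whose image in $\Sigma_{\mathcal Y}$ is the $2$-dimensional cone $\delta_e$. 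Compatibility of the incidence relations under the map to $\Sigma_{\mathcal Y}$ then gives that $r_1$ and $r_2$ are both faces of $\delta_e$.

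Next, I would identify $\delta_i$ inside the quotient. The flag $(v_i, e)$, by definition, picks out the edge $e$ as seen in the link at the vertex $v_i$, which on the fan side is exactly the image of $\delta_e$ in the quotient complex $\Sigma_{\mathcal Y}(r_i) = \Sigma_i$; this image is a ray because $\delta_e$ is $2$-dimensional and contains $r_i$ as a face. Thus $\delta_i$ is the image of $\delta_e$ in $\Sigma_i$ under the star projection. Applying the transitivity of the star construction yields
\[
\Sigma_1(\delta_1) = \Sigma_{\mathcal Y}(r_1)(\delta_1) = \Sigma_{\mathcal Y}(\delta_e),
\]
and symmetrically $\Sigma_2(\delta_2) = \Sigma_{\mathcal Y}(\delta_e)$, giving the chain of canonical identifications.

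I do not anticipate any serious obstacle here; the content of the lemma is almost formal once the identification of $\delta_i$ as the image of $\delta_e$ under the star projection is made, and the only care required is in bookkeeping the bijection between vertices/edges of $G_\rho$ and cones of $\mathbf G_\rho$ (and hence of $\Sigma_{\mathcal Y}$) to justify that $r_i$ is indeed a face of $\delta_e$ so that transitivity applies.
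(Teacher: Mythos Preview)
Your proposal is correct and is precisely the unpacking of what the paper means by ``immediate from the definitions'' (the paper provides no further proof). The transitivity property of the star construction from Section~\ref{sec:star-of-cone} is exactly the right tool, and your bookkeeping of the correspondence between vertices/edges of $G_\rho$ and cones of $\mathbf G_\rho$ mapping to $r_i$ and $\delta_e$ in $\Sigma_{\mathcal Y}$ is accurate.
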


Given a ray $\rho$ in $T(\Sigma)$, we have exhibited cutting maps
\[
\kappa: |T(\Sigma_{\mathcal Y})(\rho)|\to \prod_i |T^\circ(\Sigma_i)|
\]
where the latter product is the space of $1$-complexes in $\Sigma_1,\ldots, \Sigma_k$. Let $G_\rho$ be the combinatorial $1$-complex along $\rho$. Fix an edge $e$ with incident vertices $v_1$ and $v_2$. Given this, and the canonical identification of Lemma~\ref{lem: gluing divisors} above, we can construct two evaluation maps to $\Sigma_{\mathcal Y}(\delta_e)$:
\[
\mathsf{ev}_{\delta_i}\circ \kappa_i: |T(\Sigma_{\mathcal Y})(\rho)|\to |P(\Sigma_{\mathcal Y}(\delta_e))|. 
\]
Packaging them, we have a map
\[
\mathsf{ev}_e: |T(\Sigma_{\mathcal Y})(\rho)|\to |P(\Sigma_{\mathcal Y}(\delta_e))|\times |P(\Sigma_{\mathcal Y}(\delta_e))|.
\]

\begin{lemma}
The image of the map $\mathsf{ev}_e$ is contained in the diagonal. 
\end{lemma}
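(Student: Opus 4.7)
The plan is to unravel both compositions $\mathsf{ev}_{\delta_i}\circ \kappa_i$ in terms of the edges of the ambient $1$-complex, and observe that they compute the same intrinsic quantity, namely the position of certain $2$-dimensional cells modulo the $2$-cone $\delta_e$.

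First, fix a point $p\in |T(\Sigma_{\mathcal Y})(\rho)|$ lying in the interior of a cone $\tau$ containing $\rho$, and work with the associated $1$-complex $\mathbf G_p\subset \Sigma_{\mathcal Y}$ together with the retraction $\psi\colon G_\tau\to G_\rho$ of Section~\ref{sec: alt-description}. Using the slice-neighborhood description, $\kappa_i(p)$ is obtained by taking the set of vertices $\psi^{-1}(v_i)\subset \mathscr P^\circ_t(v_i)$ together with their incident edges and rays, and then quotienting the ambient cone $\mathscr P^\circ_t(v_i)$ by the direction $r_i$. The crucial observation is that the unbounded rays of $\kappa_i(p)$ parallel to $\delta_i$ are in canonical bijection with edges $e'$ of $\mathbf G_p$ satisfying $\psi(e')=e$: indeed, such an $e'$ is precisely an edge with one endpoint in $\psi^{-1}(v_i)$ whose other endpoint ``leaves the slice'' in the direction of $v_j$, i.e. parallel to $\delta_i$. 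In particular, the sets of unbounded rays of $\kappa_1(p)$ and $\kappa_2(p)$ in the directions $\delta_1$ and $\delta_2$ respectively are indexed by the same finite set $E_e(p):=\psi^{-1}(e)$.

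Second, for each $e'\in E_e(p)$ the corresponding edge lies in (a cone containing) the $2$-dimensional cone $\delta_e\subset \Sigma_{\mathcal Y}$. Since $e'$ is parallel to $\delta_e$, its image in the star $\Sigma_{\mathcal Y}(\delta_e)$ is a well-defined point, namely the class of any interior point of $e'$ modulo the additive $\delta_e$-action. I then claim that both $\mathsf{ev}_{\delta_1}\circ\kappa_1$ and $\mathsf{ev}_{\delta_2}\circ\kappa_2$ send $p$ to the $0$-complex whose points are exactly these classes, under the identifications of Lemma~\ref{lem: gluing divisors}. This is essentially a compatibility of two iterated star constructions: taking the star at $r_i$ and then at $\delta_i$ agrees with taking the star at the flag $(r_i,\delta_e)$, and the latter is symmetric in $i=1,2$. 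Concretely, the position of the endpoint of the ray $\kappa_i(p)$ parallel to $\delta_i$ modulo $\delta_i$ is computed by taking any lift of that endpoint in $\mathscr P^\circ_t(v_i)\subset\Sigma_{\mathcal Y}$ and reducing modulo the span $\langle r_i,\delta_e\rangle=\delta_e$; this lift may be chosen to be any interior point of $e'$.

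Putting the two steps together, the two compositions agree pointwise on $|T(\Sigma_{\mathcal Y})(\rho)|$, so $\mathsf{ev}_e$ lands in the diagonal of $|P(\Sigma_{\mathcal Y}(\delta_e))|\times |P(\Sigma_{\mathcal Y}(\delta_e))|$. The only mildly subtle point is verifying compatibility with the integral/lattice structure on both sides—this uses that the cone $\delta_e$ is a face of both $r_i$-adjacent cones realizing $e'$, so the two star-fan identifications of Lemma~\ref{lem: gluing divisors} are identifications of integral cone complexes, not merely of underlying topological spaces. Once this is in hand, the matching of points under $\mathsf{ev}_{\delta_1}\circ\kappa_1$ and $\mathsf{ev}_{\delta_2}\circ\kappa_2$ is automatic.
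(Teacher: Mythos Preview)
Your proof is correct and rests on the same key idea as the paper's: the composition $\mathsf{ev}_{\delta_i}\circ\kappa_i$ is computed by the iterated star at $r_i$ and then at $\delta_i$, which by functoriality of the star construction equals the star at the $2$-cone $\delta_e$ directly and is therefore independent of $i$. The paper packages this more tersely by defining an auxiliary ``edge-cutting'' map $\kappa_e\colon T(\Sigma_{\mathcal Y})(\rho)\to P(\Sigma_{\mathcal Y}(\delta_e))$ via the star of $\delta_e$ inside $\mathbf G_\tau$, and then reading off $\mathsf{ev}_{\delta_1}\circ\kappa_1=\kappa_e=\mathsf{ev}_{\delta_2}\circ\kappa_2$ from the diagram of triples $(\mathbf G_\tau,\delta_e,\rho_i)\to(\Sigma,\delta_e,r_i)$ over $(\tau,\rho,\rho)$.

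One small caution on your metric description: the assertion that the edge $e'$ is ``parallel to $\delta_e$'' so that \emph{any} interior point of $e'$ gives the same class modulo $\delta_e$ is not obviously true in general---the direction of $e'$ lives in a cone containing $\delta_e$ and could have a transverse component. This does not affect the argument, since what you actually use (and correctly state) is that the resulting unbounded ray in $\Sigma_i$ is parallel to $\delta_i$, and its image in $\Sigma_i(\delta_i)=\Sigma_{\mathcal Y}(\delta_e)$ is well-defined; computing this via the star-of-a-flag identification bypasses the need to choose a point on $e'$ at all. The paper's formulation sidesteps this entirely by never leaving the star formalism.
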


\begin{proof}
Given a cone $\tau$ containing $\rho$ as a ray, the edge $e$ defines a two-dimensional cone $\delta_e \subset \mathbf G_\rho \subset \mathbf G_\tau$, and as in the definition of the cutting map, its star defines a map $\kappa_e: T(\Sigma_{\mathcal Y})(\rho)\to P(\Sigma_{\mathcal Y}(\delta_e))$.

On the other hand, for each $i=1,2$, the vertex $v_i$ defines a ray $\rho_i \subset \delta_e \subset \mathbf G_\tau$, and we have a diagram of triples
\[
\begin{tikzcd}
\mathbf (G_{\tau}, \delta_e, \rho_i) \arrow{d}\arrow{r} & (\Sigma, \delta_e, \rho_i)\\
(\tau, \rho, \rho) & 
\end{tikzcd}
\]
By applying the compatibility of the star construction with respect to triples, we see that $$\mathsf{ev}_{\delta_1}\circ \kappa_1 = \kappa_e = \mathsf{ev}_{\delta_2}\circ \kappa_2.$$ The result follows.   
\end{proof}

\subsection{The degeneration formula} Our final task is to characterize the image of the total cutting map $\kappa$ via a matching condition given by the diagonal in the tropical evaluation space. 

Consider the cutting morphism
\[
\kappa\colon|T(\Sigma_{\mathcal Y})(\rho)|\to \prod_i |T^\circ(\Sigma_i)|. 
\]
There is an evaluation morphism
\[
\mathsf{ev}\colon \prod_i |T^\circ(\Sigma_i)|\to \prod_{e} |P(\Sigma_{\mathcal Y}(\delta_e))|\times |P(\Sigma_{\mathcal Y}(\delta_e))|.
\]
Let $\Delta$ denote the product over all edges $e$ of the diagonal loci in $|P(\Sigma_{\mathcal Y}(\delta_e))|\times |P(\Sigma_{\mathcal Y}(\delta_e))|$. 

\begin{theorem}
The cutting map induces an equality of sets $\kappa\colon |T(\Sigma_{\mathcal Y})(\rho)| \xrightarrow{\sim} \mathsf{ev}^{-1}(\Delta)$. 
\end{theorem}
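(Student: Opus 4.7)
The plan is to prove the bijection by constructing an explicit inverse, using the metric-geometric description of the cutting map given in Section~\ref{sec: alt-description}. We already have that the image of $\kappa$ is contained in $\mathsf{ev}^{-1}(\Delta)$, so it remains to prove injectivity and surjectivity of $\kappa$ onto this subset.

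First I would establish injectivity. A point $\overline p$ of $|T(\Sigma_{\mathcal Y})(\rho)|$ is represented by an embedded $1$-complex $\mathbf G_p \subset \Sigma_{\mathcal Y}$, after choosing any lift $p$ to $|T(\Sigma_{\mathcal Y})|$ lying over some $t \in \RR_{>0}$. By our standing assumption (no tube vertices, all vertices of $G_\rho$ assigned to some ray $r_i$), the retraction $\psi: G_\tau \to G_\rho$ partitions the vertices of $\mathbf G_p$ according to which vertex of $G_\rho$ they specialize to, and hence distributes them into the disjoint slice neighborhoods $\mathscr P_t^\circ(v_i)$. The cutting $\kappa_i(\overline p)$ records exactly the vertices and bounded edges of $\mathbf G_p$ lying in $\mathscr P_t^\circ(v_i)$, together with an unbounded ray (parallel to a ray of $\Sigma_i$) for each edge of $\mathbf G_p$ exiting this slice. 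The combined data $(\kappa_1(\overline p), \ldots, \kappa_k(\overline p))$ therefore records every vertex and every edge of $\mathbf G_p$, and together with the matching across edges of $G_\rho$ recovers $\overline p$ unambiguously.

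For surjectivity, I would construct the inverse as follows. Given $(q_1, \ldots, q_k) \in \mathsf{ev}^{-1}(\Delta)$, choose $t > 0$ large enough that the bounded subcomplex of each $q_i$ fits inside the slice $\mathscr P_t^\circ(v_i)$; such $t$ exists because the slice neighborhoods exhaust $\Sigma_i$ as $t \to \infty$. Embed each $q_i$ (minus its unbounded rays) into $\mathscr P_t^\circ(v_i) \subset \Sigma_{\mathcal Y}$. For each edge $e$ of $G_\rho$ with endpoints $v_1, v_2$ and associated $2$-cone $\delta_e \subset \Sigma_{\mathcal Y}$, the canonical identification $\Sigma_1(\delta_1) = \Sigma_2(\delta_2) = \Sigma_{\mathcal Y}(\delta_e)$ of Lemma~\ref{lem: gluing divisors}, combined with the diagonal matching condition, puts the unbounded rays of $q_1$ parallel to $\delta_1$ in natural bijection with those of $q_2$ parallel to $\delta_2$ at matching transverse positions; truncate each matched pair inside $\delta_e$ to a single finite edge of the result. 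Unmatched rays of any $q_i$ parallel to rays of $\Sigma_{\mathcal Y}$ contracted by $\pi$ are impossible (since $q_i \in T^\circ(\Sigma_i)$), while rays parallel to other rays of $\Sigma_{\mathcal Y}(r_i)$ not corresponding to edges of $G_\rho$ remain unbounded; but such rays cannot occur either, since the matching condition forces every unbounded direction to be paired. The result is an embedded vertical $1$-complex $\mathbf G$ in $\pi^{-1}(t)$, which as $t$ varies scales radially and thus defines a point in the overstar of $\rho$; its image in the star $|T(\Sigma_{\mathcal Y})(\rho)|$ is the desired preimage, and by construction $\kappa_i$ recovers $q_i$.

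The main obstacle will be the edge reconstruction step: one must verify that the $2$-cone $\delta_e$ in $\Sigma_{\mathcal Y}$ genuinely accommodates the placed rays from both $q_1$ and $q_2$ at consistent transverse positions, that no rays remain unpaired, and that the resulting glued complex is embedded with no spurious intersections. Each of these follows from the fact that the star identification of Lemma~\ref{lem: gluing divisors} is canonical and compatible with the overstar structure, together with the combinatorial flatness of $\mathbf G \to T$, but the bookkeeping must be executed carefully edge by edge. Once this is in place, checking that $\kappa$ composed with our construction is the identity on $\mathsf{ev}^{-1}(\Delta)$, and that our construction composed with $\kappa$ is the identity on $|T(\Sigma_{\mathcal Y})(\rho)|$, reduces to the observation that both operations are determined cone-by-cone by the same combinatorial data.
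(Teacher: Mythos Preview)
Your overall approach matches the paper's: injectivity via the slice-neighborhood cover, surjectivity by lifting each $q_i$ into a slice $\mathscr P_t^\circ(v_i)$ for large $t$ and gluing along bounded edges.

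There is, however, a concrete error in your handling of unbounded rays. You claim that rays of $q_i$ parallel to rays of $\Sigma_{\mathcal Y}$ contracted by $\pi$ cannot occur because $q_i \in T^\circ(\Sigma_i)$. This is false. A horizontal ray $h$ of $\Sigma_{\mathcal Y}$ adjacent to $r_i$ spans a $2$-cone with $r_i$, which under the star construction becomes a bona fide ray of $\Sigma_i = \Sigma_{\mathcal Y}(r_i)$; the condition defining $T^\circ(\Sigma_i)$ therefore permits $q_i$ to have unbounded rays in this direction. Geometrically these are exactly the legs of the rigid $1$-complex $G_\rho$, and in the glued object they remain as unbounded rays of $\mathbf G \subset \mathscr P_t$ rather than being paired with anything. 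The paper's Step~IV accordingly glues only along bounded edges of $\mathscr P_t$ and leaves such horizontal rays untouched. Your subsequent assertion that ``the matching condition forces every unbounded direction to be paired'' is likewise unjustified, since $\Delta$ is imposed only along the edges of $G_\rho$; the paper is also informal on that second point, but the horizontal-ray claim is a definite mistake that must be corrected for your reconstruction to produce a valid vertical $1$-complex retracting to $G_\rho$.
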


\begin{proof}

To prove injectivity and surjectivity, it is convenient to use the description of the cutting map via slice neighborhoods, as in Section~\ref{sec: alt-description}.

\noindent
{\bf I. Injectivity.}
Given $\overline{p}, \overline{p}' \in T(\Sigma_{\mathcal Y})(\rho)$, we can choose lifts $p, p' \in T(\Sigma_{\mathcal Y})$ lying over the same point $t \in \RR_{>0}$.
For each ray $r_i$ of $\Sigma_{\mathcal Y}$ with corresponding vertex $v_i$ of $\mathscr P_t$, the data characterizing $\kappa_i(\overline{p})$ is equivalent to specifying the open subset of $\mathbf G_p \cap \mathscr P_t^\circ(u_i)$ determined by the vertices of $G_p$ which retract to $v_i$.  As $i$ varies, these open subsets cover $\mathbf G_p$.  
In particular, if $\kappa_i(\overline{p}) = \kappa_i(\overline{p}')$ for all $i$, the corresponding open subset of the slice neighborhoods agree and, by taking the union over $i$, we must have $\mathbf G_p = \mathbf G_{p'}$ so that $p = p'$.

To prove surjectivity onto the pullback of the diagonal we give a reconstruction procedure: given points of each $T^\circ(\Sigma_i)$ that evaluate into the diagonal, we will construct, for any sufficiently large $t\in \RR_{>0}$, an embedded $1$-complex in $\Sigma_{\mathcal Y}$ that lies in the fiber of over $t$ 

\noindent
{\bf II. Exporting $1$-complexes to the slice neighborhood.} Fix a point in $T(\Sigma_i)$ and let $G$ be the embedded $1$-complex. On the complement of a compact neighborhood of the origin, the $1$-complex $\mathbf G$ consists of a disjoint union of unbounded rays, each parallel to one of the rays of $\Sigma_i$. For future use, we refer to the subcomplex that excludes these unbounded rays as a \textit{skeleton}. In particular, there exists some sufficiently large value of $t$ such that this slice neighborhood contains the skeleton and intersects all the unbounded rays. We now lift the $1$-complex parameterized by a point $T(\Sigma_i)$ to a non-compact, $1$-dimensional polyhedral subset $Q_i$ of $\mathscr P_t$, by identifying the slice neighborhood with a subset of $\mathscr P_t$ as explained above.

\noindent
{\bf III. Preparing the lifts.} Now fix a point $p$ in the product $\prod_i T(\Sigma_i)$, which determines $1$-complexes in each $\Sigma_i$. We have described a way to produce, for any sufficiently large $t$, a lift of each of these $1$-complexes to polyhedral subsets of $\mathscr P_t$. If the $t$ coordinate is sufficiently large, we can also guarantee the following separation criterion: there is a compact neighborhood $K_i$ of each vertex $v_i$, containing the skeleton of $Q_i$, such that $K_i\cap K_j$ is empty for distinct $i$ and $j$. In preparation for gluing the lifts $Q_i$, we adjust them as follows. For each $Q_i$, we truncate the non-compact edges that lie outside the skeleton such that these edges remain disjoint from $K_j$ for $i$ and $j$ distinct. 

\noindent
{\bf IV. Gluing the lifts.} We now specialize to the case where $p$ is a point in $\mathsf{ev}^{-1}(\Delta)$. By applying the procedure above, we obtain lifts $Q_i$ of $1$-complexes, to non-compact polyhedral subsets of $\mathscr P_t$, with $t$ sufficiently large as by the preceding step. For each edge $e$ incident to a vertex $v_i$, the polyhedral subset $Q_i$ has (a potentially empty) set of non-compact edges parallel to $e$. The lengths of these edges may be increased or decreased, maintaining the property that the edges belonging to $Q_i$ are disjoint from the neighborhood $K_j$ for $i$ and $j$ distinct. We now use the diagonal condition to glue the lifts together to a $1$-complex in $\Sigma_{\mathcal Y}$, living over the sufficiently large value of $t$. Consider two vertices $v_1$ and $v_2$ of $\mathscr P_t$, sharing an edge $e$ and the lifts $Q_1$ and $Q_2$ described above. The complement of the skeleton in each includes a set non-compact edges parallel to $e$. In both cases, there is a natural projection from the cone containing these non-compact edges to $\Sigma_{\mathcal Y}(\delta_e)$. By construction, the image is a $0$-complex, i.e. a point of $P(\Sigma_{\mathcal Y}(\delta_e)))$. The diagonal condition ensures that this $0$-complex is the same for both $Q_1$ and $Q_2$. By linear geometry, it follows that these non-compact edges in the direction of $e$ are open subsets of the preimage of a set of points in $\Sigma_{\mathcal Y}(\delta_e)$, where the preimage is taken under the natural projection from the overstar of $e$ to the quotient. it follows that if the edge lengths of these non-compact edges are adjusted appropriately, the lifts $Q_1$ and $Q_2$ may be glued. Ranging over all vertices, we now obtain a $1$-complex in $\mathscr P_t$ whose image under the cutting map is the chosen point $p$. 

It follows from the steps in the construction that the natural 
\[
\kappa\colon|T(\Sigma_{\mathcal Y})(\rho)|\to \prod_i |T(\Sigma_i)|
\]
is an isomorphism onto $\mathsf{ev}^{-1}(\Delta)$ as claimed. 
\end{proof}

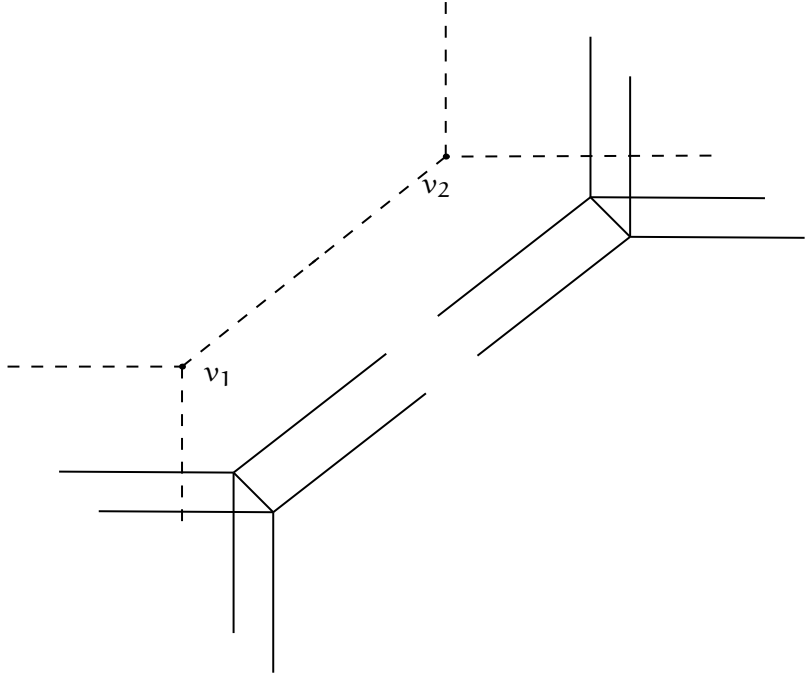
\begin{figure}

\tikzset{every picture/.style={line width=0.75pt}} 

\begin{tikzpicture}[x=0.75pt,y=0.75pt,yscale=-1,xscale=1]

\draw  [dash pattern={on 4.5pt off 4.5pt}]  (153,227) -- (241,227) ;
\draw  [dash pattern={on 4.5pt off 4.5pt}]  (241,227) -- (241,308) ;
\draw  [dash pattern={on 4.5pt off 4.5pt}]  (241,227) -- (310,172) ;
\draw  [dash pattern={on 4.5pt off 4.5pt}]  (508,120.5) -- (374,121) ;
\draw  [dash pattern={on 4.5pt off 4.5pt}]  (374,121) -- (374,40) ;
\draw  [dash pattern={on 4.5pt off 4.5pt}]  (374,121) -- (305,176) ;
\draw  [line width=2.25] [line join = round][line cap = round] (241,227) .. controls (241,227) and (241,227) .. (241,227) ;
\draw  [line width=2.25] [line join = round][line cap = round] (374,121) .. controls (374,121) and (374,121) .. (374,121) ;
\draw    (179,280) -- (267,280.5) ;
\draw    (267,280.5) -- (267,361.5) ;
\draw    (267,280.5) -- (344,220.5) ;
\draw    (199,300) -- (287,300.5) ;
\draw    (287,300.5) -- (287,381.5) ;
\draw    (287,300.5) -- (364,240.5) ;
\draw    (555,162) -- (467,161.5) ;
\draw    (467,161.5) -- (467,80.5) ;
\draw    (467,161.5) -- (390,221.5) ;
\draw    (535,142) -- (447,141.5) ;
\draw    (447,141.5) -- (447,60.5) ;
\draw    (447,141.5) -- (370,201.5) ;
\draw    (267,280.5) -- (287,300.5) ;
\draw    (447,141.5) -- (467,161.5) ;

\draw (251,226) node [anchor=north west][inner sep=0.75pt]    {$v_{1}$};
\draw (361,131) node [anchor=north west][inner sep=0.75pt]    {$v_{2}$};

\end{tikzpicture}
\caption{The dashed structure represents the polyhedral complex $\mathscr P_t$, while the solid structure represents the lifts of the $1$-complex from slice neighborhoods }
\end{figure}

\section{A first DT gluing formula}\label{sec: moving-gluing}

The decomposition aspect of the degeneration package above isolates virtual components $\mathsf{DT}_\gamma(Y_0)$ of the special fiber $\mathsf{DT}(Y_0)$. We explain how to construct this space out of the spaces $\mathsf{DT}(X_v)$ associated to the vertices of the rigid tropical curve $\gamma$. 

The basic idea is to express a subscheme in an expansion of the broken target $Y_0$ by gluing together subschemes in expansions of the components $X_v$. There are two aspects of this problem. First, we construct a moduli space of subschemes in expansions of the disjoint union of all the $X_v$, but with the additional property that these expansions glue together to form an expansion of $Y_0$. Next, we impose the condition that the subschemes themselves also glue together. We deal with these in turn. 

\subsection*{Conventions on terminology} For readability, in this section we drop the decoration of moduli spaces by discrete data whenever they are clear from context. For example, we will use $\mathsf{DT}(X_v)$ to mean the appropriate component of the DT spaces associated to $X_v$ determined by the rigid tropical curve $\gamma$. 

{We also recall a piece of terminology from the first part of the paper, as it has been a while since we've used it. Our moduli spaces $\mathsf{DT}(Y_0)$, $\mathsf{DT}(X_v)$, and $\mathsf{Hilb}^{\mathbf{n}}(D)$ have natural strict maps to Artin fans -- respectively $\mathsf{Exp}(Y_0)$, $\mathsf{Exp}(X_v)$, and $\mathsf{Exp}_0(D)$. Given these maps, a distinguished class of logarithmic modifications arise as pullbacks of modifications of this Artin fan. We called such modifications {\it tropical models}. }

\subsection{Gluing the target families} The notation is maintained from the previous section: we have a degeneration $\mathcal Y\to B$, with special fiber $Y_0$, and components $X_v$. We examine the product $\prod_v \mathsf{DT}(X_v)$. Over this moduli space, we have a disjoint union of subschemes in target expansions:
\[
\begin{tikzcd}
\coprod \mathcal Z_v \arrow{dr} \arrow{rr} & & \coprod \mathcal X_v \arrow{r} \arrow{dl} & \coprod X_v\\
&  \prod_v \mathsf{DT}(X_v). & &
\end{tikzcd}
\]
Consider the locus 
\[
\prod_v \mathsf{DT}^\circ(X_v)\subset \prod_v \mathsf{DT}(X_v)
\]
parameterizing subschemes where the universal target, in each factor, is {\it unexpanded}. On this locus, the subschemes $X_v$ glue together to a universal target
\[
Y_0\times \prod_v \mathsf{DT}^\circ(X_v)\to \prod_v \mathsf{DT}^\circ(X_v).
\]
However, there may exist points of $\prod_v \mathsf{DT}(X_v)$ where there is no way to glue the expanded target families $\mathcal X_v$ to form a family of expansions of $Y_0$. Indeed, as a simple example, consider a point of the product where the universal expansion is nontrivial in one factor but nontrivial at another factor. If the corresponding components in $Y_0$ meet, the expansions should be glued but this is impossible. Therefore it is not clear how to extend the gluing 
\[
Y_0 = \coprod_v X_v/\sim
\]
to the expansions over the product. 

The solution can be seen tropically. We pass to the locus in the tropical space of $1$-complexes where the targets {\it can} be glued -- indeed, this was already considered in the preceding Section~\ref{sec: combinatorial-degeneration}, as the locus of $1$-complexes in the fans associated to vertices of the rigid curve $\gamma$ that glue along the maps induced by the edge directions of the initial $1$-complex $\gamma$.

\subsubsection{Combinatorial splitting}\label{sec: comb-splitting} Let us recall the combinatorial splitting formula of Section~\ref{sec: combinatorial-degeneration}. First, we fixed a ray $\rho$ in the space $T(\Sigma_{\mathcal Y})$ corresponding to a rigid $1$-complex $\gamma$, with the assumption that the general $1$-complex on this ray was supported on the fan of $\Sigma_{\mathcal Y}$. 

We then constructed a total cutting map:
\[
\kappa: |T(\Sigma_{\mathcal Y})(\rho)|\to \prod_i |T^\circ(\Sigma_i)|,
\]
from the star of that ray to the product of spaces of $1$-complexes associated to stars of the rays of $\Sigma_{\mathcal Y}$. We also constructed an evaluation:
\[
\mathsf{ev}: \prod_i |T^\circ(\Sigma_i)|\to \prod_{e} |P(\Sigma_{\mathcal Y}(\delta_e))|\times |P(\Sigma_{\mathcal Y}(\delta_e))|.
\]
Finally, we identified a locus
\[
\Delta\subset \prod_{e} |P(\Sigma_{\mathcal Y}(\delta_e))|\times |P(\Sigma_{\mathcal Y}(\delta_e))|
\]
parameterizing the diagonal among pairs of $0$-complexes, one pair corresponding to each edge $e$. 

The combinatorial splitting formula asserts:
\[
|T(\Sigma_{\mathcal Y})(\rho)|\to\mathsf{ev}^{-1}(\Delta) 
\]
is an isomorphism. 

To use this geometrically, we fix cone structures on the spaces involved. In particular, we fix a cone structure on $\prod_i |T^\circ(\Sigma_i)|$ such that $\mathsf{ev}^{-1}(\Delta)$ is a union of cones; we will denote this cone structure
by $\bigtimes_i T^\circ(\Sigma_i)$. We may also assume that we have chosen a cone structure on $|T(\Sigma_{\mathcal Y})(\rho)|$ in such a way that 
\[
T(\Sigma_{\mathcal Y})(\rho)\to\mathsf{ev}^{-1}(\Delta) \subset \bigtimes_i T^\circ(\Sigma_i),
\]
becomes an isomorphism of cone complexes onto the preimage of the diagonal. Note this may involve changing integral structures. 

The cone structure $\bigtimes_i T^\circ(\Sigma_i)$ on the product space need not -- and typically is not -- a product of cone structures on the factors. Nevertheless, it gives rise to a modification
\[
\bigtimes_v \mathsf{DT}(X_v)\to \prod_v \mathsf{DT}(X_v).
\]
The subcomplex structure on $\mathsf{ev}^{-1}(\Delta)$ alluded to above is an {\it incomplete subdivision}, and so it gives us an open immersion:
\[
{\bigtimes}_v^\circ \mathsf{DT}(X_v)\hookrightarrow \bigtimes_v \mathsf{DT}(X_v).
\]

\subsubsection{Geometric relevance} We briefly explain the geometric role of this open subspace ${\bigtimes}_v^\circ \mathsf{DT}(X_v)$ in the next proposition and following discussion. This discussion is not logically necessary and the reader can skip to Section~\ref{sec: evaluation-sections} without loss of continuity. 

Recall that we have fixed a rigid $1$-complex $\gamma$.

\begin{proposition}
    Let $e$ be an edge between vertices $u$ and $v$ in $\gamma$. On the locus ${\bigtimes}_v^\circ \mathsf{DT}(X_v)$, the universal expansion of $D_e$ induced by the universal expansion $\mathcal X_u$ and by $\mathcal X_v$ are isomorphic. More precisely, the resulting families $\mathcal D_{e,u}$ and $\mathcal D_{e,v}$ are isomorphic as logarithmic modifications of ${\bigtimes}_v^\circ \mathsf{DT}(X_v)\times D_e$.
\end{proposition}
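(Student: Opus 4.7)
The plan is to lift the combinatorial matching provided by the tropical diagonal condition to an equality of logarithmic modifications via the Artin fan dictionary. The basic mechanism: $\mathcal{D}_{e,v}$ is pulled back from a universal family over $\mathsf{Exp}_0(D_e)$ via an evaluation morphism, the same is true for $\mathcal{D}_{e,u}$ after identifying star fans, and the defining property of $\bigtimes_v^\circ \mathsf{DT}(X_v)$ is exactly that these two classifying morphisms coincide.

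First, I would describe how $\mathcal{D}_{e,v}$ arises from $\mathcal{X}_v$ as a classifying morphism. The family $\mathcal{X}_v \to \mathsf{DT}(X_v)$ is pulled back from the universal family of expansions over $\mathsf{Exp}(X_v)$. By construction (see Section~\ref{sec: geometric-evaluations}), restricting the universal expansion of $X_v$ to the divisor $D_e \subset X_v$ gives a family of rank-$0$ expansions of $D_e$, and this construction is the geometric realization of the tropical evaluation map $\mathsf{ev}_{\delta_v}: T^\circ(\Sigma_v) \to P(\Sigma_v(\delta_v))$ constructed in Section~\ref{tropical-evaluation}. In particular, $\mathcal{D}_{e,v}$ is pulled back via a morphism
\[
\mathsf{DT}(X_v) \to \mathsf{Exp}(X_v) \xrightarrow{\mathsf{ev}_{\delta_v}} \mathsf{Exp}_0(D_{e,v})
\]
from the universal $0$-dimensional expansion. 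The same picture holds for $\mathcal{D}_{e,u}$.

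Next, I would invoke the canonical identification $\Sigma_v(\delta_v) = \Sigma_{\mathcal{Y}}(\delta_e) = \Sigma_u(\delta_u)$ from Lemma~\ref{lem: gluing divisors}. Passing to Artin fans, this identifies $\mathsf{Exp}_0(D_{e,v})$ and $\mathsf{Exp}_0(D_{e,u})$ with a single stack $\mathsf{Exp}_0(D_e)$ carrying a single universal family of expansions of $D_e$. Consequently both $\mathcal{D}_{e,v}$ and $\mathcal{D}_{e,u}$, restricted to $\bigtimes_v^\circ \mathsf{DT}(X_v)$, are pulled back from this single universal family via the two morphisms $\mathsf{ev}_{\delta_v}\circ \kappa_v$ and $\mathsf{ev}_{\delta_u}\circ \kappa_u$, where $\kappa_v, \kappa_u$ are the projections from the product of DT moduli spaces to its factors. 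The proposition therefore reduces to showing that these two classifying morphisms agree on $\bigtimes_v^\circ \mathsf{DT}(X_v)$.

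Finally, I would appeal to the very construction of $\bigtimes_v^\circ \mathsf{DT}(X_v)$ in Section~\ref{sec: comb-splitting}. By design, this open locus is the pullback from the incomplete subdivision $\mathsf{ev}^{-1}(\Delta) \subset \bigtimes_i T^\circ(\Sigma_i)$, where $\Delta$ is the edge-wise diagonal inside $\prod_e |P(\Sigma_{\mathcal{Y}}(\delta_e))| \times |P(\Sigma_{\mathcal{Y}}(\delta_e))|$. For the edge $e$ under consideration, landing in $\Delta$ means precisely that the two tropical evaluation maps $\mathsf{ev}_{\delta_v}\circ \kappa_v$ and $\mathsf{ev}_{\delta_u}\circ \kappa_u$ coincide. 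Passing to Artin fans, the corresponding morphisms to $\mathsf{Exp}_0(D_e)$ agree, and pulling back the universal $0$-complex expansion of $D_e$ along this common morphism yields a canonical isomorphism $\mathcal{D}_{e,u} \cong \mathcal{D}_{e,v}$ as logarithmic modifications of $D_e \times \bigtimes_v^\circ \mathsf{DT}(X_v)$. The principal point to check carefully is that geometric evaluation at the level of DT spaces strictly factors through tropical evaluation on Artin fans; this is immediate from the constructions of Section~\ref{sec: geometric-evaluations}, since both the DT space and $\mathsf{Exp}_0(D_e)$ are built strict over their respective Artin fans. No further obstacle arises.
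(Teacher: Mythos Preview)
Your proposal is correct and follows essentially the same approach as the paper: both reduce the question to the tropical level, observe that the induced divisor expansion is determined by the asymptotic $0$-complex (equivalently, by the classifying morphism to $\mathsf{Exp}_0(D_e)$), and then invoke the very definition of $\mathsf{ev}^{-1}(\Delta)$ as the locus where these $0$-complexes at $u$ and $v$ coincide. Your version is somewhat more explicit in spelling out the classifying-morphism viewpoint and the role of Lemma~\ref{lem: gluing divisors}, but the underlying argument is the same.
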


Let us express the phrase ``isomorphic as logarithmic modifications'' slightly differently. The universal expansion of $D_e$ is a naturally a subfunctor of the functor of points on logarithmic schemes of ${\bigtimes}_v^\circ \mathsf{DT}(X_v)\times D_e$. The proposition asserts that the two expansions $\mathcal X_u$ and $\mathcal X_v$ induce the {\it same} subfunctor. 

\begin{proof}
    The induced expansion of $D_e$ associated to $\mathcal X_v$ can be calculated as follows.  We take the universal $1$-complex, which is a family of $1$-complexes in $\Sigma_v$ over the subcomplex (whose support is) $\mathsf{ev}^{-1}(\Delta)$, and take asymptotics in the direction determined by the flag $(v,e)$. Since it is calculated at this tropical level, the result follows formally from Section~\ref{sec: combinatorial-degeneration} and in particular, the definition of $\mathsf{ev}^{-1}(\Delta)$: this is, by definition, the locus where the $0$-complexes induced at $u$ and $v$ coincide. 
\end{proof}

As a consequence of the proposition, for each edge $e$ between vertices $u$ and $v$ we have a diagram \textit{over} ${\bigtimes}_v^\circ \mathsf{DT}(X_v)$
\[
\begin{tikzcd}
    &\mathcal X_v&\\
    \mathcal D_e\arrow{ur}\arrow{dr}&&\\
    &\mathcal X_u&
\end{tikzcd}
\]
whose pushout glues the expansion of $X_v$ and $X_u$ along the expansion of $D_e$. More generally, by ranging over the edges in $\gamma$, we can take the colimit of all the spaces $\{\mathcal X_v\}_v\cup\{\mathcal D_e\}_e$ under the natural inclusions of the spaces $\mathcal D_e$ into the spaces $\mathcal X_v$ whenever $e$ is incident to $v$. The result is a scheme
\[
\cX^{\Join} \to {\bigtimes}_v^\circ \mathsf{DT}(X_v).
\]
Thus, over the space ${\bigtimes}_v^\circ \mathsf{DT}(X_v)$ we can extend the gluings of $X_v$ to $Y_0$ to gluings of expansions $\cX_v$ to form expansions of $Y_0$.

\subsection{Evaluation sections and the family Hilbert scheme}\label{sec: evaluation-sections} If we fix an edge $e$ in a rigid $1$-complex $\gamma$, we have constructed a flat family $\cD_e$ over ${\bigtimes}^\circ_v \mathsf{DT}(X_v)$, which captures the induced expansion of $D_e$ from both vertices adjacent to $e$.

\begin{construction}[Boundary evaluation maps]
Let $\mathcal D_e$ be the double divisor family above. The natural morphism
\[
\mathcal D_e\to {\bigtimes}^\circ_v \mathsf{DT}(X_v)
\]
is smooth. We may now pass to the traditional relative Hilbert scheme of $(D_e\cdot\beta)$-many points of the family $\mathcal D_e$, denoted
\[
\mathsf H(\mathcal D_e):=\mathsf{Hilb}\left(\mathcal D_e/{\bigtimes}^\circ_v \mathsf{DT}(X_v)\right) \to {\bigtimes}^\circ_v \mathsf{DT}(X_v).
\]
We let $\mathsf H^{\{2\}}(\mathcal D_e)$ be the fiber square of $\mathsf H(\mathcal D_e)$, subject to the following stability condition: a point of this fiber square determines two $0$-dimensional subschemes on the same expansion of $D_e$, and we demand that for every component of the expansion, at least one of these subschemes has nontrivial support. In keeping with the notation in the section, we drop the number of points, which is understood to be $D_e\cdot\beta$. 

Let $v_1$ and $v_2$ be the vertices adjacent to $e$ and let $\cZ_{v_i}$ be the pullbacks of the corresponding universal subschemes from $\mathsf{DT}(X_v)$ to ${\bigtimes}^\circ_v \mathsf{DT}(X_v)$. Intersecting $\mathcal D_e$ with either $\mathcal Z_{v_1}$ and $\mathcal Z_{v_2}$ gives rise to points in $\mathsf H(\mathcal D_e)$. As a result, we have evaluation maps:
\[
\mathsf{ev}_{v_i,e}\colon {\bigtimes}^\circ_v \mathsf{DT}(X_v)\to \mathsf H(\mathcal D_e), \ \ \textnormal{for $i=1,2$.}
\]
We can consolidate these to get a map:
\[
\mathsf{ev_e}:=\mathsf{ev}_{v_1,e}\times \mathsf{ev}_{v_2,e}\colon {\bigtimes}^\circ_v \mathsf{DT}(X_v) \to \mathsf H^{\{2\}}(\mathcal D_e).
\]
We refer to it as the \textit{boundary evaluation map at $e$}. 
\end{construction}

\begin{lemma}\label{lem: diag-intersection}
Let $\mathsf H^\Delta(\mathcal D_e)$ be the relative diagonal locus in $\mathsf H^{\{2\}}(\mathcal D_e)$ over ${\bigtimes}^\circ_v \mathsf{DT}(X_v)$. The inclusion is a regular embedding. 
\end{lemma}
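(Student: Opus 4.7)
The plan is to deduce regularity of the diagonal from smoothness of the ambient morphism. Specifically, I will argue that on the open stability locus relevant to the lemma, the structure map
\[
\mathsf H(\mathcal D_e) \longrightarrow {\bigtimes}_v^\circ \mathsf{DT}(X_v)
\]
is smooth of relative dimension $2n$, where $n = D_e\cdot \beta$. Granting this, the fiber square $\mathsf H^{\{2\}}(\mathcal D_e)$, being an open substack (cut out by the open condition that the pair of subschemes jointly hits every component) of the ordinary fiber product of two copies of this smooth morphism, is itself smooth over the base of relative dimension $4n$. The diagonal $\mathsf H^\Delta(\mathcal D_e)$ is then the diagonal of the smooth morphism above; it automatically lies in the open stability locus since a subscheme paired with itself hits the same components as it does individually. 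The diagonal of a smooth morphism is always a regular embedding, with conormal sheaf canonically isomorphic to the relative cotangent sheaf of $\mathsf H(\mathcal D_e)$ over ${\bigtimes}_v^\circ \mathsf{DT}(X_v)$, a locally free sheaf of rank $2n$.

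Smoothness of the relative Hilbert scheme is the heart of the matter. Over the locus ${\bigtimes}_v^\circ \mathsf{DT}(X_v)$, where the combinatorial splitting of Section~\ref{sec: comb-splitting} is in force, the family $\mathcal D_e$ is a logarithmically smooth family: each geometric fiber is an expansion of the smooth surface $D_e$, whose irreducible components are smooth and meet transversally along smooth curves. By the stability condition in the definition of $\mathsf H(\mathcal D_e)$ one restricts to subschemes supported in the smooth locus of the fiber, with nontrivial intersection with every component. This recovers the logarithmic Hilbert scheme of the family in the sense of Section~\ref{sec: Hilb-cohomology}, whose smoothness over the base follows from Fogarty's theorem applied component-by-component to the smooth factors, combined with the smoothness of the chosen stack of expansions after subdivision.

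The main point requiring caution is that the naive full Hilbert scheme of points on a normal crossings surface is not smooth, so the argument only applies after restricting to the open locus of subschemes supported on the smooth locus of the fiber. Because the subschemes in DT theory relevant for the evaluation maps of Section~\ref{sec: evaluation-sections} automatically land in this locus by the algebraic transversality condition of Definition~\ref{def: dt-stability}, the restriction is harmless for our applications, and in fact the joint stability condition built into $\mathsf H^{\{2\}}(\mathcal D_e)$ already restricts us to the regime where the argument applies.
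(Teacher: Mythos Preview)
Your argument is correct and follows the same approach as the paper: smoothness of the relative Hilbert scheme of points on a surface (Fogarty's theorem applied fiberwise on the smooth locus of the expansion) gives that $\mathsf H(\mathcal D_e)$ is smooth over the base, and the diagonal of a smooth morphism is a regular embedding. Your additional care in verifying that the diagonal lands in the open stability locus of $\mathsf H^{\{2\}}(\mathcal D_e)$ is a welcome elaboration of what the paper leaves implicit.
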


\begin{proof}
The Hilbert scheme of points on a surface is smooth, and as a consequence $\mathsf{H}(\mathcal D_e)$ is a smooth fibration; the diagonal is therefore a regular embedding. 
\end{proof}

Let $\mathsf{DT}_\gamma(Y_0)$
denote the tropical model associated to the chosen cone complex structure
$T(\Sigma_{\mathcal Y})(\rho)$.
By construction, we have a map
\[
\mathsf{DT}_{\gamma}(Y_0)\to {\bigtimes}_v \mathsf{DT}(X_v)
\]
associated to the map of cone complexes $T(\Sigma_{\mathcal Y})(\rho) \rightarrow \bigtimes_i T^\circ(\Sigma_i)$. Examining the morphism, target space contains ${\bigtimes}_v^\circ \mathsf{DT}(X_v)$ as an open subspace. Indeed, the map $T(\Sigma_{\mathcal Y})(\rho) \rightarrow \bigtimes_i T^\circ(\Sigma_i)$ factors through the subspace $\mathsf{ev}^{-1}(\Delta)$. We can therefore write a more refined map
\[
\mathsf{DT}_{\gamma}(Y_0)\to {\bigtimes}^\circ_v \mathsf{DT}(X_v)
\]


\begin{proposition}\label{prop: expanded-formula}

There is an equality of Chow homology classes
\[
q_\star\left[\mathsf{DT}_\gamma(Y_0)\right]^{\mathsf{vir}} = \bigcap_e \mathsf H^\Delta(\mathcal D_e)\cap \left[{\bigtimes}^\circ_v \mathsf{DT}(X_v)\right]^{\mathsf{vir}},
\]
where the intersection product on the right is defined by the regular embedding in Lemma~\ref{lem: diag-intersection}.
\end{proposition}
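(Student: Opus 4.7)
The strategy has three steps: first, identify $\mathsf{DT}_\gamma(Y_0)$ set-theoretically with a fiber product of the diagonal loci over ${\bigtimes}^\circ_v \mathsf{DT}(X_v)$; second, upgrade this to a scheme-theoretic/obstruction-theoretic identification by a partial normalization argument; third, deduce the cycle-theoretic formula from virtual pullback compatibility for regular embeddings.

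A point of $\mathsf{DT}_\gamma(Y_0)$ is a subscheme $Z\subset\mathcal X\to Y_0$ whose universal expansion has combinatorial type in the cone $\rho$. The combinatorial splitting formula of Section~\ref{sec: combinatorial-degeneration}, passed through Artin fans, identifies $\mathsf{Exp}_\gamma(Y_0)$ with the preimage of the diagonal under the tropical evaluation on $\bigtimes^\circ_v\mathsf{Exp}(X_v)$. This says that the expansion $\mathcal X$ is determined by its restrictions $\mathcal X_v$ to each component together with the datum of lying in the $\circ$-locus where these glue. For the subscheme $Z$, the algebraic transversality condition of Definition~\ref{def: dt-stability} implies that $Z$ is recovered from the restrictions $Z_v=Z\cap\mathcal X_v$ via the partial normalization sequence, together with the matching condition that, for each edge $e$ incident to vertices $u,v$, the induced $0$-dimensional subschemes $Z_u\cap\mathcal D_e$ and $Z_v\cap\mathcal D_e$ agree. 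This is exactly the condition that the point maps into $\mathsf H^\Delta(\mathcal D_e)\subset\mathsf H^{\{2\}}(\mathcal D_e)$, so I would produce a Cartesian diagram realizing $\mathsf{DT}_\gamma(Y_0)$ as the fiber product of the diagonals over ${\bigtimes}^\circ_v\mathsf{DT}(X_v)$.

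For the obstruction-theoretic step, both sides carry relative perfect obstruction theories over the common base $\bigtimes_v^\circ\mathsf{Exp}(X_v)$. The obstruction theory on $\mathsf{DT}_\gamma(Y_0)$ is controlled by $\mathrm{Ext}^\bullet(\mathcal I_Z,\mathcal I_Z)_0$ on expansions of $Y_0$, while the product space has obstruction complex $\bigoplus_v\mathrm{Ext}^\bullet(\mathcal I_{Z_v},\mathcal I_{Z_v})_0$. The partial normalization sequence $0\to\mathcal O_{\mathcal X}\to\bigoplus_v\mathcal O_{\mathcal X_v}\to\bigoplus_e\mathcal O_{\mathcal D_e}\to 0$ induces, after applying $\mathrm{RHom}$, a distinguished triangle comparing these obstruction complexes whose error term is the tangent complex of $\prod_e\mathsf H(\mathcal D_e)$, since each relative Hilbert scheme of points on the smooth surface family $\mathcal D_e$ is smooth. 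This error term is precisely the normal bundle of the diagonal embedding in the fiber square, so the obstruction theory on $\mathsf{DT}_\gamma(Y_0)$ is the one induced from the product obstruction theory by pulling back along the regular embedding $\prod_e\mathsf H^\Delta(\mathcal D_e)\hookrightarrow\prod_e\mathsf H^{\{2\}}(\mathcal D_e)$. Once this compatibility is established, the desired cycle equation follows from the functoriality of Manolache's virtual pullback for compositions with regular embeddings.

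The main obstacle I expect is the precise verification of the obstruction-theoretic comparison. The formal Mayer--Vietoris picture is natural, but checking that the triangle of obstruction complexes assembles into the map of perfect obstruction theories required by Manolache's formalism, rather than merely a $K$-theoretic identity, requires careful bookkeeping of cotangent complexes of universal expansions and their interaction with the logarithmic structure. In particular, one must verify that the smoothness of the Hilbert schemes $\mathsf H(\mathcal D_e)$ along the diagonal (Lemma~\ref{lem: diag-intersection}) yields a genuine regular embedding compatible with the virtual structure, absorbing what would otherwise be an excess intersection contribution; this is the logarithmic, multi-edge analogue of the key calculation in the Li--Wu double point gluing formula.
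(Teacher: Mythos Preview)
Your proposal is correct and follows essentially the same route as the paper: first the schematic identification of $\mathsf{DT}_\gamma(Y_0)$ with the diagonal fiber product over the common base $\mathsf{Exp}_\gamma(Y_0)\cong\mathsf{Exp}_\Delta$ via the combinatorial splitting and algebraic transversality, then the obstruction-theoretic comparison via the short exact sequence of ideal sheaves (equivalently, your partial normalization sequence) and the resulting triangle of $\mathrm{RHom}$ complexes, with the edge term identified as the normal bundle of the relative diagonal. The paper likewise defers the careful verification of the compatible triangle of obstruction theories to \cite{LiWu15} and \cite{MPT10}, exactly the point you flag as the main obstacle.
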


\begin{proof}
We split the statement into two claims. First, an equality of spaces, and then a compatibility with virtual classes. The scheme theoretic statement is as follows.

\noindent
{\bf Schematic claim.} Our goal is to compare the spaces
\[
\mathsf{DT}_\gamma(Y_0) \ \ \text{and} \ \ \ \mathsf H^\Delta(\mathcal D_e)\cap {\bigtimes}^\circ_v \mathsf{DT}(X_v).
\]
Earlier in this section, we have chosen a cone complex structure on the product space $\prod_i |T^\circ(\Sigma_i)|$ and on $\mathsf{ev}^{-1}(\Delta)$. We consequently get Artin fans, which we denote:
\[
{\bigtimes}_v \mathsf{Exp}(X_v) \ \ \text{and} \ \ \mathsf{Exp}_\Delta.
\]
The space on the left is a blowup of the product of Exp-spaces associated to vertices of $\gamma$, while the space on the right is an open substack of this modification. 

We now make three observations. First, there are structure maps
\[
\mathsf{DT}_\gamma(Y_0)\to \mathsf{Exp}_\gamma(Y_0), \ \ {\bigtimes}_v \mathsf{DT}(X_v)\to {\bigtimes}_v\mathsf{Exp}(X_v), \ \ \text{and} \ \ {\bigtimes}^\circ_v \mathsf{DT}(X_v)\to \mathsf{Exp}_\Delta.
\]
Second, by the combinatorial degeneration formula, the cutting map at the tropical level gives rise to an arrow
\[
\mathsf{Exp}_\gamma(Y_0)\to {\bigtimes}_v\mathsf{Exp}(X_v),
\]
and by choosing cone structures compatibly, we can assume this is an isomorphism onto $\mathsf{Exp}_\Delta$. And third, we have a natural cutting map 
\[
\begin{tikzcd}
\mathsf{DT}_\gamma(Y_0)\arrow{d}\arrow{r} &{\bigtimes}_v \mathsf{DT}(X_v)\arrow{d}\\
\mathsf{Exp}_\gamma(Y_0)\arrow{r} & \bigtimes_v\mathsf{Exp}(X_v)
\end{tikzcd}
\]
lifting the combinatorial cutting map on Artin fans, in Section~\ref{sec: comb-splitting}. The map on Artin fans is given by partial normalization of the expansion; the top vertical arrow is then defined by pulling back the subscheme to the components. As we have already noted before the proposition, the map 
factorizes through 
\[
 \bigcap_e \mathsf H^\Delta(\mathcal D_e)\cap {\bigtimes}^\circ_v \mathsf{DT}(X_v)\subset {\bigtimes}_v \mathsf{DT}(X_v).
\]
Together, we therefore have a diagram
\[
\begin{tikzcd}
\mathsf{DT}_\gamma(Y_0) \ar{r}\ar{d}&  \bigcap_e \mathsf H^\Delta(\mathcal D_e)\cap {\bigtimes}^\circ_v \mathsf{DT}(X_v) \ar{d}\\
\mathsf{Exp}_\gamma(Y_0) \ar[equal]{r} & \mathsf{Exp}_\Delta.
\end{tikzcd}
\]
We claim the top horizontal is an isomorphism. Since the two bases are naturally identified, we can verify that the functors of points {\it relative to the base} coincide. We can see this from definitions: if we fix an $S$-valued point of $\mathsf{Exp}_\gamma(Y_0)$ we have fixed a family of expansions over $S$ of each component $X_v$, that glue. A point of the space $\mathsf{DT}_\gamma(Y_0)$ is precisely the data of subschemes in each irreducible component that are algebraically transverse to the strata and agree at the pairwise intersections.

The virtual structure statement now remains. The schematic claim gives an isomorphism:
\[
q: \mathsf{DT}_\gamma(Y_0) \to \bigcap_e \mathsf H^\Delta(\mathcal D_e)\cap {\bigtimes}_v^\circ \mathsf{DT}(X_v),
\]
The right hand side carries a natural virtual class, obtained by pulling back the virtual class of $\bigtimes_v \mathsf{DT}(X_v)$ along a regular embedding. We therefore make the following:

\noindent
{\bf Virtual claim.} Pushforward along $q$ gives an identification of Chow homology classes:
\[
q_\star\left[\mathsf{DT}_\gamma(Y_0)\right]^{\mathsf{vir}} = \bigcap_e \mathsf H^\Delta(\mathcal D_e)\cap \left[{\bigtimes_v}^\circ \mathsf{DT}(X_v)\right]^{\mathsf{vir}}.
\]
The virtual structure claim follows from the fact that diagonal condition coming from the gluing condition is compatible with the obstruction theory on $\mathsf{DT}_\gamma(Y_0)$. This latter result follows from~\cite{MPT10} and \cite[Section~6]{LiWu15}, and since our gluing divisor is smooth, the same argument applies here, so we will sketch the key points.

 We denote by $\mathcal{I}_0$ denote the universal ideal sheaf on $\mathsf{DT}_\gamma(Y_0)$; we denote by
$\mathcal{I}_v$ and $\mathcal{I}_e$ the restriction of this sheaf to $\mathcal{X}_v$ and $\mathcal{D}_e$ respectively.
From algebraic transversality, these are related by a short exact sequence
\[
0 \rightarrow \mathcal{I}_0 \rightarrow \bigoplus_{v} \mathcal{I}_v \rightarrow \bigoplus_{e} \mathcal{I}_e \rightarrow 0.
\]
This induces an exact triangle in the derived category:
\[
R\pi_\star\mathcal{RH}om(\mathcal{I}_0, \mathcal{I}_0)_0 \rightarrow \bigoplus_{v} R\pi_\star\mathcal{RH}om(\mathcal{I}_v, \mathcal{I}_v)_0 \rightarrow 
\bigoplus_{e} R\pi_\star\mathcal{RH}om(\mathcal{I}_e, \mathcal{I}_e)_0 \rightarrow\cdots.
\]
Each summand in right term in this triangle is then identified, after a shift, with the normal bundle of the relative diagonal 
$\mathsf H^\Delta(\mathcal D_e) \hookrightarrow \mathsf{H}^{\{2\}}(\mathcal D_e)$.
This implies that, after taking duals, we have a compatible triangle of obstruction theories\footnote{Abramovich suggests that such a triangle should be referred to as a \textit{justice} of obstruction theories~\cite[Section~6]{ACGS17}. } and the equality of virtual classes follows.  Again, we refer the reader to \cite{LiWu15} for more details.
\end{proof}

\section{The main splitting formula}\label{sec: main-splitting-formula}

We are now in a position to relate the output of the weak splitting formula to the statement of Theorem~\ref{thm: main-splitting-theorem}. We do this on the DT side, and then run a parallel argument in GW theory based on the output of~\cite{R19}.

\subsection{Gluing in DT theory}\label{gluing-in-dt} In the context of Proposition~\ref{prop: expanded-formula} in the previous section, we had a space $\bigtimes_v\mathsf{DT}(X_v)$, and a distinguished open subspace ${\bigtimes}^\circ_v\mathsf{DT}(X_v)$ that supported the evaluation map to the family Hilbert schemes $\mathsf H^{\{2\}}(\cD_e)$. The diagonal loci then allowed us to reconstruct the spaces $\mathsf{DT}_\gamma(Y_0)$.

We now manipulate this proposition using logarithmic intersection arguments, into the claimed form in our main splitting theorem. We keep the notation from that section, and in particular the space $\bigtimes_v\mathsf{DT}(X_v)$.

Recall that we have set
\[
\mathsf{Hilb}_{\beta}(D_v):=\prod_{i=1}^m \mathsf{Hilb}_{\beta\cdot D_{e_i(v)}}(D_{e_i(v)}). 
\]
We have also chosen logarithmic modifications such that
\[
\mathsf{ev}:\prod_v \mathsf{DT}_{\gamma(v)}(X_v) \to \prod_v \mathsf{Hilb}^\dagger_{\beta}(D_v),
\]
is combinatorially flat with reduced fibers. Let $\bigtimes_v \mathsf{Hilb}(D_v)$ be any modification of $\prod_v \mathsf{Hilb}(D_v)$ such that $\bigtimes_v\mathsf{DT}(X_v)$ maps to this modification via a combinatorially flat evaluation map. Note that this may require replacing $\bigtimes_v\mathsf{DT}(X_v)$ with a further subdivision.

\begin{lemma}
The morphism
\[
\bigtimes^{\circ}_v\mathsf{DT}(X_v)\to \bigtimes_v \mathsf{Hilb}(D_v)
\]
factors as
\[
\bigtimes^{\circ}_v\mathsf{DT}(X_v)\to {\prod_e}_{\bigtimes^{\circ}_v\mathsf{DT}(X_v)} \!\!\!\!\!\!\!\!\!\!\!\!\!\!\!\!\!\!\!\mathsf{H}^{\{2\}}(\mathcal D_e) \xrightarrow{f} \bigtimes_v \mathsf{Hilb}(D_v). 
\]
The second map $f$ is combinatorially flat. Note that the middle term is a fiber product of the spaces indexed by the edges, taken over $\bigtimes_v\mathsf{DT}(X_v)$.
\end{lemma}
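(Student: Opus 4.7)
The plan is to define $f$ explicitly, verify the factorization by unwinding the evaluation maps, and then deduce combinatorial flatness by working at the level of Artin fans. The structural input that makes the argument go through is that the middle fiber product carries no extra logarithmic structure beyond that of the DT base.

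\textbf{Construction of $f$.} A point of the middle fiber product is a tuple $\bigl((Z_e^{(1)}, Z_e^{(2)})\bigr)_e$ of pairs of $0$-dimensional subschemes, with $(Z_e^{(1)}, Z_e^{(2)})$ lying in a common expansion of $D_e$ over the DT base. Distribute these vertex by vertex: for each vertex $v$ and each incident edge $e$ at which $v$ is the $i^{\mathrm{th}}$ endpoint ($i \in \{1,2\}$), record $Z_e^{(i)}$ in the corresponding factor of $\mathsf{Hilb}(D_v) = \prod_{j=1}^m \mathsf{Hilb}(D_{e_j(v)})$. This defines a map to the naive product $\prod_v \mathsf{Hilb}(D_v)$; a lift through the chosen tropical modification $\bigtimes_v \mathsf{Hilb}(D_v) \to \prod_v \mathsf{Hilb}(D_v)$ exists by the universal property of logarithmic blowups, provided one checks the required factorization at the level of Artin fans. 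This will be automatic from the strictness claim below together with the hypothesis that $\mathsf{ev}$ itself factors through $\bigtimes_v \mathsf{Hilb}(D_v)$.

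\textbf{Verification of the factorization.} Chase definitions. The evaluation $\mathsf{ev}$ takes the universal subscheme $\coprod \mathcal Z_v$ over $\bigtimes^\circ_v \mathsf{DT}(X_v)$, restricts to each double divisor $\mathcal D_{e_i(v)}$ incident to each vertex, and collates the resulting $0$-schemes into a point of $\bigtimes_v \mathsf{Hilb}(D_v)$. The composition $f \circ (\mathsf{ev}_e)_e$ does the same thing in two steps: $(\mathsf{ev}_e)_e$ records, edge by edge, the pair of restrictions of $\mathcal Z_{v_1(e)}$ and $\mathcal Z_{v_2(e)}$ to the common expansion $\mathcal D_e$, and $f$ then regroups these by vertex. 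The two routes yield the same point, establishing both the equality and the existence of the claimed lift.

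\textbf{Combinatorial flatness of $f$.} The central input is the following strictness claim: the first arrow $\bigtimes^\circ_v \mathsf{DT}(X_v) \to \prod_e \mathsf{H}^{\{2\}}(\mathcal D_e)$ is an isomorphism on Artin fans. Indeed, each $\mathsf{H}(\mathcal D_e)$ is defined as the traditional relative Hilbert scheme over the DT base, and so carries logarithmic structure purely by pullback from the base; the fiber square $\mathsf{H}^{\{2\}}(\mathcal D_e)$ and the iterated fiber product of these over all edges preserve this pullback property. Consequently, the Artin fan of the middle space coincides with that of $\bigtimes^\circ_v \mathsf{DT}(X_v)$. Under this identification, the map induced by $f$ on Artin fans agrees with the map induced by $\mathsf{ev}$, which is combinatorially flat by the choice of tropical model $\bigtimes_v \mathsf{Hilb}(D_v) \to \prod_v \mathsf{Hilb}(D_v)$. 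Since combinatorial flatness is a tropical condition, this is what we wanted.

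The main technical obstacle is the structural strictness claim: one must confirm that passage from the logarithmic base $\bigtimes^\circ_v \mathsf{DT}(X_v)$ to the edge-wise relative Hilbert schemes $\mathsf{H}^{\{2\}}(\mathcal D_e)$ and their fiber product introduces no new tropical cones. This is precisely why we restricted to the open locus $\bigtimes^\circ$ in Section~\ref{sec: comb-splitting}: there the expansions of each $D_e$ inherited from the two adjacent vertices already glue, so no further subdivision or coupling is forced upon the middle space beyond what is already present on the DT side. Once this structural point is secured, the remainder of the argument is formal.
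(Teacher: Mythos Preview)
Your proposal is correct and follows essentially the same approach as the paper. The paper's proof is terser---it dispatches the factorization as ``easily from definitions'' and states the strictness of the projection $\mathsf{H}^{\{2\}}(\mathcal D_e)\to \bigtimes_v\mathsf{DT}(X_v)$ in terms of the subschemes being supported on the smooth locus of the expansion---but your strictness claim about the Artin fan of the middle fiber product is exactly the same structural input, and both arguments conclude by identifying the tropical map of $f$ with the already-flattened evaluation $\mathsf{ev}$.
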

\begin{proof}
The factorization follows easily from definitions. For the flatness, consider the projection
\[
\mathsf{H}^{\{2\}}(\mathcal D_e)\to \bigtimes_v\mathsf{DT}(X_v).
\]
from the fiberwise Hilbert square. We observe that the fiberwise Hilbert square parameterized subschemes supported on the smooth locus of the expansion, and therefore the relative logarithmic structure on this projection map is trivial, i.e. the projection is strict. It follows that the associated map of tropical spaces is a \textit{face morphism}: every cone maps isomorphically onto a cone of the image. Now, in order to compute the tropical map induced by $f$, we can take a cone of $\mathsf{H}^{\{2\}}(\mathcal D_e)$, project it isomorphically onto a cone $\bigtimes_v\mathsf{DT}(X_v)$ and apply the evaluation map. Since the evaluation has specifically been chosen to be combinatorially flat, the claimed combinatorial flatness follows. 
\end{proof}

We now deduce the first simplification of the degeneration formula. Recall from the context of Proposition~\ref{prop: expanded-formula}, we have a space $\bigtimes^\circ_v\mathsf{DT}(X_v)$ defined as an open in $\bigtimes_v\mathsf{DT}(X_v)$ by the cone subspace $\mathsf{ev}^{-1}(\Delta)$. 

\begin{lemma}\label{lem: diagonal-preimage}
    Let $\Delta^\ddagger$ be strict transform in $\bigtimes_v\mathsf{Hilb}(D_v)$ of the diagonal locus in $\prod_v \mathsf{Hilb}(D_v)$. The preimage of $\Delta^\ddagger$ along
    \[
{\bigtimes}_v\mathsf{DT}(X_v)\to {\bigtimes}_v\mathsf{Hilb}(D_v)
    \]
    is contained in the subspace $\bigtimes^\circ_v\mathsf{DT}(X_v)$.
\end{lemma}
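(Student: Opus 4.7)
The plan is to translate the containment into a tropical matching condition, and then to exploit the fact that equality of points in the logarithmic Hilbert scheme forces equality of the underlying expansions.

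First I would unpack the defining condition of $\bigtimes_v^\circ\mathsf{DT}(X_v)$. By the construction in Section~\ref{sec: comb-splitting}, a geometric point $p$ of $\bigtimes_v\mathsf{DT}(X_v)$ lies in the open substack $\bigtimes_v^\circ\mathsf{DT}(X_v)$ if and only if its image under the strict morphism to the Artin fan lands in the cone subcomplex $\mathsf{ev}^{-1}(\Delta)$. Concretely, this amounts to the condition that for every edge $e$ of $\gamma$ with adjacent vertices $v_1$ and $v_2$, the pair of $0$-complexes $(\mathsf{ev}^{\trop}_{v_1,e}(p),\mathsf{ev}^{\trop}_{v_2,e}(p))$ lies on the diagonal of $P(\Sigma(\delta_e))\times P(\Sigma(\delta_e))$.

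Now suppose the image of $p$ in $\bigtimes_v\mathsf{Hilb}(D_v)$ lies on $\Delta^{\ddagger}$. Since the strict transform of a subvariety is contained in its total transform, composing with the structure morphism $\bigtimes_v\mathsf{Hilb}(D_v)\to\prod_v\mathsf{Hilb}(D_v)$ sends the image of $p$ into the diagonal of $\prod_v\mathsf{Hilb}(D_v)$. For each edge $e$ between $v_1$ and $v_2$, the two evaluation points $\mathsf{ev}_{v_i,e}(p)\in\mathsf{Hilb}(D_e)$ therefore coincide.

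The forgetful morphism $\mathsf{Hilb}(D_e)\to\mathsf{Exp}_0(D_e)$ is strict by construction, so equal points of the logarithmic Hilbert scheme have equal images in the Artin fan of expansions. Geometrically this says that the two induced expansions of $D_e$ from the sides $v_1$ and $v_2$ coincide; tropically it is precisely the statement that the pair $(\mathsf{ev}^{\trop}_{v_1,e}(p),\mathsf{ev}^{\trop}_{v_2,e}(p))$ is diagonal. Ranging over all edges of $\gamma$, this is exactly the tropical matching condition from the first paragraph, so $p$ lies in $\bigtimes_v^\circ\mathsf{DT}(X_v)$.

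The one step that requires some care is the compatibility between the geometric evaluations into $\mathsf{Hilb}(D_e)$ appearing as factors of $\bigtimes_v\mathsf{Hilb}(D_v)$ and the tropical evaluations into $P(\Sigma(\delta_e))$ defining $\mathsf{ev}^{-1}(\Delta)$. This compatibility is built into the construction of the boundary evaluation maps in Sections~\ref{sec: geometric-evaluations} and~\ref{sec: evaluation-sections}; once it is in hand, the argument above is entirely formal.
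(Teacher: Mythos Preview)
There is a genuine gap in step 5 of your argument. You pass from ``$p$ lands in $\Delta^{\ddagger}$'' to ``$p$ lands in the diagonal of $\prod_v\mathsf{Hilb}(D_v)$'' via the inclusion of strict transform into total transform, and from that point on you only use the latter, weaker, hypothesis. But this weaker hypothesis does not imply the conclusion. Equal underlying points in $\mathsf{Hilb}(D_e)$, and hence equal images in $\mathsf{Exp}_0(D_e)$, only tells you that the two \emph{target cones} in $P(\Sigma(\delta_e))$ coincide. Membership of $\sigma_p$ in the subcomplex $\mathsf{ev}^{-1}(\Delta)$ is the stronger condition that the two \emph{linear maps} $\sigma_p\to|P(\Sigma(\delta_e))|$ agree. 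These are not the same: a two--dimensional $\sigma_p$ surjecting onto a cone such as $\langle(1,0),(1,1)\rangle\subset\RR_{\geq 0}^{2}$ has both projections equal to the full ray, yet is not in $\mathsf{ev}^{-1}(\Delta)$. Put differently, your argument would prove the same containment with $\Delta^{\ddagger}$ replaced by the \emph{total} transform of the diagonal, and that statement is false.

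The paper's proof uses precisely the information you discarded. It identifies $\Delta^{\ddagger}$ with the fine and saturated logarithmic pullback of the diagonal, then uses combinatorial flatness of ${\bigtimes}_v\mathsf{DT}(X_v)\to{\bigtimes}_v\mathsf{Hilb}(D_v)$ to identify the scheme-theoretic preimage with the fs log pullback along the composite to $\prod_e\mathsf{Hilb}(D_e)^{2}$. That fs log pullback is automatically supported on the incomplete subdivision coming from the cone-level fibre product, which is exactly $\mathsf{ev}^{-1}(\Delta)$. To repair your approach you would need to argue at the level of logarithmic points (retaining the monoid data at $p$) rather than underlying geometric points; once you do that you are essentially reproducing the log fibre product argument.
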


\begin{proof}
The locus $\Delta^\ddagger$ is equal to the fine and saturated logarithmic pullback of the diagonal in $\prod_v \mathsf{Hilb}(D_v)$ under the subdivision $\bigtimes_v\mathsf{Hilb}(D_v)\to \prod_v \mathsf{Hilb}(D_v)$. We also know that the map ${\bigtimes}_v\mathsf{DT}_v(X_v)\to {\bigtimes}_v\mathsf{Hilb}(D_v)$ is combinatorially flat with reduced fibers. Putting these together, the usual scheme theoretic preimage in question is equal to the fine and saturated pullback associated to the diagram
\[
\begin{tikzcd}
    & {\bigtimes}_v\mathsf{DT}_v(X_v)\arrow{d}\\
\prod_e \mathsf{Hilb}(D_e)\arrow{r}&\prod_e \mathsf{Hilb}(D_e) \times \mathsf{Hilb}(D_e).
\end{tikzcd}
\]
By general arguments concerning logarithmic fiber products, this pullback is contained in the (possibly incomplete) subdivision of $\bigtimes_v\mathsf{DT}(X_v)$ induced by fiber product of the corresponding diagram of cone spaces. The latter fiber product is exactly the union of cones $\mathsf{ev}^{-1}(\Delta)$ discussed in the previous section. This defines the open subspace $\bigtimes^\circ_v\mathsf{DT}(X_v)$ so the claim follows. 
\end{proof}

Recall the morphism
\[
q\colon\mathsf{DT}_\gamma(Y_0)\to {\bigtimes}^\circ_v\mathsf{DT}_v(X_v)
\]
used in Proposition~\ref{prop: expanded-formula}. By composing with the open immersion we can consider the map
\[
p\colon \mathsf{DT}_\gamma(Y_0)\to {\bigtimes}_v\mathsf{DT}_v(X_v).
\]
We now state the first stage of our simplified gluing formula. 

\begin{corollary}
Let $\Delta^\ddagger$ be strict transform in $\bigtimes_v\mathsf{Hilb}(D_v)$ of the diagonal locus in $\prod_v \mathsf{Hilb}(D_v)$. The following equality of virtual classes holds:
\[
p_\star\left[\mathsf{DT}_\gamma(Y_0)\right]^{\mathsf{vir}} = \Delta^\ddagger \cap \left[\bigtimes_v \mathsf{DT}(X_v)\right]^{\mathsf{vir}},
\]
where we have suppressed the pullback of the cohomology class $\Delta^\ddagger$ from the notation.
\end{corollary}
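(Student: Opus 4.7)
The plan is to upgrade Proposition~\ref{prop: expanded-formula} from a statement about relative diagonals in the fiberwise Hilbert squares $\mathsf H^{\{2\}}(\mathcal D_e)$ to a statement about the strict transform diagonal $\Delta^\ddagger$ in the absolute evaluation target $\bigtimes_v \mathsf{Hilb}(D_v)$, using the combinatorial flatness established in the first lemma of this section and the containment provided by Lemma~\ref{lem: diagonal-preimage}. Concretely, let $j\colon \bigtimes^\circ_v \mathsf{DT}(X_v)\hookrightarrow \bigtimes_v \mathsf{DT}(X_v)$ be the open immersion, so that $p = j\circ q$. Since $j$ is an open immersion, $[\bigtimes^\circ_v \mathsf{DT}(X_v)]^{\mathsf{vir}} = j^\star [\bigtimes_v \mathsf{DT}(X_v)]^{\mathsf{vir}}$, and I would apply $j_\star$ to the formula of Proposition~\ref{prop: expanded-formula}.

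After this step the remaining task is to identify the class
\[
j_\star\left(\bigcap_e [\mathsf H^\Delta(\mathcal D_e)] \cap j^\star[\bigtimes_v \mathsf{DT}(X_v)]^{\mathsf{vir}}\right)
\]
with $[\Delta^\ddagger]\cap [\bigtimes_v \mathsf{DT}(X_v)]^{\mathsf{vir}}$. I would do this in two steps. First, Lemma~\ref{lem: diagonal-preimage} gives that the scheme-theoretic preimage of $\Delta^\ddagger$ along $\mathsf{ev}$ is contained in the open subspace $\bigtimes^\circ_v\mathsf{DT}(X_v)$, so by a projection-formula argument the pullback cycle $\mathsf{ev}^\star[\Delta^\ddagger]\cap [\bigtimes_v \mathsf{DT}(X_v)]^{\mathsf{vir}}$ is supported inside the open and can be computed on the open subspace. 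Second, I would use the factorization
\[
\bigtimes^\circ_v \mathsf{DT}(X_v)\;\longrightarrow\; {\textstyle\prod_e}_{\bigtimes^\circ_v\mathsf{DT}(X_v)}\!\!\mathsf H^{\{2\}}(\mathcal D_e) \;\xrightarrow{\;f\;}\; \bigtimes_v \mathsf{Hilb}(D_v)
\]
from the first lemma of this section, together with the combinatorial flatness (with reduced fibers) of $f$ and of $\mathsf{ev}$. By construction, $\Delta^\ddagger$ is the fs-logarithmic pullback of the absolute diagonal $\Delta\subset\prod_v\mathsf{Hilb}(D_v)$ along the log modification, and the relative diagonals $\mathsf H^\Delta(\mathcal D_e)$ are the strict (i.e.\ ordinary) pullbacks of the factorwise diagonals $\Delta_e\subset\mathsf{Hilb}(D_e)\times\mathsf{Hilb}(D_e)$ along $f$. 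Combinatorial flatness with reduced fibers is precisely the hypothesis (cf.\ the proposition at the end of Section~\ref{sec: tropical-toolbox}) under which fs-logarithmic fiber products coincide with ordinary fiber products, so $f^\star[\Delta^\ddagger] = \bigcap_e [\mathsf H^\Delta(\mathcal D_e)]$ as cycles on the fiberwise Hilbert square over $\bigtimes^\circ_v\mathsf{DT}(X_v)$.

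Pulling further back through the combinatorially flat evaluation map from $\bigtimes^\circ_v \mathsf{DT}(X_v)$ to the fiberwise Hilbert square, and combining with step one, yields the asserted equality. The main obstacle is the compatibility of fs-logarithmic pullback with ordinary intersection-theoretic pullback used in step two: one must check that the combinatorial flatness statement of the first lemma of this section genuinely suffices to identify $f^\star[\Delta^\ddagger]$ with $\bigcap_e [\mathsf H^\Delta(\mathcal D_e)]$ at the level of refined cycle classes, not merely of underlying sets. This reduces, via the usual toric-chart reduction, to the statement that a complete subdivision followed by a combinatorially flat, reduced-fiber morphism induces strict transforms by ordinary pullback on cycles, which is exactly the content invoked earlier in the paper. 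Once this is established, the identification of virtual classes follows from the compatibility of virtual pullback with flat pullback, concluding the proof.
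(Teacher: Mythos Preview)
Your proposal is correct and follows essentially the same approach as the paper: start from Proposition~\ref{prop: expanded-formula}, use Lemma~\ref{lem: diagonal-preimage} to identify the support, and then identify $\bigcap_e [\mathsf H^\Delta(\mathcal D_e)]$ with the pullback of $\Delta^\ddagger$ so that proper pushforward and the projection formula finish the job. The paper's own proof is much terser (three sentences), essentially asserting the cycle identification as a consequence of Lemma~\ref{lem: diagonal-preimage}; you have spelled out that identification via the factorization lemma and combinatorial flatness, which is exactly the content the paper is silently relying on.
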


\begin{proof}
    The statement of Proposition~\ref{prop: expanded-formula} is the statement:
\[
q_\star\left[\mathsf{DT}_\gamma(Y_0)\right]^{\mathsf{vir}} = \bigcap_e \mathsf H^\Delta(\mathcal D_e)\cap \left[{\bigtimes}^\circ_v \mathsf{DT}(X_v)\right]^{\mathsf{vir}},
    \]
    which is a refined intersection statement\footnote{Refined here is used in the sense of Fulton~\cite{Ful98}, i.e. the intersection product is supported on the scheme theoretic intersection rather than the non-compact ambient space.} supported on the space $\bigcap_e \mathsf H^\Delta(\mathcal D_e)\cap{\bigtimes}^\circ_v \mathsf{DT}(X_v)$. This locus is proper, and by Lemma~\ref{lem: diagonal-preimage}, the intersection class $\bigcap_e \mathsf H^\Delta(\mathcal D_e)$ is equal to the pullback of $\Delta^\ddagger$. The corollary follows by proper pushforward and the projection formula. 
\end{proof}

The gain we have made from the corollary is that we have collapsed the ``moving'' family of Hilbert schemes $\mathsf H^{\{2\}}(\mathcal D_e)$, to a blowup of an absolute product $\bigtimes_v\mathsf{Hilb}(D_v)$. 

Next, we use the fact that the virtual classes of the DT space are birationally invariant, to show that we can replace $\bigtimes_v\mathsf{DT}(X_v)$ with a simpler birational model. Let $\prod^\diamond_v\mathsf{DT}(X_v)$ be \textit{any} virtual birational model mapping to the space $\bigtimes_v\mathsf{Hilb}(D_v)$ constructed above.\footnote{By universal mapping properties, any such model is a subdivision of the fine and saturated fiber product of $\prod_v \mathsf{DT}_v(X_v)$ with $\bigtimes_v \mathsf{Hilb}(D_v)$ over $\prod_v \mathsf{Hilb}(D_v)$.}The cutting map again gives rise to
\[
p: \mathsf{DT}^\dagger_\gamma(Y_0)\to {\prod_v}^\diamond \mathsf{DT}_v(X_v).
\]

\begin{proposition}
The following formula holds:
\[
p_\star\left[ \mathsf{DT}^\dagger_\gamma(Y_0) \right]^{\mathsf{vir}} = \Delta^\ddagger\cap\left[ {\prod_v}^\diamond \mathsf{DT}_v(X_v) \right]^{\mathsf{vir}},
\]
where we have suppressed the pullback of the cohomology class $\Delta^\ddagger$ from the notation.
\end{proposition}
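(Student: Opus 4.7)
The plan is to deduce this more general statement from the previous corollary by a birational comparison argument. The preceding corollary handles the specific model $\bigtimes_v\mathsf{DT}(X_v)$, which is a particular subdivision of $\prod_v\mathsf{DT}(X_v)$ chosen so that the evaluation maps to the constituent Hilbert schemes are combinatorially flat. The new statement asks for the same formula on an arbitrary virtual birational model $\prod_v^\diamond\mathsf{DT}(X_v)$ mapping to $\bigtimes_v\mathsf{Hilb}(D_v)$, so the goal is to show that both sides of the equation transform compatibly under replacing one birational model with another.

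First, I would take a common refinement $\prod_v^\star \mathsf{DT}(X_v)$ of the two subdivisions $\bigtimes_v\mathsf{DT}(X_v)$ and $\prod_v^\diamond\mathsf{DT}(X_v)$, both viewed as tropical models of $\prod_v\mathsf{DT}(X_v)$ in the directed system of subdivisions. Let $\mathsf{DT}_\gamma^\star(Y_0)$ denote the pullback of $\mathsf{DT}_\gamma(Y_0)$ (equivalently $\mathsf{DT}_\gamma^\dagger(Y_0)$) to this refinement, so we obtain a commutative diagram with horizontal cutting maps to $\prod_v^\star \mathsf{DT}(X_v)$, $\bigtimes_v\mathsf{DT}(X_v)$, and $\prod_v^\diamond\mathsf{DT}(X_v)$, and vertical arrows relating these three birational models. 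Since the obstruction theories on all these spaces are pulled back from the common obstruction theory over $\prod_v\mathsf{Exp}(X_v)$ via strict maps to their associated Artin fans, virtual birational invariance yields
\[
\pi_\star[\prod_v{}^\star\mathsf{DT}(X_v)]^{\mathsf{vir}} = [\prod_v{}^\diamond\mathsf{DT}(X_v)]^{\mathsf{vir}},
\]
and similarly for the $\bigtimes$-model and for $\mathsf{DT}^\star_\gamma(Y_0)$.

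Next, the diagonal class $\Delta^\ddagger$ is defined on the modification $\bigtimes_v\mathsf{Hilb}(D_v)$, which is independent of the choice of model for the product of DT spaces. Its pullback to each of $\prod_v^\star \mathsf{DT}(X_v)$, $\bigtimes_v\mathsf{DT}(X_v)$, and $\prod_v^\diamond\mathsf{DT}(X_v)$ is compatible under the vertical birational morphisms between them. Applying the previous corollary to the $\bigtimes$-model and then pushing forward along $\prod_v^\star\mathsf{DT}(X_v) \to \bigtimes_v\mathsf{DT}(X_v)$ via the projection formula (using that $\Delta^\ddagger$ is pulled back from $\bigtimes_v\mathsf{Hilb}(D_v)$) identifies the cycle
\[
p^\star_\star[\mathsf{DT}^\star_\gamma(Y_0)]^{\mathsf{vir}} = \Delta^\ddagger \cap [\prod_v{}^\star\mathsf{DT}(X_v)]^{\mathsf{vir}}
\]
on the common refinement. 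Pushing this identity forward under $\prod_v^\star\mathsf{DT}(X_v) \to \prod_v^\diamond\mathsf{DT}(X_v)$, again using the projection formula, yields the desired equality.

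The main obstacle I expect is verifying that the cap product with $\Delta^\ddagger$ commutes appropriately with virtual pushforward between the birational models. This requires that the restriction of the universal evaluation map to each model remains sufficiently transverse to $\Delta^\ddagger$ so that the refined intersection class $\Delta^\ddagger \cap [-]^{\mathsf{vir}}$ is well-defined and compatible with pushforward. The combinatorial flatness with reduced fibers for the $\bigtimes$-model is what guarantees this on one side, and by the argument of Lemma~\ref{lem: diagonal-preimage}, the scheme-theoretic preimage of $\Delta^\ddagger$ is contained in a locus where the flatness/transversality persists under the further subdivision to $\prod_v^\star\mathsf{DT}(X_v)$. Once this compatibility is checked — essentially by reducing to the statement that refined intersection against a cohomology class pulled back from a common target commutes with proper pushforward along birational modifications — the proof reduces to the bookkeeping described above.
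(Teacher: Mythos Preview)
Your approach is correct and matches the paper's proof in spirit, though you have made it more elaborate than necessary. The paper's argument is a one-line application of the projection formula: one observes (after possibly further subdividing $\bigtimes_v\mathsf{DT}(X_v)$) that there is a commutative triangle
\[
\begin{tikzcd}
 \bigtimes_v\mathsf{DT}_v(X_v)\arrow{dr} \arrow{rr} && \bigtimes_v\mathsf{Hilb}(D_v)\\
 &\prod_v^\diamond \mathsf{DT}_v(X_v) \arrow{ur} &
\end{tikzcd}
\]
and since the virtual classes push forward compatibly while $\Delta^\ddagger$ is pulled back from the common target $\bigtimes_v\mathsf{Hilb}(D_v)$, the projection formula gives the result immediately. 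Your common-refinement maneuver is a harmless variant of the same idea.

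Your stated ``main obstacle'' is not an obstacle at all. You are capping a virtual class with the pullback of a \emph{cohomology} class from a fixed space $\bigtimes_v\mathsf{Hilb}(D_v)$ to which all the models map; compatibility with proper pushforward is then exactly the projection formula $f_\star(f^\star\alpha \cap \beta) = \alpha \cap f_\star\beta$, with no transversality or flatness needed. The combinatorial flatness and Lemma~\ref{lem: diagonal-preimage} were used earlier to pass from the relative-diagonal description to the cohomology class $\Delta^\ddagger$, but once that is done the remaining step is purely formal.
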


\begin{proof}
We have a commutative diagram
\[
\begin{tikzcd}
 \bigtimes_v\mathsf{DT}_v(X_v)\arrow{dr} \arrow{rr} && \bigtimes_v\mathsf{Hilb}(D_v)\\
 &\prod_v^\diamond \mathsf{DT}_v(X_v) \ar[ur]. & 
\end{tikzcd}
\]
The virtual classes are compatible under pushforward so this follows from the projection formula.
\end{proof}

We prove the main gluing result. With the work up to this point, we have found a \textit{particular} birational model of the evaluation map such that the pullback of the strict diagonal calculates the class of $\mathsf{DT}_\gamma$.

Recall we have produced a flattening of the evaluation morphism:
\[
\mathsf{ev}:\prod_v \mathsf{DT}_{\gamma(v)}(X_v) \to \prod_v \mathsf{Hilb}^\dagger_{\beta}(D_v).
\]
By further subdivision if required, we can assume that there is a subdivision
\[
\bigtimes_v\mathsf{Hilb}(D_v)\to \prod_v\mathsf{Hilb}^\dagger_\beta(D_v).
\]
We recall also that this map can be forced to have smooth source and target, and therefore is a local complete intersection morphism. 

\begin{lemma}\label{lem: good-square}
Consider the following diagram, defined by fine and saturated logarithmic base change:
\[
\begin{tikzcd}
\prod_v^\sim \mathsf{DT}(X_v)\arrow{d}{\mathsf{ev}^\sim}\arrow{r} & \prod_v \mathsf{DT}^\dagger_{\gamma(v)}(X_v)\arrow{d}{\mathsf{ev}^\dagger} \\
\bigtimes_v \mathsf{Hilb}(D_v) \arrow{r} &\prod_v \mathsf{Hilb}^\dagger_{\beta}(D_v).
\end{tikzcd}
\]
The underlying map of stacks is also Cartesian. 
\end{lemma}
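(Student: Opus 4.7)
The plan is to reduce the claim to the standard criterion of Kato~\cite{Kat89} that the underlying scheme of an fs logarithmic fiber product agrees with the ordinary scheme-theoretic fiber product whenever one of the structure maps is integral and saturated. Concretely, this criterion was recorded in the tropical language of combinatorial flatness in the Proposition of Section~1 following Proposition~\ref{prop: strong-ss-reduction}: if the induced map of cone spaces is combinatorially flat with reduced fibers, then the fs logarithmic pullback of a strict morphism coincides with its scheme-theoretic pullback.

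First, I would observe that the morphism $\bigtimes_v \mathsf{Hilb}(D_v) \to \prod_v \mathsf{Hilb}^\dagger_\beta(D_v)$ is a subdivision, hence strict, and therefore its fs logarithmic pullback along any morphism coincides with the scheme-theoretic pullback precisely under the hypothesis above. So the content of the lemma is that, up to identifying the two fiber products on underlying stacks, combinatorial flatness with reduced fibers is preserved by pulling back along such a subdivision.

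Next, I would invoke the uniform flattening Theorem~\ref{prop:flat-evaluation}, which was applied vertex by vertex in Section~\ref{sec: splitting-compatibility}: by the choice of tropical models, the evaluation map $\mathsf{ev}^\dagger$ is combinatorially flat with reduced fibers. This is exactly the hypothesis required to conclude that the fs logarithmic fiber product
\[
\prod_v^\sim \mathsf{DT}(X_v) = \prod_v \mathsf{DT}^\dagger_{\gamma(v)}(X_v) \times^{\mathsf{fs}}_{\prod_v \mathsf{Hilb}^\dagger_{\beta}(D_v)} \bigtimes_v \mathsf{Hilb}(D_v)
\]
agrees with the ordinary scheme-theoretic fiber product on underlying stacks. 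Equivalently, $\mathsf{ev}^\dagger$ is integral and saturated in the sense of Kato, so base change in the category of fs log stacks commutes with the forgetful functor to stacks.

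The only step requiring care, and the one I would treat with most attention, is verifying that the combinatorial flatness set up in Theorem~\ref{prop:flat-evaluation} applies after the further subdivision used to produce $\bigtimes_v \mathsf{Hilb}(D_v)$. This amounts to noting that combinatorial flatness and reduced fibers of a map of cone complexes are stable under strict base change to any subdivision of the target, since a subdivision preserves the lattice structure and, for each cone of the source, its image meets the subdivided cones of the target transversally. After this observation, the lemma follows directly from Kato's criterion applied to $\mathsf{ev}^\dagger$.
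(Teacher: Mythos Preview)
Your proposal is correct and follows the same approach as the paper, which records the proof in a single line: ``Immediate from combinatorial flatness of the evaluation map.'' You have correctly identified that the content is Kato's criterion applied to the right vertical $\mathsf{ev}^\dagger$, which is combinatorially flat with reduced fibers by the flattening already arranged; your third paragraph, about stability of combinatorial flatness under base change along the subdivision, is unnecessary here, since the criterion only requires flatness of $\mathsf{ev}^\dagger$ itself, not of the induced map $\mathsf{ev}^\sim$.
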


\begin{proof}
Immediate from combinatorial flatness of the evaluation map. 
\end{proof}

We complete the proof of Theorem~\ref{thm: main-splitting-theorem} using an intersection theory argument.

\subsection*{Proof of Theorem~\ref{thm: main-splitting-theorem}} We inspect the fiber diagram:
\[
\begin{tikzcd}
\prod_v^\sim \mathsf{DT}(X_v)\arrow{d}{\mathsf{ev}^\sim}\arrow{r} & \prod_v \mathsf{DT}^\dagger_{\gamma(v)}(X_v)\arrow{d}{\mathsf{ev}^\dagger} \\
\bigtimes_v \mathsf{Hilb}(D_v) \arrow{r}{\alpha} &\prod_v \mathsf{Hilb}^\dagger_{\beta}(D_v).
\end{tikzcd}
\]
The map $\alpha$ is a local complete intersection morphism, so Gysin pullback is defined. 

\noindent
{\bf Key Claim.} There is an equality of cycle classes:
\[
\alpha^!\left[\prod_v \mathsf{DT}^\dagger_{\gamma(v)}(X_v) \right]^{\mathsf{vir}} = \left[{\prod_v}^\sim \mathsf{DT}(X_v) \right]^{\mathsf{vir}}.
\]
In order to see this, observe that since $\alpha$ is a subdivision, we can replace the bottom row of the square with the respective Artin fans:
\[
\begin{tikzcd}
\prod_v^\sim \mathsf{DT}(X_v)\arrow{d}{\mathsf{ev}^\sim}\arrow{r} & \prod_v \mathsf{DT}^\dagger_{\gamma(v)}(X_v)\arrow{d}{\mathsf{ev}^\dagger} \\
\bigtimes_v \mathsf{Exp}_0(D_v) \arrow{r}{a} &\prod_v\mathsf{Exp}^\dagger_0(D_v).
\end{tikzcd}
\]
The cone complexes of these spaces on the bottom row are the $\mathsf P(\Sigma_{D_e})$ spaces discussed in Section~\ref{sec: combinatorial-degeneration} parameterizing $0$-complexes; in fact, we can restrict to the locus with at most $m_e$ vertices, where $m_e$ the length of the putative subscheme along the edge, determined by the class $\beta_v$ in the Chow $1$-complex.

We observe that by construction, we have a factorization:
\[
\prod_v \mathsf{DT}^\dagger_{\gamma(v)}(X_v)\to \prod_v \mathsf{Exp}^\dagger(X_v)\to\prod_v\mathsf{Exp}^\dagger_0(D_v).
\]
By our choices of subdivision, the second arrow is flat. 
As a result, if we consider the fiber square 
\[
\begin{tikzcd}
\prod_v^\sim \mathsf{Exp}(X_v)\arrow{d}\arrow{r}{g} & \prod_v^\dagger \mathsf{Exp}(X_v)\arrow{d} \\
\bigtimes_v \mathsf{Exp}_0(D_v) \arrow{r}{a} &\prod_v\mathsf{Exp}^\dagger_0(D_v),
\end{tikzcd}
\]
the vertical arrows are flat, the two horizontal arrows have the same cotangent complex.  Furthermore, the subdivision:
\[
{\prod_v}^\sim \mathsf{DT}(X_v)\to \prod_v \mathsf{DT}^\dagger_{\gamma(v)}(X_v)
\]
is the tropical model associated to the top arrow $g$.  Therefore, in order to prove the claim, it now suffices to show:
\[
g^!\left[\prod_v \mathsf{DT}^\dagger_{\gamma(v)}(X_v) \right]^{\mathsf{vir}} = \left[{\prod_v}^\sim \mathsf{DT}(X_v) \right]^{\mathsf{vir}}.
\]
Consider the fiber square
\[
\begin{tikzcd}
\prod_v^\sim \mathsf{DT}(X_v)\arrow{d}\arrow{r} & \prod_v \mathsf{DT}^\dagger_{\gamma(v)}(X_v)\arrow{d}\\
\prod_v^\sim \mathsf{Exp}(X_v)\arrow{r}{g} & \prod_v^\dagger \mathsf{Exp}(X_v).
\end{tikzcd}
\]
By~\cite[Section~5]{MR20} the vertical arrows are equipped with the same obstruction theory. Since $g$ is also a local complete intersection, commutativity of virtual and Gysin pullback gives the claim. 

We return to the main statement. The arguments above produce the following fiber diagram:
\[
\begin{tikzcd}
\prod_v^\sim \mathsf{DT}(X_v)\arrow{d}{\mathsf{ev}^\sim}\arrow{r} & \prod_v \mathsf{DT}^\dagger_{\gamma(v)}(X_v)\arrow{d}{\mathsf{ev}^\dagger} \\
\bigtimes_v \mathsf{Hilb}(D_v) \arrow{r}{\alpha} &\prod_v \mathsf{Hilb}^\dagger_{\beta}(D_v).
\end{tikzcd}
\]
Let $\Delta^\ddagger$ and $\Delta^\dagger$ be the strict transform of the diagonal loci in the respective entries in the bottom row. Since $\alpha_\star[\Delta^\ddagger]$ is the class $[\Delta^\dagger]$ and the virtual class is pulled back under $\alpha$, the theorem follows compatibility between pushforward and Gysin pullback. 

\qed

\subsection{Review of the expanded GW degeneration formula} We quickly review the GW degeneration formula from~\cite{R19}. The basic setup is similar to the DT setting. We start with a degeneration $\mathcal Y\to B$. There is an associated moduli space of logarithmic stable maps:
\[
\mathsf{GW}_{\beta,g}(\mathcal Y/B)\to B.
\]
As explained in~\cite[Section~6.1]{R19}, the Abramovich--Chen--Gross--Siebert decomposition formula identifies rigid tropical maps from tropical curves, labelled by genus and degree, to $\Delta_{\mathcal Y}$. These are in bijection with the virtual irreducible components of the special fiber of $\mathsf{GW}_{\beta,g}(\mathcal Y/B)\to B$.

The rigid tropical curve is a map
\[
\gamma\to \Delta_{\mathcal Y},
\]
and we may and do assume that the vertices and edges of $\gamma$ map onto vertices and edges of $\Delta_{\mathcal Y}$. We will abuse notation slightly by confusing the graph $\gamma$ and the parameterized map. 

Each vertex $v$ determines a natural moduli space of logarithmic maps, with the local data specified at $v$. We denote it $\mathsf{GW}(X_v)$. The basic version of the gluing construction is based on the natural map:
\[
\mathsf{ev}\colon \prod_v \mathsf{GW}(X_v) \to \prod_e D_e\times D_e.
\]
The map is given by evaluation at the marked point corresponding to $e$ -- there is one copy for each side of the edge. So far, essentially any birational model of the GW space may be used.

The main construction in~\cite{R19} produces a modification
\[
\bigtimes_v\mathsf{GW}(X_v)\to \prod_v \mathsf{GW}(X_v).
\]
The subdivision carries universal expansions $\mathcal X_v$ for each vertex $v$. Moreover, if $e$ is an edge between $v$ and $v'$, the induced expansion of the divisor family $D_e$ is the same on $\mathcal X_v$ and $\mathcal X_{v'}$. 

We denote the universal divisor expansion $\mathcal D_e\to \bigtimes_v\mathsf{GW}(X_v)$ and by $\mathcal D_e^{\{2\}}$ its fiber square. We let $\mathcal D_e^{\Delta}\subset\mathcal D_e^{\{2\}}$ the fiberwise diagonal in the fibered square.

We can certainly find an appropriate model $\mathsf{GW}^\dagger_\gamma(Y_0)$ that maps to $\bigtimes_v\mathsf{GW}(X_v)$. The main theorem of~\cite[Section~6.4]{R19} is stated below. Let $\gamma$ be a fixed tropical stable map and $\overline\gamma$ its induced Chow $1$-complex. As in Remark~\ref{rem: decomposition-gw} for each rigid tropical stable map $\gamma$ with Chow $1$-complex $\overline \gamma$, let $\ell(\gamma)$ denote the product, taken over all edges $e$ in $\overline \gamma$, of the least common multiplies of the dilation factors $m_{e'}$ of edges $e'$ in $\gamma$ that map to $e$.

\begin{theorem}
    Given the morphism:
    \[
    q\colon\mathsf{GW}_\gamma(Y_0)\to \bigtimes_v\mathsf{GW}(X_v),
    \]
    there is an equality of Chow homology classes
    \[
    q_\star\left[\mathsf{GW}_\gamma(Y_0) \right]^{\mathsf{vir}} = \prod_e m_ \cdot \frac{1}{\ell(\gamma)}\cdot\bigcap_e \mathcal D_e^{\Delta}\cap \left[\bigtimes_v\mathsf{GW}(X_v)   \right]^{\mathsf{vir}},
    \]
    where $m_e$ is the tropical expansion factor along the edge $e$ in the tropical map $\gamma$. Equivalently, the integer $m_e$ is the contact order at the node corresponding to $e$.
\end{theorem}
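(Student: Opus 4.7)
My plan is to treat this theorem as a GW counterpart of Proposition~\ref{prop: expanded-formula}, so the proof proceeds by the same three-step template: (i) build a cutting map $\kappa\colon \mathsf{GW}_\gamma(Y_0) \to \bigtimes_v \mathsf{GW}(X_v)$ that factors through the diagonal locus $\bigcap_e \mathcal{D}_e^{\Delta}$; (ii) show that this map is an isomorphism onto the diagonal locus on the level of underlying stacks; (iii) match the virtual classes, keeping careful track of the two numerical prefactors $\prod_e m_e$ and $1/\ell(\gamma)$. The construction of $\kappa$ is essentially forced on us: given a family of predeformable stable maps $C\to \cX\to Y_0$ whose tropical type specializes to $\gamma$, partially normalizing the expansion $\cX$ along the codimension-one strata dual to edges of $\gamma$ produces, for each vertex $v$, a family $C_v\to \cX_v\to X_v$. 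By the defining property of $\bigtimes_v \mathsf{GW}(X_v)$, the induced divisorial expansions $\mathcal D_e$ agree across the two ends of each edge, so the tuple of restricted maps lives in the fibered product. The two evaluation maps at the branches of each node give the required factorization through $\bigcap_e \mathcal D_e^{\Delta}$.

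For step (ii), at the level of closed points the bijection between predeformable maps to $Y_0$ with tropical type $\gamma$ and compatible tuples of maps to the $X_v$ with evaluations matching across edges is a direct consequence of Jun Li's original gluing construction~\cite{Li01,Li02}, and upgrades to a stack-level isomorphism once combined with the combinatorial splitting formula of Section~\ref{sec: combinatorial-degeneration} applied to the tropical types indexed by $\gamma$. No modifications to the target need to be made beyond those already built into $\bigtimes_v \mathsf{GW}(X_v)$, since the combinatorial diagonal locus computed there precisely matches the schematic diagonal over the divisor expansions.

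For step (iii), I would compare relative obstruction theories over the two Artin fans. The universal log tangent complex of $C\to\cX\to Y_0$ sits in an exact triangle with the disjoint union of the log tangent complexes of the cut pieces $C_v\to \cX_v\to X_v$ and a term concentrated at the nodes corresponding to edges; dualizing and pushing forward gives, for each edge $e$, a contribution identified with $R\pi_\star \mathcal{RH}om$ of a node-smoothing complex on $\mathcal D_e$. This contribution is precisely the normal bundle of the fibered diagonal $\mathcal D_e^{\Delta}\hookrightarrow \mathcal D_e^{\{2\}}$, matching the DT argument of Proposition~\ref{prop: expanded-formula}. The classical factor $m_e$ then appears exactly as in~\cite{Li02,LiWu15}: the node-smoothing parameter scales by $m_e$ because the edge $e$ in $\gamma$ has dilation factor $m_e$, so the virtual pullback of $[\mathcal D_e^{\Delta}]$ picks up a multiplicative correction of $m_e$ per edge.

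The remaining factor of $1/\ell(\gamma)$ is, in my opinion, the main point where care is needed. It reflects the lattice mismatch introduced when saturating the fine logarithmic structures on the base of the moduli, and it is identified with the base-change order appearing in the tropical decomposition of Remark~\ref{rem: decomposition-gw}. Concretely, the canonical map from $\mathsf{GW}_\gamma(Y_0)$ to the saturation of the naive log fiber product has degree $\ell(\gamma)$, and the obstruction theories agree only after dividing by this degree. This bookkeeping is executed in full detail in~\cite[Section~6]{R19}, and my proof would appeal to that analysis rather than reprove it here. The only thing that would need to be checked in our setup is that the particular tropical model $\bigtimes_v \mathsf{GW}(X_v)$ we have chosen is compatible with the one used in~\cite{R19}; this is straightforward from combinatorial flatness of the evaluation map, as in Lemma~\ref{lem: good-square}.
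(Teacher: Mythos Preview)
The paper does not prove this theorem: it is explicitly stated as a review of ``the main theorem of~\cite[Section~6.4]{R19}'', with no proof given. Your proposal therefore supplies more than the paper does. That said, your three-step template---cutting map, schematic identification with the diagonal locus, comparison of obstruction theories with the node contribution identified as the normal bundle of $\mathcal D_e^\Delta\hookrightarrow\mathcal D_e^{\{2\}}$---is precisely the structure of the argument in~\cite{R19}, and it mirrors exactly the DT analogue the present paper \emph{does} prove in Proposition~\ref{prop: expanded-formula}. Your attribution of $\prod_e m_e$ to the edge dilation factors and of $1/\ell(\gamma)$ to the saturation/base-order discrepancy is correct, and you rightly defer the detailed bookkeeping for the latter to~\cite[Section~6]{R19}.

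One minor point: the compatibility check you flag at the end, between the tropical model $\bigtimes_v\mathsf{GW}(X_v)$ used here and the one in~\cite{R19}, is not really an extra verification. Section~\ref{subsec:comparisons} already records that the expanded GW space constructed in this paper agrees, as a logarithmic modification, with the one in~\cite{R19}, and both are governed by the same combinatorial splitting of Section~\ref{sec: combinatorial-degeneration}; Lemma~\ref{lem: good-square} is used later for a different purpose (passing from $\bigtimes$ to $\prod^\dagger$) rather than for this compatibility.
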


\subsection{A simplified gluing formula in GW theory} We make a parallel set of manipulations of the degeneration formula in GW theory. We first state the theorem precisely.

Let $\mathcal Y\to B$ continue to be a degeneration and let $\gamma$ be a rigid tropical curve. We assume, as above, that the image of every vertex (resp. edge) of $\gamma$ is a vertex (resp. vertex or edge) of $\Delta_{\mathcal Y}$. Associated to each edge $e$ of $\gamma$ is a therefore a divisor $D_e$ in $Y_0$. We set:
\[
\mathsf{Ev}_\gamma(v) = \prod_{e\colon v\prec e} D_e,
\]
where the product runs over all edges and half edges adjacent to $v$. Each vertex of $\gamma$ determines a genus, degree, and contact order for a mapping space $\mathsf{GW}_{\gamma(v)}(X_v)$. In the previous section, we used an evaluation map from the moduli space $\bigtimes_v\mathsf{GW}(X_v)$ the fiber product over all edges of $\mathcal D_e^{\{2\}}$. There is a natural factorization of this evaluation morphism:
\[
\bigtimes_v\mathsf{GW}(X_v)\to {\prod_e}_{\bigtimes_v\mathsf{GW}(X_v)} \mathcal D_e^{\{2\}} \to \prod_v \mathsf{Ev}_\gamma(v).
\]
The second factor is a fibered product ranging over the edges $e$ of $\gamma$.

By semistable reduction, we can find a subdivision of the product of $\mathsf{Ev}_\gamma(v)$ over all vertices $v$, which we denote
\[
\bigtimes_v \mathsf{Ev}_\gamma(v) \to \prod_v\mathsf{Ev}_\gamma(v)
\]
such that the induced map
\[
\bigtimes_v\mathsf{GW}(X_v)\to\bigtimes_v \mathsf{Ev}^\dagger_\gamma(v)
\]
is combinatorially flat. Since the first map in the factorization above is strict, the map
\[
{\prod_e}_{\bigtimes_v\mathsf{GW}(X_v)} \mathcal D_e^{\{2\}} \to \bigtimes_v \mathsf{Ev}^\dagger_\gamma(v)
\]
is also combinatorially flat. 

Let $\Delta^\dagger$ denote the strict transform of the diagonal:
\[
\prod_e D_e\hookrightarrow \prod_e D_e\times D_e = \prod_v\mathsf{Ev}_\gamma(v),
\]
under the blowup
\[
\bigtimes_v \mathsf{Ev}_\gamma(v) \to \prod_v\mathsf{Ev}_\gamma(v)
\]

\begin{proposition}
    The preimage of $\Delta^\dagger$ under the map
    \[
    {\prod_e}_{\bigtimes_v\mathsf{GW}(X_v)} \mathcal D_e^{\{2\}} \to \bigtimes_v \mathsf{Ev}_\gamma(v).
    \]
    is the relative diagonal locus, i.e. the fibered product of $\mathcal D_e^\Delta$ over all edges $e$, with the fiber product taken over $\bigtimes_v\mathsf{GW}(X_v)$.
\end{proposition}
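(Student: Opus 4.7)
The plan is to convert the scheme-theoretic preimage into an fs logarithmic pullback using the combinatorial flatness established just above, and then reduce to an edge-by-edge computation that exploits the log modification structure of the universal divisor family. By definition of the strict transform, the closed embedding $\Delta^\dagger \hookrightarrow \bigtimes_v \mathsf{Ev}_\gamma(v)$ is the fs log pullback of the diagonal $\prod_e D_e \hookrightarrow \prod_v \mathsf{Ev}_\gamma(v) = \prod_e (D_e \times D_e)$ along the subdivision $\bigtimes_v \mathsf{Ev}_\gamma(v) \to \prod_v \mathsf{Ev}_\gamma(v)$. By transitivity of fs log pullbacks, it therefore suffices to compute the fs log pullback of $\prod_e D_e$ along the composite map ${\prod_e}_{\bigtimes_v\mathsf{GW}(X_v)} \mathcal D_e^{\{2\}} \to \prod_e (D_e \times D_e)$, and to verify that it agrees with the scheme-theoretic preimage.

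For the latter comparison I would invoke combinatorial flatness. The paragraph preceding the proposition establishes that ${\prod_e}_{\bigtimes_v\mathsf{GW}(X_v)} \mathcal D_e^{\{2\}} \to \bigtimes_v \mathsf{Ev}^\dagger_\gamma(v)$ is combinatorially flat, and by further subdividing the integral structure of the target if necessary I may also arrange reduced fibers; this extra subdivision does not alter either $\Delta^\dagger$ or the relative diagonal, since both are themselves defined by fs log fiber products. Under these hypotheses the final Proposition of Section~\ref{sec: tropical-toolbox} identifies the scheme-theoretic fiber product with the fs log fiber product, so the scheme-theoretic preimage of $\Delta^\dagger$ coincides with the fs log pullback. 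To perform this fs log pullback explicitly, both the diagonal $\prod_e D_e$ and the source ${\prod_e}_{\bigtimes_v\mathsf{GW}(X_v)} \mathcal D_e^{\{2\}}$ factor as products indexed by edges $e$ (the latter over the common base $\bigtimes_v\mathsf{GW}(X_v)$), so the computation reduces for each $e$ to
\[
D_e \times^{\mathsf{fs}}_{D_e \times D_e} \mathcal D_e^{\{2\}} \;=\; \mathcal D_e \times^{\mathsf{fs}}_{\bigtimes_v\mathsf{GW}(X_v) \times D_e} \mathcal D_e,
\]
where both maps $\mathcal D_e \to \bigtimes_v\mathsf{GW}(X_v) \times D_e$ are the canonical one coming from the universal expansion together with its projection to $D_e$. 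Since this canonical map is a logarithmic modification of its target (a fiberwise family of log modifications of $D_e$ over the base), its fs log self-fiber product is the source embedded via the diagonal. This identifies the fs log pullback with the relative diagonal $\mathcal D_e^\Delta$, and consolidating over edges yields the statement.

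The main obstacle is the identification of the fs log self-fiber product of the log modification $\mathcal D_e \to \bigtimes_v\mathsf{GW}(X_v) \times D_e$ with its source via the diagonal. \'Etale-locally on the base this reduces to the toric fact that if $X' \to X$ is a toric blowup of toric varieties, then $X' \times^{\mathsf{fs}}_X X' = X'$, which in turn follows from the combinatorial statement that the fs fiber product of a subdivision with itself over the original cone complex collapses onto the subdivision. The remaining steps are formal manipulations with strictness and fs log fiber products, paralleling the proof of Lemma~\ref{lem: diagonal-preimage} in the sheaf-theoretic setting.
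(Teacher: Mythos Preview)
Your proposal is correct and follows essentially the same approach as the paper: use combinatorial flatness to identify the scheme-theoretic preimage with the fs logarithmic pullback along the diagonal $\prod_e D_e \hookrightarrow \prod_e D_e \times D_e$, then compute this fs pullback directly. The paper's proof is terser---it simply asserts that ``the claim now follows by unwinding the functor of points of the fine and saturated base change''---whereas you supply that unwinding explicitly, reducing edge-by-edge and invoking the fact that a logarithmic modification is a monomorphism in the fs category (so its fs self-fiber product is the diagonal).
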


\begin{proof}
    Since the map is combinatorially flat, the preimage coincides with the fine and saturated logarithmic pullback filling in:
    \[
    \begin{tikzcd}
         & {\prod_e}_{\bigtimes_v\mathsf{GW}(X_v)} \mathcal D_e^{\{2\}}\arrow{d} \\
         \prod_e D_e\arrow{r} & \prod_e D_e\times D_e,
    \end{tikzcd}
    \]
    where the lower map is the factorwise diagonal. It therefore suffices to prove the statement by interpreting the diagram in the category of stacks over logarithmic schemes. The claim now follows by unwinding the functor of points of the fine and saturated base change.
\end{proof}

As a consequence of the proposition, we obtain the first refinement of the gluing formula. We continue with the notation $\mathcal Y$ for the degeneration, $Y_0$ for its special fiber, and $\gamma$ for the rigid tropical map. 

In what follows, we fix subdivisions such that there exists a combinatorially flat evaluation map:
\[
\mathsf{ev}^{\sim}:\bigtimes_v\mathsf{GW}(X_v)\to\bigtimes_v \mathsf{Ev}^\dagger_\gamma(v).
\]
As we have used before, we let $\ell$ denote the product, taken over all edges $e$ in $\overline \gamma$, of the least common multiplies of the dilation factors $m_{e'}$ of edges $e'$ in $\gamma$ that map to $e$.

\begin{corollary}\label{cor: collapsed-degeneration}
    Given the morphism:
    \[
    q: \mathsf{GW}_\gamma(Y_0)\to \bigtimes_v\mathsf{GW}(X_v),
    \]
    there is an equality of Chow homology classes
    \[
    q_\star \left[\mathsf{GW}^{}_\gamma(Y_0) \right]^{\mathsf{vir}} = \prod_e m_e\cdot \frac{1}{\ell(\gamma)}\cdot\left(\mathsf{ev}^\sim\right)^\star \Delta^\dagger \cap\left[\bigtimes_v\mathsf{GW}(X_v) \right]^{\mathsf{vir}}.
    \]
\end{corollary}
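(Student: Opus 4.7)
The plan is to combine the expanded GW degeneration formula of~\cite{R19}, recalled just before the corollary, with the proposition immediately preceding the corollary. The R19 formula gives
\[
q_\star[\mathsf{GW}_\gamma(Y_0)]^{\mathsf{vir}} = \prod_e m_e \cdot \frac{1}{\ell(\gamma)} \cdot \bigcap_e \mathcal{D}_e^\Delta \cap [\bigtimes_v \mathsf{GW}(X_v)]^{\mathsf{vir}},
\]
so the entire task is to identify $\bigcap_e \mathcal{D}_e^\Delta$, as a refined intersection class on $\bigtimes_v \mathsf{GW}(X_v)$, with $(\mathsf{ev}^\sim)^\star \Delta^\dagger$; the combinatorial factors then carry through unchanged.

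First I would make the factorization of $\mathsf{ev}^\sim$ explicit: write $f\colon \bigtimes_v \mathsf{GW}(X_v) \to {\prod_e}_{\bigtimes_v \mathsf{GW}(X_v)} \mathcal{D}_e^{\{2\}}$ for the map assembled from the node-adjacent evaluation sections, and $g\colon {\prod_e}_{\bigtimes_v \mathsf{GW}(X_v)} \mathcal{D}_e^{\{2\}} \to \bigtimes_v \mathsf{Ev}^\dagger_\gamma(v)$ for the projection used in the setup of the corollary, so that $\mathsf{ev}^\sim = g \circ f$. The map $f$ is strict, since its target, the relative double fiber product of the universal divisor family, is smooth over $\bigtimes_v \mathsf{GW}(X_v)$ and the evaluation sections land in the smooth locus; combinatorial flatness of $\mathsf{ev}^\sim$, imposed by construction, then forces combinatorial flatness of $g$.

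Second, I would apply the proposition immediately preceding the corollary, which identifies the preimage $g^{-1}(\Delta^\dagger)$ set-theoretically with the relative fiberwise diagonal $\prod_e \mathcal{D}_e^\Delta$. Combinatorial flatness of $g$ upgrades this to an equality of refined pullbacks $g^\star[\Delta^\dagger] = [\prod_e \mathcal{D}_e^\Delta]$, by exactly the same fine-and-saturated fiber product argument used in Lemma~\ref{lem: diagonal-preimage} on the DT side. Pulling back through $f$, and using that $f$ realizes, edge by edge, the fiberwise diagonal inside $\mathcal{D}_e^{\{2\}}$, yields
\[
(\mathsf{ev}^\sim)^\star [\Delta^\dagger] = f^\star g^\star [\Delta^\dagger] = \bigcap_e \mathcal{D}_e^\Delta
\]
as classes on $\bigtimes_v \mathsf{GW}(X_v)$. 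Capping with $[\bigtimes_v \mathsf{GW}(X_v)]^{\mathsf{vir}}$ and substituting into the R19 formula delivers the stated identity.

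The main obstacle is justifying that these refined Gysin pullbacks commute with cap product against the virtual class, given that $\Delta^\dagger$ is only the strict transform of a diagonal in a blowup and therefore need not itself be a regular embedding. I would handle this in the same spirit as the proof of Theorem~\ref{thm: main-splitting-theorem}: pull everything back to the level of Artin fans $\mathsf{Exp}(X_v)$, where combinatorial flatness of $g$ becomes honest flatness and the regular embedding $\Delta \hookrightarrow \prod_v \mathsf{Ev}_\gamma(v)$ is genuinely lci in the smooth ambient product, and then invoke the standard compatibility between virtual pullback and lci Gysin pullback, following~\cite{Mano12,MPT10}. Once this compatibility is in place, the corollary is immediate from the two inputs above.
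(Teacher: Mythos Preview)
Your approach is correct and matches the paper's own (implicit) argument: the paper states this corollary without an explicit proof, simply declaring it ``a consequence of the proposition'' combined with the R19 formula recalled just before. You have correctly identified the two inputs and the factorization $\mathsf{ev}^\sim = g\circ f$, and the identification $(\mathsf{ev}^\sim)^\star[\Delta^\dagger] = \bigcap_e \mathcal D_e^\Delta$ is exactly what is needed.

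One remark: your final paragraph about the ``main obstacle'' is more cautious than necessary at this stage. The analogous DT corollary in the paper is proved using only proper pushforward and the projection formula, and the same works here. The point is that the proposition gives a \emph{scheme-theoretic} identification of $g^{-1}(\Delta^\dagger)$ with the relative diagonal $\prod_e \mathcal D_e^\Delta$; since the fiberwise diagonal $\mathcal D_e^\Delta \hookrightarrow \mathcal D_e^{\{2\}}$ is already a regular embedding (the family is smooth), the refined class $\bigcap_e \mathcal D_e^\Delta \cap [\bigtimes_v \mathsf{GW}(X_v)]^{\mathsf{vir}}$ is well-defined via \emph{that} embedding, and the proposition then lets you rewrite it as $(\mathsf{ev}^\sim)^\star\Delta^\dagger$. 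The heavier Artin-fan and virtual-pullback compatibility argument you sketch is the machinery the paper deploys for the \emph{next} step (the full Theorem~\ref{thm: main-splitting-theorem-GW}), not for this intermediate corollary.
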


We present the final simplification of the formula. The result is parallel to Theorem~\ref{thm: main-splitting-theorem} on the DT side. Choose any combinatorial flattening:
\[
\mathsf{ev}^\dagger: \prod_v \mathsf{GW}^\dagger_{\gamma}(X_v)\to \prod_v\mathsf{Ev}_\gamma^\dagger(v).
\]
The number $\ell$ is as above. 

\begin{theorem}
    Given the morphism
    \[
\kappa: \mathsf{GW}^\dagger_\gamma(Y_0)\to \prod_v \mathsf{GW}^\dagger_{\gamma}(X_v),
\]
there is an equality of Chow homology classes
\[
\kappa_\star \left[\mathsf{GW}^\dagger_{\gamma}(Y_0) \right]^{\mathsf{vir}}= \prod_e m_e\cdot \frac{1}{\ell(\gamma)}\cdot\left(\mathsf{ev}^\dagger\right)^\star \Delta^\dagger \cap\left[\prod_v\mathsf{GW}^\dagger(X_v) \right]^{\mathsf{vir}}.
\]
\end{theorem}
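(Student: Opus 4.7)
The plan is to mirror the DT argument from Section~\ref{sec: main-splitting-formula}, using Corollary~\ref{cor: collapsed-degeneration} as the starting point. That corollary already establishes the desired formula, but with $\bigtimes_v\mathsf{GW}(X_v)$ in place of $\prod_v\mathsf{GW}^\dagger(X_v)$ and with $\bigtimes_v\mathsf{Ev}_\gamma(v)$ in place of $\prod_v\mathsf{Ev}_\gamma^\dagger(v)$. The task is therefore to transport this cycle-level identity across the chosen combinatorial flattening $\mathsf{ev}^\dagger$, while absorbing the same combinatorial prefactor $\prod_e m_e/\ell(\gamma)$ unchanged.

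First I would form the square
\[
\begin{tikzcd}
\prod_v^{\sim}\mathsf{GW}(X_v)\arrow{d}{\mathsf{ev}^\sim}\arrow{r}{g}& \prod_v \mathsf{GW}^\dagger(X_v)\arrow{d}{\mathsf{ev}^\dagger}\\
\bigtimes_v \mathsf{Ev}_\gamma(v)\arrow{r}{\alpha} &\prod_v \mathsf{Ev}_\gamma^\dagger(v),
\end{tikzcd}
\]
obtained by fine and saturated logarithmic base change from $\alpha$. Exactly as in Lemma~\ref{lem: good-square}, the combinatorial flatness of $\mathsf{ev}^\dagger$ guarantees that this square is also Cartesian on underlying stacks. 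By further subdivision of the evaluation spaces if needed, I may arrange that $\bigtimes_v\mathsf{GW}(X_v)$ dominates $\prod_v^\sim\mathsf{GW}(X_v)$ via a proper birational map and that $\alpha$ is a local complete intersection morphism, using that its source and target can be chosen to be smooth (or simplicial) toroidal Deligne--Mumford stacks by strong semistable reduction (Proposition~\ref{prop: strong-ss-reduction}).

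The core technical step is then to establish the identity
\[
\alpha^!\left[\prod_v \mathsf{GW}^\dagger(X_v)\right]^{\mathsf{vir}} = \left[\prod_v^\sim\mathsf{GW}(X_v)\right]^{\mathsf{vir}}.
\]
This is proved along the lines of the Key Claim in the DT case: since $\alpha$ is a toroidal subdivision, it is pulled back from a subdivision $a\colon \bigtimes_v\mathsf{Exp}(X_v)\to\prod_v\mathsf{Exp}^\dagger(X_v)$ of Artin fans, and the evaluation morphisms factor through these Artin fans. The relative obstruction theory of $\mathsf{GW}(X_v)\to\mathsf{Exp}(X_v)$ constructed in Section~\ref{sec: expanded-maps} is pulled back strictly through the stack of expansions in both cases, so the two virtual classes in question are related by Gysin pullback along the lci morphism $a$; the commutativity of virtual and Gysin pullbacks~\cite{Mano12} yields the equality.

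Once this Key Claim is in place, the theorem follows by push--pull: applying $g_\star$ to the formula in Corollary~\ref{cor: collapsed-degeneration} and using the projection formula for $\alpha$ converts the diagonal $\Delta^\dagger$ on $\bigtimes_v\mathsf{Ev}_\gamma(v)$ into its pushforward $\alpha_\star\Delta^\dagger$, which on the absolute product is nothing but the strict transform of the factorwise diagonal on $\prod_v \mathsf{Ev}_\gamma^\dagger(v)$; the prefactor $\prod_e m_e/\ell(\gamma)$ is untouched. The main obstacle I anticipate is the bookkeeping around fine and saturated versus ordinary fiber products at both the source and target of $\alpha$: the identification of $\alpha^\star\Delta^\dagger$ with the strict transform used in Corollary~\ref{cor: collapsed-degeneration} requires a GW analogue of Lemma~\ref{lem: diagonal-preimage}, which in turn relies precisely on the combinatorial flatness (with reduced fibers) of $\mathsf{ev}^\sim$ and $\mathsf{ev}^\dagger$ arranged at the outset. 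Once this coincidence is verified, the remainder of the argument is a formal manipulation, exactly parallel to the proof of Theorem~\ref{thm: main-splitting-theorem} on the DT side.
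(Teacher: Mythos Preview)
Your proposal is correct and follows essentially the same approach as the paper's own proof, which explicitly states that the argument is ``exactly parallel to the DT side'' and reduces to ``a formally identical diagram chase'' after replacing the top-left entry by the fine and saturated fiber product. Your write-up is in fact more detailed than the paper's sketch: you spell out the Key Claim, the role of combinatorial flatness in identifying the schematic and logarithmic fiber squares, and the push--pull step, all of which the paper leaves implicit by pointing back to the DT argument.
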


We stress that the key difference between the corollary that precedes this result, and the theorem we have just stated, is that the second formula splits into a vertex-by-vertex expression. 

\begin{proof}
The proof is exactly parallel to the DT side. Since it is important for our main results, we sketch the steps. First, we have proved in the preceding corollary that there exists a combinatorially flat evaluation map, namely $\mathsf{ev}^\sim$, such that the pullback of the strict transform of the diagonal gives the required class associated to the rigid tropical curve $\gamma$. Second, we move this from a statement about a \textit{particular} combinatorial flattening of the evaluation map to an \textit{arbitrary} cone. We have fixed such an arbitrary flattening from the outset:
\[
\mathsf{ev}^\dagger: \prod_v \mathsf{GW}^\dagger_{\gamma}(X_v)\to \prod_v\mathsf{Ev}_\gamma^\dagger(v).
\]
We can also assume by performing further blowups, that there is a subdivision
\[
\bigtimes_v\mathsf{Ev}_\gamma^\dagger(v)\to\prod_v\mathsf{Ev}_\gamma^\dagger(v)
\]
We therefore end up with a commutative diagram:
\[
\begin{tikzcd}
    \bigtimes_v\mathsf{GW}_\gamma(X_v)\arrow{d}\arrow{r} & \prod_v\mathsf{GW}^\dagger_\gamma(X_v)\arrow{d}\\
    \bigtimes_v\mathsf{Ev}_\gamma^\dagger(v)\arrow{r}& \prod_v\mathsf{Ev}_\gamma^\dagger(v).
\end{tikzcd}
\]
Although this square is not a fiber square, we can \textit{replace} the top left part of the diagram with the fine and saturated fiber product: the result will still be a modification of the product since it is a pullback of a subdivision. Formal properties of the virtual class under pushforward guarantee that the conclusion of Corollary~\ref{cor: collapsed-degeneration} continues to hold with this replacement. Now, since the right vertical is combinatorially flat, this fine and saturated fibered diagram is also fibered in the category of stacks. We are now in exactly the situation of the proof of Theorem~\ref{thm: main-splitting-theorem} in the previous section. After making the appropriate cosmetic changes, replacing the Hilbert schemes of curves and points with the spaces of stable maps and their evaluation spaces, the argument follows from a formally identical diagram chase. 
\end{proof}

The discussion in this section concludes the simplified gluing formula that we have input into the key corollaries of Section~\ref{sec: splitting-compatibility}.

\bibliographystyle{siam} 
\bibliography{LogarithmicDT_Final}

\end{document}